\documentclass[11pt,leqno,oneside,letterpaper]{amsart}
\usepackage[letterpaper,left=1.75truein, top=1truein]{geometry}
\usepackage{amsmath,amsfonts,amssymb,amscd,amsthm,amsbsy,epsf,graphics}
\usepackage{color}
\usepackage[colorlinks,linkcolor=blue,citecolor=blue]{hyperref}
\usepackage{epsfig}
\usepackage{graphicx}
\usepackage[utf8]{inputenc}
\usepackage{tikz}
\usepackage{amsthm,amssymb,amsfonts,amsmath,xfrac, xcolor}
    \usepackage{color,soul}
    \usepackage{amscd} 
    \usepackage{mathrsfs}
    \usepackage[numbers, sort&compress]{natbib}

\setlength{\thinmuskip}{2mu} 
\setlength{\medmuskip}{2mu}  
\setlength{\thickmuskip}{2mu} 

\textwidth=6.25in \textheight=9in
\addtolength{\evensidemargin}{-.75in}
\addtolength{\oddsidemargin}{-.75in}

\parindent=0pt
\parskip=8pt
\usepackage{mathtools}
\usepackage{todonotes}
\newtheorem{theorem}{Theorem}[section]
\newtheorem{lemma}[theorem]{Lemma}
\newtheorem{proposition}[theorem]{Proposition}
\newtheorem{definition}[theorem]{Definition}

\newtheorem{corollary}[theorem]{Corollary}
\theoremstyle{definition} 

\newtheorem{remark}[theorem]{Remark}
\newtheorem{example}[theorem]{Example} 

\newtheorem{question}[theorem]{Question} 

\newtheorem{claim}[theorem]{Claim}

\newtheoremstyle{cases}
  {12pt plus 6 pt}
  {2pt}
  {\bfseries}   
  {}
  {\bfseries}
  {.}
  {.5em}
  {}
\theoremstyle{cases}

\numberwithin{subcase}{case} \numberwithin{subsubcase}{subcase}
\numberwithin{equation}{subsection}
\usepackage{tikz}
\usetikzlibrary{matrix,arrows}
\usepackage[shortlabels]{enumitem}
\title{Generalized torsion in amalgams}

\begin{document}
\author[T.~Cai]{Tommy Wuxing Cai}

\address{Department of Mathematics, University of
	Manitoba, Winnipeg, MB, R3T 2N2}
\email{cait@myumanitoba.ca}

\author[A.~Clay]{Adam Clay}

\address{Department of Mathematics, University of
	Manitoba, Winnipeg, MB, R3T 2N2}
\email{Adam.Clay@umanitoba.ca}

 \subjclass[2010]{05E16, 06F15, 20F60, 57M05} 
 \keywords{Generalized torsion, left-orderability, bi-orderability, amalgams}
\thanks{Adam Clay was partially supported by NSERC grant RGPIN-05343-2020.}

\begin{abstract} 
We give a condition  sufficient to ensure that an amalgam of groups is generalized torsion-free. As applications, we construct a closed 3-manifold whose fundamental group is generalized torsion-free and non bi-orderable; a one-relator group which is generalized torsion-free and non bi-orderable; and a group which is generalized torsion-free and non left-orderable.
\end{abstract}
\maketitle

\tableofcontents

\section{Introduction}

A \textbf{left-ordering} of a group $G$ is a strict total ordering $<$ of the elements of $G$ which is invariant under left multiplication, meaning $a<b$ implies $ca<cb$ for all $a,b,c\in G$. If a left-ordering is also invariant under right multiplication, we call it a \textbf{bi-ordering}. A group is said to be \textbf{left-orderable} (resp.  \textbf{bi-orderable}) if it admits a left-ordering (resp. bi-ordering).  An element $g \in G$ is a \textbf{generalized torsion element} if $g\neq1$ and there exist $h_1, \ldots, h_n \in G$, $n \geq 1$, such that $g^{h_1}g^{h_2}\dotsm g^{h_n}=1$, where we write $g^h$ for the conjugate $h^{-1}gh$. A group containing no generalized torsion elements is said to be \textbf{generalized torsion-free}, which we will often write as GTF for short.

Left-orderable groups are torsion-free, and similarly, bi-orderable groups are GTF.  For each of these implications, the converse does not hold: There are plenty of non-left-orderable torsion-free groups, for instance the fundamental groups of many $3$-manifolds (see \cite{DMPT05, CW13, BGW13} for examples); and there exist GTF groups which are not bi-orderable (see \cite[Section 4.3]{MR77} and \cite{Bludov72}, for example).  However, it has long been an open question whether or not being GTF is sufficient to guarantee left-orderability, or whether GTF is sufficient to imply bi-orderability in certain special classes of groups.

In this manuscript, we provide a sufficient condition for a free product with amalgamation to be GTF; see Theorem \ref{T:GTFAmalgamIffFamilies}.
For certain kinds of amalgams, our condition is also necessary; see Corollary \ref{C:BOGTFOfExtendableAmalgam}.
Our proofs use a combinatorial argument to provide a lower bound on the length of products of conjugates to rule out the existence of generalized torsion elements that are conjugate into a factor of an amalgam.  To show that no other elements can be generalized torsion, we combine techniques from \cite{CH21} and \cite{IMT19}, which give an upper bound and a lower bound on stable commutator length, respectively.  As applications of Theorem \ref{T:GTFAmalgamIffFamilies}, we can address three open problems concerning the relationship between generalized torsion-freeness
and orderability.

First, there has been a considerable effort in recent years to investigate the generalized torsion elements of fundamental groups of $3$-manifolds \cite{MT17, IMT19, ITM21}.  This investigation is largely motivated by the rich theory surrounding the orderability properties of such groups, which has developed as a result of the L-space conjecture \cite{BGW13}.  Having checked a compelling number of cases, the authors conjecture in \cite{MT17} that the fundamental group of a $3$-manifold is bi-orderable if and only if it is GTF \cite[Conjecture 1.1]{MT17}.  In subsequent work, they resolve \cite[Problem 16.49]{kourovka} by showing that a generalized torsion element in a free product of torsion-free groups must be conjugate into one of the factors.  Since the fundamental group of a $3$-manifold can be expressed as a free product if and only if the manifold is not prime \cite{Stallings71}, 
\cite[Conjecture 1.1]{MT17} holds if and only if it holds for all prime $3$-manifolds. There have since been a number of manuscripts focused on exhibiting generalized torsion elements in fundamental groups of prime $3$-manifolds, whenever their groups are non-bi-orderable \cite{KMT23, Keisuke23, Sekino24a, Sekino24b}.  We provide a counterexample to \cite[Conjecture 1.1]{MT17}.

\begin{theorem}
\label{thm:intro3mfldthm}
There exists a closed, connected $3$-manifold whose fundamental group is GTF and not bi-orderable.
\end{theorem}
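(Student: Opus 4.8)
The plan is to realize the required manifold as a closed, orientable graph manifold $M = M_1 \cup_T M_2$, obtained by gluing two Seifert fibered pieces with a single torus boundary component along their boundary tori by a suitably chosen homeomorphism, so that $\pi_1(M) = A *_C B$ with $A = \pi_1(M_1)$, $B = \pi_1(M_2)$ and $C = \pi_1(T) \cong \mathbb{Z}^2$ the common peripheral subgroup. I would pick $M_1$ and $M_2$ from a class of Seifert pieces whose fundamental groups are well understood and come equipped with exactly the combinatorial data (``families'') that feed into Theorem \ref{T:GTFAmalgamIffFamilies} --- for instance circle bundles over once-punctured surfaces, or Seifert fibered spaces over surfaces with boundary with a controlled set of exceptional fibers --- and then tune the gluing matrix in $GL_2(\mathbb{Z})$ identifying the two peripheral $\mathbb{Z}^2$'s. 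Since Corollary \ref{C:BOGTFOfExtendableAmalgam} shows that for extendable amalgams the family condition is equivalent to bi-orderability, the gluing must be chosen so that the resulting amalgam is \emph{not} extendable; this is what leaves room for a GTF non-bi-orderable example.

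The first step is bookkeeping: record $A$, $B$, $C$, the two embeddings $C \hookrightarrow A$ and $C \hookrightarrow B$, and the presentation of $\pi_1(M)$ produced by the gluing. The second step is to verify the hypotheses of Theorem \ref{T:GTFAmalgamIffFamilies} for this amalgam --- identify the relevant family on the $A$-side and on the $B$-side (coming from the orderability properties and the Seifert structure of the pieces), and check that they match up across the identification of $C$. Granting the theorem, this already yields that $\pi_1(M)$ is GTF: the combinatorial length estimate rules out generalized torsion elements conjugate into $A$ or $B$, and the imported scl bounds rule out all others.

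The third step is to show $\pi_1(M)$ is not bi-orderable, using a purely algebraic obstruction chosen to be invisible to generalized torsion. Two natural candidates present themselves: (a) exhibit $g \in A \setminus C$ and $h \in B \setminus C$ with $g^n = h^n$ for some $n \geq 1$ but $g \neq h$, so that $\pi_1(M)$ lacks unique roots; since a bi-ordered group is an $R$-group ($g > h$ forces $g^n > h^n$ by repeated one-sided multiplication, so $g^n = h^n \Rightarrow g = h$), this rules out bi-orderability. Or (b) note that a bi-ordering of $\pi_1(M)$ restricts to a bi-ordering of $C \cong \mathbb{Z}^2$ invariant under the automorphisms of $C$ induced by conjugation from $A$ and from $B$, and arrange the Seifert data and the gluing so that these two automorphism groups have no common invariant bi-ordering of $\mathbb{Z}^2$. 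In either case one must double-check that the obstruction used does not itself produce a relation $g^{h_1}\cdots g^{h_n} = 1$, i.e. that it is consistent with the conclusion of the second step.

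The main obstacle I expect is precisely this last compatibility. The flexibility that makes the family hypothesis of Theorem \ref{T:GTFAmalgamIffFamilies} hold --- enough room in $A$ and $B$ that conjugates of a given element never cancel --- tends to make the amalgam bi-orderable, while the structural features that destroy bi-orderability (exceptional fibers forcing hidden roots, or gluings that reverse peripheral orientation) are exactly the ones that tend to create generalized torsion, as already happens for torus bundles and for Seifert fibered spaces with nonzero Euler number. Threading this needle --- choosing the two pieces, their Seifert invariants and the gluing matrix so that the family condition of Theorem \ref{T:GTFAmalgamIffFamilies} survives while an $R$-group-type (or invariant-ordering-type) obstruction persists --- is where the real difficulty lies; once the manifold is fixed, verifying the family hypothesis is a finite, if delicate, check.
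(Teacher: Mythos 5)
Your overall framework is aligned with the paper's: realize the manifold as $M_1\cup_T M_2$, so that $\pi_1 = A *_C B$ with $C\cong\mathbb{Z}^2$ peripheral, then appeal to Theorem \ref{T:GTFAmalgamIffFamilies} for GTF while obstructing bi-orderability separately. But the two specific choices you float are different from the paper's, and more importantly the hard part of your plan is left entirely undone.

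On the choice of pieces: you propose Seifert fibered pieces and a graph-manifold $M$. The paper instead takes $M_1=M_2=$ the figure-eight knot exterior --- a hyperbolic piece --- glued by $\phi_*(\mu_1)=\mu_2$, $\phi_*(\lambda_1)=\mu_2\lambda_2$. This is not an incidental stylistic difference. To satisfy the hypotheses of Theorem \ref{T:GTFAmalgamIffFamilies} you need $C$ to be RTF in each factor \emph{and} you need a rich enough supply of normal subsemigroups covering $G_i\setminus\{1\}$ whose intersections with $C\cong\mathbb{Z}^2$ can be made to match across $\phi_*$. For the figure-eight piece the paper manufactures exactly the right subsemigroups: two ($Q$ and $R$, Lemma \ref{L:NSSQandR}) come from the Perron--Rolfsen bi-ordering, and two more families ($X_{(\pm1,1)}$, Lemma \ref{lem:X}; $Y_1,Y_2$, Lemma \ref{lem:Y}) come from the Boyer--Gordon--Hu $\mathrm{H\widetilde{ome}o}_+(S^1)$ representations and from Dehn-filling considerations. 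These give normal subsemigroups whose traces on $\pi_1(\partial M)$ realize the four specific ``half-planes'' needed to match up under the gluing $\phi_*$. Your Seifert-fibered pieces would need a comparable inventory of normal subsemigroups with prescribed traces on $\mathbb{Z}^2$, and you neither produce it nor indicate why it should exist.

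On the bi-orderability obstruction: the paper's is much more elementary than either of yours --- the glued group $\pi_1(W)$ is perfect, so it has no surjection to $\mathbb{Z}$, so by Conrad's theorem it is not bi-orderable. Your proposals (failure of the $R$-group property, or absence of a common invariant bi-ordering of $C$ under the two peripheral conjugation actions) are legitimate obstructions in principle, but you never locate elements witnessing them in a specific amalgam, and --- as you yourself flag --- they may be incompatible with the very choices that make the family hypothesis hold.

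The genuine gap, then, is the one you name at the end: ``threading the needle'' is where the content is, and your plan stops there. The paper threads it by pairing (i) bi-orderable hyperbolic pieces with isolated, hence relatively convex, hence RTF peripheral subgroups, with (ii) the specific gluing matrix $\begin{psmallmatrix}1&0\\1&1\end{psmallmatrix}$ that makes $\phi_*(\mathcal F_1\cap C)$ land exactly on $\mathcal F_2\cap C$ for the families constructed from \cite{PR03} and \cite{BGH21}, and simultaneously kills $H_1$. Without those inputs the proof does not exist; nothing in your sketch substitutes for them. In addition, the Seifert-fibered route you lean toward is dangerous: the pieces with non-bi-orderable fundamental group typically already contain generalized torsion (this is exactly the territory covered in \cite{MT17, IMT19, ITM21, KMT23}), and the pieces whose groups are bi-orderable are so constrained that the resulting amalgam tends to be bi-orderable too, leaving no room for a counterexample.
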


The $3$-manifold we construct is a union of two figure eight knot complements glued together along their boundary tori.  It is straightforward to show that the fundamental group resulting from our choice of gluing is not bi-orderable, whereas showing that the fundamental group is GTF relies upon recent orderability results related to an investigation of the L-space conjecture \cite{BGH21}, together with an application of Theorem \ref{T:GTFAmalgamIffFamilies}.  See Section \ref{S:3mflds}.

Second, one-relator groups are known to exhibit exceptional behavior with respect to orderability.  In \cite{Brodskii84, Howie82}, it is shown that every torsion-free one relator group is locally indicable, and so left-orderable (in fact, Conradian left-orderable, which is a strengthening of left-orderability that is weaker than bi-orderability \cite{Conrad59}).  In \cite{CGW15}, the authors investigate bi-orderability of one-relator groups, with the single relator being subject to certain combinatorial conditions, and ask if every one-relator GTF group is bi-orderable \cite[Question 3]{CGW15}. We answer this question in the negative.

\begin{theorem}
\label{thm:onerelatorGTFnonBO}
The following group is GTF and non-bi-orderable:$$G=\langle a,b,d\mid a^{b^2}(a^{b})^{-1}a=(a^{d^2})^{-1}a^{d}a^{-1}\rangle.$$
\end{theorem}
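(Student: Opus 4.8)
The plan is to realize $G$ as a free product with amalgamation of two free groups and then apply Theorem~\ref{T:GTFAmalgamIffFamilies}. Put $w_1:=a^{b^2}(a^b)^{-1}a\in F(a,b)$ and $w_2:=(a^{d^2})^{-1}a^d a^{-1}\in F(a,d)$, so that $G=\langle a,b,d\mid w_1w_2^{-1}\rangle$. Neither word is a power of $a$, so $\langle a,w_1\rangle\le F(a,b)$ and $\langle a,w_2\rangle\le F(a,d)$ are free of rank two; matching them by $a\leftrightarrow a$ and $w_1\leftrightarrow w_2$ and reading off the standard presentation of an amalgam gives
$$ G\;\cong\;F(a,b)\ast_{C}F(a,d),\qquad C:=\langle a,w_1\rangle\cong F_2. $$
The essential feature of the gluing is a twist: $w_2=\varphi(w_1)$ for the isomorphism $\varphi\colon F(a,b)\to F(a,d)$, $a\mapsto a^{-1}$, $b\mapsto d$, so the amalgamating identification of $C$ differs from $\varphi$ by the automorphism of $C=\langle a,w_1\rangle$ that inverts $a$ and fixes $w_1$. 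Equivalently, $w_1\equiv a$ modulo $[F(a,b),F(a,b)]$ while $w_2\equiv a^{-1}$ modulo $[F(a,d),F(a,d)]$; this is the source both of the relation $2\bar a=0$ in $H_1(G)$ and of the failure of bi-orderability.

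I would prove $G$ is GTF by verifying the hypothesis of Theorem~\ref{T:GTFAmalgamIffFamilies} for this decomposition and then quoting the theorem. Both factors are free, hence bi-orderable, and $C$ lies inside the normal closure $\langle\!\langle a\rangle\!\rangle$ of $a$ in each factor---a free group on $\{a^{b^k}\}_{k\in\mathbb{Z}}$, respectively $\{a^{d^k}\}_k$, carrying shift-invariant bi-orderings---inside which $w_1$ and $w_2$ are short words in the shifted generators. From this data one must construct the compatible families of orderings the hypothesis requires and check the condition; the combinatorial length bound then precludes a generalized torsion element conjugate into a factor, and the stable-commutator-length estimates imported from \cite{CH21} and \cite{IMT19} handle everything else. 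I expect this verification---making the two embeddings of $C$ into the free factors explicit enough to produce the families and to rule out the short products of conjugates a generalized torsion element would supply---to be the main obstacle; the rest is bookkeeping around Theorem~\ref{T:GTFAmalgamIffFamilies}.

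For non-bi-orderability, suppose $<$ is a bi-ordering of $G$. Restricting yields bi-orderings of $F(a,b)$ and of $F(a,d)$ that agree on $C$, hence a single bi-ordering of $C$ extending to each factor; normalize $a>1$. But $C$ is glued into $F(a,b)$ so that $w_1$ is congruent to $a$ in $H_1(F(a,b))$, and into $F(a,d)$ so that the very same element $w_1=w_2$ is congruent to $a^{-1}$ in $H_1(F(a,d))$; the two embeddings of $C$ therefore cannot share a common bi-ordering extending to both factors. This is precisely the stronger compatibility that fails here, so $G$ is not bi-orderable (this is the content of Corollary~\ref{C:BOGTFOfExtendableAmalgam} for amalgams such as this one). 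Note that, in contrast with the GTF statement, the non-bi-orderability is forced cheaply by the $a\mapsto a^{-1}$ twist in the amalgamating isomorphism. Combining the two paragraphs, $G$ is GTF and not bi-orderable, as claimed.
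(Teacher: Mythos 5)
Your decomposition $G \cong F(a,b) \ast_C F(a,d)$ over $C = \langle a, w_1\rangle$ is the paper's decomposition up to renaming (the paper writes the second factor as $\langle c, d\rangle$ with $c$ identified with $a^{-1}$, making the twisted gluing $\phi(a)=c^{-1}$ explicit), and you have correctly located the sign flip on $a$ as the crux. Both halves of the argument, however, have real gaps. On non-bi-orderability, the homology observation does not by itself close: in a general bi-ordering of a free group the sign of an element is not controlled by its image in $H_1$, so the assertion that ``the two embeddings of $C$ cannot share a common bi-ordering extending to both factors'' needs a proof you haven't supplied, and Corollary \ref{C:BOGTFOfExtendableAmalgam}, which you invoke, is a statement about generalized torsion, not bi-orderability. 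What actually closes it is Lemma \ref{lem:nonbolemma}: in any bi-ordered group with $a>1$, conjugation by $b$ preserves the order, so $(a,a^b,a^{b^2})$ is a monotone triple and a short case check gives $a^{b^2}(a^b)^{-1}a>1$, while the mirror argument gives $(a^{d^2})^{-1}a^d a^{-1}<1$. Applied directly to a hypothetical bi-ordering of $G$ itself (no amalgam decomposition needed for this half), the two sides of the relator have opposite signs.

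On GTF you have a plan rather than a proof, and the plan as written is aimed in a direction that cannot succeed. Because $G$ is not bi-orderable, no pair of positive cones on $F(a,b)$ and $F(a,d)$ can agree on $C$ across $\phi$ --- this is exactly what Lemma \ref{lem:nonbolemma} rules out --- so the hypothesis of Corollary \ref{C:GTFAmalgamWhenPhiPreservesOrders} fails, and ``compatible families of orderings'' will not materialize; Theorem \ref{T:GTFAmalgamIffFamilies} needs families of normal subsemigroups that are not positive cones. Producing them is the actual content of Section \ref{Section:One-RelatorNon-BO-GTF-Group}: Lemma \ref{L:One-Relator-RTF} establishes RTF via relative convexity (using that $C$ is a free factor of $N=\ker(A\to\mathbb{Z})$); Propositions \ref{P:One-Relator-ConjugateIntoC} and \ref{P:NonConjugateIntoCMatching} use the Magnus embedding, the Freiheitssatz, and multi-malnormality of $C$ in $N$ to pin down $\mathrm{NSS}_A(\{\alpha\})\cap C$ for every nontrivial $\alpha$, separately for $\alpha$ conjugate into $C$ and not; and Theorem \ref{T:GTF-One-Relator} assembles the matching families. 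Your observation that $C$ sits inside $\langle\!\langle a\rangle\!\rangle$ with its shift structure is the right starting point for the Magnus analysis, but the trace-matching across the twisted $\phi$ is where all the work lives, and it is missing.
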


Our construction uses a combination of Theorem \ref{T:GTFAmalgamIffFamilies} to guarantee that the group is GTF, and a technique of \cite{Ak23} to obstruct bi-orderability.  See Section \ref{Section:One-RelatorNon-BO-GTF-Group}.

Third, we are also able to address the question of whether a GTF group must be left-orderable,
 which appears as \cite[Problem 16.48]{kourovka} and \cite[Question 2.1]{BGKM09}. 

\begin{theorem}
\label{thm:introGTFnonLO}
There exists a group which is GTF and not left-orderable.
\end{theorem}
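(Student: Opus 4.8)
The plan is to construct the group as an amalgamated free product to which Theorem \ref{T:GTFAmalgamIffFamilies} applies, so that it is GTF, while simultaneously arranging the amalgam so that it is not left-orderable. The key tension is that most of the hypotheses guaranteeing an amalgam is GTF will push the factors toward being bi-orderable, hence left-orderable; so the non-left-orderability must be forced by the way the factors are glued rather than by the factors themselves. The classical mechanism here is Bludov–Glass: an amalgamated product $A \ast_C B$ can fail to be left-orderable even when $A$, $B$ are bi-orderable, provided the identifications of $C$ inside $A$ and inside $B$ are ``incompatible'' with any left-ordering — concretely, one exhibits finitely many elements whose positivity in any left-ordering leads to a contradiction via an inequality forced on one side of the amalgam and the opposite inequality forced on the other.

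The steps, in order, would be: (1) Select bi-orderable groups $A$ and $B$ together with a common subgroup $C$ and two embeddings $C \hookrightarrow A$, $C \hookrightarrow B$ chosen so that $G = A \ast_C B$ is not left-orderable; here I would lean on a known Bludov–Glass-type example, or on the left-orderability obstruction packaged in \cite{BGKM09}, to get a clean finite certificate of non-left-orderability. (2) Verify that this data satisfies the hypotheses of Theorem \ref{T:GTFAmalgamIffFamilies}: check that $A$ and $B$ are GTF (they are, being bi-orderable), and check the combinatorial/convexity condition on the family of subgroups that the theorem requires — this is where the bulk of the work lies, since we must confirm that the embedded copies of $C$ sit inside $A$ and $B$ in the controlled way the theorem demands (e.g.\ relatively convex, or admitting the extendability hypothesis of Corollary \ref{C:BOGTFOfExtendableAmalgam}). (3) Conclude from Theorem \ref{T:GTFAmalgamIffFamilies} that $G$ is GTF, and from step (1) that $G$ is not left-orderable, which proves the theorem.

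The main obstacle I anticipate is reconciling the two requirements at once. Theorem \ref{T:GTFAmalgamIffFamilies}'s sufficient condition for GTF is, in spirit, a ``compatibility of orderings across the amalgamated subgroup'' condition — precisely the sort of compatibility whose failure is what makes an amalgam non-left-orderable. So I would need to locate the gap between the two: the GTF condition presumably only constrains how $C$ interacts with bi-orderings of $A$ and $B$ \emph{internally} (convex position, or closure under the dynamics of conjugation within each factor), whereas the Bludov–Glass obstruction concerns a \emph{global} incompatibility that no internal condition can see. Pinning down an explicit $A$, $B$, $C$ for which the internal condition holds on each side but the global gluing is order-theoretically inconsistent is the crux; once such an example is in hand, both conclusions follow formally.

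Finally, I would remark that, as with Theorems \ref{thm:intro3mfldthm} and \ref{thm:onerelatorGTFnonBO}, the GTF half of the argument is entirely an application of Theorem \ref{T:GTFAmalgamIffFamilies}, and only the obstruction half — here non-left-orderability rather than non-bi-orderability — uses an external technique, so the write-up should be short, with the verification of the theorem's hypotheses being the only non-formal content.
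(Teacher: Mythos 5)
Your overall strategy is the one the paper uses: build an amalgam $A \ast_{\phi} B$ of free groups along a finitely generated subgroup $C$, use Theorem~\ref{T:GTFAmalgamIffFamilies} for the GTF half, and use a semigroup/Bludov--Glass-type criterion for non-left-orderability, with the twist living entirely in the gluing map $\phi$. You also correctly anticipate that the crux is the apparent incompatibility between the GTF hypothesis (some coherence of normal subsemigroups across $\phi$) and the non-LO obstruction (some incoherence of positivity across $\phi$). What is missing is the idea that actually resolves this tension.

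The gap is that you have not identified what property of $C$ inside the factors makes the GTF conclusion \emph{independent of the choice of $\phi$}. If you only arrange $C$ to be relatively convex (hence RTF) in $A$ and $B$, then to apply Theorem~\ref{T:GTFAmalgamIffFamilies} you must still produce families of normal subsemigroups whose traces on $C$ match under $\phi$ --- and trying to do this via positive cones of bi-orderings (as in Corollary~\ref{C:GTFAmalgamWhenPhiPreservesOrders}) would force $\phi$ to intertwine orderings of $C$, exactly the compatibility that would make the amalgam left-orderable. The paper's resolution is the notion of a \emph{multi-malnormal} subgroup (Definition~\ref{D:Malnormalities}) and Corollary~\ref{C:RTF+multi-malnormalityImpliesGTF}: if $C$ is RTF \emph{and multi-malnormal} in each GTF factor, then $\ast_C G_i$ is GTF for \emph{every} gluing. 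Multi-malnormality lets one manufacture the required families automatically from normal closures, and decouples the GTF verification from $\phi$, so that $\phi$ is then free to be chosen adversarially to kill left-orderability. Without this (or an equivalent device), your step (2) has no path forward, and ``locate the gap between the two conditions'' remains unresolved rather than just deferred.

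A secondary point: you propose to ``lean on a known Bludov--Glass-type example,'' but the paper does not have one available that also satisfies the GTF hypotheses; it constructs $C$ from scratch as the subgroup of $F_2$ generated by $m\geq 8$ long small-cancellation words (Definition~\ref{D:DefineSubgroupC}, with $s\geq10$ and distinct-exponent conditions). Those combinatorial constraints are what make the lengthy proofs of Theorem~\ref{T:CIsRTFInA} (RTF-ness) and especially Theorem~\ref{T:multi-malnormalityOfC} (multi-malnormality) go through, and the sign conditions on the first eight generators are what feed the semigroup criterion (\cite[Theorem 1.48]{CR16}) for non-left-orderability. So the ``write-up should be short'' expectation does not hold: verifying multi-malnormality is the genuinely hard part and occupies most of the section.
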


The group we construct in this case is an amalgam of two copies of the free group on two generators, amalgamated along certain finitely generated subgroups. 
Our choices in constructing the amalgam are determined by combinatorial considerations related to the generalized torsion-freeness, specifically, our choices guarantee that the generators of the amalgamating subgroup satisfy very strong ``small cancellation conditions'' that aid us in our analysis of products of conjugates (see Subsection \ref{Subsection:ConstructionOfTheAmalgamAndNonLO}).

\begin{question} Does \cite[Conjecture 1.1]{MT17} hold for all $3$-manifolds whose fundamental group cannot be expressed as an amalgam along a $\mathbb{Z} \oplus \mathbb{Z}$ subgroup?
\end{question}

\begin{question}
     Is there a one-relator group which is GTF, non-biorderable and has only two generators?
\end{question}

\subsection{Organization of the manuscript}
In Section \ref{S:notation}, we introduce our notation for amalgams, elements of amalgams, the notion of ``end-preserving'' products and results involving this notion. In Section \ref{S:TamingProductOfConjugates}, we introduce ``tamed products'' and prove an estimate for their length.  In Section \ref{S:GeneralizedTorsionInAmalgams}, we provide a sufficient condition that an amalgam be GTF, and we use this result to generate new examples of GTF groups. In Section \ref{S:3mflds}, we construct a $3$-manifold group that is GTF but not bi-orderable. In Section \ref{Section:One-RelatorNon-BO-GTF-Group}, we construct a one-relator group which is GTF and not bi-orderable,  and we construct a group which is GTF and not left-orderable in Section \ref{Section:GTFAndNonLOGroups}.

Note that the last three sections are independent of one another, as they are applications of the results in Section \ref{S:GeneralizedTorsionInAmalgams}. 

\section{Notation and preparatory results}

\label{S:notation}
    We refer to Serre's book \cite{Serre1980} for definitions and notations concerning amalgams of groups. Let $G_i$ ($i\in I$) be a collection of groups and $C$ be a common subgroup of all these $G_i$. 
       Then every element $g$ in the amalgam $G=\ast_CG_i$ can be written as $g=cg_1g_2\dotsm g_n$ with $n\geq0$, $c\in C$ and $g_j\in G_{i_j}\backslash C$  satisfying $i_j\neq i_{j+1}$ for $j=1,2,\dotsc,n-1$, and the \textbf{index vector} $(i_1,\dotsc, i_n)$ is uniquely determined by $g$. (The index vector is empty $(\empty)$ when $g$ is in $C$.) We say $g=cg_1g_2\dotsm g_n$ is an \textbf{alternating product}, each $g_i$ a \textbf{component} of $g$ and that the $j$th component of $g$ is from $G_{i_j}$, or that the $j$th component \textbf{has index $i_j$}.
       We define the \textbf{length} of $g$ to be $l(g)=n$. Note $l(g)=0$ (resp. $l(g)=1$) if and only if $g\in C$ (resp. $g\in \cup_{i\in I} G_i\backslash C$). Moreover, when $l(g)\geq1$, we can require the ``$c$" in the alternating product $g=cg_1g_2\dotsm g_n$ to be the unit element $1 \in G$, and thus write $g$ in the form $g=g_1g_2\dotsm g_n$.   
    
    Given a tuple $(g_1,\dotsc, g_n)$ with $g_i \in G$, we say that $(g_1,\dotsc, g_n)$ is \textbf{reduced} if $l(g_1\dotsm g_n)=l(g_1)+\dotsm+l(g_n)$.  When no confusion will arise from doing so, we will simply say that $g_1 \cdots g_n$ is a reduced product (of $n$ terms) and make no reference to the underlying tuple. If $g_1g_2$ is a reduced product, we say that $g_1$ (resp. $g_2$) is a \textbf{left factor} (resp. \textbf{right factor}) of $g_1g_2$. Note that in the case that $l(g_i)\geq1$ for all $i =2, \ldots , n-1$, the product $g_1\dotsm g_n$ is reduced if and only if $l(g_ig_{i+1})=l(g_i)+l(g_{i+1})$ for all $i$ with $1\leq i\leq n-1$.

For $g,h\in G$, write $g$ as an alternating product $g=cx_m\dotsm x_1$ with $c\in C$ and write $h=y_1\dotsm y_nd$, where $y_1\dotsm y_n$ is an alternating product (with $l(y_i)=1$) and $d\in C$. Denote by $K(g,h)$ the maximum $k\geq0$ such that $x_k\dotsm x_1y_1\dotsm y_k\in C$; we call $K(g,h)$ the \textbf{cancellation number} of the product $gh$. Using the uniqueness of index vectors, it can be shown that the cancellation number $K(g,h)$ depends only on $g$ and $h$, and not how the alternating products are written.

\subsection{Some results about products in amalgams}
In this subsection, we prepare basic results concerning cancellation for later use. Let $G=\ast_CG_i$ ($i\in I$) be an amalgam of groups. If $g \in G$ has index vector $(i_1,\ldots, i_n)$ and $n\geq1$, then we call $i_1$ (resp. $i_n$) the left-end (resp. right-end) index of $g$, denote it as $LEI(g)$ (resp. $REI(g)$). Note that $LEI(g)$ and $REI(g)$ are undefined if $g \in C$.

Given a tuple $(g_1, \ldots, g_n)$ where $g_k \in G$ for $k =1, \ldots, n$ with at least one $l(g_k)\geq1$, let $i$ (resp. $j$) be the minimal (resp. maximal) index such that $l(g_i)\geq1$ (resp. $l(g_j)\geq1$). If $LEI(g_1\dotsm g_n)=LEI(g_i)$ (resp. $REI(g_1\dotsm g_n)=REI(g_j)$), we say the tuple $(g_1, \ldots, g_n)$ is \textbf{left end-preserving} (resp. \textbf{right end-preserving}). We say the tuple is \textbf{end-preserving} if it's both left end-preserving and right end-preserving. As in the case of reduced products, we will sometimes make no reference to the underlying tuple, and instead write that $g_1\dotsm g_n$ is left or right end-preserving, when no confusion arises from doing so.  Note that by definition, $l(g_1\dotsm g_n)\geq1$ if the tuple $(g_1, \ldots, g_n)$ is left end-preserving or right end-preserving.
 
For the definitions of reduced products and end-preserving products, we will often bracket a product to indicate the underlying tuple.  As an example, if we say the product $g_1\dotsm g_n$ is reduced, this means that the tuple $(g_1, \ldots, g_n)$ is reduced; whereas if we say that the product $(g_1g_2)g_3\dotsm g_n$ is reduced, we mean that the tuple $(g_1g_2, g_3, \ldots, g_n)$ is reduced.  It is necessary to take such care, as the following example shows: Considering the amalgam $\langle a\mid\rangle\ast\langle b\mid\rangle$, which is the free group of rank two generated by $\{a,b\}$, let $g_1=a,g_2=a^{-1}b,g_3=b^{-1}a$. Then $g_1g_2g_3$ and $g_1(g_2g_3)$ are end-preserving, but $(g_1g_2)g_3$ is not left end-preserving.

\begin{lemma}\label{L:End-preservingAndCancellationNumber}
    Let $\alpha,\beta\in G$ and $k=K(\alpha,\beta)$. 
    \begin{enumerate}[(1)]
        \item If $l(\alpha)>0$, then the product $\alpha\beta$ is left end-preserving if and only if $k<l(\alpha)$;
        \item Assume that $\alpha\beta$ is left end-preserving and $\beta_1$ is a left factor of $\beta$. Also assume that $l(\alpha)>0$ or $l(\beta_1)>0$. Then $\alpha\beta_1$ is left end-preserving. 
    \end{enumerate}
\end{lemma}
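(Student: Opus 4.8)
The plan is to work directly with alternating products and cancellation numbers, using part (1) of the lemma as the main engine. Throughout, write $\alpha = c\,x_m \cdots x_1$ as an alternating product with $c \in C$ and each $l(x_i)=1$, and write $\beta = y_1 \cdots y_n d$ with $d \in C$ and each $l(y_j)=1$; so $k = K(\alpha,\beta)$ is the largest $k \geq 0$ with $x_k \cdots x_1 y_1 \cdots y_k \in C$. For part (2), write $\beta_1$ as a left factor of $\beta$, meaning $\beta = \beta_1 \beta_2$ is a reduced product; since $\beta_1$ is a ``prefix'' of $\beta$ (after absorbing the $C$-coset bookkeeping), its alternating product begins $\beta_1 = y_1 \cdots y_r d_1$ for some $0 \le r \le n$ and $d_1 \in C$ — here I would first justify carefully that a left factor of $\beta$ really does have this form, using uniqueness of index vectors and reducedness of $\beta_1\beta_2$.

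First I would dispense with part (1). If $l(\alpha)>0$ then $LEI(\alpha)$ is the index of $x_m$. The product $\alpha\beta$ fails to be left end-preserving exactly when cancellation ``reaches past'' the leftmost component $x_m$ of $\alpha$, i.e. when $x_m \cdots x_1 y_1 \cdots y_m \in C$ (so that the $m$th, hence all of $\alpha$'s components, get cancelled and $LEI(\alpha\beta) \neq LEI(\alpha)$, or $\alpha\beta\in C$), which by definition of $K$ is exactly the statement $k \geq m = l(\alpha)$. Conversely if $k < l(\alpha)$, then $x_{k+1}$ survives as the leftmost component after reduction — one checks from $x_k\cdots x_1 y_1 \cdots y_k \in C$ and $x_{k+1}\cdots x_1 y_1\cdots y_{k+1} \notin C$ that $x_{k+1}$ (times a $C$-element) is genuinely the left-end component of $\alpha\beta$ — giving $LEI(\alpha\beta) = LEI(x_{k+1}\cdots)$. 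Since $k < l(\alpha)$ forces $x_{k+1}$ to be a component of $\alpha$ and in fact, when combined with the symmetric analysis, its index equals that of $x_m = LEI(\alpha)$; the cleanest way to see the index is preserved is to note $\alpha\beta = (x_m \cdots x_{k+1})(x_k \cdots x_1 y_1 \cdots y_k)(y_{k+1} \cdots)$ where the middle factor is in $C$, so the displayed product is reduced and left end-preserving with $LEI = $ index of $x_m$. This gives the ``if and only if.''

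Now for part (2), let $k = K(\alpha,\beta)$ and $k_1 = K(\alpha,\beta_1)$. Since $\alpha\beta$ is left end-preserving, I split on whether $l(\alpha)>0$. If $l(\alpha)>0$: by part (1), $k < l(\alpha)$. The point is that $k_1 \leq k$ — cancellation of $\alpha$ against the prefix $\beta_1$ cannot exceed cancellation against all of $\beta$, which again follows from uniqueness of index vectors applied to the prefixes $y_1 \cdots y_j$ of $\beta_1$ being prefixes of $\beta$ (so any $j$ witnessing $x_j \cdots x_1 y_1 \cdots y_j \in C$ for $\beta_1$ also witnesses it for $\beta$). Hence $k_1 \le k < l(\alpha)$, and part (1) gives that $\alpha\beta_1$ is left end-preserving. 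If instead $l(\alpha)=0$, then $\alpha \in C$ and $LEI(\alpha\beta) = LEI(\beta)$ forces $l(\beta) > 0$; by hypothesis $l(\beta_1)>0$, and $\alpha\beta_1 = c\beta_1$ has $LEI(c\beta_1) = LEI(\beta_1)$ — here I need that the minimal index with positive length in the tuple $(\alpha,\beta_1)$ is that of $\beta_1$, which is immediate since $l(\alpha)=0$, and that left-multiplying $\beta_1$ by $c \in C$ does not change $LEI$, which is clear from its alternating form. So $\alpha\beta_1$ is left end-preserving, completing the proof.

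I expect the main obstacle to be the bookkeeping around what exactly a ``left factor'' of $\beta$ looks like as an alternating product — i.e. confirming that reducedness of $\beta_1\beta_2 = \beta$ forces the alternating product of $\beta_1$ to be a genuine initial segment $y_1 \cdots y_r d_1$ of that of $\beta$ — and, relatedly, nailing down the monotonicity $K(\alpha,\beta_1) \le K(\alpha,\beta)$ cleanly from uniqueness of index vectors rather than by hand-wavy ``cancellation'' talk. Everything else is a direct unwinding of the definition of $K$ and the characterization in part (1).
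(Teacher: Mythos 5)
Your proposal takes essentially the same route as the paper: handle part (2) by reducing to part (1) via the monotonicity $K(\alpha,\beta_1)\leq K(\alpha,\beta)$ (together with the trivial $l(\alpha)=0$ case), and prove part (1) by a direct case analysis on alternating products. The argument is correct in outline, but two intermediate claims in your treatment of the "if'' direction of (1) are imprecise. First, the sentence "if $k<l(\alpha)$ then $x_{k+1}$ survives as the leftmost component after reduction'' is only true when $k+1=m$; in general the leftmost component is $x_m$, which is simply untouched when $k+1<m$. Second, the "cleanest way'' decomposition $\alpha\beta = (x_m\cdots x_{k+1})\,c_k\,(y_{k+1}\cdots y_n)$ need not be a \emph{reduced} product of three terms: $x_{k+1}c_k y_{k+1}$ may have length one (a merge), so $l(\alpha\beta)$ can be $m+n-2k-1$ rather than $m+n-2k$. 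This does not change the conclusion -- the merged component still carries the index of $x_{k+1}$, so $LEI(\alpha\beta)=LEI(\alpha)$ either way -- but as written the reducedness claim is false. The paper handles this by explicitly recording the two possible alternating forms of $\alpha\beta$ (with and without the merge) in its case $k<l(\alpha)$; adopting that formulation closes the slip. Your bookkeeping worry about a left factor $\beta_1$ of $\beta$ contributing the initial $l(\beta_1)$ components of the index vector of $\beta$ is legitimate but routine, and the paper simply asserts $K(\alpha,\beta_1)\leq K(\alpha,\beta)$ without spelling it out.
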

\begin{proof}
First, we prove (1). As it is clearly true if $l(\beta)=0$, we assume that $l(\beta)>0$. Write $\alpha$ and $\beta$ as alternating products $\alpha=x_m\dotsm x_1$ and $\beta=y_1\dotsm y_n$; observe that $k\leq m$ and $k\leq n$. Consider two cases.
    \begin{enumerate}[(i)]
\item $k \geq l(\alpha)$. We then have $k=m \leq n$. Now $c_i:=x_i\dotsm x_1y_1\dotsm y_i \in C$ for all $i$ with $1\leq i\leq m$.  If $k=m=n$, then $\alpha\beta=c_m\in C$ and thus $\alpha\beta$ is not left end-preserving.  On the other hand, if $k=m<n$, then $\alpha\beta=c_my_{m+1}\dotsm y_n$.
    Because $y_{m}$ and $y_{m+1}$ have different indices and $x_m$ and $y_m$ have the same index (as $c_{m-1}, c_m=x_mc_{m-1}y_m\in C$), we have $LEI(\alpha\beta)\neq LEI(\alpha)$, and thus the product $\alpha\beta$ is not left end-preserving.

\item $k<l(\alpha)$. Then $\alpha\beta$ is equal to one of the following alternating products
    \begin{align*}
        x_m\dotsm x_{k+2}(x_{k+1}c_{k}y_{k+1})y_{k+2}\dotsm y_n \mbox{ or } x_m\dotsm x_{k+2}(x_{k+1}c_{k})y_{k+1}\dotsm y_{n},
    \end{align*} 
    where the brackets are used to indicate a single component of the alternating product.  This shows that the product $\alpha\beta$ is left end-preserving. 
\end{enumerate}
Now we prove (2). If $l(\alpha)=0$, then $l(\beta_1)>0$ and $\alpha\beta_1$ is (left) end-preserving by definition. Otherwise, it follows from (1) as we have $K(\alpha,\beta_1)\leq K(\alpha,\beta)$.
\end{proof}

We remark there is an analog to Lemma \ref{L:End-preservingAndCancellationNumber} concerning right end-preserving, cancellation numbers and right factors. We only state one of the two versions of this result, as the other version can be deduced easily from the one provided and/or proved similarly. Note that this situation happens several times later, and this occurs quite frequently in Section \ref{Section:GTFAndNonLOGroups}. 
 
The next result, which follows from Lemma \ref{L:End-preservingAndCancellationNumber}, will only be used in the proof of Theorem \ref{T:CIsRTFInA} in Section \ref{Section:GTFAndNonLOGroups}.
\begin{corollary}\label{C:TwosidedCancellationAndSumOfCancellationLengths}
 Let $\alpha_i,g_i\in G$, $n\geq1$. Consider the product $T=g_0\alpha_1g_1\alpha_2g_2\dotsm \alpha_ng_n.$
Assume that for each $i\in\{1,2,\dotsc,n-1\}$, we have 
 \begin{align}\label{I:RTFLeqs+2}
K(g_{i-1,2}'\alpha_i, g_i)+K(g_i, \alpha_{i+1}g_{i+1,1}')\leq l(g_i)-2
\end{align} for every right factor $g_{i-1,2}'$ of $g_{i-1}$ and every left factor $g_{i+1,1}'$ of $g_{i+1}$. Then $T\neq1$.
\end{corollary}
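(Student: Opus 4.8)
The statement is about a product $T = g_0 \alpha_1 g_1 \alpha_2 g_2 \cdots \alpha_n g_n$ where the "cancellation budget" hypothesis (\ref{I:RTFLeqs+2}) says that at each interior $g_i$, the total cancellation coming in from the left (from $g_{i-1,2}'\alpha_i$) plus the total cancellation coming in from the right (from $\alpha_{i+1}g_{i+1,1}'$) leaves at least $2$ of the $l(g_i)$ components of $g_i$ untouched — and this holds uniformly over all right factors of $g_{i-1}$ and left factors of $g_{i+1}$. The conclusion is that $T$ cannot be trivial. The natural strategy is to show that in fact $l(T) \ge 1$, and more specifically that the middle portion $\alpha_1 g_1 \cdots \alpha_{n-1} g_{n-1} \alpha_n$ survives with positive length and with controlled end indices, so that multiplying by $g_0$ on the left and $g_n$ on the right cannot kill it.

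Let me set up an induction on $n$. For $n = 1$ the hypothesis (\ref{I:RTFLeqs+2}) is vacuous, so I need a separate (easy) argument there, or I can take the base of the induction to be a claim about the "core" $P_k := \alpha_1 g_1 \alpha_2 g_2 \cdots \alpha_k g_k \alpha_{k+1}$ (for $1 \le k \le n-1$, interpreting the product suitably), asserting $l(P_k) \ge 1$ together with the key refinement that $P_k$ is right end-preserving as a product ending in $\alpha_{k+1}$ — i.e., $REI(P_k) = REI(\alpha_{k+1})$ provided $l(\alpha_{k+1}) > 0$, and analogously on the left. The inductive step multiplies $P_{k-1}$ on the right by $g_k \alpha_{k+1}$; the budget inequality (\ref{I:RTFLeqs+2}) at index $k$ is exactly what guarantees that the $\le l(g_k) - K(\,\cdot\,, g_k)$ surviving components of $g_k$ include at least one component that also survives the cancellation against $\alpha_{k+1} g_{k+1,1}'$ on its right, hence the bracketing $(P_{k-1})(g_k)(\alpha_{k+1})$ is reduced-enough to keep the product left end-preserving from the $P_{k-1}$ side and right end-preserving into $\alpha_{k+1}$. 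This is where Lemma \ref{L:End-preservingAndCancellationNumber} — specifically the characterization $k < l(\alpha)$ for left end-preservation and part (2) about passing to left/right factors — does the real work, applied to both ends of the interior factors $g_k$.

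Once I have that the core $P_{n-1}\,g_n$, or equivalently the product $\alpha_1 g_1 \cdots \alpha_n$, has $l \ge 1$ with its left end index equal to $LEI(\alpha_1)$ (when $l(\alpha_1)>0$) and right end index equal to $REI(\alpha_n)$ (when $l(\alpha_n)>0$), I finish by analyzing $T = g_0 \cdot (\alpha_1 g_1 \cdots \alpha_n) \cdot g_n$. If $T$ were $1$, then $\alpha_1 g_1 \cdots \alpha_n = g_0^{-1} g_n^{-1}$, and $l(g_0^{-1} g_n^{-1}) \le l(g_0) + l(g_n)$; but I will have controlled the interior length from below in terms of $\sum l(g_i) - (\text{total budget used})$, and more importantly the uniformity of (\ref{I:RTFLeqs+2}) over all left/right factors is precisely designed so that no matter how $g_0^{-1}$ and $g_n^{-1}$ cancel into the ends of the core, at least one component survives — there is no right factor of $g_0^{-1}$ (a left factor of $g_0$) nor left factor of $g_n^{-1}$ that can be "budgeted in" to finish the annihilation. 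One has to handle the degenerate cases $l(\alpha_1) = 0$ or $l(\alpha_n) = 0$ or $n$ small by inspection, folding $\alpha_1$ into $g_0$ and $\alpha_n$ into $g_n$ as needed.

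The main obstacle I anticipate is bookkeeping the interaction at the two ends of each interior $g_k$ simultaneously: the hypothesis bounds $K(g_{k-1,2}'\alpha_k, g_k) + K(g_k, \alpha_{k+1}g_{k+1,1}')$, a sum of two cancellation numbers taken against potentially varying factors on each side, and to invoke Lemma \ref{L:End-preservingAndCancellationNumber} cleanly I must argue that the relevant left factor of $g_k$ (what's left after right-cancellation) still has length strictly larger than the cancellation number it meets on the left — which is where the "$-2$" (rather than "$-1$" or "$0$") on the right-hand side of (\ref{I:RTFLeqs+2}) is used, giving a genuine margin of one surviving component on each side after accounting for the possibility that the two cancellation regions abut. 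Getting the induction hypothesis stated with exactly the right end-preservation clauses so that it is strong enough to be self-propagating, yet matches what is needed at the very end against $g_0$ and $g_n$, will require some care but no deep new idea beyond repeated, careful application of Lemma \ref{L:End-preservingAndCancellationNumber} and its right-handed analog.
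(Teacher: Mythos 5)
Your plan is morally in the right neighborhood (end-preservation via Lemma \ref{L:End-preservingAndCancellationNumber}, budget from \eqref{I:RTFLeqs+2}), but the induction hypothesis you state — that $P_k=\alpha_1 g_1\cdots g_k\alpha_{k+1}$ has $l(P_k)\ge 1$ with $REI(P_k)=REI(\alpha_{k+1})$ and $LEI(P_k)=LEI(\alpha_1)$ — is genuinely too weak to propagate, and fixing it is not a matter of "care" but is the entire content of the proof. Knowing the \emph{index} of the rightmost component of $P_{k-1}$ tells you nothing about how deeply $g_k$ can cancel into $P_{k-1}$. The budget inequality \eqref{I:RTFLeqs+2} only bounds $K(g_{k-1,2}'\alpha_k,g_k)$ for a right factor $g_{k-1,2}'$ of $g_{k-1}$; to convert this into a bound on $K(P_{k-1},g_k)$, you must know that the right end of $P_{k-1}$ literally consists of an unaltered right factor of $g_{k-1}$ (of controlled length) followed by $\alpha_k$, so that the cancellation against $g_k$ is governed by the budget at index $k$ and cannot "jump" further into $P_{k-1}$. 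That structural fact is the IH you actually need, and once you write it out, the incremental induction collapses into a direct argument.

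The paper achieves exactly this with no induction at all: for each interior $i$, set $l_i=\max_{g_{i-1,2}'}K(g_{i-1,2}'\alpha_i,g_i)$ and $r_i=\max_{g_{i+1,1}'}K(g_i,\alpha_{i+1}g_{i+1,1}')$, so \eqref{I:RTFLeqs+2} gives $l_i+r_i\le l(g_i)-2$; split $g_i=g_{i,1}g_{i,2}$ as a reduced product with $l(g_{i,1})=l_i+1$ and hence $l(g_{i,2})\ge r_i+1$; then Lemma \ref{L:End-preservingAndCancellationNumber}(1) (and its right-sided analog) shows each block $g_{i-1,2}\alpha_i g_{i,1}$ is right end-preserving and each $g_{i,2}\alpha_{i+1}g_{i+1,1}$ is left end-preserving. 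Since $g_{i,1}g_{i,2}$ is reduced, $REI(g_{i,1})\ne LEI(g_{i,2})$, so adjacent blocks share no letter and $T=(g_0\alpha_1 g_{1,1})(g_{1,2}\alpha_2 g_{2,1})\cdots(g_{n-1,2}\alpha_n g_n)$ is a reduced product of nontrivial blocks, hence nontrivial. Note also that the paper does \emph{not} treat $g_0$ and $g_n$ separately as you propose — they are absorbed into the first and last blocks — which removes the second delicate step of your plan. Switching from the incremental bookkeeping to this up-front decomposition is not cosmetic: it is what makes the budget hypothesis usable at each $g_i$ without needing a running structural invariant.
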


\begin{proof} For each $i\in\{1,2,\dotsc,n-1\}$, let $l_i$ (resp. $r_i$) be the maximum of $K(g_{i-1,2}'\alpha_i, g_i)$ (resp. $K(g_i, \alpha_{i+1}g_{i+1,1}')$) for all right factors $g_{i-1,2}'$ of $g_{i-1}$ (resp. left factors $g_{i+1,1}'$ of $g_{i+1}$), then we have $l_i+r_i\leq l(g_i)-2$ by \eqref{I:RTFLeqs+2}. Now, for each $i\in\{1,2,\dotsc,n-1\}$, we write $g_i=g_{i,1}g_{i,2}$ as a reduced product such that $l(g_{i,1})=l_i+1$, then $l(g_{i,2})\geq r_i+1$. Also, let $g_{0,2}=g_0$ and $g_{n,1}=g_n$. Now by Lemma \ref{L:End-preservingAndCancellationNumber}, $g_{i-1,2}\alpha_ig_{i,1}$ (resp. $g_{i,2}\alpha_{i+1}g_{i+1,1}$) is right (resp. left) end-preserving for all $i\in\{1,2,\dotsc,n-1\}$.  Then the following product is reduced and thus nontrivial
 $$T:=\left(g_0\alpha_1g_{1,1}\right)\left(g_{1,2}\alpha_2g_{2,1}\right)\dotsm \left(g_{n-2,2}\alpha_{n-1}g_{n-1,1}\right)\left(g_{n-1,2}\alpha_{n}g_n\right).$$
\end{proof}

The next result is about the relation between cancellation numbers and right factors.  
\begin{lemma}\label{L:CancellationNumberAndRightFactor}
Assume that $K(\alpha,\beta)=l(\beta)$ and $\gamma$ is a right factor of $\alpha\beta$.  Then there is a right factor $\alpha_2$ of $\alpha$ such that $\gamma=\alpha_2\beta$. 
\end{lemma}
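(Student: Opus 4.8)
The plan is to argue group-theoretically rather than by manipulating alternating forms. Since $\gamma$ is a right factor of $\alpha\beta$, there is $\delta\in G$ with $\alpha\beta=\delta\gamma$ a reduced product, so $l(\alpha\beta)=l(\delta)+l(\gamma)$. The natural candidate for the desired right factor is $\alpha_2:=\delta^{-1}\alpha$: indeed $\alpha_2\beta=\delta^{-1}\alpha\beta=\delta^{-1}(\delta\gamma)=\gamma$ automatically, so the whole content of the lemma reduces to showing that $\alpha=\delta\alpha_2$ is a reduced product, i.e.\ that $l(\alpha)=l(\delta)+l(\alpha_2)$.

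The one ingredient I would establish first is the length identity $l(\alpha\beta)=l(\alpha)-l(\beta)$ under the hypothesis $K(\alpha,\beta)=l(\beta)$. Writing $\alpha=x_m\dotsm x_1$ and $\beta=y_1\dotsm y_n$ as alternating products with $m=l(\alpha)$ and $n=l(\beta)$, the bounds $K(\alpha,\beta)\le\min\{m,n\}$ (as observed in the proof of Lemma \ref{L:End-preservingAndCancellationNumber}) force $n\le m$; and, by definition of the cancellation number, $c:=x_n\dotsm x_1 y_1\dotsm y_n\in C$, so $\alpha\beta=x_m\dotsm x_{n+1}\,c$, which is an alternating product of length $m-n$ (and lies in $C$ when $m=n$). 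In particular $l(\beta)\le l(\alpha)$.

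With this in hand the proof is a short squeeze using only the length identity, subadditivity $l(gh)\le l(g)+l(h)$, and $l(g^{-1})=l(g)$, all immediate from the definitions. From $\alpha\beta=\delta\gamma$ reduced and the identity, $l(\delta)=l(\alpha\beta)-l(\gamma)=l(\alpha)-l(\beta)-l(\gamma)$; from $\alpha_2=\gamma\beta^{-1}$, $l(\alpha_2)\le l(\gamma)+l(\beta)$. Adding these gives $l(\delta)+l(\alpha_2)\le l(\alpha)$, while $l(\alpha)=l(\delta\alpha_2)\le l(\delta)+l(\alpha_2)$ holds in general; hence equality throughout, so $\alpha=\delta\alpha_2$ is reduced and $\alpha_2$ is a right factor of $\alpha$ with $\alpha_2\beta=\gamma$. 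The only place demanding any care is the length identity: one must check that the residual component $x_{n+1}c$ is genuinely outside $C$ so that no further cancellation occurs, which holds because $x_{n+1}\notin C$ and $c\in C$.
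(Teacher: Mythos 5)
Your proof is correct. It ultimately rests on the same structural fact as the paper's argument, namely that under the hypothesis $K(\alpha,\beta)=l(\beta)$ we have $l(\alpha\beta)=l(\alpha)-l(\beta)$ — equivalently, that $\beta^{-1}$ is a right factor of $\alpha$ — and both proofs produce the same element $\alpha_2$ (in the paper's notation $\alpha_2=\gamma\alpha_2'$ with $\alpha_2'=\beta^{-1}$, which is exactly your $\delta^{-1}\alpha$). Where the two diverge is in how they verify that $\alpha=\delta\alpha_2$ is reduced: the paper does this by explicitly exhibiting the chain of reduced decompositions $\alpha=\alpha_1'\alpha_2'=\gamma_1\gamma\alpha_2'$ and reading off that $\gamma\alpha_2'$ is a right factor, whereas you close with a subadditivity squeeze — deriving $l(\delta)+l(\alpha_2)\le l(\alpha)$ from the length identity and $l(\alpha_2)\le l(\gamma)+l(\beta)$, then combining with the trivial bound $l(\alpha)\le l(\delta)+l(\alpha_2)$. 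Your route is a bit more ``algebraic'' and avoids tracking a second explicit reduced decomposition, at the modest cost of having to state and prove the length identity as a separate preliminary step; the paper's version is more constructive and perhaps more transparent about where $\alpha_2$ comes from. Both are sound and of comparable length.
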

\begin{proof}
    First, there is a reduced product $\alpha=\alpha_1'\alpha_2'$ such that $\alpha_2'\beta=1$. Hence $\alpha\beta=\alpha_1'$. Now $\gamma$ is a right factor of $\alpha_1'$, so we can write $\alpha_1'$ as a reduced product $\gamma_1\gamma$. Letting $\alpha_2=\gamma\alpha_2'$, we have $\alpha=\alpha_1'\alpha_2'=\gamma_1\gamma\alpha_2'=\gamma_1 \alpha_2$, all being reduced products. Thus, $\alpha_2$ is a right factor of $\alpha$ and we have 
    $\gamma=\gamma 1=\gamma \alpha_2'\beta=\alpha_2\beta$, as desired. 
\end{proof}

\section{Taming products of conjugates}\label{S:TamingProductOfConjugates}
To study generalized torsion, we need to investigate products of conjugates, wherein there may be a considerable amount of cancellation. In this section, we introduce an essential technique for controlling this cancellation. 

The setup of this section is as follows.  We let $G=\ast_CG_i$ be an amalgam of groups and  consider the tuple $v=((t_1,g_1),\dots,(t_n,g_n))$, where $n\geq1$, $t_i,g_i\in G$, and write the associated product of conjugates as $T=C_1C_2\dotsm C_n$, where $C_i=t_i^{g_i}$.  For convenience of notation, we set $t_0=t_{n+1}=g_0=g_{n+1}=1$. We make the following definitions to aid in our analysis of products of conjugates.
\begin{definition}\label{D:CancellableFromOneSideOrTwosidedly}
 Given $i$ with $1\leq i\leq n$, we say that $t_i$ is \textbf{cancellable} if at least one of the following is true:
\begin{enumerate}
    \item $t_i$ is cancellable from LHS, meaning that 
    $R_{i-1}g_i^{-1}t_i\in C$ for a right factor $R_{i-1}$ of $g_{i-1}$;
    \item $t_i$ is cancellable from RHS, meaning that 
     $t_ig_iL_{i+1}\in C$ for a left factor $L_{i+1}$ of $g_{i+1}^{-1}$;
    \item $t_i$ is cancellable two-sidedly, meaning that 
    $R_{i-1}t_i^{g_i}L_{i+1}\in C$ for a a right factor $R_{i-1}$ of $g_{i-1}$ and a left factor $L_{i+1}$ of $g_{i+1}^{-1}$.
\end{enumerate} 
\end{definition}

\begin{lemma}\label{L:CancellableFromOneSide}
Fix an $i$ with $1\leq i\leq n$. 
    Assume that $l(t_i)>0$. If $t_i$ is cancellable from LHS (resp. RHS), then 
    $l(g_{i-1}')<l(g_{i-1})$ (resp. $l(g_{i+1}')<l(g_{i+1})$), where $g_{i-1}':=g_{i-1}C_i$ and $g_{i+1}':=g_{i+1}C_i^{-1}$.
\end{lemma}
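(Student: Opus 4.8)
The plan is to unwind the definition of ``cancellable from LHS'' and then do a single length computation. Suppose $t_i$ is cancellable from LHS, so by Definition \ref{D:CancellableFromOneSideOrTwosidedly} there is a reduced factorization $g_{i-1}=L_{i-1}R_{i-1}$ together with an element $c:=R_{i-1}g_i^{-1}t_i\in C$. Since $C_i=g_i^{-1}t_ig_i$, substituting the identity $R_{i-1}g_i^{-1}t_i=c$ yields
$$g_{i-1}'=g_{i-1}C_i=L_{i-1}R_{i-1}g_i^{-1}t_ig_i=L_{i-1}cg_i.$$
Because multiplying by $c\in C$ does not change the length of a word, $l(g_{i-1}')=l(L_{i-1}cg_i)\le l(L_{i-1})+l(g_i)$. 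On the other hand $l(g_{i-1})=l(L_{i-1})+l(R_{i-1})$, since the factorization $g_{i-1}=L_{i-1}R_{i-1}$ is reduced. Hence it suffices to prove the strict inequality $l(R_{i-1})>l(g_i)$.

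To obtain $l(R_{i-1})>l(g_i)$ I would rewrite $R_{i-1}g_i^{-1}t_i=c$ as $R_{i-1}=ct_i^{-1}g_i$, so that $l(R_{i-1})=l(t_i^{-1}g_i)$, and then feed the product $t_i^{-1}g_i$ into the cancellation-number machinery of Section \ref{S:notation}: the hypothesis $l(t_i)>0$ rules out total cancellation, and Lemma \ref{L:End-preservingAndCancellationNumber} bounds $K(t_i^{-1},g_i)$ strictly enough to force $l(t_i^{-1}g_i)>l(g_i)$; equivalently, one argues that $R_{i-1}g_i^{-1}=ct_i^{-1}\notin C$ means the cancellation between $R_{i-1}$ and $g_i^{-1}$ consumes strictly fewer than $l(g_i)$ components of $g_i^{-1}$, leaving a residual contribution that makes $R_{i-1}$ longer than $g_i$. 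The right-hand-side case is entirely symmetric: write $g_{i+1}^{-1}=L_{i+1}R_{i+1}$ reduced with $t_ig_iL_{i+1}=c\in C$, so that $g_{i+1}'=g_{i+1}C_i^{-1}=R_{i+1}^{-1}c^{-1}g_i$, $l(g_{i+1})=l(L_{i+1})+l(R_{i+1})$, and one reduces to the analogous strict bound $l(L_{i+1})=l(t_ig_i)>l(g_i)$.

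The main obstacle is precisely this strict inequality $l(R_{i-1})>l(g_i)$. The weak bound $l(g_{i-1}')\le l(L_{i-1})+l(g_i)$ is immediate from the display above, but upgrading it to the strict decrease $l(g_{i-1}')<l(g_{i-1})$ requires a careful accounting of exactly how many components of $g_i$ can be absorbed into $L_{i-1}c$, and this is where the combinatorial estimates on cancellation numbers from Section \ref{S:notation}, together with the hypothesis $l(t_i)>0$, must do the real work.
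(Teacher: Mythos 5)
Your decomposition $g_{i-1}'=L_{i-1}c\,g_i$, the identity $l(g_{i-1})=l(L_{i-1})+l(R_{i-1})$, and the reduction to the strict inequality $l(R_{i-1})>l(g_i)$ all match the paper's proof. The gap is in how you try to close that inequality. The claim that $l(t_i)>0$ plus ``the cancellation-number machinery'' forces $l(t_i^{-1}g_i)>l(g_i)$ is false in general: take $A=\langle a\mid\rangle\ast\langle b\mid\rangle$ with $C=\{1\}$, $t_i=a$ and $g_i=a^2b$. Then $l(t_i)=1>0$ but $t_i^{-1}g_i=ab$ has $l(t_i^{-1}g_i)=2=l(g_i)$. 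With $g_{i-1}=ab$ and $R_{i-1}=ab$ (a right factor of $g_{i-1}$ with $R_{i-1}g_i^{-1}t_i=1$), one even has $C_i=b^{-1}ab$ and $g_{i-1}C_i=a^2b$, so $l(g_{i-1}')=2=l(g_{i-1})$ and the asserted strict decrease fails. Neither of your two suggested routes rescues this: Lemma \ref{L:End-preservingAndCancellationNumber} controls only end indices, not total length, and the observation $R_{i-1}g_i^{-1}=ct_i^{-1}\notin C$ gives only the two-sided bound $|l(R_{i-1})-l(g_i)|\le l(R_{i-1}g_i^{-1})=l(t_i)$, which says nothing about which of $l(R_{i-1})$ and $l(g_i)$ is larger.

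The missing ingredient is the hypothesis that $t_i^{g_i}=g_i^{-1}t_ig_i$ is a \emph{reduced} product, which is exactly the extra assumption carried by Proposition \ref{CancellabilityMeansShorterConjugateLength} and is in force at every point where this lemma is invoked (e.g. in step (4) of the proof of Theorem \ref{T:TauInSAIfInC}, after step (2) has established reducedness). With that hypothesis, $g_i^{-1}t_i$, and hence $t_i^{-1}g_i$, is a reduced product, so from $R_{i-1}=c\,t_i^{-1}g_i$ one reads off
\[
l(R_{i-1})=l(t_i^{-1}g_i)=l(t_i)+l(g_i)>l(g_i),
\]
and the rest of your argument goes through. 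Without explicitly invoking reducedness of $t_i^{g_i}$ (or at least of $t_i^{-1}g_i$), the strict inequality is simply unavailable, and the ``careful accounting'' you defer to cannot supply it. (For what it is worth, the displayed chain $l(g_i)\le l(t_i)+0+l(R_{i-1})<l(R_{i-1})$ in the paper's own proof is garbled at exactly this spot; the intended computation is the one above with $R_{i-1}$ on the left-hand side.)
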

\begin{proof}
    Assume that $t_i$ is cancellable from LHS. Then we can write $g_{i-1}=L_{i-1}R_{i-1}$ as a reduced product, such that $c_i:=R_{i-1}g_i^{-1}t_i\in C$. 
    Thus, $g_i=t_ic_i^{-1}R_{i-1}$, and then we find $l(g_i)\leq l(t_i)+0+l(R_{i-1})<l(R_{i-1})$, as $l(t_i)>0$. Now we have, as desired,
    \begin{align*}
        l(g_{i-1}')=l(L_{i-1}R_{i-1}g_{i}^{-1}t_ig_i)=l(L_{i-1}c_ig_i)\leq l(L_{i-1})+0+l(g_i)<l(L_{i-1})+l(R_{i-1})=l(g_{i-1}).
    \end{align*}
 
    Similarly, if $t_i$ is cancellable from RHS, then we can write $g_{i+1}^{-1}=L_{i+1}R_{i+1}$ as a reduced product such that $d_i:=t_ig_iL_{i+1}\in C$, which implies that  $l(g_i)<l(L_{i+1})$ and 
    \begin{align*}
        l(g_{i+1}'^{-1})=l(C_ig_{i+1}^{-1})=l(g_i^{-1}t_ig_iL_{i+1}R_{i+1})=l(g_i^{-1}d_iR_{i+1})<l(L_{i+1})+l(R_{i+1})=l(g_{i+1}). 
    \end{align*}
\end{proof}

A key to tame the products of conjugates is to notice that $C_{i-1}C_{i}=C_{i}C_{i-1}'$, where $C_{i-1}'=C_{i-1}^{C_{i}}=t_{i-1}^{g_{i-1}C_{i}}$. This gives a glimpse into the role of the next result, which is to assist in the proof of Theorem \ref{T:TauInSAIfInC}, a key result of Section \ref{S:GeneralizedTorsionInAmalgams}. 
\begin{proposition} \label{CancellabilityMeansShorterConjugateLength}
Fix an $i$ with $1\leq i\leq n$. Let $g_{i-1}'=g_{i-1}C_i$ and $g_{i+1}'=g_{i+1}C_i^{-1}$.
Assume that  $l(t_i)\geq1$ and the product $g_i^{-1}t_ig_i$ is reduced. Also assume that $t_i$ is cancellable,  $l(g_{i-1}')\geq l(g_{i-1})$ and $l(g_{i+1}')\geq l(g_{i+1})$. Then we have 
$l(g_{i-1}')=l(g_{i-1})>l(g_i)$ and
$l(g_{i+1}')=l(g_{i+1})>l(g_i).$
\end{proposition}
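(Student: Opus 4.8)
The plan is to first determine, from the two length hypotheses, exactly which of the three forms of cancellability in Definition~\ref{D:CancellableFromOneSideOrTwosidedly} can actually occur, and then to convert the single resulting relation into length estimates.

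First, since $l(t_i)\geq 1$, the contrapositive of Lemma~\ref{L:CancellableFromOneSide} rules out $t_i$ being cancellable from LHS (that would force $l(g_{i-1}')<l(g_{i-1})$, against hypothesis) and likewise rules out $t_i$ being cancellable from RHS (that would force $l(g_{i+1}')<l(g_{i+1})$). As $t_i$ is cancellable by hypothesis, it must be cancellable two-sidedly, so there are a right factor $R_{i-1}$ of $g_{i-1}$ and a left factor $L_{i+1}$ of $g_{i+1}^{-1}$ with $c:=R_{i-1}C_iL_{i+1}\in C$, where $C_i=g_i^{-1}t_ig_i$.

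Next, write $g_{i-1}=L_{i-1}R_{i-1}$ and $g_{i+1}^{-1}=L_{i+1}R_{i+1}$ as reduced products. Solving the relation for $C_i$ gives $C_i=R_{i-1}^{-1}cL_{i+1}^{-1}$ and $C_i^{-1}=L_{i+1}c^{-1}R_{i-1}$, whence a direct computation, using $g_{i+1}L_{i+1}=R_{i+1}^{-1}$, yields $g_{i-1}'=L_{i-1}cL_{i+1}^{-1}$ and $g_{i+1}'=R_{i+1}^{-1}c^{-1}R_{i-1}$. Subadditivity of $l$ then gives $l(g_{i-1}')\leq l(L_{i-1})+l(L_{i+1})$ and $l(g_{i+1}')\leq l(R_{i+1})+l(R_{i-1})$; comparing with the hypotheses $l(g_{i-1}')\geq l(g_{i-1})=l(L_{i-1})+l(R_{i-1})$ and $l(g_{i+1}')\geq l(g_{i+1})=l(L_{i+1})+l(R_{i+1})$ forces $l(R_{i-1})\leq l(L_{i+1})$ and $l(L_{i+1})\leq l(R_{i-1})$, so $a:=l(R_{i-1})=l(L_{i+1})$. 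Substituting this equality back into the same inequalities upgrades both length hypotheses to equalities: $l(g_{i-1}')=l(g_{i-1})$ and $l(g_{i+1}')=l(g_{i+1})$.

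Finally, for the strict inequalities I would invoke the reducedness hypothesis: $l(C_i)=2l(g_i)+l(t_i)$, while from $C_i=R_{i-1}^{-1}cL_{i+1}^{-1}$ we also have $l(C_i)\leq l(R_{i-1})+l(L_{i+1})=2a$; since $l(t_i)\geq 1$ this gives $l(g_i)<a$, and therefore $l(g_{i-1})=l(L_{i-1})+a\geq a>l(g_i)$ and $l(g_{i+1})=a+l(R_{i+1})\geq a>l(g_i)$, as required. The only real subtlety is the opening move — recognizing that the two length hypotheses together with Lemma~\ref{L:CancellableFromOneSide} exclude one-sided cancellability and thus pin the situation down to the single clean relation $R_{i-1}C_iL_{i+1}=c$; once that relation is in hand, everything reduces to routine length bookkeeping, so I expect no serious obstacle beyond keeping the factorizations $g_{i-1}=L_{i-1}R_{i-1}$, $g_{i+1}^{-1}=L_{i+1}R_{i+1}$ and their inverses straight.
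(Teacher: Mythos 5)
Your proof is correct and follows essentially the same strategy as the paper: use Lemma~\ref{L:CancellableFromOneSide} together with the two length hypotheses to rule out one-sided cancellability, then exploit the two-sided relation $R_{i-1}C_iL_{i+1}\in C$ and length bookkeeping, with the final strict inequality coming from $l(C_i)=2l(g_i)+l(t_i)$ exactly as in the paper. The one place where your argument is tidier is the derivation of $l(R_{i-1})=l(L_{i+1})$ and the equalities $l(g_{i\pm1}')=l(g_{i\pm1})$: you solve for $C_i$ and compute $g_{i-1}'=L_{i-1}cL_{i+1}^{-1}$ and $g_{i+1}'=R_{i+1}^{-1}c^{-1}R_{i-1}$ explicitly, so that subadditivity of $l$ immediately pins everything down, whereas the paper reaches the same conclusions by setting up the auxiliary equalities $l(R_{i-1})=l(C_iL_{i+1})$, $l(L_{i+1})=l(R_{i-1}C_i)$ and then running a small case analysis by contradiction on whether $l(C_iL_{i+1})$ is less than, equal to, or greater than $l(L_{i+1})$. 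The two routes are logically equivalent, but your direct substitution avoids the contradiction argument entirely and would make the exposition a bit shorter.
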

\begin{proof}

By Lemma \ref{L:CancellableFromOneSide}, we know that $t_i$ is not cancellable from LHS or RHS,  hence it is cancellable two-sidedly; i.e., we can write $g_{i-1}=L_{i-1}R_{i-1}$ and $g_{i+1}^{-1}=L_{i+1}R_{i+1}$ as reduced products, 
    such that $c_i:=R_{i-1}C_iL_{i+1}\in C$, from which we have 
        \begin{align}\label{E:EqualitiesFromTwosidedCancellation}
        &l(R_{i-1})=l(C_iL_{i+1}), \qquad l(L_{i+1})=l(R_{i-1}C_i),\\
\label{E:InequalitiesFromTwosidedCancellation}
        &l(C_i)\leq l(R_{i-1})+l(L_{i+1}).
    \end{align}
    Here, the inequality follows by considering the length of $C_i=R_{i-1}^{-1}c_iL_{i+1}^{-1}$.
    
    Next, we proceed as follows:
    \begin{enumerate}[(1)]
       \item \label{Item 3:CancellableTwosidedly}We prove that $l(C_iL_{i+1})=l(L_{i+1})$ and $l(R_{i-1}C_i)=l(R_{i-1})$. We only prove the first equality, as the second follows from the first and \eqref{E:EqualitiesFromTwosidedCancellation}. First, suppose that $l(C_iL_{i+1})<l(L_{i+1})$, then we have
    $$l(g_{i+1}'^{-1})=l(C_iL_{i+1}R_{i+1})\leq l(C_iL_{i+1})+l(R_{i+1})<l(L_{i+1})+l(R_{i+1})=l(g_{i+1}),$$ contradicting the assumptions of the lemma. Second, if $l(C_iL_{i+1})>l(L_{i+1})$, then we deduce $l(R_{i-1})>l(R_{i-1}C_i)$ from \eqref{E:EqualitiesFromTwosidedCancellation}. Adding $l(L_{i-1})$ to both sides, we find 
    \begin{align*}
        l(g_{i-1})=l(L_{i-1})+l(R_{i-1})>l(L_{i-1})+l(R_{i-1}C_i)\geq l(L_{i-1}R_{i-1}C_i)=l(g_{i-1}'),
    \end{align*} again a contradiction. This finishes the proof that $l(C_iL_{i+1})=l(L_{i+1})$. 

         \item  We have, using \ref{Item 3:CancellableTwosidedly}, 
         $$l(g_{i-1})=l(L_{i-1})+l(R_{i-1})=l(L_{i-1})+l(R_{i-1}C_i)\geq l(L_{i-1}R_{i-1}C_i)=l(g_{i-1}').$$
         From this and our assumptions, we obtain $l(g_{i-1}')=l(g_{i-1})$. Similarly, $l(g_{i+1}')=l(g_{i+1})$.

        \item \label{Item 4:CancellableTwosidedly}We prove that $l(R_{i-1})=l(L_{i+1})\geq\frac12l(C_i)$. The equality follows from \ref{Item 3:CancellableTwosidedly} and \eqref{E:EqualitiesFromTwosidedCancellation}. This equality and \eqref{E:InequalitiesFromTwosidedCancellation} together imply the inequality. 

        \item Note that $l(C_i)=2l(g_i)+l(t_i)$, applying \ref{Item 4:CancellableTwosidedly} we have $$l(g_{i-1})\geq l(R_{i-1})\geq \frac12l(C_i)= l(g_i)+\frac12l(t_i)>l(g_i),$$ as $l(t_i)\geq1$. Similarly, $l(g_{i+1})>l(g_i)$.      
    \end{enumerate}
\end{proof}

\sloppy Our key result is that the length of the product $t_1^{g_1}\dotsm t_n^{g_n}$ is bounded below by $n$ when $((t_1,g_1), \dots,(t_n,g_n))$
 is tamed (see Definition \ref{D:Tamedness} and Proposition \ref{P:l(T_i)IsAtLeasti}). We first provide two lemmas which build towards this result.

 For each $i$ with $1\leq i\leq n$, we set $r_i=l(g_{i-1}g_{i}^{-1})$, and when $r_i \geq 1$ we write $g_{i-1}g_{i}^{-1}$ as an alternating product $$g_{i-1}g_{i}^{-1}=e_{i,1}\dotsm e_{i,r_i}.$$
\begin{lemma}\label{L:Whentigigi+1-1IsNotReduced}
 Fix an $i$ with $1\leq i\leq n$. Assume that $g_i^{-1}t_ig_i$ is reduced, $l(t_i)=1$ and $t_i$ is not cancellable. Let $L_i$ be a left factor of $g_ig_{i+1}^{-1}$.
   \begin{enumerate}[(1)]
       \item If $l(L_i)\geq1$ and $l(t_ie_{i+1,1})=1$, then the product $t_iL_i$ is not reduced.
       \item Assume that $t_iL_i$ is not reduced. Then $l(g_ig_{i+1}^{-1})\geq1$, $l(t_ie_{i+1,1})=1$, and there is a left factor $L_{i+1}$ of $g_{i+1}^{-1}$ such that $e_{i+1,1}=g_iL_{i+1}$. 
       \item The product $t_iL_i$ is end-preserving.
   \end{enumerate}
\end{lemma}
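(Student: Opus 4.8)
The plan is to prove the three parts in the order (1), (2), (3), using throughout two elementary consequences of the standing hypotheses. First, given $l(t_i)=1$ and $l(g_i)\geq1$, the product $g_i^{-1}t_ig_i$ is reduced if and only if $t_i$ has index different from $LEI(g_i)$, since only the two junctions of $t_i$ with the abutting end-components of $g_i^{-1}$ and $g_i$ can fail to be reduced. Second, since $t_i$ and $e_{i+1,1}$ both have length $1$ and lie outside $C$, $l(t_ie_{i+1,1})\leq1$ holds exactly when $t_i$ and $e_{i+1,1}$ share an index (and $l(t_ie_{i+1,1})=2$ otherwise); moreover, whenever $l(L_i)\geq1$, the first component of the alternating form of $L_i$ has the same index as $e_{i+1,1}$, because concatenating the alternating forms of $L_i$ and of its complementary right factor in $g_ig_{i+1}^{-1}$ yields the alternating form of $g_ig_{i+1}^{-1}$. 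Granting these, part (1) is immediate: the hypothesis $l(t_ie_{i+1,1})=1$ forces $t_i$ and $e_{i+1,1}$, hence $t_i$ and the first component of $L_i$, to share an index, so $l(t_iL_i)<l(t_i)+l(L_i)$ and $t_iL_i$ is not reduced.

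For part (2), first note that $t_iL_i$ not reduced together with $l(t_i)=1$ already forces $l(L_i)\geq1$, whence $l(g_ig_{i+1}^{-1})\geq1$ (so $e_{i+1,1}$ is defined) and, by the reasoning of part (1) run backwards, $t_i$ and $e_{i+1,1}$ share an index, so $l(t_ie_{i+1,1})\leq1$. The heart of the proof --- and the step I expect to be the main obstacle --- is to produce a left factor $L_{i+1}$ of $g_{i+1}^{-1}$ with $e_{i+1,1}=g_iL_{i+1}$, and this is where the reducedness of $g_i^{-1}t_ig_i$ and the non-cancellability of $t_i$ are genuinely needed. When $l(g_i)\geq1$, the first standing observation combined with ``index of $t_i$ $=$ index of $e_{i+1,1}$ $= LEI(g_ig_{i+1}^{-1})$'' gives $LEI(g_ig_{i+1}^{-1})\neq LEI(g_i)$; hence $g_ig_{i+1}^{-1}$ is not left end-preserving, and Lemma \ref{L:End-preservingAndCancellationNumber}(1) forces $K(g_i,g_{i+1}^{-1})=l(g_i)$ (this identity being automatic when $l(g_i)=0$). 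Then the left-sided analog of Lemma \ref{L:CancellationNumberAndRightFactor}, applied to the left factor $e_{i+1,1}$ of $g_ig_{i+1}^{-1}$ --- or, concretely, writing $g_i=x_{l(g_i)}\cdots x_1$ and $g_{i+1}^{-1}=y_1\cdots y_q$ and observing $e_{i+1,1}=g_i(y_1\cdots y_{l(g_i)+1})$ --- produces the desired $L_{i+1}$. Finally, $l(t_ie_{i+1,1})=1$: if instead $t_ie_{i+1,1}\in C$, then $t_ig_iL_{i+1}=t_ie_{i+1,1}\in C$ would exhibit $t_i$ as cancellable from RHS, contradicting the hypothesis.

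For part (3), if $t_iL_i$ is reduced then it is automatically end-preserving, since the alternating forms of the factors of a reduced product of positive total length concatenate to give the alternating form of the product. If $t_iL_i$ is not reduced, apply part (2): then $l(L_i)\geq1$ and $l(t_ie_{i+1,1})=1$, and writing $L_i=f_1\cdots f_s$ in alternating form ($s\geq1$) one has $f_1=e_{i+1,1}d$ for some $d\in C$ (because $L_i$ is a left factor of $g_ig_{i+1}^{-1}$), with $t_i$ and $e_{i+1,1}$ of the same index. Hence $t_if_1=(t_ie_{i+1,1})d$ lies in the same factor as $t_i$ and has length $1$, so $t_iL_i=(t_if_1)f_2\cdots f_s$ (or simply $t_if_1$ when $s=1$) is itself an alternating form, whose left-end index is that of $t_i$ and whose right-end index is that of $f_s$, i.e.\ $REI(L_i)$; thus $t_iL_i$ is end-preserving. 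Throughout part (3) one must track the degenerate cases $l(L_i)=0$ (where $t_iL_i$ lies in one factor and end-preservation is trivial), $s=1$, and $l(L_i)=l(g_ig_{i+1}^{-1})$, in which $f_1$ can genuinely differ from $e_{i+1,1}$ by a nontrivial element of $C$.
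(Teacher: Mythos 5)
Your proof is correct and follows essentially the same route as the paper's: part (1) via the index of the first component of $L_i$, part (2) via the reducedness of $t_ig_i$ forcing $K(g_i,g_{i+1}^{-1})=l(g_i)$, the (left version of) Lemma \ref{L:CancellationNumberAndRightFactor} to produce $L_{i+1}$, and non-cancellability to rule out $l(t_ie_{i+1,1})=0$; and part (3) by writing $L_i$ in a form beginning with $e_{i+1,1}$ up to $C$. The only minor slip is a sentence suggesting non-cancellability is needed to \emph{produce} $L_{i+1}$, when in fact (as your own argument shows) it is used only in the final step to exclude $t_ie_{i+1,1}\in C$.
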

\begin{proof} \begin{enumerate}[(1)]
   \item This is clear, as $e_{i+1,1}$ is a left factor of $L_i$.
   \item First, we know that $l(g_ig_{i+1}^{-1})\geq1$, as otherwise, $L_i\in C$ thus the product $t_iL_i$ is reduced.  
    
    Next, since the product $t_ig_i$ is reduced, we know that either $g_i\in C$ or the product $g_ig_{i+1}^{-1}$ is not left end-preserving and thus $K(g_i,g_{i+1}^{-1})=l(g_i)$ by Lemma \ref{L:End-preservingAndCancellationNumber}. By Lemma \ref{L:CancellationNumberAndRightFactor}, there is a left factor $L_{i+1}$ of $g_{i+1}^{-1}$ such that $e_{i+1,1}=g_iL_{i+1}$. 
    
    Last, let's show that  $l(t_ie_{i+1,1})=1$. (i) If $l(t_ie_{i+1,1})=0$, then $t_ig_iL_{i+1}=t_ie_{i+1,1}\in C$, and thus $t_i$ is cancellable (from RHS), contradicting the assumption. (ii) If $l(t_ie_{i+1,1})=2$, then $t_i(g_ig_{i+1}^{-1})$ is reduced and thus so is $t_iL_i$, contradicting the assumption. Therefore $l(t_ie_{i+1,1})=1$, as desired.     

     \item If the product $t_iL_i$ is reduced, it is end-preserving. So we assume that $t_iL_i$ is not reduced. Then $l(t_ie_{i+1,1})=1$ by (2) and $l(L_i)\geq1$. Since $L_i$ is a left factor of $g_ig_{i+1}^{-1}=e_{i+1,1}\dotsm e_{i+1,r_{i+1}}$, the right-hand side of which is an alternating product, we know that $L_i=e_{i+1,1}\dotsm e_{i+1,j}c$ for some $j$ with $1\leq j\leq r_{i+1}$ and $c\in C$. Therefore, 
    $$t_iL_i=(t_ie_{i+1,1})e_{i+1,2}\dotsm e_{i+1,j}c,$$
    and thus $t_iL_i$ is end-preserving, as $(t_ie_{i+1,1})e_{i+1,2}\dotsm e_{i+1,j}$ is an alternating product. 
    \end{enumerate}
\end{proof}
Note that the parallel version of Lemma \ref{L:Whentigigi+1-1IsNotReduced} considers a right factor $R_{i-1}$ of $g_{i-1}g_i^{-1}$, with the other conditions unchanged, and the conclusions are: (1) If $l(R_{i-1})\geq1$ and $l(e_{i,r_i}t_i)=1$, then the product $R_{i-1}t_i$ is not reduced; (2) if $R_{i-1}t_i$ is not reduced, then $l(g_{i-1}g_i^{-1})\geq1$, $l(e_{i,r_i}t_i)=1$, and there is a right factor $R_{i-1}'$ of $g_{i-1}$ such that $e_{i,r_i}=R_{i-1}'g_{i}^{-1}$; (3) the product $R_{i-1}t_i$ is end-preserving.

\begin{lemma}\label{L:TamedProductEndPreserving}
Fix an $i$ with $1\leq i\leq n$. Suppose $g_i^{-1}t_ig_i$ is a reduced product, $l(t_i)=1$ and $t_i$ is not cancellable. Let $L_i$ be a left factor of $g_ig_{i+1}^{-1}$ and $R_{i-1}$ a right factor of $g_{i-1}g_i^{-1}$.  Then we have: \looseness=-1
 \begin{enumerate}[(1)]
  \item $t_iL_i, R_{i-1}t_i, R_{i-1}t_iL_i, (R_{i-1}t_i)L_i, R_{i-1}(t_iL_i)$ are end-preserving;
  \item If $l(g_{i}g_{i+1}^{-1})\geq 1$ and $l(t_ie_{i+1,1})=1$, then $l(R_{i-1}t_ie_{i+1,1})=l(R_{i-1}t_i)$.
  \end{enumerate}
\end{lemma}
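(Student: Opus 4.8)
The plan is to break the statement into its two parts and handle them by direct appeal to Lemma~\ref{L:Whentigigi+1-1IsNotReduced} (and its stated parallel version), together with the elementary facts about reduced and end-preserving products collected in Section~\ref{S:notation}. For part~(1), I would first dispense with the simpler building blocks: $t_iL_i$ is end-preserving by Lemma~\ref{L:Whentigigi+1-1IsNotReduced}(3), and $R_{i-1}t_i$ is end-preserving by the parallel version of that lemma. The real content is the three-term products. The natural case analysis is on whether $t_iL_i$ and $R_{i-1}t_i$ are reduced.

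If both $t_iL_i$ and $R_{i-1}t_i$ are reduced, then since $g_i^{-1}t_ig_i$ reduced forces (roughly) that $t_i$ ``sees'' the end indices of $R_{i-1}$ on the left and $L_i$ on the right in a compatible way, one checks that $R_{i-1}t_iL_i$ is reduced and hence end-preserving, and the bracketed versions agree. If $t_iL_i$ is not reduced, Lemma~\ref{L:Whentigigi+1-1IsNotReduced}(2) gives $l(g_ig_{i+1}^{-1})\ge 1$, $l(t_ie_{i+1,1})=1$, and a left factor $L_{i+1}$ of $g_{i+1}^{-1}$ with $e_{i+1,1}=g_iL_{i+1}$; writing $L_i=e_{i+1,1}e_{i+1,2}\dotsm e_{i+1,j}c$ as in the proof of part~(3) of that lemma, I would multiply on the left by $R_{i-1}$ and use that $R_{i-1}t_i$ is end-preserving: the product becomes $R_{i-1}(t_ie_{i+1,1})e_{i+1,2}\dotsm e_{i+1,j}c$, and since $l(t_ie_{i+1,1})=1$ and $R_{i-1}t_i$ is end-preserving, absorbing $t_ie_{i+1,1}$ into the right end of $R_{i-1}t_i$ (or noting they have matching indices) keeps the whole thing end-preserving; the symmetric argument handles the case $R_{i-1}t_i$ not reduced. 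Throughout I would invoke Lemma~\ref{L:End-preservingAndCancellationNumber} to pass between cancellation numbers and end-preservation and to see that a left/right factor of an end-preserving product is again end-preserving, which is also what makes the bracketed variants $(R_{i-1}t_i)L_i$ and $R_{i-1}(t_iL_i)$ come out the same.

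For part~(2), under the hypotheses $l(g_ig_{i+1}^{-1})\ge 1$ and $l(t_ie_{i+1,1})=1$, I want to show appending $e_{i+1,1}$ to $R_{i-1}t_i$ does not change length, i.e. $e_{i+1,1}$ merges into the right-end component of $R_{i-1}t_i$. The point is that $l(t_ie_{i+1,1})=1$ already says $t_i$ and $e_{i+1,1}$ have the same index and collapse to a single component, so $R_{i-1}t_ie_{i+1,1}$ has the same alternating-product shape as $R_{i-1}t_i$ with its last component replaced by $t_ie_{i+1,1}$ — provided $R_{i-1}t_i$ itself is ``honest'' at the junction, which is exactly the end-preserving statement from part~(1) (or from the parallel Lemma~\ref{L:Whentigigi+1-1IsNotReduced}). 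So I would deduce part~(2) from part~(1): write $R_{i-1}t_i$ as an alternating product, observe its last component has the index of $t_i$ (right end-preservation), hence the same index as $e_{i+1,1}$, so multiplying by $e_{i+1,1}$ leaves the length unchanged.

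The main obstacle I anticipate is the bookkeeping in part~(1) when \emph{both} $t_iL_i$ and $R_{i-1}t_i$ fail to be reduced simultaneously: one must check that the two one-step collapses (on the left with $e_{i,r_i}$, on the right with $e_{i+1,1}$) do not interact badly, e.g. that $t_i$ is not ``eaten from both sides'' in a way that would make $R_{i-1}t_iL_i$ lie in $C$ or flip an end index — but this is precisely ruled out by $g_i^{-1}t_ig_i$ being reduced together with $t_i$ not being cancellable (two-sidedly), so the argument is really an exercise in unwinding the definitions rather than anything deep. I would organize it as: reduce to the two non-reduced cases via part of Lemma~\ref{L:Whentigigi+1-1IsNotReduced}, handle each by explicitly exhibiting the alternating product, and note the bracketed versions are left/right factors of the unbracketed one (or vice versa) so they inherit end-preservation from Lemma~\ref{L:End-preservingAndCancellationNumber}(2).
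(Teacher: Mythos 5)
Your proposal is essentially the paper's proof: you invoke the same two lemmas (Lemma \ref{L:Whentigigi+1-1IsNotReduced} together with its right-sided analogue, and Lemma \ref{L:End-preservingAndCancellationNumber}), and you correctly isolate the one real obstruction — that in the case where both $t_iL_i$ and $R_{i-1}t_i$ fail to be reduced one must rule out $e_{i,r_i}t_ie_{i+1,1}\in C$, which is exactly what the two-sided non-cancellability of $t_i$ gives after writing $e_{i,r_i}=R'_{i-1}g_i^{-1}$ and $e_{i+1,1}=g_iL_{i+1}$. The difference between your case analysis and the paper's contradiction argument showing $K(g_{i-1}g_i^{-1},t_iL_i)=0$ is cosmetic. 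One small inaccuracy worth fixing: $(R_{i-1}t_i)L_i$ and $R_{i-1}(t_iL_i)$ are re-bracketings of the same element, not left or right factors of $R_{i-1}t_iL_i$, so Lemma \ref{L:End-preservingAmdCancellationNumber}(2) is not what transfers end-preservation between them; instead it is a direct comparison of end indices using the already-established end-preservation of the two-term products $R_{i-1}t_i$ and $t_iL_i$, which is what the paper's step (c) carries out.
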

\begin{proof} 
\begin{enumerate}[(1)]
\item We use Lemma \ref{L:Whentigigi+1-1IsNotReduced} (and its ``right version") and proceed in 3 steps.
\begin{enumerate}
    \item First, $t_iL_i$ and $R_{i-1}t_i$ are end-preserving by Lemma \ref{L:Whentigigi+1-1IsNotReduced}(3).  
    \item We prove that $R_{i-1}(t_iL_i)$ is right end-preserving. 
    By Lemma \ref{L:End-preservingAndCancellationNumber}, it suffices to prove that $K(g_{i-1}g_i^{-1},t_iL_i)=0$. Assuming otherwise, then depending on whether the product $t_iL_i$ is reduced or not, 
\begin{align}\label{E:Ri-1LiInC}
    e_{i,r_i} t_i\in C \text{ or } e_{i,r_i}t_ie_{i+1,1}\in C,
\end{align} respectively,  where $e_{i+1,1}=g_iL_{i+1}$ for some left factor $L_{i+1}$ of $g_{i+1}^{-1}$ by Lemma \ref{L:Whentigigi+1-1IsNotReduced}. Note that $t_iL_i$ is (left) end-preserving and $(g_{i-1}g_i^{-1})(t_iL_i)$ is not reduced, thus the product $(g_{i-1}g_i^{-1})t_i$ is not reduced either. Hence, by Lemma \ref{L:Whentigigi+1-1IsNotReduced}, there is a right factor $R'_{i-1}$ of $g_{i-1}$ such that $e_{i,r_i}=R'_{i-1}g_{i}^{-1}$. Plugging the expressions for $e_{i+1,1}$ and $e_{i,r_i}$ into \eqref{E:Ri-1LiInC}, we find
$R'_{i-1}g_i^{-1}t_i\in C$ or $R'_{i-1}g_i^{-1}t_ig_iL_{i+1}\in C$,
implying that $t_i$ is cancellable, contradicting the assumption. This finishes proving that $R_{i-1}(t_iL_i)$ is right end-preserving.
\item We now wrap up the proof of (1). It follows similarly to (b) that the product $(R_{i-1}t_i)L_i$ is left end-preserving. We have $R_{i-1}t_iL_i$ is right end-preserving because both $R_{i-1}(t_iL_i)$ and $t_iL_i$ are so. Similarly, $R_{i-1}t_iL_i$ is left end-preserving. So $R_{i-1}t_iL_i$ is end-preserving, from which it follows that $(R_{i-1}t_i)L_i$ is  right end-preserving and $R_{i-1}(t_iL_i)$ is left end-preserving.  
\end{enumerate}

\item If $l(R_{i-1})=0$, the equality is clear. Otherwise,  since $e_{i,r_{i}}t_ie_{i+1,1}, t_ie_{i+1,1}\not\in C$ by (1),  $R_{i-1}t_ie_{i+1,1}$ can be written as one of the following alternating products
   \begin{align*}
   &ce_{i,j}\dotsm e_{i,r_{i}-1}(e_{i,r_{i}}t_ie_{i+1,1}),\quad ce_{i,j}\dotsm e_{i,r_{i}-1}e_{i,r_{i}}(t_ie_{i+1,1}),
    \end{align*}where $c\in C$ and $1\leq j\leq r_i$. 
    We can write $R_{i-1}t_i$ similarly, with the only difference being the omission of $e_{i+1,1}$. We see that if $l(e_{i,r_{i}}t_i)=1$ (resp. $l(e_{i,r_{i}}t_i)=2$), then both $l(R_{i-1}t_ie_{i,1})$ and $l(R_{i-1}t_i)$ are equal to $r_{i}-j+1$ (resp. $r_{i}-j+2$).
\end{enumerate}
\end{proof}

The following definition will allow us to control products of conjugates in an amalgam.

\begin{definition}\label{D:Tamedness}
We say the tuple $v=((t_1,g_1),\dots,(t_n,g_n))$ is \textbf{tamed} if the following are satisfied:
\begin{enumerate}
\item For $i=1, \dots, n$, $l(t_i)=1$ and $t_i^{g_i}=g_i^{-1}t_ig_i$ is a reduced product;
\item For $i=1, \dots, n-1$, if $l(g_ig_{i+1}^{-1})=0$, then $l(t_it_{i+1})=2$;
\item No $t_i$ is cancellable.
\end{enumerate}
\end{definition}

\begin{lemma}\label{L:IterativeFormulaForTi}
Assume that $((t_1,g_1), \dots,(t_n,g_n))$ is tamed. Let $T_i=t_1^{g_1}\dotsm t_i^{g_i}$ for $1\leq i\leq n$ and $T_0=1$.  (Also recall that $t_0=g_0=1$.) Then for each $i$ with $1\leq i\leq n$,  
\begin{align}\label{E:T_iReducedProdIteratively}
T_i=(T_{i-1}g_{i-1}^{-1}\delta_i)(\delta_i^{-1}g_{i-1}g_i^{-1}t_i)g_i
\end{align}
is a reduced product (of three terms), where
$$\delta_i:=\begin{cases} e_{i,1}&\quad \text{ if } l(g_{i-1}g_i^{-1})\geq1 \text{ and } l(t_{i-1}e_{i,1})=1,\\
                         1      &\quad \text{ otherwise.}
           \end{cases}$$

\end{lemma}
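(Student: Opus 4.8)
The plan is to prove the iterative formula \eqref{E:T_iReducedProdIteratively} by induction on $i$, using the commutation trick $C_{i-1}C_i=C_iC_{i-1}'$ mentioned before Proposition \ref{CancellabilityMeansShorterConjugateLength}. For the base case $i=1$, we have $T_0=1$, $t_0=g_0=1$, so $\delta_1=1$ (since $l(t_0e_{1,1})=l(e_{1,1})$ might be $\geq 1$, but $t_0=1$ forces $l(t_0e_{1,1})=l(e_{1,1})$; actually I need to check the definition carefully — with $t_0=1$ the condition $l(t_0e_{1,1})=1$ becomes $l(e_{1,1})=1$ which is automatic when $l(g_0g_1^{-1})=l(g_1^{-1})\geq 1$, so $\delta_1=e_{1,1}$ is the first component of $g_1^{-1}$). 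In any case the formula for $i=1$ reads $T_1=(g_0^{-1}\delta_1)(\delta_1^{-1}g_0g_1^{-1}t_1)g_1=(\delta_1)(\delta_1^{-1}g_1^{-1}t_1)g_1=g_1^{-1}t_1g_1$, which is reduced by tameness condition (1); the bracketing into three reduced factors follows since $\delta_1$ is a left factor of the alternating product $g_1^{-1}$ and $g_1^{-1}t_1$ is reduced with $t_i$ not cancellable (hence end-preserving by Lemma \ref{L:TamedProductEndPreserving}). So the real content is the inductive step.

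For the inductive step, assume \eqref{E:T_iReducedProdIteratively} holds for $i-1$, i.e. $T_{i-1}=(T_{i-2}g_{i-2}^{-1}\delta_{i-1})(\delta_{i-1}^{-1}g_{i-2}g_{i-1}^{-1}t_{i-1})g_{i-1}$ is reduced. Write $T_i=T_{i-1}t_i^{g_i}=T_{i-1}g_i^{-1}t_ig_i$. The key algebraic manipulation is to insert $g_{i-1}^{-1}g_{i-1}$ and regroup: $T_i = (T_{i-1}g_{i-1}^{-1})(g_{i-1}g_i^{-1}t_i)g_i$, and then split off $\delta_i$: $T_i=(T_{i-1}g_{i-1}^{-1}\delta_i)(\delta_i^{-1}g_{i-1}g_i^{-1}t_i)g_i$. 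To show this is a reduced product of three terms, I would verify the three factors have lengths summing to $l(T_i)$, equivalently, verify the two ``joints'' are end-preserving using Lemma \ref{L:End-preservingAndCancellationNumber} together with the $R_{i-1}t_iL_i$-type end-preserving statements from Lemma \ref{L:TamedProductEndPreserving}. The middle factor $\delta_i^{-1}g_{i-1}g_i^{-1}t_i$ should be recognized as $R_{i-1}t_i$ (or $R_{i-1}'g_i^{-1}t_i$ after the Lemma \ref{L:Whentigigi+1-1IsNotReduced} identification $e_{i,r_i}=R_{i-1}'g_i^{-1}$) with $R_{i-1}$ a right factor of $g_{i-1}g_i^{-1}$, which is end-preserving; the choice of $\delta_i$ is exactly what is needed so that $\delta_i^{-1}g_{i-1}g_i^{-1}t_i$ is reduced (this is the role of condition (2) of tamedness when $l(g_{i-1}g_i^{-1})=0$, forcing $l(t_{i-1}t_i)=2$).

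The main obstacle, I expect, is controlling the left joint: showing that $(T_{i-1}g_{i-1}^{-1}\delta_i)$ is a genuine left factor and that appending $(\delta_i^{-1}g_{i-1}g_i^{-1}t_i)$ does not cause cancellation back into $T_{i-1}$. Here I would use the inductive hypothesis to see that $g_{i-1}$ is a right factor of $T_{i-1}$ as a reduced product, so $T_{i-1}g_{i-1}^{-1}$ is obtained by deleting the tail; but the interplay between the ``old'' tail $g_{i-1}$ of $T_{i-1}$ and the ``new'' piece $g_{i-1}g_i^{-1}t_i$ requires knowing that $t_{i-1}$ interacts with $g_{i-1}g_i^{-1}$ in a controlled way, which is where $\delta_{i-1}$ and the right-version of Lemma \ref{L:Whentigigi+1-1IsNotReduced} enter (the case analysis on whether $l(t_{i-1}e_{i,1})=1$). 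Concretely, I anticipate writing $T_{i-1}g_{i-1}^{-1}=\big(T_{i-2}g_{i-2}^{-1}\delta_{i-1}\big)\big(\delta_{i-1}^{-1}g_{i-2}g_{i-1}^{-1}t_{i-1}\big)$ as reduced, using that $t_{i-1}$ is not cancellable (so by Lemma \ref{L:TamedProductEndPreserving} the relevant products are end-preserving), and then invoking Lemma \ref{L:End-preservingAndCancellationNumber}(2) to pass to the left factor $\delta_i$ of $g_{i-1}g_i^{-1}$ — being careful to handle separately the degenerate cases $l(g_{i-1}g_i^{-1})=0$, $l(g_{i-2}g_{i-1}^{-1})=0$, and $i=1,2$ where some $t_j$ or $g_j$ is trivial. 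Once the two joints are shown end-preserving, the three-term product is reduced and the induction closes.
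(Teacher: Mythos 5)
Your overall architecture matches the paper's: an induction on $i$, the same three-piece decomposition of $T_i$, and reliance on Lemmas \ref{L:End-preservingAndCancellationNumber}, \ref{L:Whentigigi+1-1IsNotReduced} and \ref{L:TamedProductEndPreserving}. However, two things need correction.

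First, the opening sentence is a red herring: the commutation trick $C_{i-1}C_i = C_iC_{i-1}'$ is \emph{not} used in the proof of this lemma. It is invoked later, in part (4) of the proof of Theorem \ref{T:TauInSAIfInC} (to rule out cancellable $t_i$'s). The proof of Lemma \ref{L:IterativeFormulaForTi} is a direct structural induction that never reorders conjugates.

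Second, and more seriously, you assert that verifying the three factors' lengths sum to $l(T_i)$ is ``equivalently'' the same as verifying the two joints are end-preserving, and later say ``Once the two joints are shown end-preserving, the three-term product is reduced.'' This is a genuine logical gap: end-preserving is strictly weaker than reduced. By Lemma \ref{L:End-preservingAndCancellationNumber}(1), $\alpha\beta$ is left end-preserving iff $K(\alpha,\beta) < l(\alpha)$; this permits nonzero cancellation and merges, whereas reducedness requires $K(\alpha,\beta)=0$ \emph{and} no merge. The paper uses end-preserving as a tool, not as the goal: first, the end-preserving statements of Lemma \ref{L:TamedProductEndPreserving}(1) guarantee the middle factors are nontrivial, so that reducedness of the full product reduces to reducedness of each adjacent pair; then, end-preserving of each middle factor (at both ends) is used to localize the reducedness check of each pair to the single pair of components meeting at the joint, which is finally settled by the explicit case analysis on $l(g_ig_{i+1}^{-1})$ and $l(t_ie_{i+1,1})$ (this is where tameness condition (2) and the role of $\delta_{i+1}$ come in). Your proposal gestures at the right lemmas but does not carry out this localization-and-case-analysis step, and collapsing ``end-preserving'' onto ``reduced'' would make the argument fail as stated.

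As a structural note, the paper's presentation first establishes the \emph{four-term} product
\[
T_{i+1}=(T_{i-1}g_{i-1}^{-1}\delta_i)\big(\delta_i^{-1}g_{i-1}g_i^{-1}t_i\delta_{i+1}\big)\big(\delta_{i+1}^{-1}g_ig_{i+1}^{-1}t_{i+1}\big)g_{i+1}
\]
is reduced and only then contracts the first two terms to recover the stated three-term form. Your variant, attaching $\delta_i$ to the \emph{front} of the middle factor rather than the \emph{back} of the previous one, is algebraically the same expansion but packaged less conveniently: the paper's grouping lets the induction hypothesis (reducedness of $S_1 S_2' g_i$) feed directly into the first joint, whereas your grouping forces you to track $\delta_i$ across the boundary of the inductive data. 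That is a presentational choice, not an error — but you should still supply the four-term reducedness and the final case analysis to close the argument.
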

\begin{proof} We use Lemma \ref{L:TamedProductEndPreserving}
 and make induction on $i$. When $i=1$, $T_1=\delta_1(\delta_1^{-1}g_1^{-1}t_1) g_1$ is clearly a reduced product. Now assume that for $i$ with $1\leq i<n$, \eqref{E:T_iReducedProdIteratively} is a reduced product; we will show that $T_{i+1}$ is also a reduced product of three terms as in \eqref{E:T_iReducedProdIteratively}. Consider
\begin{align*}
T_{i+1}&=T_ig_{i+1}^{-1}t_{i+1}g_{i+1}\\
       &=(T_{i-1}g_{i-1}^{-1}\delta_i)(\delta_i^{-1}g_{i-1}g_i^{-1}t_i)g_ig_{i+1}^{-1}t_{i+1}g_{i+1}\\
       &=(T_{i-1}g_{i-1}^{-1}\delta_i)(\delta_i^{-1}g_{i-1}g_i^{-1}t_i\delta_{i+1})(\delta_{i+1}^{-1}g_ig_{i+1}^{-1}t_{i+1})g_{i+1}.
\end{align*} We will prove that the last expression is a reduced product, then by combining the first two terms, 
$T_{i+1}=(T_ig_i^{-1}\delta_{i+1})(\delta_{i+1}^{-1}g_ig_{i+1}^{-1}t_{i+1})g_{i+1}$ is a reduced product, as desired.

For ease of exposition, write $T_{i+1}=S_1S_2S_3S_4$ and $T_i=S_1S_2'g_i$ with
\[ S_1 = T_{i-1}g_{i-1}^{-1}\delta_i, \, S_2 = \delta_i^{-1}g_{i-1}g_i^{-1}t_i\delta_{i+1}, \, S_3= \delta_{i+1}^{-1}g_ig_{i+1}^{-1}t_{i+1}, \, S_4 = g_{i+1}, \,S_2'=\delta_i^{-1}g_{i-1}g_i^{-1}t_i.
\]
 Since $(\delta_i^{-1}g_{i-1}g_i^{-1})t_i\delta_{i+1}$ and $(\delta_{i+1}^{-1}g_ig_{i+1}^{-1})t_{i+1}$ are end-preserving by \ref{L:TamedProductEndPreserving}(1), we have $l(S_2)\geq1$ and $l(S_3)\geq1$, and thus we only need to show that $S_kS_{k+1}$ is reduced for $k=1,2,3$. Since $S_1S_2'$ is reduced, $ S_2=S_2'\delta_{i+1}$ is (left) end-preserving and $l(S_2')\geq1$ by Lemma \ref{L:TamedProductEndPreserving}(1), $S_1S_2$ is reduced. Since $t_{i+1}g_{i+1}$ is reduced and $(\delta_{i+1}^{-1}g_ig_{i+1}^{-1})t_{i+1}$ is (right) end-preserving by Lemma \ref{L:TamedProductEndPreserving}(1), $S_3S_4$ is reduced.
 
 Now we only need to show that $S_2S_3$ is reduced. Since the product $S_2=(\delta_i^{-1}g_{i-1}g_i^{-1})t_i\delta_{i+1}$ is (right) end-preserving and $S_3=(\delta_{i+1}^{-1}g_ig_{i+1}^{-1})t_{i+1}$ is (left) end-preserving by Lemma \ref{L:TamedProductEndPreserving}(1), we only need to show that
 the product of the rightmost term of $S_2$ which is not in $C$, and the leftmost term of $S_3$ which is not in $C$, is reduced. This is given by analyzing the following possible cases.
\begin{enumerate}
\item $l(g_ig_{i+1}^{-1})=0$. In this case, $\delta_{i+1}=1$, $g_ig_{i+1}^{-1}\in C$ and $l(t_it_{i+1})=2$ by tameness. So $t_it_{i+1}$ is reduced, as desired. 
\item $l(g_ig_{i+1}^{-1})\geq1$ and $l(t_{i}e_{i+1,1})=2$. Then $\delta_{i+1}=1$ and $t_i(g_ig_{i+1}^{-1})$ is reduced, as desired.
\item $l(g_ig_{i+1}^{-1})\geq1$ and $l(t_{i}e_{i+1,1})=1$. In this case, $\delta_{i+1}=e_{i+1,1}$. 
     (i) If $l(g_ig_{i+1}^{-1})\geq2$, then $\delta_{i+1}(e_{i+1}^{-1}g_ig_{i+1}^{-1})$ is reduced, as desired.   (ii) If $l(g_ig_{i+1}^{-1})=1$, then $e_{i+1,1}^{-1}g_ig_{i+1}^{-1}=1$ and we must show that $e_{i+1,1}t_{i+1}$ is reduced.  Since $g_ig_{i+1}^{-1} =e_{i+1,1}$, $e_{i+1,1}$ is a right factor of $g_{i+1}^{-1}$ or a left factor of $g_i$. But $e_{i+1,1}$ can't be a left factor of $g_i$, since $t_ig_i$ is reduced and $t_ie_{i+1,1}$ is not. Now since since $g_{i+1}^{-1}t_{i+1}$ is reduced, so is $e_{i+1,1}t_{i+1}$, as desired.
\end{enumerate}
\end{proof}
The next result plays an indispensable role in the proof of Theorem \ref{T:TauInSAIfInC} of Section \ref{S:GeneralizedTorsionInAmalgams}.
\begin{proposition}\label{P:l(T_i)IsAtLeasti}
If $((t_1,g_1),\dots,(t_n,g_n))$ is tamed, then
$l(t_1^{g_1}\dotsm t_n^{g_n})\geq l(g_1)+n+l(g_n).$
\end{proposition}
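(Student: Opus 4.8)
The plan is to apply Lemma \ref{L:IterativeFormulaForTi}, which already gives us, for $i=n$, the reduced product of three terms
\[
t_1^{g_1}\dotsm t_n^{g_n}=T_n=(T_{n-1}g_{n-1}^{-1}\delta_n)(\delta_n^{-1}g_{n-1}g_n^{-1}t_n)g_n,
\]
together with an induction on $n$ to track the lengths of the three pieces. The target bound $l(T_n)\geq l(g_1)+n+l(g_n)$ splits naturally: we expect $l(g_n)$ to come from the rightmost factor $g_n$ (recall $g_n^{-1}t_ng_n$ is reduced with $l(t_n)=1$, so $l(g_n^{-1}t_ng_n)=2l(g_n)+1$, but it is attached to the rest through the middle term $\delta_n^{-1}g_{n-1}g_n^{-1}t_n$), the $n$ from the $n$ components $t_1,\dots,t_n$, and the $l(g_1)$ from a left-end portion that never gets cancelled. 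The induction hypothesis should therefore be a strengthened statement: not merely $l(T_i)\geq l(g_1)+i+l(g_i)$, but a refined claim about the lengths of the three factors $S_1^{(i)}=T_{i-1}g_{i-1}^{-1}\delta_i$, $S_2^{(i)}=\delta_i^{-1}g_{i-1}g_i^{-1}t_i$, $S_3^{(i)}=g_i$ appearing in \eqref{E:T_iReducedProdIteratively}, e.g. that $l(S_1^{(i)}g_i)\geq l(g_1)+i$ (i.e.\ the product of the first two of the three terms, times $g_i$, already has length at least $l(g_1)+i$), so that adding $l(S_3^{(i)})=l(g_i)$ on the right gives the bound; but one must be careful since $S_3^{(i)}$ overlaps with what becomes $S_2^{(i+1)}$ at the next stage.

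Concretely, I would set up the induction as follows. Define $A_i := l(T_{i-1}g_{i-1}^{-1}\delta_i) = l(S_1^{(i)})$ and $B_i := l(\delta_i^{-1}g_{i-1}g_i^{-1}t_i) = l(S_2^{(i)})$; then $l(T_i)=A_i+B_i+l(g_i)$ by Lemma \ref{L:IterativeFormulaForTi}. The claim to prove by induction is: $A_i + B_i \geq l(g_1) + i$ (the ``$+l(g_n)$'' is then free at $i=n$). For the base case $i=1$, $T_1=\delta_1(\delta_1^{-1}g_1^{-1}t_1)g_1$ is reduced; since $l(t_1)=1$ and $t_1$ is not cancellable, one checks $B_1=l(\delta_1^{-1}g_1^{-1}t_1)\geq l(g_1)+1$ regardless of whether $\delta_1=1$ or $\delta_1=e_{1,1}$ (note $g_0=1$ forces $l(g_0g_1^{-1})=l(g_1)$, and $\delta_1$ removes at most one component while $t_1$ adds exactly one, using that $t_1e_{1,1}$ has length $1$ in the nontrivial case). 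For the inductive step, rewrite as in the proof of Lemma \ref{L:IterativeFormulaForTi}: $T_{i+1}=(T_ig_i^{-1}\delta_{i+1})(\delta_{i+1}^{-1}g_ig_{i+1}^{-1}t_{i+1})g_{i+1}$, so $A_{i+1}=l(T_ig_i^{-1}\delta_{i+1})$ and $B_{i+1}=l(\delta_{i+1}^{-1}g_ig_{i+1}^{-1}t_{i+1})$. The key estimates are (a) $l(T_ig_i^{-1})\geq A_i+B_i$, because $T_i=S_1^{(i)}S_2^{(i)}g_i$ is reduced and right-cancelling $g_i$ leaves the reduced product $S_1^{(i)}S_2^{(i)}$ of length $A_i+B_i$ (here one uses that $S_2^{(i)}=S_2'\delta_{i+1}$-type end-preserving facts from Lemma \ref{L:TamedProductEndPreserving} guarantee no collapse), and (b) passing from $T_ig_i^{-1}$ to $T_ig_i^{-1}\delta_{i+1}$ and then accounting for $t_{i+1}$ inside $B_{i+1}$ loses at most one component to $\delta_{i+1}$ but gains exactly one from $t_{i+1}$, so $A_{i+1}+B_{i+1}\geq l(T_ig_i^{-1})+1\geq A_i+B_i+1\geq l(g_1)+i+1$. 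Finally at $i=n$: $l(T_n)=A_n+B_n+l(g_n)\geq l(g_1)+n+l(g_n)$.

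There is one more point to pin down: the $l(g_1)$ in the bound. This should be traced back to the base case, where $B_1\geq l(g_1)+1$, and then to the observation that the factor $S_1^{(i)}=T_{i-1}g_{i-1}^{-1}\delta_i$ always contains, as a left factor, something of length at least the ``$g_1$ contribution'' because in forming $T_i$ from $T_{i-1}$ we only multiply on the right by $g_i^{-1}\delta_{i+1}$-type and $t$-type terms, and the reducedness in Lemma \ref{L:IterativeFormulaForTi} ensures the leftmost $l(g_1)$ components of $T_{i-1}$ survive into $T_i$. Making this precise may require carrying, as an additional clause of the induction hypothesis, that $l(S_1^{(i)})=A_i \geq l(g_1)+(\text{something})$ or simply that $g_1$ is a left factor of $T_i$; I would track ``$g_1$ is a left factor of $T_i g_i^{-1}$'' alongside the length bound.

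The main obstacle I anticipate is the bookkeeping around $\delta_i$ and $\delta_{i+1}$: each can independently be trivial or a single component $e_{i,1}$, and the worst case — where $\delta_{i+1}=e_{i+1,1}$ absorbs a component — must be shown to be exactly compensated by $t_{i+1}$ contributing its single component, never a net loss. Getting the inequalities to be non-strict in exactly the right places (so the telescoping $A_i+B_i\geq l(g_1)+i$ does not degrade) is where care is needed; all the required non-cancellation and end-preserving facts are supplied by Lemma \ref{L:TamedProductEndPreserving} and Lemma \ref{L:IterativeFormulaForTi}, so no genuinely new combinatorial input should be needed, only a disciplined induction.
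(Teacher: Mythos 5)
Your overall structure matches the paper's proof, which proceeds in two steps: (1) show $l(T_i g_i^{-1}\delta_{i+1})=l(T_ig_i^{-1})$ for $1\le i\le n-1$; (2) telescope via $l(T_i)\ge l(T_{i-1})-l(g_{i-1})+1+l(g_i)$. Your reformulation in terms of $A_i+B_i=l(T_ig_i^{-1})$ is equivalent. The gap is in how you justify the crucial estimate $A_{i+1}+B_{i+1}\ge l(T_ig_i^{-1})+1$: the budget ``loses at most one to $\delta_{i+1}$, gains exactly one from $t_{i+1}$'' yields only $A_{i+1}+B_{i+1}\ge\bigl(l(T_ig_i^{-1})-1\bigr)+1=l(T_ig_i^{-1})$, which is one short, and then the telescoping degrades to $A_n+B_n\ge l(g_1)+1$ rather than $l(g_1)+n$. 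What is actually needed is that multiplying by $\delta_{i+1}$ causes \emph{no} loss at all, i.e.\ $l(T_ig_i^{-1}\delta_{i+1})=l(T_ig_i^{-1})$ exactly. This is not a triangle-inequality fact; it is the paper's step (1), established via Lemma \ref{L:TamedProductEndPreserving}(2), which guarantees $l(\delta_i^{-1}g_{i-1}g_i^{-1}t_ie_{i+1,1})=l(\delta_i^{-1}g_{i-1}g_i^{-1}t_i)$ when $\delta_{i+1}=e_{i+1,1}$. You correctly anticipate that this lemma supplies the needed fact, but the ``at most one'' bound you actually invoke does not deliver it.

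Two smaller points. The base-case assertion $B_1\ge l(g_1)+1$ fails when $\delta_1=e_{1,1}$ (there $B_1=l(g_1)$ while $A_1=1$); the correct claim is $A_1+B_1=l(g_1)+1$, which holds in both cases. And the extra invariant ``$g_1$ is a left factor of $T_ig_i^{-1}$'' proposed in your third paragraph is unnecessary: the $l(g_1)$ term survives automatically because the $-l(g_{i-1})$ and $+l(g_{i-1})$ contributions cancel in the telescoping, exactly as in the paper's displayed chain of inequalities.
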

\begin{proof} Let $T_i=t_1^{g_1}\dotsm t_i^{g_i}$ for $1\leq i\leq n$. We will show by induction that
\begin{align}\label{E:l(T_i)IsAtLeasti}
l(T_i)\geq l(g_1)+i+l(g_i) \text{ for all }i\in\{1,2,\dotsc,n\}.
\end{align} We proceed with two steps, using the definition of $\delta_i$ from the statement of Lemma \ref{L:IterativeFormulaForTi}.
\begin{enumerate}[(1)]
\item \label{Item:ProveOfTheLengthFormulaForTamedProductStep1} We prove that $l(T_{i}g_{i}^{-1}\delta_{i+1})=l(T_ig_i^{-1})$ for $1\leq i\leq n-1$.

This is true if $\delta_{i+1}=1$. Thus, we consider when $\delta_{i+1}=e_{i+1,1}$ meaning that $l(t_ie_{i+1,1})=1$ and $l(g_ig_{i+1}^{-1})\geq1$. Consider the products
\begin{align*}
T_ig_{i}^{-1}=(T_{i-1}g_{i-1}^{-1}\delta_i)(\delta_i^{-1}g_{i-1}g_i^{-1}t_i), \quad T_ig_{i}^{-1}\delta_{i+1}=(T_{i-1}g_{i-1}^{-1}\delta_i)(\delta_i^{-1}g_{i-1}g_i^{-1}t_ie_{i+1,1}).
\end{align*} The first product is reduced by Lemma \ref{L:IterativeFormulaForTi}, and so is the second product because $(\delta_i^{-1}g_{i-1}g_i^{-1}t_i)e_{i+1,1}$ is (left) end-preserving and $l(\delta_i^{-1}g_{i-1}g_i^{-1}t_i)\geq1$ by Lemma \ref{L:TamedProductEndPreserving}(1). 
Thus it suffices to prove that $l(\delta_i^{-1}g_{i-1}g_i^{-1}t_i) = l(\delta_i^{-1}g_{i-1}g_i^{-1}t_ie_{i+1,1})$, 
which does hold by Lemma \ref{L:TamedProductEndPreserving}(2).

 \item Now we prove \eqref{E:l(T_i)IsAtLeasti} by induction. It is true when $i=1$ since $t_1^{g_1}$ is reduced. Assume that it is true for $T_{i-1}$ and consider $T_i$. We have:
\begin{align*}
l(T_i)&=l(T_{i-1}g_{i-1}^{-1}\delta_i)+l(\delta_i^{-1}g_{i-1}g_i^{-1}t_i)+l(g_i)\quad  \text{(by Lemma \ref{L:IterativeFormulaForTi})}\\
      &\geq l(T_{i-1}g_{i-1}^{-1})+1+l(g_i)\quad\text{ (by \ref{Item:ProveOfTheLengthFormulaForTamedProductStep1} and Lemma \ref{L:TamedProductEndPreserving}(1))}\\
      &\geq l(T_{i-1})-l(g_{i-1})+1+l(g_i)\\
      &\geq (l(g_1)+(i-1)+l(g_{i-1}))-l(g_{i-1})+1+l(g_i)\quad \text{(by induction assumption})\\
      &=l(g_1)+i+l(g_i).
\end{align*}
\end{enumerate}
\end{proof}

\section{Generalized torsion in amalgams}\label{S:GeneralizedTorsionInAmalgams}
The main goal of this section is to derive Theorem \ref{T:GTFAmalgamIffFamilies}, which gives a sufficient condition ensuring that an amalgam of groups is GTF. In Subsection \ref{Subsection:BergmanAnalog}, we derive an analog of a theorem of George Bergman \cite[Theorems 21 and 28]{Bergman90} from Theorem \ref{T:GTFAmalgamIffFamilies}. In Subsection \ref{Subsec:Multi-malnormalityAndTwoCorollaries}, we derive from Theorem \ref{T:GTFAmalgamIffFamilies} and its proof two corollaries, which will be useful in Section \ref{Section:One-RelatorNon-BO-GTF-Group} and Section \ref{Section:GTFAndNonLOGroups}.

For a subset $S$ of a group $G$, we denote by $\mathrm{NSS}_G(S)$ the normal subsemigroup of $G$ generated by $S$; i.e., $\mathrm{NSS}_G(S):=\{s_1^{g_1}\dotsm s_n^{g_n}\mid s_i\in S, g_i\in G,n\geq1\}$. Note that $a\in G\backslash\{1\}$ is generalized torsion if and only if $1\in \mathrm{NSS}_G(\{a\}$). Also note that we allow a (normal) subsemigroup to be empty.

\begin{theorem}\label{T:TauInSAIfInC}
Consider the amalgam $G=\ast_CG_i$ $(i\in I)$.  Let $P_i$ be a normal subsemigroup of $G_i$ for each $i\in I$, such that $P_i\cap C=P_j\cap C$ for all $i,j\in I$. Then $G_i\cap \mathrm{NSS}_G(\cup_{i\in I}P_i)=P_i$ for all $i\in I$. \looseness=-1
\end{theorem}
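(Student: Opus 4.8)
The plan is to prove both inclusions; the easy one is $P_i\subseteq G_i\cap\mathrm{NSS}_G(\cup_j P_j)$, which is immediate since $P_i\subseteq G_i$ and $P_i\subseteq\mathrm{NSS}_G(\cup_j P_j)$ (an element of $P_i$ is a product of one conjugate, namely itself conjugated by $1$). The substance is the reverse inclusion: if $\tau\in G_i$ and $\tau=s_1^{g_1}\cdots s_n^{g_n}$ with $s_k\in\cup_{j}P_j$ and $g_k\in G$, then $\tau\in P_i$. The idea is to take such an expression with $n$ minimal and argue that $\tau$ must already lie in some $P_j$, then use the hypothesis $P_j\cap C=P_i\cap C$ together with the structure of the amalgam to pin down $\tau\in P_i$ (if $\tau\in C$ this is the common-intersection hypothesis; if $\tau\notin C$ then $l(\tau)=1$ forces $\tau\in G_i\setminus C$ and $\tau\in P_j$ for some $j$, which, since $\tau\in G_i$ and the $G_j$ intersect only in $C$, forces $j=i$).

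The core of the argument is to show that a minimal-length product of conjugates of elements in $\cup_j P_j$ which lands in $G_i$ (so has length $\le 1$) can be replaced by a shorter one unless $n=1$. First I would reduce to the case that each $s_k$ has $l(s_k)=1$: since $P_j$ is a normal subsemigroup of $G_j$, each $s_k\in P_{j_k}\subseteq G_{j_k}$ lies in $\cup_j G_j$, so $l(s_k)\le 1$, and $l(s_k)=0$ means $s_k\in P_{j_k}\cap C=P_i\cap C$; a factor lying in $C$ can be absorbed (using normality of the $P_j$ in each factor and of $\mathrm{NSS}$ under $G$-conjugation) to shorten the product, contradicting minimality — so $l(s_k)=1$ for all $k$. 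Next I would arrange, via the conjugation identity $C_{k}C_{k+1}=C_{k+1}C_{k}'$ where $C_k'=C_k^{C_{k+1}}$ flagged in the discussion before Proposition \ref{CancellabilityMeansShorterConjugateLength}, and the cancellability analysis of Lemma \ref{L:CancellableFromOneSide} and Proposition \ref{CancellabilityMeansShorterConjugateLength}, to choose the representatives $(s_k,g_k)$ so that no $s_k$ is cancellable and each $s_k^{g_k}=g_k^{-1}s_kg_k$ is reduced; the non-obvious point is that such "taming" moves can be performed without increasing $n$ — this is where Proposition \ref{CancellabilityMeansShorterConjugateLength} does the work, giving that a cancellable factor can be pushed past its neighbour while the conjugating length strictly drops, so an infinite descent / minimal-counterexample argument terminates. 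The remaining tameness condition (2), that $l(g_kg_{k+1}^{-1})=0$ implies $l(s_ks_{k+1})=2$, must be handled: if instead $l(s_ks_{k+1})\le 1$ then $C_kC_{k+1}=g_k^{-1}(s_ks_{k+1})g_k$ has length $\le 2l(g_k)+1$, and one checks $s_ks_{k+1}\in\cup_j P_j$ (it lies in the same factor $G_{j_k}=G_{j_{k+1}}$ and $P_{j_k}$ is a semigroup) or in $C$, again contradicting minimality of $n$.

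Once the tuple $((s_1,g_1),\dots,(s_n,g_n))$ is tamed, Proposition \ref{P:l(T_i)IsAtLeasti} gives $l(\tau)=l(s_1^{g_1}\cdots s_n^{g_n})\ge l(g_1)+n+l(g_n)\ge n$. Since $\tau\in G_i$ forces $l(\tau)\le 1$, we get $n=1$ and $l(g_1)=l(g_n)=0$, i.e. $g_1\in C$; then $\tau=g_1^{-1}s_1g_1$ with $s_1\in P_{j_1}$ and $g_1\in C\subseteq G_{j_1}$, so $\tau\in P_{j_1}$ by normality of $P_{j_1}$ in $G_{j_1}$. Finally $\tau\in G_i\cap P_{j_1}$: if $\tau\in C$ then $\tau\in P_{j_1}\cap C=P_i\cap C\subseteq P_i$; if $\tau\notin C$ then $j_1=i$ since $\tau\in G_i\setminus C$ meets $G_{j_1}$ only if the factors coincide, so $\tau\in P_i$.

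The main obstacle I anticipate is the bookkeeping in the reduction step: showing that the taming operations (absorbing $C$-factors, resolving cancellable factors via Proposition \ref{CancellabilityMeansShorterConjugateLength}, and fixing the $l(g_kg_{k+1}^{-1})=0$ case) can all be carried out while keeping $s_k\in\cup_j P_j$ and not increasing $n$ — in particular, one must be careful that "pushing a cancellable $s_k$ past its neighbour" replaces $s_k$ by a conjugate that still lies in $\cup_j P_j$ (it does, being conjugate within the same factor by an element landing in that factor, combined with normality), and that the length-drop in Proposition \ref{CancellabilityMeansShorterConjugateLength} really forces termination. Organizing this as: "choose among all length-$n$ expressions of $\tau$ one minimizing $\sum_k l(g_k)$, then show it is automatically tamed" is the cleanest route, and that is the structure I would follow.
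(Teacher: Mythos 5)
Your overall architecture matches the paper's: both take a minimal ``bad'' tuple, show it must be tamed, apply Proposition~\ref{P:l(T_i)IsAtLeasti} to force $n=1$ and $g_1\in C$, and then finish using $P_i\cap C=P_j\cap C$. Most of your reduction steps (eliminating $s_k\in C$, forcing $s_k^{g_k}$ reduced, handling $l(g_kg_{k+1}^{-1})=0$) are on the right track and correspond to items (1)--(3) in the paper's tameness check.

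However, there is a genuine gap in your proposed minimality criterion. You say to ``choose among all length-$n$ expressions one minimizing $\sum_k l(g_k)$, then show it is automatically tamed,'' and you attribute to Proposition~\ref{CancellabilityMeansShorterConjugateLength} the claim that pushing a cancellable $s_k$ past a neighbour ``strictly drops'' the conjugating length. That is a misreading: Lemma~\ref{L:CancellableFromOneSide} gives a strict drop $l(g_{i\pm1}')<l(g_{i\pm1})$ only for \emph{one-sided} cancellability. Proposition~\ref{CancellabilityMeansShorterConjugateLength} treats precisely the complementary case where $l(g_{i\pm1}')\geq l(g_{i\pm1})$ (so $t_i$ is cancellable \emph{two-sidedly}), and its conclusion is $l(g_{i\pm1}')=l(g_{i\pm1})>l(g_i)$ --- equality, not a drop. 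In that case the rewrite $C_iC_{i+1}=C_{i+1}'C_i$ gives a new tuple $w_i$ with $NC(w_i)=n$ and $\sum_k l(\cdot)$ unchanged, so your descent stalls. The paper handles this by introducing a third tiebreaker, $N(v)$, the number of pairs $(p,q)$ with $p<q$ and $l(h_p)<l(h_q)$; since $l(g_{i+1}')=l(g_{i+1})>l(g_i)$, the swap decreases $N$ by exactly one, giving a genuinely well-founded ordering on $\mathbb{N}^3$. Without some such third coordinate (or an equivalent device), the ``non-cancellable'' part of tameness does not follow from your minimality, and the whole reduction is incomplete.

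Two smaller points. First, your descent should be taken over \emph{all} tuples whose product lands in $G_{i_0}\setminus P_{i_0}$, not over representations of a fixed $\tau$: the $C$-absorption move (paper's item (1)(b)) replaces $T$ by $T'=Tc_i^{-1}$, so the product genuinely changes; minimizing ``expressions of $\tau$'' would not accommodate this. Second, your final case split ``if $\tau\in C$ / if $\tau\notin C$'' is harmless but the first branch is vacuous: the length bound gives $l(\tau)\geq n\geq 1$, so $\tau\notin C$ always.
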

\begin{proof} Fix an $i_0\in I$ and let $S=\cup_{i\in I} P_i$. We want to prove that $G_{i_0}\cap \mathrm{NSS}_G(S) = P_{i_0}$. Since the inclusion $\supseteq$ is clear, we only need to prove the other inclusion.

Let $\mathcal S$  be the set of tuples $((s_1,h_1),\dotsc,(s_k,h_k))$ such that $k\geq1$, $s_i\in S$, $h_i\in G$ and $s_1^{h_1}\dotsm s_k^{h_k}\in G_{i_0} \backslash P_{i_0}$. Suppose $\mathcal S$ is non-empty; we will find a contradiction. This will complete the proof. 

For $w=((s_1,h_1),\dotsc,(s_k,h_k))$ in $\mathcal S$, we associate three nonnegative integers to $w$: $NC(w)=k$, $L(w)=\sum_{i=1}^kl(h_i)$, and $N(w)$, which is the number of pairs $(i,j)$ with $i,j \in \{1, \ldots, k\}$ such that $i<j$ and $l(h_i)<l(h_j)$. 

Choose $v_0=((t_1,g_1),\dotsc,(t_n,g_n))\in \mathcal S$ such that $(NC(v),L(v),N(v))$ is minimal, where the set $\mathbb N^3$ is given the lexicographic order: $(a_1,a_2,a_3)<(b_1,b_2,b_3)$ if $a_1<b_1$ or $a_1=b_1$ and $a_2<b_2$ or $a_1=b_1,a_2=b_2$ and $a_3<b_3$. We will finish the proof by showing that $v_0$ must be tamed. Then by Proposition \ref{P:l(T_i)IsAtLeasti}, $l(t_1^{g_1} \cdots t_n^{g_n})\geq n$ and thus $n=1$ as $t_1^{g_1} \cdots t_n^{g_n}\in G_{i_0}$.  But then the product $t_1^{g_1} \cdots t_n^{g_n}$ reduces to $t_1^{g_1}$ with $l(t_1^{g_1})=1$. As $(t_1,g_1)$ is tamed, the product $t_1^{g_1}$ is reduced and $l(t_1)=1$, we have $t_1 \in G_{i_0}\backslash C$ and $g_1 \in C$, recalling that $t_1^{g_1} \in G_{i_0}$.  But this leads to $t_1 \in (G_{i_0}\backslash C)\cap S=P_{i_0}\backslash C$ and then $t_1^{g_1} \in P_{i_0}$, a contradiction as desired.

We use the minimality of $v_0$ and $S\cap C=P_i\cap C$ ($i\in I$) to check that $v_0$ is tamed as follows.
 \begin{enumerate}[(1)]

\item \label{Item:ProofOfTheorem{T:TauInSAIfInC}}We first check that $l(t_i) = 1$ for all $i=1, \ldots, n$.  We only need to show that $t_i \notin C$, since $t_i \in S$ already implies $l(t_i) \leq 1$.  Assume that $t_i$ is in $C$, then it is in $S\cap C=P_j\cap C$ for all $j\in I$.  Consider two possible cases.
\begin{enumerate}
    \item \label{Item:lgigeq1TauInSAIfInC} $l(g_i) \geq 1$. In this case, we can write $g_i = c_i x_1 \cdots x_m$ as an alternating product with $m \geq 1$.  Set $t_i' = t_i^{c_ix_1}$ and $g_i' = x_2 \cdots x_m$.  Then observe that $t_i' \in S$, $g_i' \in G$ and $$G_{i_0}\backslash P_{i_0}\ni t_1^{g_1} \cdots t_n^{g_n}= t_1^{g_1} \cdots t_{i-1}^{g_{i-1}} (t_i')^{g_i'}t_{i+1}^{g_{i+1}} \cdots t_n^{g_n}.$$
 So $w:=((t_1,g_1),\dotsc,(t_{i-1},g_{i-1}), (t_i',g_i'),(t_{i+1},g_{i+1}),\dotsc,(t_n,g_n))$ is in $\mathcal{S}$. But $w$ satisfies $NC(w)=NC(v_0)$ and $L(w)<L(v_0)$, contradicting the minimality of $v_0$.
\item \label{Item:lgieq1TauInSAIfInC}$l(g_i) =0$. This means $g_i \in C$ and then we have $c_i:=t_i^{g_i} \in P_{i_0}\cap C$, as $t_i\in P_{i_0}\cap C$. So we have $n\geq2$. Letting $g_j' = g_jc_i^{-1}$ for $j=i+1, \ldots, n$, we have
\[ T:=t_1^{g_1} \cdots t_n^{g_n} =t_1^{g_1} \cdots t_{i-1}^{g_{i-1}} c_it_{i+1}^{g_{i+1}} \cdots t_n^{g_n}c_i^{-1} c_i = t_1^{g_1} \cdots t_{i-1}^{g_{i-1}} t_{i+1}^{g_{i+1}'} \cdots t_n^{g_n'}c_i.
\]
 Now, $c_i \in C \subseteq G_{i_0}$ and by assumption $T \in G_{i_0}$, so $T':=t_1^{g_1} \cdots t_{i-1}^{g_{i-1}} t_{i+1}^{g_{i+1}'} \cdots t_n^{g_n'} \in G_{i_0}$.  Moreover, $T' \not\in P_{i_0}$, because otherwise, since $c_i\in P_{i_0}$, we have $T=T'c_i\in P_{i_0}$, a contradiction.  Letting 
$w=((t_1,g_1),\dotsc,(t_{i-1},g_{i-1}),(t_{i+1},g_{i+1}'),\dotsc,(t_n,g_n'))$, we can now conclude that $w$ is in $\mathcal{S}$, 
a contradiction since $NC(w) < NC(v_0)$.
\end{enumerate}
 \item Then we show that each $t_i^{g_i}$ is a reduced product.  Fix $i$ and suppose $t_i^{g_i}$ is not reduced.  Then, writing $g_i = c_i x_1 \cdots x_m$ as an alternating product, there is a $j\in I$ such that $t_i, x_1 \in G_j\backslash C$. We set $t_i' = t_i^{c_ix_1}$ and $g_i' = x_2 \cdots x_m$ and argue as in the previous case \ref{Item:lgigeq1TauInSAIfInC} that 
 $$w:=((t_1,g_1),\dotsc,(t_{i-1},g_{i-1}), (t_i',g_i'),(t_{i+1},g_{i+1}),\dotsc,(t_n,g_n)) \in \mathcal{S},$$
which contradicts the minimality of $v_0$.

\item  \label{Item:ltiti+1T:TauInSAIfInC}Next, we assume that $c_i:=g_ig_{i+1}^{-1}\in C$, and we will show $l(t_it_{i+1})=2$.
Note that $t_i^{g_i}t_{i+1}^{g_{i+1}}=(t_it_{i+1}^{c_i^{-1}})^{g_i}={t_i'}^{g_i}$, where $t_i':=t_it_{i+1}^{c_i^{-1}}$.  If $l(t_it_{i+1})\leq 1$, then $t_i$ and $t_{i+1}$ are in the same set $P_j \setminus C$ for a $j\in I$, implying $t_i'\in S$, which leads to the existence of a tuple $w \in \mathcal S$ with $NC(w)<NC(v_0)$, a contradiction as desired.

\item \label{Item:Non-cancellableProofOfT:TauInSAIfInC}Last, we show that no $t_i$ is cancellable. Assume that $t_i$ is cancellable for a fixed $i$ with $1\leq i\leq n$. We will find a contradiction using Proposition \ref{CancellabilityMeansShorterConjugateLength} and the notations there. Letting $C_j'=t_j^{g_j'}$ for $j=i-1,i+1$, we have $C_{i-1}C_i=C_iC_{i-1}'$ and $C_iC_{i+1}=C_{i+1}'C_i$. 
Using these identities, we rewrite the product $C_1\dotsm C_n$ and consider the corresponding tuples
\begin{align*}
    u_i&:=((t_1,g_1),\ldots,(t_i,g_i),(t_{i-1},g_{i-1}'),(t_{i+1},g_{i+1}),\ldots,(t_n,g_n)),\\
    w_i&:=((t_1,g_1),\ldots,(t_{i-1},g_{i-1}),(t_{i+1},g_{i+1}'),(t_i,g_i),\ldots,(t_n,g_n)).
 \end{align*} 
If $l(g_{i-1}')<l(g_{i-1})$ (resp. $l(g_{i+1}')<l(g_{i+1})$), then $i>1$, $u_i\in \mathcal S$ and $L(u_i)<L(v_0)$ (resp. $i<n$, $w_i\in\mathcal S$ and $L(w_i)<L(v_0)$), contradicting the minimality of $v_0$. 
Therefore $l(g_{i-1}')\geq l(g_{i-1})$ and $l(g_{i+1}')\geq l(g_{i+1})$. Then $i<n$, since otherwise $t_n$ is cancelable from LHS (as $g_{n+1}=1)$, implying that $l(g_{i-1}')< l(g_{i-1})$ by Lemma \ref{L:CancellableFromOneSide}, a contradiction. Now $w_i\in\mathcal S$, and $l(g_{i+1}')=l(g_{i+1})>l(g_i)$ by Proposition \ref{CancellabilityMeansShorterConjugateLength}, implying that $L(w_i)=L(v_0)$ while $N(w_i)<N(v_0)$, a contradiction.
 \end{enumerate}
\end{proof}

\begin{proposition}\label{P:SASBN}
Consider the amalgam $G= \ast_CG_i$ ($i\in I)$. Fix an $i_0\in I$ and an $R\subseteq G_{i_0}$.  Then there exist a normal subsemigroup $P_i$ of $G_i$ for each $i\in I$, such that
\begin{align*}
    (1)~ R \subseteq P_{i_0}, \quad (2)~ P_{j} \cap C = P_{j'} \cap C, \forall j,j'\in I ,\quad (3) ~P_{i_0} \subseteq \mathrm{NSS}_G(R).
\end{align*}
\end{proposition}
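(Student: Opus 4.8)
The plan is to build the subsemigroups $P_i$ explicitly as the intersection of $G_i$ with a single normal subsemigroup of $G$, and then check conditions (1)--(3) more or less by construction, using Theorem~\ref{T:TauInSAIfInC} as the engine that makes the "normalization across $C$" work. Concretely, first I would set $S_0 = R$ and consider the normal subsemigroup $N_0 = \mathrm{NSS}_G(R) \subseteq G$. The naive guess $P_i := N_0 \cap G_i$ satisfies (1) (since $R \subseteq N_0 \cap G_{i_0}$, as $R \subseteq G_{i_0}$ and $R \subseteq N_0$ when $R \neq \emptyset$; the $R = \emptyset$ case is handled separately by taking all $P_i = \emptyset$) and each $P_i$ is automatically a normal subsemigroup of $G_i$. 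The content of the proposition is then condition (2), namely that these intersections agree on $C$, and condition (3), namely $P_{i_0} = N_0 \cap G_{i_0} \subseteq \mathrm{NSS}_G(R)$ --- but (3) is immediate from $N_0 = \mathrm{NSS}_G(R)$. So the real work is arranging (2).

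The issue is that $N_0 \cap C$ need not a priori be "the same from the point of view of each $G_i$" in the sense the statement wants --- but actually $N_0 \cap C$ is a single subset of $C$, and $P_i \cap C = (N_0 \cap G_i) \cap C = N_0 \cap C$ for every $i$, since $C \subseteq G_i$. So with $P_i := N_0 \cap G_i$, condition (2) holds trivially: each $P_j \cap C$ equals $N_0 \cap C$. Thus all three conditions hold for this choice, and the proposition is essentially a matter of writing down the right definition. I would present it this way: define $P_i = \mathrm{NSS}_G(R) \cap G_i$; verify (2) because $P_j \cap C = \mathrm{NSS}_G(R) \cap C$ independently of $j$; verify (3) because $P_{i_0} \subseteq \mathrm{NSS}_G(R)$ by definition; and verify (1) because $R \subseteq \mathrm{NSS}_G(R) \cap G_{i_0} = P_{i_0}$ (treating $R = \emptyset$ by taking every $P_i$ empty, which is permitted since the paper allows empty normal subsemigroups).

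The step I expect to be the true substance --- and where Theorem~\ref{T:TauInSAIfInC} enters --- is not the construction but rather its companion role: with this choice of $P_i$, Theorem~\ref{T:TauInSAIfInC} gives $G_i \cap \mathrm{NSS}_G(\cup_i P_i) = P_i$, and since $\cup_i P_i \subseteq \mathrm{NSS}_G(R)$ we get $\mathrm{NSS}_G(\cup_i P_i) \subseteq \mathrm{NSS}_G(R)$, so in fact $G_i \cap \mathrm{NSS}_G(R) \subseteq G_i \cap \mathrm{NSS}_G(\cup_i P_i) = P_i = G_i \cap \mathrm{NSS}_G(R)$; this is consistent but adds nothing new here --- the Proposition as stated is purely a packaging lemma feeding \emph{into} an application of Theorem~\ref{T:TauInSAIfInC} elsewhere. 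Hence the only genuine care needed is (a) checking the degenerate case $R = \emptyset$, and (b) making sure one reads condition (2) as "equal as subsets of $C$," which holds automatically once the $P_i$ are cut out by intersecting a common normal subsemigroup of $G$ with the $G_i$. I would write the proof in four or five lines along exactly these lines.
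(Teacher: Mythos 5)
Your proof is correct, and it is genuinely simpler than the one in the paper. Your choice $P_i := \mathrm{NSS}_G(R)\cap G_i$ works: $\mathrm{NSS}_G(R)$ is a normal subsemigroup of $G$, and intersecting a $G$-normal subsemigroup with a subgroup $G_i$ yields a $G_i$-normal subsemigroup of $G_i$; property (2) then holds with $P_j\cap C=\mathrm{NSS}_G(R)\cap C$ for every $j$ because $C\subseteq G_j$, and (1), (3) are immediate. The degenerate case $R=\emptyset$ is handled by the paper's convention that empty normal subsemigroups are allowed, as you note.

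The paper instead uses an iterative construction: it builds sets $S_j^{(i)}$ by alternating ``take $\mathrm{NSS}$ inside $G_{i_0}$'' and ``push the intersection with $C$ into the other factors,'' and sets $P_j=\bigcup_i S_j^{(i)}$. That construction produces what one might call the \emph{smallest} family $(P_j)$ satisfying (1) and (2), and in particular each $P_j$ is defined intrinsically from $R$, $C$ and the factor groups $G_j$ without reference to the amalgam $G$; this is a feature worth having if one wishes to compute the $P_j$ in concrete examples, but the proposition as stated does not demand minimality or intrinsicness, so your shortcut discharges it. One small correction to your commentary: Theorem~\ref{T:TauInSAIfInC} plays no role in proving this proposition at all, not even as a ``consistency check'' --- it is used only afterwards, in Theorem~\ref{T:NecessaryAndSufficient4SubsetGTF}, and your proof rightly does not rely on it. The minor redundancy in your second paragraph (where you first raise the worry that $N_0\cap C$ might not ``agree across the $G_i$'' and then immediately observe it is a single subset of $C$) could be cut; the observation that $C\subseteq G_j$ makes (2) tautological is the whole point and deserves to be stated once, cleanly.
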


\begin{proof}
 We first define sets $S_j^{(i)}$ for all $i\in\mathbb N$ and $j\in I$. Let $S_{i_0}^{(0)}=\mathrm{NSS}_{G_{i_0}}(R)$ and then for $i=0,1,2,\dotsc$, we iteratively define 
$$S_j^{(i)}=\mathrm{NSS}_{G_j}(S_{i_0}^{(i)}\cap C) \text{ for each } j\in I\backslash\{i_0\}\text{ and } S_{i_0}^{(i+1)}=\mathrm{NSS}_{G_{i_0}}\left(R\cup\big(\cup_{j\neq i_0}(S_j^{(i)}\cap C)\big)\right).$$
 Now take  $P_j=\cup_{i\geq0}S_j^{(i)}$ for each $j\in I$.  We check that these sets have the required properties.

 First, for every $j\in I\backslash\{i_0\}$ and $i\geq0$, we have $S_{i_0}^{(i)}\cap C\subseteq S_{j}^{(i)}\cap C\subseteq S_{i_0}^{i+1}\cap C\subseteq S_{j}^{i+1}\cap C$. 
 Therefore, for all $j\in I$, we have $S_j^{(0)}\cap C\subseteq S_j^{(1)}\cap C\subseteq S_j^{(2)}\cap C\subseteq\dotsm$, which leads to $S_j^{(0)}\subseteq S_j^{(1)}\subseteq S_j^{(2)}\subseteq\dotsm$, implying that  $P_j$ is a normal subsemigroup of $G_j$. 

 The inclusion (1) is true by definition. To prove (2), we only need to show $P_{i_0}\cap C=P_j\cap C$ for all $j\in I$, which holds because for $j\in I\backslash\{i_0\}$ we have
 \begin{align*}
     \bigcup_{i\geq0}(S_{i_0}^{(i)}\cap C)\subseteq \bigcup_{i\geq0}(S_{j}^{(i)}\cap C)\subseteq \bigcup_{i\geq0}(S_{i_0}^{(i+1)}\cap C)=\bigcup_{i\geq1}(S_{i_0}^{(i)}\cap C)\subseteq \bigcup_{i\geq0}(S_{i_0}^{(i)}\cap C),
 \end{align*} where the first and the last sets are $P_{i_0}\cap C$ and the second is $P_j\cap C$.

 To prove (3) it suffices to prove that $S_j^{(i)}\subseteq N:=\mathrm{NSS}_G(R)$ for all $i\geq0$ and $j\in I$, which can be shown by induction on $i$. First, $S_{i_0}^{(0)}\subseteq N$ and then $S_{j}^{(0)}\subseteq N$ for $j\neq i_0$. Second, fix an $i$ and assume that $S_j^{(i)}\subseteq N$ for all $j\in I$, then we have, using also $R\subseteq N$,
 \begin{align*}
     &S_{i_0}^{(i+1)}=\mathrm{NSS}_{G_{i_0}}\left(R\cup\left(\cup_{j\neq i_0}(S_j^{(i)}\cap C)\right)\right)\subseteq \mathrm{NSS}_{G_{i_0}}\left(N\cup\big(\cup_{j\neq i_0}(N\cap C)\big)\right)
     \subseteq \mathrm{NSS}_{G}(N)=N,
 \end{align*} and thus for $j\neq i_0$, we have $S_j^{(i+1)}=\mathrm{NSS}_{G_j}(S_{i_0}^{(i+1)}\cap C)\subseteq\mathrm{NSS}_{G}(N\cap C)\subseteq N$. This finishes the proof of (3).
\end{proof}

The next result follows immediately by combining Theorem \ref{T:TauInSAIfInC} and Proposition \ref{P:SASBN}. It gives a necessary and sufficient condition which guarantees that an element of  $G_{i_0} \subseteq \ast_CG_i$ is a not a generalized torsion element of $G=\ast_CG_i$.

\begin{theorem}\label{T:NecessaryAndSufficient4SubsetGTF}
Consider the amalgam $G= \ast_iG_i$ ($i\in I)$. Fix an $i_0\in I$ and let $R\subseteq G_{i_0}\backslash\{1\}$. Then $1\not\in \mathrm{NSS}_G(R)$ if and only if there exist a normal subsemigroup $P_j$ of $G_j$ for each $j\in I$ such that $R \subseteq P_{i_0}$, $1 \notin P_{i_0}$, and $P_{j}\cap C=P_{j'}\cap C$ for all $j,j'\in I$. 
\end{theorem}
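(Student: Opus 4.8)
The statement is designed to follow ``immediately'' by combining Theorem \ref{T:TauInSAIfInC} and Proposition \ref{P:SASBN}, and that is exactly the route I would take. The two directions are quite asymmetric in difficulty.

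For the ``if'' direction, suppose we are handed normal subsemigroups $P_j \trianglelefteq G_j$ with $R \subseteq P_{i_0}$, $1 \notin P_{i_0}$, and $P_j \cap C = P_{j'} \cap C$ for all $j,j' \in I$. First I would note that $\mathrm{NSS}_G(R) \subseteq \mathrm{NSS}_G(\cup_{i\in I} P_i)$, since $R \subseteq P_{i_0} \subseteq \cup_{i\in I} P_i$ and $\mathrm{NSS}_G$ is monotone. Then Theorem \ref{T:TauInSAIfInC} applies to the family $(P_i)_{i\in I}$ (its hypothesis $P_i \cap C = P_j \cap C$ is exactly what we have assumed), giving $G_{i_0} \cap \mathrm{NSS}_G(\cup_{i\in I} P_i) = P_{i_0}$. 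Since $1 \in G_{i_0}$ but $1 \notin P_{i_0}$, we conclude $1 \notin \mathrm{NSS}_G(\cup_{i\in I} P_i)$, hence $1 \notin \mathrm{NSS}_G(R)$. This direction is essentially a one-line deduction.

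For the ``only if'' direction, assume $1 \notin \mathrm{NSS}_G(R)$. Apply Proposition \ref{P:SASBN} to $i_0$ and $R$: this produces normal subsemigroups $P_j \trianglelefteq G_j$ ($j \in I$) satisfying $R \subseteq P_{i_0}$, $P_j \cap C = P_{j'} \cap C$ for all $j,j'$, and $P_{i_0} \subseteq \mathrm{NSS}_G(R)$. The only remaining point is to verify $1 \notin P_{i_0}$: this is immediate from $P_{i_0} \subseteq \mathrm{NSS}_G(R)$ together with the hypothesis $1 \notin \mathrm{NSS}_G(R)$. Thus the $(P_j)_{j\in I}$ furnished by Proposition \ref{P:SASBN} are exactly the required family.

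\textbf{Main obstacle.} Honestly, there is no real obstacle here: all the content has already been packaged into Theorem \ref{T:TauInSAIfInC} and Proposition \ref{P:SASBN}, and this statement is the clean logical combination of the two. The only thing to be slightly careful about is the monotonicity/substitution step in the ``if'' direction (replacing $R$ by $\cup_{i\in I} P_i$ before invoking Theorem \ref{T:TauInSAIfInC}), and the trivial observation that $1 \notin \mathrm{NSS}_G(R)$ transfers to $1 \notin P_{i_0}$ via the inclusion $P_{i_0} \subseteq \mathrm{NSS}_G(R)$. So the ``proof'' is really just: ``Combine Theorem \ref{T:TauInSAIfInC} and Proposition \ref{P:SASBN}, noting that $1 \notin \mathrm{NSS}_G(R)$ is equivalent to $1 \notin P_{i_0}$ given the inclusions $R \subseteq P_{i_0} \subseteq \mathrm{NSS}_G(R)$.''
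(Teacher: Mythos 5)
Your proof is correct and follows exactly the same route as the paper's: the ``if'' direction by applying Theorem \ref{T:TauInSAIfInC} to $\cup_{j\in I}P_j$ and using $R\subseteq P_{i_0}$ and $1\notin P_{i_0}$, and the ``only if'' direction by invoking Proposition \ref{P:SASBN} and reading off $1\notin P_{i_0}$ from the inclusion $P_{i_0}\subseteq\mathrm{NSS}_G(R)$.
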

\begin{proof}
Suppose there are such $P_j$ ($j\in I$), then by Theorem \ref{T:TauInSAIfInC}, we have $G_{i_0}\cap \mathrm{NSS}_G(\cup_{j\in I}P_j)=P_{i_0}.$ Since $1\in G_{i_0}$ and $1\not\in P_{i_0}$, we have $1\not\in \mathrm{NSS}_G(\cup_{j\in I}P_j)$. Therefore $1\not\in \mathrm{NSS}_G(R)$, as $R\subseteq P_{i_0}$.

On the other hand, assume that $1\not\in \mathrm{NSS}_G(R)$. Then apply Proposition \ref{P:SASBN}, and note that $P_{i_0} \subseteq \mathrm{NSS}_G(R)$ guarantees that $1 \notin P_{i_0}$.
\end{proof}

While the previous theorem allows us to rule out generalized torsion in $G$ arising from elements of $\cup_{j\in I} G_j$, we need the following result to deal with other elements in the amalgam.

\begin{definition}(\cite{CH21})\label{D:RTF}
Let $H$ be a subgroup of a group $G$.  The subgroup $H$ is called \textbf{relatively torsion-free}, or \textbf{RTF} for short, if for all $g \in G \setminus H$ and all $h_1, \ldots, h_k \in H$ ($k \geq 1$), we have
\[ gh_1gh_2 \cdots gh_k \neq 1.
\]
\end{definition}

In the proof below, we use $\mathrm{scl}(g)$ to denote the stable commutator length of $g$. (See \cite{Ca2009} for background on stable commutator length.)

\begin{proposition}
\label{prop:hypelements}
Consider the amalgam $G= \ast_iG_i$ ($i\in I)$. Assume that $g\in G$ is not conjugate into $\cup_{i\in I}G_i$.  If $C$ is RTF in $G_i$ for all $i\in I$, then $g$ is not generalized torsion.    
\end{proposition}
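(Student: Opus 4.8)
The plan is to combine the stable commutator length (scl) machinery referenced in the excerpt: an \emph{upper} bound on $\mathrm{scl}$ coming from the RTF hypothesis via the techniques of \cite{CH21}, and a \emph{lower} bound on $\mathrm{scl}$ for elements not conjugate into a factor of an amalgam coming from \cite{IMT19}. The point is that generalized torsion elements always have $\mathrm{scl}$ equal to zero: if $g^{h_1}\cdots g^{h_n}=1$ then $g^n = (g^{-h_1}g)(g^{-h_2}g)\cdots$, more precisely one checks that $g^n$ is a product of $n$ commutators (a standard computation: $g^{h_1}\cdots g^{h_n}=1$ rewrites as $g^n=\prod_{j}(g^{-1}g^{h_j})^{c_j}$ for suitable $c_j$, each factor a commutator, possibly after grouping), so $\mathrm{scl}(g)\le \lim_{k}\tfrac{kn/2}{kn}=\tfrac12$ along the powers $g^{kn}$, and in fact the homogeneity of $\mathrm{scl}$ together with $g^n$ being a product of a \emph{fixed} number of commutators forces $\mathrm{scl}(g)=0$. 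So it suffices to show that any $g\in G$ not conjugate into $\cup_{i\in I}G_i$ has $\mathrm{scl}(g)>0$.

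First I would recall (or cite) the precise statement from \cite{CH21} that the RTF condition on $C\le G_i$ for every $i$ implies a positive lower bound on $\mathrm{scl}_G(g)$ — indeed \cite{CH21} is designed exactly to produce such gaps (a ``scl spectral gap'' of $1/2$) for elements of amalgamated free products that are not conjugate into a vertex group, under a malnormality-type or RTF-type hypothesis on the edge group. Concretely, the combination of the upper-bound technology of \cite{CH21} and the lower bound of \cite{IMT19} (Theorem on scl of hyperbolic-type elements in graphs of groups) should give $\mathrm{scl}_G(g)\ge 1/2$, or at least $\mathrm{scl}_G(g)>0$, for every $g$ not conjugate into some $G_i$, provided $C$ is RTF in each $G_i$. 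I would isolate this as the key input, stated as: under the hypotheses, $\mathrm{scl}_G(g)>0$.

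Then the argument concludes quickly. Suppose for contradiction that $g$ is generalized torsion, so there are $h_1,\dots,h_n\in G$ with $g^{h_1}\cdots g^{h_n}=1$. A direct manipulation shows $g^n$ lies in the commutator subgroup and is a product of at most $n$ (or $n-1$) commutators; hence $\mathrm{scl}_G(g^n)\le n/2$, and by homogeneity $\mathrm{scl}_G(g)=\tfrac1n\mathrm{scl}_G(g^n)$, which is finite — but more sharply, since $g^{kn}=(g^n)^k$ is a product of a number of commutators growing \emph{linearly} in $k$ (namely at most $kn$ of them, but actually the relation $g^{h_1}\cdots g^{h_n}=1$ gives $g^{Nn}$ as a product of $Nn$ copies of commutators of the fixed form $[g,\cdot]$, whose count does not force genus growth) one gets $\mathrm{scl}_G(g)\le \tfrac{n-?}{2n}$, and iterating / taking powers of the relation drives this to $0$. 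The cleanest phrasing: a standard lemma (e.g.\ in \cite{Ca2009}) states that if some positive power of $g$ is a product of commutators whose \emph{number per power} is bounded, then $\mathrm{scl}_G(g)=0$; generalized torsion gives exactly this. Combined with $\mathrm{scl}_G(g)>0$ from the previous paragraph, we reach a contradiction, so $g$ is not generalized torsion.

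The main obstacle I anticipate is pinning down the exact form of the lower bound on $\mathrm{scl}$ that follows from the RTF hypothesis. The results of \cite{CH21} and \cite{IMT19} are stated under their own hypotheses (malnormality, or specific amalgam structures), so the work is in verifying that ``$C$ is RTF in every $G_i$'' is precisely (or implies precisely) what is needed to invoke them — that RTF is the right hypothesis guaranteeing no ``chain'' in the relevant complex collapses, giving the positive scl gap. Once that bridge is in place, the generalized-torsion-implies-$\mathrm{scl}=0$ step is routine. I would therefore structure the proof as: (i) cite the scl lower bound from RTF; (ii) the elementary observation that generalized torsion forces $\mathrm{scl}=0$; (iii) combine.
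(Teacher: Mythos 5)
Your overall strategy is the same as the paper's: pit a lower bound on $\mathrm{scl}_G(g)$ coming from the RTF hypothesis against an upper bound on $\mathrm{scl}$ for generalized torsion elements, and derive a contradiction. The genuine gap is in how you justify the upper bound. You claim that generalized torsion forces $\mathrm{scl}(g)=0$, and argue that because $g^n$ is a product of a fixed number of commutators, homogeneity drives $\mathrm{scl}(g)$ to $0$. That reasoning is incorrect. If $g^n$ is a product of $k$ commutators then $\mathrm{scl}(g)=\tfrac1n\mathrm{scl}(g^n)\le\tfrac{k-\tfrac12}{n}$, a fixed positive constant; it does not tend to $0$ unless the number of commutators needed to express $g^m$ grows \emph{sublinearly} in $m$, and a single relation $g^{h_1}\cdots g^{h_n}=1$ only yields $\mathrm{cl}(g^{kn})$ growing linearly in $k$ (roughly $kn$ commutators, after the standard rewriting into conjugates of $[g,h_i^{\pm1}]$), so the ratio does not go to $0$. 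Indeed generalized torsion does \emph{not} imply $\mathrm{scl}=0$ in general. The correct input, and what the paper actually cites, is \cite[Theorem 2.4]{IMT19}: a generalized torsion element has $\mathrm{scl}<\tfrac12$, and no more.

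Because that upper bound is only $<\tfrac12$, your hedge ``or at least $\mathrm{scl}_G(g)>0$'' does not close the argument. You need the sharp gap $\mathrm{scl}_G(g)\ge\tfrac12$ from \cite[Theorem A]{CH21}. That theorem applies here after one observation you leave implicit: the vertex stabilizers of the Bass--Serre tree of $\ast_C G_i$ are exactly the conjugates of the $G_i$, so ``$g$ not conjugate into any $G_i$'' translates to ``$g$ fixes no vertex of the tree,'' which is the hyperbolicity hypothesis under which the RTF condition on $C$ forces the $\tfrac12$ lower bound. With $\mathrm{scl}_G(g)\ge\tfrac12$ on one side and $\mathrm{scl}<\tfrac12$ for generalized torsion elements on the other, the two specific constants meet exactly, and it is this precise matching---not a cheap $\mathrm{scl}=0$ statement---that closes the proof. (You also swap the roles of the two references at the outset: \cite{CH21} provides the lower bound on the hyperbolic element, \cite{IMT19} the upper bound on generalized torsion elements. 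That is cosmetic next to the $\mathrm{scl}=0$ claim, but worth fixing.)
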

\begin{proof}
The usual construction of the Bass-Serre tree associated with the amalgam $\ast_C G_i$ provides a tree $\Gamma$ such that the vertex stabilizers of the $G$-action on $\Gamma$ are precisely the conjugate subgroups $h^{-1}G_ih$, $h \in G, i\in I$.  Thus, if $g\in G$ is not conjugate into $\cup_{i\in I}G_i$, then $g$ stabilizes no vertices of $\Gamma$.  Consequently, if $C$ is RTF in all $G_i$ ($i\in I$) then $\mathrm{scl}(g) \geq \frac{1}{2}$ by \cite[Theorem A]{CH21}.  On the other hand, generalized torsion elements of $G$ have $\mathrm{scl}$ strictly less than $\frac{1}{2}$ by \cite[Theorem 2.4]{IMT19}, so $g$ cannot be generalized torsion.
\end{proof}

Combining Theorem \ref{T:NecessaryAndSufficient4SubsetGTF} and Proposition \ref{prop:hypelements}, we can give a sufficient condition that the amalgam
contains no generalized torsion. It is worth noting the similarity between the conditions in the theorem below and those in \cite[Theorem A]{BG09}.

\begin{theorem}\label{T:GTFAmalgamIffFamilies}
Let $G=\ast_C G_i$ be the free product of groups $G_i$ ($i\in I$), amalgamated by a common subgroup $C$. Then no element of $\cup_{i\in I}G_i$ is generalized torsion if and only if there is a family $\mathcal F_i$ of normal subsemigroups of $G_i$ for each $i\in I$, satisfying:
 \begin{enumerate}
 \item $G_i\backslash\{1\}=\bigcup_{P\in\mathcal F_i}P$ for every $i\in I$;
 \item For all $i, j\in I$ and each $P\in\mathcal F_i$, there is a $Q\in\mathcal F_j$ such that $P\cap C=Q\cap C$.
 \end{enumerate} 
If moreover $C$ is RTF in $G_i$ for all $i\in I$, then $G$ is GTF. 
\end{theorem}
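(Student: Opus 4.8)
\textbf{Proof plan for Theorem \ref{T:GTFAmalgamIffFamilies}.}

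The plan is to deduce the biconditional directly from Theorem \ref{T:NecessaryAndSufficient4SubsetGTF}, and then to combine with Proposition \ref{prop:hypelements} for the final ``moreover'' clause. For the forward direction, suppose no element of $\cup_{i\in I}G_i$ is generalized torsion. Fix $i_0\in I$ and an element $r\in G_{i_0}\backslash\{1\}$; since $r$ is not generalized torsion we have $1\notin\mathrm{NSS}_G(\{r\})$, so Theorem \ref{T:NecessaryAndSufficient4SubsetGTF} (applied with $R=\{r\}$) produces normal subsemigroups $P_j^{(r)}$ of $G_j$ for each $j\in I$ with $r\in P_{i_0}^{(r)}$, $1\notin P_{i_0}^{(r)}$, and $P_j^{(r)}\cap C=P_{j'}^{(r)}\cap C$ for all $j,j'$. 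The idea is then to let $\mathcal F_i$ be the collection of all subsemigroups obtained this way, i.e. $\mathcal F_i=\{P_i^{(r)}\mid r\in G_{i_0}\backslash\{1\},\ i_0\in I\}$ (ranging over all choices of $i_0$ and $r$). Property (1) holds because for any $g\in G_i\backslash\{1\}$, taking $i_0=i$ and $r=g$ gives $g\in P_i^{(g)}\in\mathcal F_i$. Property (2) holds because the whole family indexed by a single $(i_0,r)$ has the property that all its members have the same intersection with $C$: given $P\in\mathcal F_i$, say $P=P_i^{(r)}$ for some choice, then $Q:=P_j^{(r)}\in\mathcal F_j$ satisfies $P\cap C=Q\cap C$.

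For the converse, suppose such families $\mathcal F_i$ exist, and let $g\in G_{i_0}\backslash\{1\}$ for some $i_0$. By (1) there is $P\in\mathcal F_{i_0}$ with $g\in P$; moreover $1\notin P$ since $P\subseteq G_{i_0}\backslash\{1\}$ (this uses (1) to know that every member of $\mathcal F_{i_0}$ is contained in $G_{i_0}\backslash\{1\}$, because if some $P'\in\mathcal F_{i_0}$ contained $1$ then, being a normal subsemigroup, $P'=G_{i_0}$, contradicting $P'\subseteq G_{i_0}\backslash\{1\}$; alternatively one observes directly that $P'$ cannot contain $1$ without equalling $G_{i_0}$). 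Then, using (2), for each $j\in I$ pick $Q_j\in\mathcal F_j$ with $Q_j\cap C=P\cap C$; taking $P_{i_0}:=P$ and $P_j:=Q_j$ for $j\neq i_0$, one checks $R:=\{g\}\subseteq P_{i_0}$, $1\notin P_{i_0}$, and $P_j\cap C=P_{j'}\cap C$ for all $j,j'$, so Theorem \ref{T:NecessaryAndSufficient4SubsetGTF} gives $1\notin\mathrm{NSS}_G(\{g\})$, i.e. $g$ is not generalized torsion. As $g$ was arbitrary, no element of $\cup_{i\in I}G_i$ is generalized torsion.

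Finally, for the ``moreover'' clause, assume in addition that $C$ is RTF in $G_i$ for all $i\in I$. An arbitrary nontrivial $g\in G$ is either conjugate into $\cup_{i\in I}G_i$ or not. In the first case, $g^h\in G_j\backslash\{1\}$ for some $h\in G$, $j\in I$; by the biconditional just proved, $g^h$ is not generalized torsion, hence neither is $g$ (being generalized torsion is a conjugacy-invariant property, since $1\in\mathrm{NSS}_G(\{g\})$ iff $1\in\mathrm{NSS}_G(\{g^h\})$). In the second case, Proposition \ref{prop:hypelements} applies directly and shows $g$ is not generalized torsion. Thus $G$ has no generalized torsion elements, i.e. $G$ is GTF.

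The routine but slightly fiddly point to get right is the bookkeeping in the forward direction: the family $\mathcal F_i$ must be assembled from the outputs of Theorem \ref{T:NecessaryAndSufficient4SubsetGTF} applied for \emph{all} choices of index $i_0$ and element $r$, and one must verify that condition (2) — which quantifies over a single $P$ at a time — is satisfied by pairing $P=P_i^{(r)}$ with its ``sibling'' $P_j^{(r)}$ coming from the \emph{same} invocation of the theorem; there is no genuine obstacle here, only the need to phrase the indexing carefully. The only other thing worth a sentence is the observation that a normal subsemigroup of $G_i$ contained in $G_i\backslash\{1\}$ cannot contain $1$ — immediate since a subsemigroup containing $1$ together with normality need not be all of $G_i$, but at any rate membership of $1$ is excluded by the containment hypothesis, so this is trivial once (1) is in hand.
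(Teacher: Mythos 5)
Your proof is correct and follows essentially the same approach as the paper: both directions of the biconditional are deduced directly from Theorem \ref{T:NecessaryAndSufficient4SubsetGTF}, and the ``moreover'' clause is obtained by combining with Proposition \ref{prop:hypelements}. The paper presents the two directions in the opposite order, and is slightly terser about the final step, but the content is identical; your explicit case split for the final clause (conjugate into a factor vs.~not, with the remark that generalized torsion is conjugacy-invariant) spells out what the paper leaves implicit, which is a small improvement in exposition.
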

\begin{proof}
First, assume the existence of the families of normal subsemigroups. Fix an $i_0\in I$ and an $a\in G_{i_0} \backslash\{1\}$; we only need to prove that $1\not\in \mathrm{NSS}_G(\{a\})$. There exists a $P_{j_0}\in \mathcal F_{i_0}$ such that $a\in P_{j_0}$, and then for every $j\in I\backslash\{i_0\}$, there is a $P_{j}\in \mathcal F_j$ such that $P_{i_0}\cap C=P_{j}\cap C$. Thus $P_j\cap C=P_{j'}\cap C$ for all $j,j'\in I$. Applying Theorem \ref{T:NecessaryAndSufficient4SubsetGTF} with $R = \{ a \}$, we get that $1\not\in \mathrm{NSS}_G(\{a\})$, as desired.

For the converse, we assume that $\cup_{i\in I}G_i$ contains no generalized torsion elements of $G$. Then for every $i\in I$ and $a\in G_i\backslash\{1\}$, we have $1\not\in\mathrm{NSS}_G(\{a\})$, and thus by Theorem \ref{T:NecessaryAndSufficient4SubsetGTF}, there is a normal subsemigroup $P_{i,a,j}$ of $G_j$ for each $j\in I$,  such that $a\in P_{i,a,i}$, $1\not\in P_{i,a,i}$ and $P_{i,a,j}\cap C=P_{i,a,j'}\cap C$ for all $j,j'\in I$. Now let
$\mathcal F_j=\{ P_{i,a,j}\mid i\in I, a \in G_i \setminus \{1\} \}$ for each $j\in I$. Then the families $\mathcal F_j$ ($j\in I$) satisfy the required properties.

The last claim of the theorem now follows from Proposition \ref{prop:hypelements}.
\end{proof}

\subsection{Examples, and an analog of a theorem of Bergman} \label{Subsection:BergmanAnalog}
In this subsection, we use Theorem \ref{T:GTFAmalgamIffFamilies} to derive two corollaries, which give us many examples of generalized torsion-free amalgams. We also provide two comments about the RTF condition in Theorem \ref{T:GTFAmalgamIffFamilies}. The content of this subsection will not be used in subsequent sections.

Recall that the \textbf{positive cone} of a left-ordering $<$ of a group $G$ is the semigroup $P = \{g \in G \mid g>1\}$, and if $<$ is a bi-ordering, then this semigroup is also normal.

\begin{corollary}\label{C:GTFAmalgamWhenPhiPreservesOrders}
Let  $G=\ast_C G_i$ ($i\in I$). Assume that $C$ is RTF in $G_i$ for all $i\in I$. 
If there is a positive cone $P_i$ of $G_i$ for each $i\in I$ such that $P_i\cap C=P_j\cap C$ for all $i,j\in I$, then $G$ is GTF.
\end{corollary}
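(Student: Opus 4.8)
The plan is to deduce Corollary~\ref{C:GTFAmalgamWhenPhiPreservesOrders} directly from Theorem~\ref{T:GTFAmalgamIffFamilies} by exhibiting the required families $\mathcal F_i$. The observation driving the argument is that a positive cone $P_i$ of a bi-ordering of $G_i$ is a normal subsemigroup of $G_i$, and together with its ``negative'' counterpart $P_i^{-1}$ it partitions $G_i \setminus \{1\}$; so the two-element family $\{P_i, P_i^{-1}\}$ is the natural candidate. The task is then to check that this candidate meets the two hypotheses of Theorem~\ref{T:GTFAmalgamIffFamilies}, after which the RTF assumption supplies the final conclusion that $G$ is GTF.

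First I would record that each $P_i$ is a normal subsemigroup of $G_i$: it is a subsemigroup because $<$ is a bi-ordering (hence $P_i$ is closed under multiplication), and it is normal because conjugation by any element of $G_i$ preserves the bi-ordering and thus preserves $\{g : g > 1\}$. The same holds for $P_i^{-1} = \{g \in G_i : g < 1\}$, which is the positive cone of the reverse bi-ordering. Set $\mathcal F_i = \{P_i, P_i^{-1}\}$. Hypothesis~(1) of Theorem~\ref{T:GTFAmalgamIffFamilies} holds because a bi-ordering is a strict total ordering, so $G_i \setminus \{1\} = P_i \sqcup P_i^{-1}$. For hypothesis~(2), fix $i, j \in I$ and a member of $\mathcal F_i$. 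If the member is $P_i$, take $Q = P_j \in \mathcal F_j$; then $P_i \cap C = P_j \cap C$ by assumption. If the member is $P_i^{-1}$, take $Q = P_j^{-1} \in \mathcal F_j$; then $P_i^{-1} \cap C = (P_i \cap C)^{-1} = (P_j \cap C)^{-1} = P_j^{-1} \cap C$, where the outer equalities use that inversion is a bijection of $C$ and the middle equality is the hypothesis.

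With both hypotheses verified, Theorem~\ref{T:GTFAmalgamIffFamilies} gives that no element of $\cup_{i \in I} G_i$ is generalized torsion in $G$; and since $C$ is RTF in each $G_i$ by assumption, the ``moreover'' clause of that theorem yields that $G$ is GTF. There is no real obstacle here: the only point requiring a moment's care is the bookkeeping in hypothesis~(2) for the inverse cones, namely that intersecting with $C$ commutes with inversion, which is immediate since $C$ is a subgroup. Everything else is a direct invocation of the already-established Theorem~\ref{T:GTFAmalgamIffFamilies}.
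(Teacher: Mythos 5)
Your proposal is correct and follows the paper's own argument exactly: the paper's proof is the one-line observation that the families $\mathcal F_i = \{P_i, P_i^{-1}\}$ satisfy the hypotheses of Theorem~\ref{T:GTFAmalgamIffFamilies}, and you have simply written out the routine verifications in full.
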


\begin{proof}
The families $\mathcal F_i:=\{P_i,P_i^{-1}\}$ ($i\in I$) satisfy the conditions in Theorem \ref{T:GTFAmalgamIffFamilies}.
\end{proof}

Consider the following sample application of Corollary \ref{C:GTFAmalgamWhenPhiPreservesOrders}. Let $A$ (resp. $B$) be a free group generated by a non-empty set $S$ (resp. $T$) with $S\cap T=\emptyset$, and suppose $\alpha\in A$ (resp. $\beta\in B$) is not a proper power. Then the one-relator group $\langle S\cup T\mid \alpha=\beta\rangle$ is GTF, because the cyclic subgroup of $A$ (resp. $B$) generated by $\alpha$ (resp. $\beta$) is left-relatively convex by \cite[Corollary 3.6]{LRR09}, thus also RTF in $A$ (resp. $B$) by \cite[Lemma 3.15]{CH21}. We don't know if $\langle S\cup T\mid \alpha=\beta\rangle$ is necessarily bi-orderable.\looseness=-1

Theorem \ref{T:GTFAmalgamIffFamilies} also yields a GTF-analog of an orderability theorem of George Bergman \cite[Theorems 21 and 28]{Bergman90}. In particular, this analog shows that despite the previous example, there are circumstances where the RTF condition is necessary and sufficient for an amalgam to be GTF.  

\begin{corollary}\label{C:BOGTFOfExtendableAmalgam}
Let $A$ be a group and $C \subseteq A$ a subgroup. Consider the amalgam $G=\ast_CG_i$ ($i\in I$), where $I$ is an index set of cardinality at least two and each $G_i=A$. 
Then $G$ is GTF if and only if $A$ is GTF and $C$ is RTF in $A$.
\end{corollary}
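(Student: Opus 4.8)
The plan is to prove Corollary~\ref{C:BOGTFOfExtendableAmalgam} by deducing both implications from Theorem~\ref{T:GTFAmalgamIffFamilies}, using the extra structural feature that all factors $G_i$ are literally equal to $A$ (not merely isomorphic), so that a single normal subsemigroup of $A$ can play the role of $P_i$ for every $i$ simultaneously.

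First I would prove the ``if'' direction, which is the easier one: assume $A$ is GTF and $C$ is RTF in $A$. Since $A$ is GTF, for each $a \in A\setminus\{1\}$ we have $1 \notin \mathrm{NSS}_A(\{a\})$, and by Theorem~\ref{T:NecessaryAndSufficient4SubsetFTF}---more precisely, applying the characterization internal to $A$, or simply taking $\mathcal F = \{\mathrm{NSS}_A(\{a\}) : a \in A \setminus\{1\}\}$---we obtain a family $\mathcal F$ of normal subsemigroups of $A$ with $A \setminus\{1\} = \bigcup_{P \in \mathcal F} P$. Now set $\mathcal F_i = \mathcal F$ for every $i \in I$. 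Condition (1) of Theorem~\ref{T:GTFAmalgamIffFamilies} holds by construction, and condition (2) holds trivially: given $i,j$ and $P \in \mathcal F_i = \mathcal F$, take $Q = P \in \mathcal F = \mathcal F_j$, so $P \cap C = Q \cap C$. Since moreover $C$ is RTF in each $G_i = A$ by hypothesis, Theorem~\ref{T:GTFAmalgamIffFamilies} gives that $G$ is GTF.

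Next I would prove the ``only if'' direction: assume $G = \ast_C G_i$ is GTF. That $A$ is GTF is immediate, since $A = G_{i_0}$ embeds in $G$ and any generalized torsion relation in $A$ persists in $G$. It remains to show $C$ is RTF in $A$. Here I would argue by contraposition, exploiting that $|I| \geq 2$: suppose $C$ is \emph{not} RTF in $A$, so there exist $g \in A \setminus C$ and $h_1,\dots,h_k \in C$ with $g h_1 g h_2 \cdots g h_k = 1$. Fix two distinct indices $i_1, i_2 \in I$, write $G_{i_1} = A = G_{i_2}$, and regard $g$ as lying in $G_{i_1}$ and the $h_m$ as lying in $C \subseteq G$. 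I would then take the element $u = h_1^{-1} g_{[i_2]}$ (or a similar ``mixed'' element alternating between the two copies) and show that the relation $gh_1gh_2\cdots gh_k = 1$, after inserting conjugations that move copies of $g$ between $G_{i_1}$ and $G_{i_2}$, exhibits $1$ as a product of conjugates of a single nontrivial element of $G$---i.e.\ produces a generalized torsion element of $G$, contradicting that $G$ is GTF. The cleanest realization: let $x$ denote $g$ viewed in $G_{i_1}$ and $y$ denote $g$ viewed in $G_{i_2}$; then $w := x y^{-1} \in G$ is nontrivial (its index vector has length $2$ since $g \notin C$), and a suitable product of conjugates of $w$ by elements built from $h_1,\dots,h_k$ telescopes to $1$ precisely because $gh_1\cdots gh_k = 1$ in $A$. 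Writing this telescoping identity out carefully, using that the $h_m \in C$ commute past the identification of the two copies appropriately, is the technical heart of the argument.

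The main obstacle I anticipate is exactly that last construction: finding the right nontrivial element $w \in G$ and the right conjugating elements so that a product of conjugates of $w$ equals $1$, forcing the bookkeeping with the two copies of $A$ glued along $C$ to reproduce the given relation $gh_1 g h_2 \cdots g h_k = 1$. One must be careful that $w$ is genuinely nontrivial in $G$ (which needs $g \notin C$ and relies on the normal-form theory of amalgams recalled in Section~\ref{S:notation}), and that the conjugators lie in $G$ so that what is produced is a bona fide generalized torsion relation. Everything else---the GTF-ness of $A$, the trivial verification of conditions (1) and (2) of Theorem~\ref{T:GTFAmalgamIffFamilies}, and the invocation of that theorem---is routine.
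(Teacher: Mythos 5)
Your approach matches the paper's in both directions, and the ``if'' direction is complete: you take one copy of $\{\mathrm{NSS}_A(\{a\}) : a \in A \setminus \{1\}\}$ for each factor, note condition~(1) of Theorem~\ref{T:GTFAmalgamIffFamilies} holds because $A$ is GTF, condition~(2) holds trivially (take $Q=P$), and cite the RTF hypothesis to conclude. This is exactly what the paper does.

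For the ``only if'' direction you have the right reduction ($A$ GTF since $A\leq G$), the right subgroup (two copies $A_1,A_2$ of $A$ glued along $C$), and the right element $w=xy^{-1}=a(a')^{-1}$ (the paper uses its inverse $a'a^{-1}$, which is immaterial), but you leave the telescoping identity as a declared gap rather than proving it, and that identity is the entire content of this direction. The paper fills it with a two-line induction: from a failure of RTF take $a\in A\setminus C$, $c_1,\dotsc,c_n\in C$, $n\geq 2$, with $T:=ac_1\dotsm ac_n\in C$; then $T=T'$ in $G_0:=A_1\ast_C A_2$, so $P_n:=T^{-1}T'=1$, and one checks directly that $P_1=c_1^{-1}a^{-1}a'c_1=(a'a^{-1})^{ac_1}$ and $P_n=(P_{n-1}\,a'a^{-1})^{ac_n}$, hence $P_n$ is a product of $n$ conjugates of $a'a^{-1}\neq 1$, exhibiting generalized torsion in $G_0\leq G$. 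You should write out this induction (or an equivalent identity) rather than asserting that ``a suitable product of conjugates telescopes to $1$''; as written, the key verification is missing.
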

\begin{proof}
Suppose $A$ is GTF and $C$ is RTF in $A$. For $a\in A$, let $N_a=\mathrm{NSS}_A(\{a\})$. Then taking one copy of the family $\{N_a\mid a\in A\backslash\{1\}\}$ for each factor yields the families of normal subsemigroups that satisfy the conditions in Theorem \ref{T:GTFAmalgamIffFamilies}.

Now assume that $G$ is GTF, then so is its subgroup $G_0:=A\ast_CA$. Suppose $C$ is not RTF in $A$ and we will find a contradiction. We  first prepare some notation.  The group $G_0$ is an amalgam of two copies of $A$; call them $A_1$ and $A_2$.  For each $a\in A_1$, we write $a' \in A_2$ to denote the image of $a$ under the identity map $A_1 \rightarrow A_2$. Note that for every $g\in A$, $g=g'$ in $G_0$ if and only if $g\in C$.

As $C$ is not RTF in $A$, there exists $a\in A\backslash C$ and $c_1,\dotsc,c_n\in C$ such that $T:=ac_1\dotsm ac_{n-1} ac_n\in C$. 
Hence $n\geq2$, $a'a^{-1}\neq1$ and $T=T'$ in $G_0$. Note that $T'=a'c_1\dotsm a'c_{n-1} a'c_n$, we have 
$$P_n:=c_n^{-1}a^{-1}c_{n-1}^{-1}a^{-1}\dotsm c_1^{-1}a^{-1}\cdot a'c_1\dotsm a'c_{n-1} a'c_n=T^{-1}T'=1.$$
We claim that $P_n$ is a product of $n$ conjugates of $a'a^{-1}$. Note that $P_1 = c_1^{-1}a^{-1}a'c_1 = (a'a^{-1})^{ac_1}$ so the base case for an induction holds, and for $n>1$, we can write
$P_n = (P_{n-1}a'a^{-1})^{ac_n}$, proving the claim.
As $P_n =1$, the group $G_0$ contains generalized torsion, a contradiction as desired.
\end{proof}

\begin{remark}
Note that $G=A*_CB$ being GTF does not imply that $C$ is RTF in either $A$ or $B$.
For example, consider $G=\langle a,b,c,d\mid a^2=c,b=d^2\rangle$, which is isomorphic to the free group $\langle a, d\mid\rangle$ because the relations $a^2=c$ and $d^2=b$ can be used to eliminate the generators $c$ and $b$.  We also have $G=A*_\phi B$, where $C$ (resp. $D$) is the subgroup of $A:=\langle a,b\mid\rangle$ (resp. $B:=\langle c,d\mid\rangle$) generated by $\{a^2,b\}$ (resp. $\{c,d^2\}$) and $\phi:C\rightarrow D$ is given by $\phi(a^2)= c, \phi(b)=d^2$. Clearly, $C$ (resp. $D$) is not RTF in $A$ (resp. $B$).
\end{remark}

\begin{remark}
    By Corollary \ref{C:BOGTFOfExtendableAmalgam}, the RTF condition in Theorem \ref{T:GTFAmalgamIffFamilies} is indispensable for the conclusion.
    In fact, the conclusion of Theorem \ref{T:GTFAmalgamIffFamilies} does not hold in general, even upon replacing the RTF condition with the weaker condition that $C$ is RTF in all but  except one $G_i$. To see this, consider the amalgam $G=A\ast_\phi B$, where $A=\langle a\mid\rangle$, $B=\langle b,c\mid bc=cb\rangle$, $C=\{a^{mk}\mid k\in\mathbb Z\}$ with $m\geq2$ fixed and $D=\{c^k\mid k\in\mathbb Z\}$, and $\phi:C\rightarrow D$ is given by $\phi(a^m)=c$. Clearly, $C$ is not RTF in $A$, and $D$ is RTF in $B$. Consider the element $\alpha=[a,b]$ of $G$, where we write $[x,y]$ for the commutator $xyx^{-1}y^{-1}$. Since $[a^m,b]=[c,b]=1$, iteratively applying the well-known commutator formula $[xy,z]=[y,z]^{x^{-1}}[x,z]$ to $[a^m,b]$, we find a product of $m$ conjugates of $\alpha=[a,b]$ that is trivial. Hence $\alpha$ is a generalized torsion element (as $m\geq2$). 
    \footnote{Note that this group is in fact the Baumslag-Solitar group  $BS(m,m)=\langle a,b\mid ba^mb^{-1}=a^m\rangle$.}
\end{remark}

\subsection{Multi-malnormality and two corollaries}\label{Subsec:Multi-malnormalityAndTwoCorollaries}

In this subsection, we define multi-malnormal subgroups. Multi-malnormality will be useful for constructing the families of normal subsemigroups as in Theorem \ref{T:GTFAmalgamIffFamilies}. To this end, we deduce Corollary \ref{C:RTF+multi-malnormalityImpliesGTF}, which underpins our construction of GTF and non-left-orderable groups in Section \ref{Section:GTFAndNonLOGroups}, and Corollary \ref{C:FactorSubgroupsAreMulti-malnormal}, which is needed in Section \ref{Section:One-RelatorNon-BO-GTF-Group}. 
This subsection's content is not used in Section \ref{S:3mflds}.\looseness=-1
    \begin{definition}\label{D:Malnormalities}
    Let $C$ be a subgroup of a group $A$. We say that $C$ is \textbf{multi-malnormal} in $A$ if for every normal subsemigroup $C'$ of $C$ with $1\not\in C'$, we have
    $$c_1^{a_1}\dotsm c_n^{a_n}\in A\backslash C, \text{ for all } c_i\in C' \text{ and } a_i\in A\backslash C \text{ where }n\geq1.$$
    \end{definition}
 
    \begin{lemma}\label{L:multi-malnormalStrongMalnormalMalnormal} Let $C$ be a multi-malnormal subgroup of group $A$. Let $C'$ be a normal subsemigroup of $C$ with $1\not\in C'$. Then we have the following.
    \begin{enumerate}[(1)]
    \item\label{Item:multi-malnormalEquivalentDefinition}
     For all $ c_i\in C'$ and $a_i\in A$, if there is an  $a_j\in A\backslash C$, then $c_1^{a_1}\dotsm c_n^{a_n}\in A\backslash C$.
\item \label{Item:multi-malnormalToWeaklyMultimalnormal} $\mathrm{NSS}_A(C')\cap C=C'$.
    \end{enumerate}
        \end{lemma}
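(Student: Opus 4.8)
The plan is to prove the two parts in sequence, with part (1) being a short bootstrapping of Definition \ref{D:Malnormalities} and part (2) following quickly from it.

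\textbf{Part (1).} First I would dispose of the trivial cases: if every $a_j$ lies in $C$, there is nothing to prove, so by hypothesis at least one $a_j \in A \setminus C$. The obstacle is that the indices with $a_i \in C$ are interspersed among those with $a_i \notin C$, whereas Definition \ref{D:Malnormalities} requires \emph{all} conjugating elements to lie in $A \setminus C$. The fix is to absorb the ``bad'' conjugators into the semigroup elements: whenever $a_i \in C$, the conjugate $c_i^{a_i}$ is again an element of $C$, and in fact of $C'$, since $C'$ is a normal subsemigroup of $C$ (it is closed under conjugation by elements of $C$). More carefully, I would group consecutive runs: write the product $c_1^{a_1}\dotsm c_n^{a_n}$ and, reading left to right, whenever a factor $c_i^{a_i}$ has $a_i \in C$, merge it with the preceding factor having a conjugator in $A\setminus C$ (there is at least one such factor somewhere, and for a run at the very start one can instead merge to the right, or simply note the leading block lies in $C'$ and can be prepended to the first genuine factor). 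After this regrouping the product is rewritten as $d_1^{b_1}\dotsm d_m^{b_m}$ with $m \geq 1$, each $d_k \in C'$, and each $b_k \in A\setminus C$; then Definition \ref{D:Malnormalities} applies directly and gives the result in $A\setminus C$. I would take care to handle the edge case where, after merging, $m$ could be forced to be $\geq 1$: since $C'$ is nonempty (it contains the $c_i$) and at least one $b_k$ survives, $m\geq 1$ is automatic. The one genuinely delicate point to spell out is the regrouping when the leftmost conjugators are all in $C$: here the prefix $c_1^{a_1}\dotsm c_r^{a_r}$ (up to the first index $r+1$ with $a_{r+1}\notin C$) lies in $C'$, call it $d$, and then $(d c_{r+1}) \in C'$ as well, so one replaces the first genuine factor $c_{r+1}^{a_{r+1}}$ by $(d^{a_{r+1}^{-1} a_{r+1}} c_{r+1})^{a_{r+1}}$ — more simply, $d \cdot c_{r+1}^{a_{r+1}} = (d' c_{r+1})^{a_{r+1}}$ where $d' = a_{r+1} d a_{r+1}^{-1}$... except $d'$ need not lie in $C$. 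The cleaner bookkeeping is: conjugate the whole product by $a_{r+1}^{-1}$ is not allowed, so instead just observe $d = d^{\,1}$ has conjugator $1 \in C$, and iterate the merging rule ``factor with conjugator in $C$ gets absorbed into the factor immediately to its right'' for the leading block, and ``absorbed into the factor immediately to its left'' for all others; a leading block followed by at least one good factor, and a trailing block preceded by at least one good factor, so the rule is well-defined because a good factor exists. I will write this out carefully as it is the crux of (1).

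\textbf{Part (2).} The inclusion $C' \subseteq \mathrm{NSS}_A(C') \cap C$ is immediate from the definitions (and $C' \subseteq C$). For the reverse inclusion, take $x \in \mathrm{NSS}_A(C') \cap C$, so $x = c_1^{a_1}\dotsm c_n^{a_n}$ with $c_i \in C'$, $a_i \in A$, $n \geq 1$, and $x \in C$. By the contrapositive of part (1), since $x \in C$, it cannot be that some $a_j \in A\setminus C$; hence \emph{every} $a_i \in C$. But then each $c_i^{a_i} \in C'$ (normality of the subsemigroup $C'$ under $C$-conjugation), and $C'$ is closed under products, so $x = c_1^{a_1}\dotsm c_n^{a_n} \in C'$. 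This gives $\mathrm{NSS}_A(C')\cap C \subseteq C'$, completing the proof.

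\textbf{Expected main obstacle.} The only real work is the regrouping argument in part (1) — making precise how factors conjugated by elements of $C$ are absorbed so that Definition \ref{D:Malnormalities}, which insists every conjugator lies outside $C$, can be invoked. Everything else is formal. I would present part (1) as a lemma-internal computation with the absorption rule stated explicitly, and then part (2) as a two-line deduction.
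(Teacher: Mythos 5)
Your Part (2) is fine and coincides with the paper's deduction, but Part (1) has a genuine gap at precisely the point you flag as ``the crux.'' The absorption/regrouping scheme cannot produce a product of the form $d_1^{b_1}\dotsm d_m^{b_m}$ with every $d_k\in C'$ and every $b_k\in A\setminus C$. To merge an element $e\in C'$ (a factor $c_i^{a_i}$ with $a_i\in C$) into an adjacent good factor $c_j^{a_j}$ with $a_j\in A\setminus C$, you must write $c_j^{a_j}e=\bigl(c_j\,e^{a_j^{-1}}\bigr)^{a_j}$ or $e\,c_j^{a_j}=\bigl(e^{a_j^{-1}}c_j\bigr)^{a_j}$, and $e^{a_j^{-1}}$ need not lie in $C'$, or even in $C$, since $a_j^{-1}\notin C$. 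You notice this (``except $d'$ need not lie in $C$'') but the proposed fix---switching the direction of absorption for the leading block---runs into exactly the same obstruction, because every merge passes a $C'$-element through a conjugation by an element outside $C$. And the obstruction is real, not just a bookkeeping nuisance: in $A=F(x,y)$ with $C=\langle x\rangle$ and $C'=\{x^k:k>0\}$, the product $x^1\cdot x^y=x\,y^{-1}xy$ is a product of two conjugates of elements of $C'$ but is not conjugate to any power of $x$, so it cannot be rewritten as a single $d^b$ with $d\in C'$. Thus the plan of reducing to the literal form required by Definition \ref{D:Malnormalities} is unsalvageable.

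The paper proves (1) differently, by a minimal-counterexample argument rather than by regrouping. Choose $n$ least such that $T:=c_1^{a_1}\dotsm c_n^{a_n}\in C$ with $c_i\in C'$, $a_i\in A$ and some $a_j\in A\setminus C$. Necessarily $n\geq 2$, and by multi-malnormality some $a_i$ lies in $C$. Rather than merging $c_i^{a_i}$ into a neighbor, one commutes it to the front: $T=c_i^{a_i}\,c_1^{a_1'}\dotsm c_{i-1}^{a_{i-1}'}c_{i+1}^{a_{i+1}}\dotsm c_n^{a_n}=c_i^{a_i}T'$ with $a_k'=a_kc_i^{a_i}$. Here $c_i^{a_i}\in C$ (not just $C'$; this weaker fact is all that's used), so $T'=(c_i^{-1})^{a_i}T\in C$; moreover $T'$ is a product of $n-1$ conjugates of the same kind and still has a conjugator outside $C$ (since $a_i$ was the one in $C$ and multiplying $a_k$ on the right by $c_i^{a_i}\in C$ doesn't change membership in $C$). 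This contradicts minimality. The key difference from your approach is that one never tries to express a partial product as a single conjugate of a $C'$-element; one only uses that $c_i^{a_i}$ lands in $C$ and can be peeled off. If you rewrite Part (1) along these lines, the rest of your argument goes through unchanged.
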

        
    \begin{proof} 
   To prove (1), we assume that $c_1, \ldots, c_n\in C'$ and $a_1, \ldots, a_n \in A$ and that there is an $a_j\in A\backslash C$, and yet $T:=c_1^{a_1}\dotsm c_n^{a_n}\in C$, where we choose $n$ to be the least possible (note necessarily $n \geq 2$).  By multi-malnormality, there exists $a_i \in C$. Note that we can rewrite:
   \[ T= c_i^{a_i}(c_1^{a_1}\ldots c_{i-1}^{a_{i-1}})^{c_i^{a_i}}c_{i+1}^{a_{i+1}} \ldots c_n^{a_n} = c_i^{a_i} c_1^{a_1'} \ldots c_{i-1}^{a_{i-1}'} c_{i+1}^{a_{i+1}} \ldots c_n^{a_n}=c_i^{a_i}T',
   \] where $a_k'=a_kc_i^{a_i}$ and $T'=c_1^{a_1'} \ldots c_{i-1}^{a_{i-1}'} c_{i+1}^{a_{i+1}} \ldots c_n^{a_n}$.
   But now $ T'\notin C$ by minimality of $n$, a contradiction since $T'=(c_i^{-1})^{a_i} T \in C$.

     Now we show (2), the RHS is clearly contained in the LHS. To prove the other direction, assume that $T:=c_1^{a_1}\dotsm c_n^{a_n}$ with $a_i\in A$ and $c_i\in C'$ is contained in the LHS. Then all $a_i\in C$, as otherwise $T$ is not in $C$ by \ref{Item:multi-malnormalEquivalentDefinition} . Now $T$ is in $C'$ by the normality of $C'$ in $C$, as desired.
    \end{proof}   

\begin{corollary}\label{C:RTF+multi-malnormalityImpliesGTF}
Consider the amalgam $G=\ast_C G_i$ ($i\in I$), where for each $i\in I$, $G_i$ is GTF and $C$ is RTF and multi-malnormal in $G_i$. Then $G$ is GTF. 
\end{corollary}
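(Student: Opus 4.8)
The plan is to deduce Corollary \ref{C:RTF+multi-malnormalityImpliesGTF} directly from Theorem \ref{T:GTFAmalgamIffFamilies} by constructing, for each $i \in I$, a suitable family $\mathcal{F}_i$ of normal subsemigroups of $G_i$. The natural candidate, mimicking the proof of Corollary \ref{C:BOGTFOfExtendableAmalgam}, is to take $\mathcal{F}_i = \{\, \mathrm{NSS}_{G_i}(\{a\}) \mid a \in G_i \setminus \{1\} \,\}$. First I would check condition (1) of Theorem \ref{T:GTFAmalgamIffFamilies}: each $a \in G_i \setminus \{1\}$ lies in $\mathrm{NSS}_{G_i}(\{a\})$, so the union of the members of $\mathcal{F}_i$ is all of $G_i \setminus \{1\}$; and since $G_i$ is GTF, $1 \notin \mathrm{NSS}_{G_i}(\{a\})$, so these are genuinely the normal subsemigroups required (in particular none of them contains $1$, which is what lets the RTF conclusion of Theorem \ref{T:GTFAmalgamIffFamilies} apply to give that $G$ is GTF).

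The substantive point is condition (2): given $i, j \in I$ and $P = \mathrm{NSS}_{G_i}(\{a\}) \in \mathcal{F}_i$, I must produce $Q \in \mathcal{F}_j$ with $P \cap C = Q \cap C$. The idea is that $P \cap C$ is a normal subsemigroup of $C$ (it is the intersection of a normal subsemigroup of $G_i$ with the subgroup $C$, hence closed under multiplication and under conjugation by elements of $C$), and it does not contain $1$. If $P \cap C$ is non-empty, pick any $c \in P \cap C$ and set $Q = \mathrm{NSS}_{G_j}(\{c\}) \in \mathcal{F}_j$; if $P \cap C$ is empty, the matching $Q$ must also satisfy $Q \cap C = \emptyset$, and one handles this by a separate observation (for instance, noting that $\mathcal{F}_j$ may be taken to include such a $Q$, or by the convention that the empty semigroup is allowed — this edge case needs a sentence of care). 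The crux is to show $P \cap C = Q \cap C$ in the non-empty case, and this is where multi-malnormality enters: by Lemma \ref{L:multi-malnormalStrongMalnormalMalnormal}(2) applied with $C' = P \cap C$, we get $\mathrm{NSS}_{G_i}(P \cap C) \cap C = P \cap C$; combined with $\mathrm{NSS}_{G_i}(\{c\}) \subseteq P$ (as $c \in P$ and $P$ is a normal subsemigroup of $G_i$) and $P \cap C \subseteq \mathrm{NSS}_{G_i}(\{c\})$ (by normality of $P \cap C$ in $C$ and $c \in P \cap C$), one sandwiches $\mathrm{NSS}_{G_i}(\{c\}) \cap C = P \cap C$. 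The analogous computation in $G_j$ gives $Q \cap C = \mathrm{NSS}_{G_j}(\{c\}) \cap C = P \cap C$, since $P \cap C$ is equally a normal subsemigroup of $C$ not containing $1$ and multi-malnormality holds in $G_j$ as well.

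The main obstacle I anticipate is bookkeeping around the case $P \cap C = \emptyset$ and making sure the families $\mathcal{F}_i$ are set up so that condition (2) of Theorem \ref{T:GTFAmalgamIffFamilies} is literally satisfied, rather than just morally so; one clean fix is to enlarge each $\mathcal{F}_i$ to also contain $\mathrm{NSS}_{G_i}(S)$ for every normal subsemigroup $S$ of $C$ with $1 \notin S$ (including $S = \emptyset$), which does no harm to condition (1) and makes condition (2) immediate via the sandwich argument above. With that choice, the verification of both conditions is short, and Theorem \ref{T:GTFAmalgamIffFamilies} — using the hypothesis that $C$ is RTF in every $G_i$ — yields that $G$ is GTF.
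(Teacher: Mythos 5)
Your initial ``sandwich'' argument has a genuine error, not merely a bookkeeping gap. You claim that $P\cap C\subseteq \mathrm{NSS}_{G_i}(\{c\})$ ``by normality of $P\cap C$ in $C$ and $c\in P\cap C$'', but normality of $P\cap C$ in $C$ together with $c\in P\cap C$ gives the \emph{reverse} containment $\mathrm{NSS}_C(\{c\})\subseteq P\cap C$, not the one you need. In general $P\cap C$ can be strictly larger than the normal subsemigroup generated by any single one of its elements, so picking $Q=\mathrm{NSS}_{G_j}(\{c\})$ need not recover $P\cap C$: multi-malnormality only gives you $Q\cap C=\mathrm{NSS}_C(\{c\})$, which can be a proper subset of $P\cap C$. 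The single-element approach therefore does not satisfy condition (2) of Theorem \ref{T:GTFAmalgamIffFamilies}.

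That said, the ``clean fix'' you offer at the end is not cosmetic --- it is the actual proof, and it is essentially what the paper does. Enlarging $\mathcal F_j$ to include $\mathrm{NSS}_{G_j}(S)$ for every normal subsemigroup $S$ of $C$ with $1\notin S$ lets you match $P=\mathrm{NSS}_{G_i}(\{a\})$ with $Q=\mathrm{NSS}_{G_j}(P\cap C)$, and then Lemma \ref{L:multi-malnormalStrongMalnormalMalnormal}(2) applied with $C'=P\cap C$ gives $Q\cap C=P\cap C$ directly, with no need for the intermediate element $c$; the reverse matching for $Q\in\mathcal F_j$ of the new type works the same way since $Q\cap C=S$ again by multi-malnormality. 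The paper's own proof is this, slightly economized: rather than ranging over all normal subsemigroups of $C$, it adjoins to each $\mathcal F_j$ only the sets $M_{i,a,j}=\mathrm{NSS}_{G_j}(N_{i,a}\cap C)$ for $i\in I$ and $a\in G_i\setminus\{1\}$, where $N_{i,a}=\mathrm{NSS}_{G_i}(\{a\})$, which is exactly the collection of $\mathrm{NSS}_{G_j}(S)$ needed to match the first-type sets. So you should discard the sandwich paragraph entirely and build the argument directly on the enlarged families; the statement and Lemma \ref{L:multi-malnormalStrongMalnormalMalnormal}(2) then do all the work.
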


\begin{proof} We only need to find families of normal subsemigroups satisfying the conditions in Theorem \ref{T:GTFAmalgamIffFamilies}. For every $i\in I$ and $a\in G_i\backslash\{1\}$, let $N_{i,a}=\mathrm{NSS}_{G_i}(\{a\})$ and $M_{i,a,j}=\mathrm{NSS}_{G_j}(N_{i,a}\cap C)$  for each $j\in I$.
Note that $1\not\in N_{i,a}$ as $G_i$ is GTF, thus $1$ is not in $C_{i,a}':=N_{i,a}\cap C$, which is a normal subsemigroup of $C$.  Hence we have, by Lemma \ref{L:multi-malnormalStrongMalnormalMalnormal}\ref{Item:multi-malnormalToWeaklyMultimalnormal}, for all $i\in I$, $a\in G_{i}\backslash\{1\}$ and $j\in I$, 
\begin{align}\label{E:KeyIDInRTF+multi-malnormalityImpliesGTF}
    M_{i,a,j}\cap C=\mathrm{NSS}_{G_j}(C_{i,a}')\cap C=C_{i,a}'=N_{i,a}\cap C
\end{align}
and thus $1\not\in M_{i,a,j}$ as $1\not\in N_{i,a}\cap C$ and $1\in C$. Moreover, for all $i\in I$ and $a\in G_i\backslash\{1\}$, we have 
$M_{i,a,j_1}\cap C=M_{i,a,j_2}\cap C$ for all $j_1,j_2\in I$, since $M_{i,a,j_t}\cap C=N_{i,a}\cap C$ for $t=1,2$ by \eqref{E:KeyIDInRTF+multi-malnormalityImpliesGTF}. 

Now we find families $\mathcal F_j$ ($j\in I$) satisfing the two conditions in Theorem \ref{T:GTFAmalgamIffFamilies}, where for each $j\in I$, $$\mathcal F_j:=\big\{N_{j,a}\mid j\in I,a\in G_j\backslash\{1\}\big\}\cup\big\{M_{i,a,j}\mid i\in I, a\in G_i\backslash\{1\}\big\}.$$
\end{proof}

We now turn to the theorem below, whose proof is similar to Theorem \ref{T:TauInSAIfInC}, so we do not present all the details here. Using this theorem, we then show that factor groups in free products are multi-malnormal. This serves as an example of multi-malnormality and as an essential building block for our construction in Section \ref{Section:One-RelatorNon-BO-GTF-Group}. Below, instead of considering $G=\ast_IG_i$ ($i\in I$), we focus on the case $|I| = 2$ since the general case follows easily from this special case.

\begin{theorem}\label{T:Multi-malnormalFactorGroup-Alike-Result}
    Consider the amalgam $G=A\ast_CB$. Assume that $P$ is a normal subsemigroup of $A$ such that $P\cap C=\emptyset$. Let $T=d_1^{f_1}\dotsm d_n^{f_n}$ with $n\geq1$, all $d_i\in P$ and all $f_i\in G\backslash A$. Then $T\not\in A$.
\end{theorem}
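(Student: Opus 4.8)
The plan is to mimic the structure of the proof of Theorem \ref{T:TauInSAIfInC}, but now tracking when a product of conjugates lands in the factor $A$ rather than in a distinguished factor $G_{i_0}$. Suppose for contradiction that the set $\mathcal{S}$ of tuples $((d_1,f_1),\dots,(d_k,f_k))$ with $k\geq 1$, $d_i\in P$, $f_i\in G\setminus A$, and $d_1^{f_1}\dotsm d_k^{f_k}\in A$ is nonempty. As in Theorem \ref{T:TauInSAIfInC}, attach to each such tuple $w$ the triple $(NC(w),L(w),N(w)) = (k, \sum l(f_i), \#\{(i,j): i<j,\ l(f_i)<l(f_j)\})$ and choose $v_0=((d_1,f_1),\dots,(d_n,f_n))\in\mathcal{S}$ minimal in the lexicographic order on $\mathbb{N}^3$. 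The goal is to show $v_0$ must be tamed (in the sense of Definition \ref{D:Tamedness}, with $t_i\coloneqq d_i$ and $g_i\coloneqq f_i$), and then derive a contradiction from Proposition \ref{P:l(T_i)IsAtLeasti}.

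First I would record a few easy preliminary reductions forced by minimality. Since each $d_i\in P$ and $P\cap C=\emptyset$, we automatically have $d_i\notin C$, so $l(d_i)=1$; this is cleaner than in Theorem \ref{T:TauInSAIfInC} because there is no case where $t_i\in C$ to dispose of. Next, each $f_i\notin A$ means $l(f_i)\geq 1$, and moreover $f_i$ has a nontrivial alternating expression beginning with a component not in $A$ — this matters for the reduction arguments. As in Theorem \ref{T:TauInSAIfInC}(2), if $d_i^{f_i}$ is not reduced, write $f_i=c_i x_1\cdots x_m$ (alternating, $m\geq 1$) with $d_i, x_1$ in the same factor; replace $(d_i,f_i)$ by $(d_i^{c_ix_1}, x_2\cdots x_m)$, noting $d_i^{c_ix_1}\in P$ by normality of $P$ in $A$ and that the new $f_i' = x_2\cdots x_m$ still lies in $G\setminus A$ unless $m=1$ — the $m=1$ subtlety (where $f_i'\in C\subseteq A$) needs a separate short argument showing the new tuple still lies in $\mathcal{S}$ and has smaller $L$ or $NC$, giving the contradiction. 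Similarly, if $l(f_if_{i+1}^{-1})=0$ then $f_if_{i+1}^{-1}=c\in C$, and $d_i^{f_i}d_{i+1}^{f_{i+1}} = (d_i d_{i+1}^{c^{-1}})^{f_i}$; if $l(d_id_{i+1})\leq 1$ then $d_id_{i+1}^{c^{-1}}\in A$, and since $d_i,d_{i+1}^{c^{-1}}\in P$ with $P$ a subsemigroup, $d_id_{i+1}^{c^{-1}}\in P$, so we can merge the two conjugates and decrease $NC$, a contradiction — this gives tameness condition (2).

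The remaining and most delicate condition is that no $d_i$ is cancellable, and here I would copy the argument of Theorem \ref{T:TauInSAIfInC}(4) essentially verbatim: if $d_i$ is cancellable, use $C_{i-1}C_i = C_i C_{i-1}'$ and $C_iC_{i+1}=C_{i+1}'C_i$ with $C_{i-1}'=d_{i-1}^{f_{i-1}C_i}$, $C_{i+1}'=d_{i+1}^{f_{i+1}C_i^{-1}}$, to produce reorganized tuples $u_i, w_i$ still in $\mathcal{S}$ (the conjugating elements change but the $d$'s stay in $P$). If $l(f_{i-1}')<l(f_{i-1})$ or $l(f_{i+1}')<l(f_{i+1})$ we get a strictly smaller $L$; otherwise Proposition \ref{CancellabilityMeansShorterConjugateLength} applies to give $l(f_{i+1}')=l(f_{i+1})>l(f_i)$ with $i<n$, so $w_i\in\mathcal S$ has the same $L$ but strictly smaller $N$ — contradiction in every case. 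One must check the boundary case $i=n$: then $d_n$ cancellable from LHS forces $l(f_{n-1}')<l(f_{n-1})$ by Lemma \ref{L:CancellableFromOneSide} (using $f_{n+1}=1$), again a contradiction. Once $v_0$ is tamed, Proposition \ref{P:l(T_i)IsAtLeasti} gives $l(d_1^{f_1}\dotsm d_n^{f_n})\geq l(f_1)+n+l(f_n)\geq 1+1+1 = 3 > 0$, so the product cannot lie in... wait, it could still lie in $A$ with $l=1$; but $l\geq l(f_1)+n+l(f_n)\geq 3$ since $l(f_1),l(f_n)\geq 1$, so $l(T)\geq 3$, contradicting $T\in A$ (which forces $l(T)\leq 1$). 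I expect the main obstacle to be the careful bookkeeping in the ``not cancellable'' step — ensuring the rewritten tuples genuinely lie in $\mathcal{S}$ (in particular that the new conjugates land back in $A$, not merely $G$) and that the monovariant strictly decreases — but since this is structurally identical to the proof of Theorem \ref{T:TauInSAIfInC}, it should go through with only cosmetic changes, which is why the paper says it will not present all the details.
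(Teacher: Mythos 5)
Your overall strategy is right --- reuse the monovariant $(NC, L, N)$ from Theorem \ref{T:TauInSAIfInC}, show the minimal tuple is tamed, and then apply Proposition \ref{P:l(T_i)IsAtLeasti} to get $l(T) \geq l(f_1) + n + l(f_n) \geq 3 > 1$. The verifications of tameness conditions (1) and (2) are also correct and, as you note, simpler than in Theorem \ref{T:TauInSAIfInC} since $P \cap C = \emptyset$ and $P$ is a subsemigroup of a single factor. (One unfounded worry: in your reduction argument for condition that $d_i^{f_i}$ is reduced, the case $m=1$ cannot occur, since $f_i = c_i x_1 \cdots x_m$ with $x_1 \in A \setminus C$ and $m=1$ would put $f_i$ in $A$, contradicting $f_i \in G \setminus A$; and if $m \geq 2$ then $f_i' = x_2 \cdots x_m$ has its first non-$C$ component in $B$ or has length $\geq 2$, so $f_i' \in G \setminus A$ automatically.)

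The genuine gap is in tameness condition (3), which you acknowledge as "the main obstacle" but then dismiss as going through "with only cosmetic changes." It does not. When you rewrite $C_{i-1}C_i = C_i C_{i-1}'$ with $g_{i-1}' = g_{i-1}C_i$ (or symmetrically $g_{i+1}' = g_{i+1}C_i^{-1}$), the new conjugating element $g_{i-1}'$ need not lie in $G \setminus A$, and if it lands in $A$ the rewritten tuple $u_i$ is simply not an element of $\mathcal{S}$, so minimality gives no contradiction. This is a real case that arises, not a bookkeeping formality. The paper resolves it by noting that if $g_{i-1}' \in A$ then $C_{i-1}' = d_{i-1}^{g_{i-1}'} \in P$ (normality of $P$ in $A$), and then $C_{i-1}'$ can be commuted to one end of the product --- conjugating the intervening factors by $C_{i-1}' \in A$ keeps their conjugating elements in $G \setminus A$ and keeps their base elements in $P$, while peeling off $C_{i-1}'$ leaves a shorter product still lying in $A$ --- which produces a tuple in $\mathcal{S}$ with strictly smaller $NC$, exactly the trick from the $l(g_i)=0$ sub-case of step (1) of Theorem \ref{T:TauInSAIfInC}. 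Your proposal does not identify this case or supply any replacement argument, so as written it does not establish that the minimal tuple is tamed.
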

\begin{proof} Let $\mathcal S$ be the set of tuples   
$((s_1,h_1),\dotsc,(s_m,h_k))$ such that $k\geq1$, $s_i\in P$, $h_i\in G\backslash A$ and $s_1^{h_1}\dotsm s_m^{h_k}\in A$. 
We want to prove that $\mathcal S$ is empty. Suppose not, and we will find a contradiction. We define the maps $NC,L,N$ as in the proof of Theorem \ref{T:TauInSAIfInC}, and let 
$v_0=((t_1,g_1),\dotsc,(t_n,g_n))$ 
be a minimal element in $\mathcal S$. We will prove that $v_0$ is tamed, then similarly we have $n=1$ and thus $t_1^{g_1}\in A$, which is impossible because  $t_1^{g_1}$ is reduced, $t_1\in P\backslash C$ and $g_1\not\in A$.

Now we verify that $v_0$ is tamed. First, $l(t_i)=1$ is already true as $t_i\in P\backslash C$. Second, each $t_i^{g_i}$ is reduced by the minimality of $v_0$ and the normality of $P$. Third, if $c_i:=g_ig_{i+1}^{-1}\in C$, then $t_i^{g_i}t_{i+1}^{g_{i+1}}=t_i'^{g_i}$, where $t_i'=t_it_{i+1}^{c_i^{-1}}$ is in $P$, which contradicts the minimality as in \ref{Item:ltiti+1T:TauInSAIfInC} in the proof of Theorem \ref{T:TauInSAIfInC}. Fourth, the proof that no $t_i$ is cancellable remains the same as \ref{Item:Non-cancellableProofOfT:TauInSAIfInC} in the proof of Theorem \ref{T:TauInSAIfInC}, except that right after the fifth sentence there, we insert this sentence: We can assume that both $g_{i-1}'$ and $g_{i+1}'$ are in $G\backslash A$, because if $g_{i-1}'
$ or $g_{i+1}'$ is in $A$, 
then $i>1$ or $i<n$ respectively, and we can ``move" the corresponding conjugation ($t_{i-1}^{g_{i-1}'}$ or $t_{i+1}^{g_{i+1}'}$ respectively) to one end using the trick in \ref{Item:lgieq1TauInSAIfInC} and similarly find a $w\in\mathcal S$ with $NC(w)<NC(v_0)$. 
\end{proof}

\begin{corollary}\label{C:FactorSubgroupsAreMulti-malnormal}
    Let $G=A\ast B$ be a free product of groups $A$ and $B$. Then $A$ is multi-malnormal in $G$. 
\end{corollary}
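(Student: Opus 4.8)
The plan is to observe that a free product is the special case of an amalgam in which the amalgamating subgroup is trivial — that is, $G = A \ast B = A \ast_C B$ with $C = \{1\}$ — and then to deduce the corollary immediately from Theorem \ref{T:Multi-malnormalFactorGroup-Alike-Result}.

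First I would unwind Definition \ref{D:Malnormalities} in the present setting: to show that $A$ is multi-malnormal in $G$, I must check that for every normal subsemigroup $A'$ of $A$ with $1 \notin A'$, and for all $a_1, \ldots, a_n \in A'$ and $g_1, \ldots, g_n \in G \setminus A$ with $n \geq 1$, the product $a_1^{g_1} \cdots a_n^{g_n}$ lies in $G \setminus A$. Next I would record the one (routine) translation that makes the hypotheses match: since here the amalgamating subgroup is $C = \{1\}$, the condition $1 \notin A'$ is precisely the condition $A' \cap C = \emptyset$ demanded in Theorem \ref{T:Multi-malnormalFactorGroup-Alike-Result}.

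With that in hand, I would set $P = A'$ and take $(d_i, f_i) = (a_i, g_i)$ for $i = 1, \ldots, n$; since each $d_i \in P$ and each $f_i \in G \setminus A$, Theorem \ref{T:Multi-malnormalFactorGroup-Alike-Result} gives $a_1^{g_1} \cdots a_n^{g_n} = d_1^{f_1} \cdots d_n^{f_n} \notin A$, i.e.\ the product lies in $G \setminus A$. This is exactly the conclusion required by Definition \ref{D:Malnormalities}, so $A$ is multi-malnormal in $G$.

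I do not anticipate any genuine obstacle: the substantive work is carried entirely by Theorem \ref{T:Multi-malnormalFactorGroup-Alike-Result}, and the only thing to verify is the bookkeeping identification of ``normal subsemigroup of $A$ missing $1$'' with ``normal subsemigroup of $A$ disjoint from the trivial amalgamating subgroup''. If one wanted the statement for a free product of more than two factors, one would simply lump all factors other than $A$ into a single group $B$ and invoke the two-factor case, exactly as remarked just before Theorem \ref{T:Multi-malnormalFactorGroup-Alike-Result}.
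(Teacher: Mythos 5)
Your proof is correct and matches the paper's own argument: both treat $G = A \ast B$ as the amalgam $A \ast_C B$ with $C = \{1\}$, note that $1 \notin A'$ is equivalent to $A' \cap C = \emptyset$, and invoke Theorem \ref{T:Multi-malnormalFactorGroup-Alike-Result}. The only difference is that you spell out the bookkeeping more explicitly than the paper does.
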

\begin{proof}
    Let $P$ be a normal subsemigroup of $A$ with $1\not\in P$, we want to show that $T:=d_1^{f_1}\dotsm d_n^{f_n}\not\in A$ for all $d_i\in P$, $f_i\in G\backslash A$ and $n\geq1$. This follows from Theorem \ref{T:Multi-malnormalFactorGroup-Alike-Result}, where $C$ is now $\{1\}$.\looseness=-1
\end{proof}

\section{A 3-manifold group which is GTF and not bi-orderable}\label{Section:3-minifold}
\label{S:3mflds}

 In this section, we prove Theorem \ref{thm:intro3mfldthm}.
Consider the figure eight knot $K \subseteq S^3$, depicted in Figure \ref{fig:fig8}.  Let $\nu(K)$ denote an open tubular neighbourhood of $K$, and set $M = S^3 \setminus \nu(K)$, the exterior of $K$.  Then $M$ is a compact, connected, orientable $3$-manifold whose fundamental group is 
\[ \pi_1(M) = \langle x, y \mid wx=yw \rangle
\]
where $w = xy^{-1}x^{-1}y$. We fix a choice of peripheral subgroup $\pi_1(\partial M) \cong \mathbb{Z} \oplus \mathbb{Z}$ by fixing generators $\mu = x$ and $\lambda= yx^{-1}y^{-1}x^2y^{-1}x^{-1}y$, called the meridian and longitude of $K$.

\begin{figure}[h]
 \includegraphics[scale=0.2]{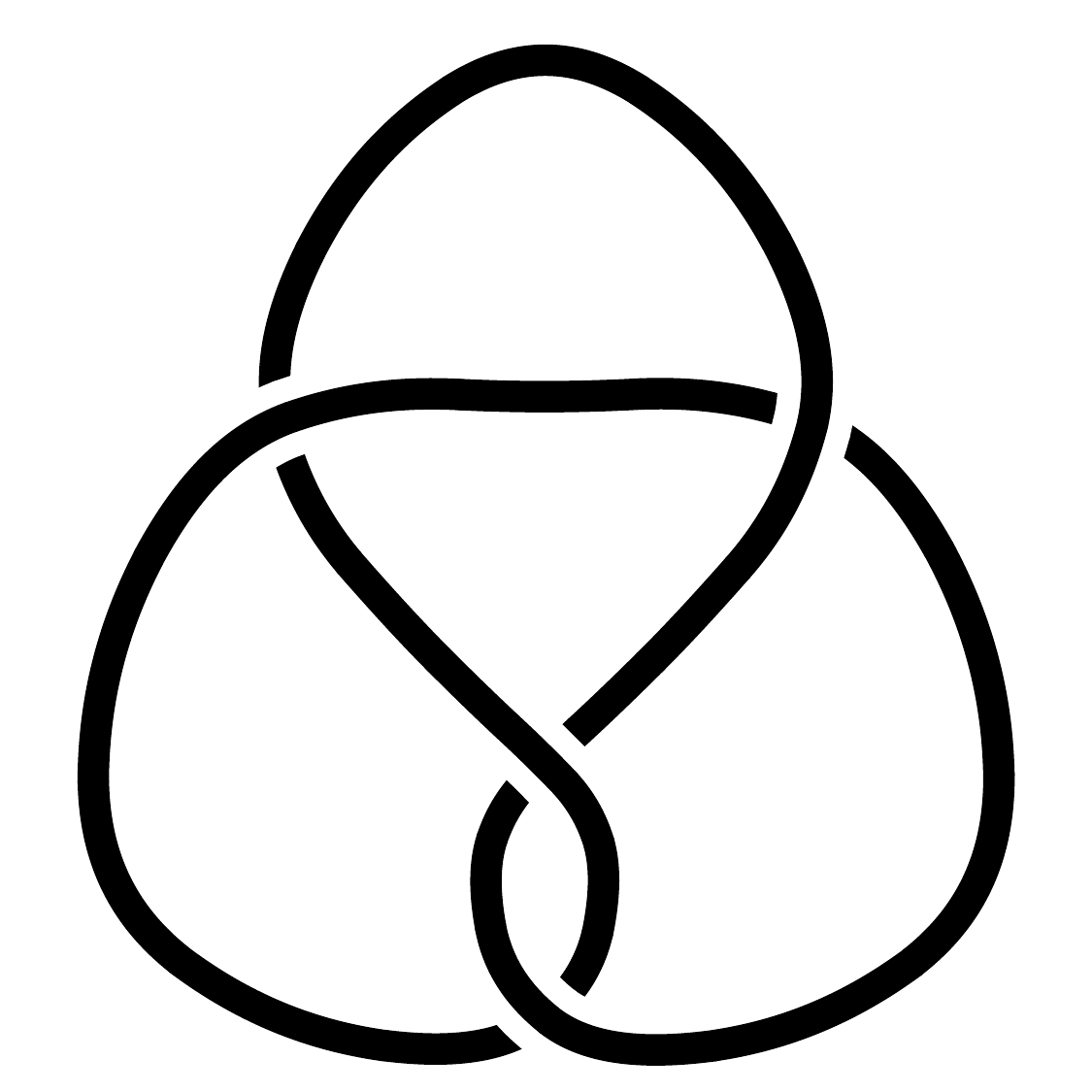}
 \caption{The figure eight knot.}
 \label{fig:fig8}
\end{figure}

Take two copies of $M$, call them $M_1$ and $M_2$, we denote the generators of their respective fundamental groups by $x_1, y_1$ and $x_2, y_2$, and the generators of their respective peripheral subgroups by $\mu_1, \lambda_1$ and $\mu_2, \lambda_2$.

Choose a homeomorphism $\phi : \partial M_1 \rightarrow \partial M_2$ such that the induced map $\phi_* : \pi_1(\partial M_1) \rightarrow \pi_1(\partial M_2)$ is given by $\phi_*(\mu_1) = \mu_2$ and $\phi_*(\lambda_1) = \mu_2\lambda_2$.  Let $W = M_1 \cup_{\phi} M_2$ be the manifold obtained by gluing $M_1$ and $M_2$ by $\phi$; by the Seifert-Van Kampen theorem, $ \pi_1(W) = \pi_1(M_1) \ast_{\phi_*} \pi_1(M_2).$

\begin{proposition}
    The group $\pi_1(W)$ is not bi-orderable.
\end{proposition}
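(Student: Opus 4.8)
\emph{Proposal.} The plan is to show that $\pi_1(W)$ is a nontrivial, finitely generated, \emph{perfect} group, and then to invoke the standard fact that such a group cannot be bi-orderable. In other words, the whole content is the (elementary) observation that the chosen gluing makes $W$ a homology $3$-sphere with infinite fundamental group.

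First I would record that $\pi_1(W)$ is nontrivial and finitely presented. It is the fundamental group of a closed $3$-manifold, hence finitely presented; and by Seifert--Van Kampen it is the amalgam $\pi_1(M_1)\ast_C\pi_1(M_2)$ where $C=\pi_1(\partial M_1)\cong\mathbb{Z}\oplus\mathbb{Z}$ is a proper subgroup of each factor, so the amalgam contains a copy of $\pi_1(M_1)$, the figure eight knot group, which is infinite. Thus $\pi_1(W)$ is nontrivial (indeed infinite).

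Next I would compute $H_1(W)\cong\pi_1(W)^{\mathrm{ab}}$ and show it vanishes. Abelianizing, $H_1(M_i)\cong\mathbb{Z}$ is generated by the class of the meridian $\mu_i$, and the longitude $\lambda_i$ is null-homologous. Hence the gluing relations $\mu_1=\mu_2$ and $\lambda_1=\mu_2\lambda_2$ become, in $H_1(W)$, the relations $[\mu_1]=[\mu_2]$ and $0=[\lambda_1]=[\mu_2]+[\lambda_2]=[\mu_2]$, which force $[\mu_2]=[\mu_1]=0$ and therefore $H_1(W)=0$. (The same conclusion follows from Mayer--Vietoris: the map $H_1(\partial M_1)\to H_1(M_1)\oplus H_1(M_2)$ is surjective, because $\mu_1$ maps onto the first summand while $\lambda_1$ maps to a generator of the second summand, so the cokernel $H_1(W)$ is trivial.) In particular $\pi_1(W)$ is perfect.

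Finally I would invoke the obstruction to bi-orderability: a nontrivial finitely generated bi-orderable group surjects onto $\mathbb{Z}$, hence has nontrivial abelianization. Briefly, in a bi-ordered group the convex subgroups are linearly ordered by inclusion and are normal; finite generation forces the existence of a maximal proper convex subgroup $H\lhd G$, and by H\"older's theorem the quotient bi-ordered group $G/H$ embeds in $(\mathbb{R},+)$ and is nontrivial, so being finitely generated it admits $\mathbb{Z}$ as a quotient; this is the familiar fact that bi-orderable groups are locally indicable. Since $\pi_1(W)$ is nontrivial, finitely generated and perfect, it cannot be bi-orderable, completing the proof. I do not expect a genuine obstacle here: the only step requiring a small amount of care is the homology computation, and none of the machinery of the earlier sections is needed.
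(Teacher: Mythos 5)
Your proof is correct and is essentially the same argument the paper gives: the paper also observes that $\pi_1(W)$ is perfect (verifiable from the presentation, as you compute) and invokes the fact that a finitely generated bi-orderable group surjects onto $\mathbb{Z}$ (citing \cite[Theorem 2.9]{CR16}). You simply spell out the abelianization computation and the local indicability argument in more detail.
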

\begin{proof}
Every finitely generated bi-orderable group admits a surjection onto $\mathbb{Z}$ \cite[Theorem 2.9]{CR16}.  However, one can verify directly from a presentation of $\pi_1(W)$ that $\pi_1(W)$ is perfect.
\end{proof}

The rest of this section is dedicated to proving that $\pi_1(W)$ is GTF.  We begin by constructing a handful of useful normal subsemigroups.

\begin{lemma}\label{L:NSSQandR}
\label{lem:RQ}
Let $\pi_1(M)$ denote the fundamental group of the figure eight knot complement, and $\pi_1(\partial M)$ the peripheral subgroup with generators $\mu, \lambda$ as above. Then there exist normal subsemigroups $Q$ and $R$ of $\pi_1(M)$ satisfying $$Q \cap \pi_1(\partial M) = \{ \lambda^k \mid k >0 \} \quad\text{  and  }\quad R \cap \pi_1(\partial M) = \{  \mu^n  \lambda^m \mid n>0 \mbox{ and } m \in \mathbb{Z} \}$$
such that $P = Q \cup R$ is the positive cone of a bi-ordering of $\pi_1(M)$.
\end{lemma}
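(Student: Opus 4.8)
The plan is to produce the bi-ordering on $\pi_1(M)$ explicitly and then carve out the two normal subsemigroups $Q$ and $R$ from the positive cone. The starting point is the known fact that the figure eight knot group is bi-orderable; in fact it fibers over the circle, so $\pi_1(M)$ is an extension $1 \to F_2 \to \pi_1(M) \to \mathbb{Z} \to 1$ where $F_2$ is the free fiber group and the $\mathbb{Z}$ quotient is generated by the image of $\mu = x$. More importantly, for the figure eight knot the monodromy acts on $H_1$ of the fiber by a matrix in $SL_2(\mathbb{Z})$ (namely $\left(\begin{smallmatrix} 2 & 1 \\ 1 & 1\end{smallmatrix}\right)$ up to conjugacy) with positive real eigenvalues, which is exactly the condition (see Perron--Rolfsen) guaranteeing that $\pi_1(M)$ is bi-orderable. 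I would fix such a bi-ordering $<$ with positive cone $P$, chosen (by replacing $<$ with the opposite order if necessary) so that $\lambda > 1$; note $\lambda$ lies in the commutator subgroup, equivalently in the fiber, so its sign is independent of the sign of $\mu$.

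Next I would define $Q$ and $R$ directly in terms of this ordering and the fibration: let $Q = \{ g \in \pi_1(M) \mid g \in \ker(\pi_1(M) \to \mathbb{Z}) \text{ and } g > 1\}$, the positive elements of the fiber subgroup, and let $R = \{ g \in \pi_1(M) \mid g \mapsto n > 0 \text{ in } \mathbb{Z}\}$, the elements mapping to a strictly positive integer under the abelianization-to-$\mathbb{Z}$ map sending $\mu \mapsto 1$. Each of these is visibly a subsemigroup; $Q$ is normal because the fiber subgroup is normal and the bi-ordering's positive cone is normal; $R$ is normal because it is the preimage of the normal subsemigroup $\{n > 0\} \subseteq \mathbb{Z}$. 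Their union $P = Q \cup R$ is the positive cone: an element either maps to $0$ (and then lies in $Q$ or $Q^{-1}$ depending on its sign in $<$) or maps to a nonzero integer (and then lies in $R$ or $R^{-1}$), and in all cases exactly one of $g, g^{-1}$ lies in $P$, while $P \cdot P \subseteq P$ follows from the two cases together with the fact that $<$ restricted to the fiber is respected. One should double-check that this $P$ is actually the positive cone of $<$ itself, or at least of \emph{some} bi-ordering: the cleanest route is to observe that $<$ can be taken to be the lexicographic order built from the quotient order on $\mathbb{Z}$ and the restricted order on the fiber — i.e. $g > 1$ iff $g$ maps to a positive integer, or maps to $0$ and is $<$-positive in the fiber — which is a standard construction of a bi-ordering on a group extension with bi-orderable kernel and quotient and a \emph{conjugation-invariant} ordering on the kernel; here invariance under the monodromy is exactly the eigenvalue condition.

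It remains to verify the two intersection formulas with the peripheral subgroup $\pi_1(\partial M) = \langle \mu, \lambda \rangle \cong \mathbb{Z}^2$. The map to $\mathbb{Z}$ sends $\mu^n \lambda^m \mapsto n$, since $\lambda$ is a longitude and hence nullhomologous in $M$, so $\mu^n\lambda^m \in R$ iff $n > 0$, giving $R \cap \pi_1(\partial M) = \{\mu^n\lambda^m \mid n > 0, m \in \mathbb{Z}\}$. And $\mu^n\lambda^m \in Q$ iff $n = 0$ and $\lambda^m > 1$ in $<$; since $\lambda > 1$ and powers preserve sign in a bi-ordering, this holds iff $n = 0$ and $m > 0$, giving $Q \cap \pi_1(\partial M) = \{\lambda^k \mid k > 0\}$. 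The main obstacle is the first paragraph: pinning down the precise bi-orderability input for the figure eight knot group and the fact that it arises from a lexicographic construction over the fibration with a monodromy-invariant ordering on the fiber — everything after that is bookkeeping with the fibration exact sequence and the sign conventions. I would cite \cite{CR16} or the Perron--Rolfsen result for the orderability of fibered knot groups with suitable monodromy, and cite the standard fact (again in \cite{CR16}) that an extension of bi-orderable groups with a conjugation-invariant order on the kernel is bi-orderable via the lexicographic cone, which is precisely the $P = Q \cup R$ above.
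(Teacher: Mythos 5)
Your proposal is correct and takes essentially the same approach as the paper. Both arguments rest on the same facts: the figure eight knot group fits in the extension $1 \to [\pi_1(M),\pi_1(M)] \to \pi_1(M) \xrightarrow{h} \mathbb{Z} \to 1$ (where, for a fibered knot, the commutator subgroup is precisely the free fiber group, and $h$ is the abelianization/fibration map with $h(\mu)=1$), Perron--Rolfsen's construction of a conjugation-invariant positive cone $Q$ on the commutator subgroup chosen to contain $\lambda$, the definition $R = h^{-1}(\{n > 0\})$, and the lexicographic cone $P = Q \cup R$ from \cite{PR03}. Your use of ``fiber subgroup'' and ``fibration map'' and the paper's use of ``commutator subgroup'' and ``abelianization map $h$'' are the same objects here, and the verification of the intersection formulas is identical.
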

\begin{proof}
There is a short exact sequence 
  \[ 1 \longrightarrow [\pi_1(M), \pi_1(M)] \longrightarrow \pi_1(M) \stackrel{h}{\longrightarrow} \mathbb{Z} \longrightarrow 1,
  \]
where $h$ is the abelianization map and $h(x) =1$, and $\lambda \in [\pi_1(M), \pi_1(M)]$ as $\lambda$ is a product of commutators.  In \cite{PR03}, they show there is a bi-ordering of $ [\pi_1(M), \pi_1(M)]$ with positive cone $Q$ such that $f(Q) = Q$, where $f: [\pi_1(M), \pi_1(M)] \rightarrow [\pi_1(M), \pi_1(M)]$ is the automorphism given by conjugation by $x$.  For our purposes, we choose such a $Q$ satisfying $\lambda \in Q$ and set $R = \{ g \in \pi_1(M) \mid h(g) >0\}$.  Note that $Q$ and $R$ are normal subsemigroups of $\pi_1(M)$, and that setting $P = Q \cup R$ yields the positive cone of a bi-ordering of $\pi_1(M)$  \cite[Corollary 3.4]{PR03}.

Since $Q$ is contained in the commutator subgroup of $\pi_1(M)$, we have $Q \cap \pi_1(\partial M) \subseteq \langle \lambda \rangle,$ and since we have chosen $Q$ to contain $\lambda$, we have $Q \cap \pi_1(\partial M) = \{ \lambda^{k} \mid k >0 \}$.  That $R \cap \pi_1(\partial M) = \{  \mu^n\lambda^m \mid n>0 \mbox{ and } m \in \mathbb{Z} \}$ follows from the fact that $h(\mu) = 1$ and $h(\lambda) =0$. 
\end{proof}

Next, recall that the universal covering group of $\mathrm{Homeo}_+(S^1)$ is 
$$\mathrm{H\widetilde{ome}o}_+(S^1) = \{ f \in \mathrm{Homeo}_+(\mathbb{R}) \mid f(x+1) = f(x) +1 \},$$ 
and if $f \in \mathrm{H\widetilde{ome}o}_+(S^1)$ then we may define the \textbf{translation number} of $f$ to be 
\[ \tau(f) = \lim_{n \to \infty} \frac{f^n(x_0) -x_0}{n}.
\]

It is known (see e.g. \cite[Chapter 11]{KH95}) that the limit exists and it is independent of the choice of $x_0$, and that $\tau(f) =0$ if and only if $f$ has a fixed point.  It follows that $\tau(f)>0$ (resp. $\tau(f)<0$) if and only if $f(x) > x$ (resp. $f(x) < x$).  Moreover, the restriction of $\tau$ to any abelian subgroup is a homomorphism. 

\begin{lemma}\label{lem:X}
 Let $\pi_1(M)$ denote the fundamental group of the figure eight knot complement, and $\pi_1(\partial M)$ the peripheral subgroup with generators $\mu, \lambda$ as above. Then there exist normal subsemigroups $X_{(1,1)}$ and $X_{(-1,1)}$ of $\pi_1(M)$ satisfying 
    \begin{align}\label{E:Xpm11}
        &X_{(1,1)} \cap \pi_1(\partial M) = \{ \mu^n \lambda^{n+m} \mid  n \in \mathbb{Z} \mbox{ and } m >0 \},\\ \nonumber 
        &X_{(-1,1)} \cap \pi_1(\partial M) = \{ \mu^{-n}\lambda^{n+m} \mid  n \in \mathbb{Z} \mbox{ and } m >0 \}.
    \end{align}
\end{lemma}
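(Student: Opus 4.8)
The plan is to construct the normal subsemigroups $X_{(1,1)}$ and $X_{(-1,1)}$ using dynamical realizations of $\pi_1(M)$ acting on $S^1$, together with the translation number $\tau$, in the same spirit as Lemma \ref{lem:RQ} but with a different linear functional on $\pi_1(\partial M) \cong \mathbb{Z} \oplus \mathbb{Z}$. Recall that $\pi_1(M)$ for the figure eight knot complement is known to act faithfully on $S^1$ by orientation-preserving homeomorphisms in a way that lifts to $\mathrm{H\widetilde{ome}o}_+(S^1)$ and is compatible with the orderability data in \cite{BGH21} (or one may use the circular/left-orderability of $\pi_1(M)$ directly); the point is that there is a homomorphism $\rho : \pi_1(M) \to \mathrm{H\widetilde{ome}o}_+(S^1)$ whose restriction to $\pi_1(\partial M)$ realizes the slope $1$ boundary class with nonzero translation number.

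Concretely, I would proceed as follows. First, invoke the relevant result from \cite{BGH21} to obtain, for the slope corresponding to $\mu\lambda$ (equivalently the primitive class $(1,1)$ in the $(\mu,\lambda)$-basis), a representation $\rho_{(1,1)} : \pi_1(M) \to \mathrm{H\widetilde{ome}o}_+(S^1)$ such that $\tau \circ \rho_{(1,1)}$ restricted to $\pi_1(\partial M)$ is, up to positive scaling, the homomorphism $\mu^a \lambda^b \mapsto b - a$ (so its kernel on the peripheral subgroup is exactly $\langle \mu\lambda \rangle$ and it is positive precisely on $\{\mu^n\lambda^{n+m} \mid n \in \mathbb{Z},\, m>0\}$). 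Second, set
\[
X_{(1,1)} = \{\, g \in \pi_1(M) \mid \rho_{(1,1)}(g)(t) > t \text{ for all } t \in \mathbb{R} \,\}.
\]
Then $X_{(1,1)}$ is closed under multiplication (a composition of maps each moving every point strictly right moves every point strictly right) and under conjugation by arbitrary elements of $\pi_1(M)$ (conjugating a fixed-point-free element of $\mathrm{H\widetilde{ome}o}_+(S^1)$ that pushes right by any homeomorphism again pushes right), so it is a normal subsemigroup. Third, compute the intersection with $\pi_1(\partial M)$: since $\pi_1(\partial M)$ is abelian, $\tau \circ \rho_{(1,1)}$ is a homomorphism there, and $g \in \pi_1(\partial M)$ lies in $X_{(1,1)}$ iff $\rho_{(1,1)}(g)$ has no fixed point and pushes right, iff $\tau(\rho_{(1,1)}(g)) > 0$; by the choice of $\rho_{(1,1)}$ this is exactly the condition $b > a$ for $g = \mu^a\lambda^b$, which is precisely $\{\mu^n\lambda^{n+m} \mid n \in \mathbb{Z},\, m > 0\}$. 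For $X_{(-1,1)}$ I would repeat the argument with the class $\mu^{-1}\lambda$ (i.e. $(-1,1)$), obtaining $\rho_{(-1,1)}$ with peripheral translation functional $\mu^a\lambda^b \mapsto b + a$, and defining $X_{(-1,1)}$ analogously; its peripheral intersection is then $\{\mu^a\lambda^b \mid b > -a\} = \{\mu^{-n}\lambda^{n+m} \mid n \in \mathbb{Z},\, m>0\}$, as required.

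The main obstacle is securing the input representation: one must quote precisely the right theorem from \cite{BGH21} guaranteeing that $\pi_1(M)$ admits an action on $S^1$ (lifting to the line) whose restriction to the peripheral torus has the prescribed slope and nonzero rotation number, and verifying that the translation-number functional on the peripheral $\mathbb{Z}\oplus\mathbb{Z}$ is the claimed primitive one rather than some multiple or a different class. Once that is in hand, the semigroup/normality verifications are the routine "push-right" observations about $\mathrm{H\widetilde{ome}o}_+(S^1)$, and the peripheral computation is just evaluating a linear functional, so the remainder is straightforward.
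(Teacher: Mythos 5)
Your proposal matches the paper's proof essentially verbatim: both invoke the $\rho_{(p,q)}$ representations from \cite{BGH21} for $(p,q)=(1,1)$ and $(-1,1)$, define $X_{(p,q)}$ as the set of elements whose image moves every point of $\mathbb{R}$ strictly right (equivalently has positive translation number), verify normality via the same two observations, and compute the peripheral intersection by evaluating the translation-number functional $\mu^a\lambda^b \mapsto d(b \mp a)$. The only difference is cosmetic: the paper spells out the explicit construction of $\rho_{(p,q)}$ (shifting by a homomorphism $\beta$), whereas you defer that to the citation.
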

\begin{proof}
In \cite{BGH21}, the authors construct a representation $\rho : \pi_1(M) \rightarrow \mathrm{H\widetilde{ome}o}_+(S^1)$ such that $\tau(\rho(\mu))$ and $\tau(\rho(\lambda))=d>0$ are integers  \cite[Theorem 1.5(1) and proof of Theorem 1.3(2)]{BGH21}.  From this, the authors show that if $p$ divides $d$ and if $p,q$ are relatively prime integers, one can then construct a new representation $\rho_{(p,q)}: \pi_1(M) \rightarrow \mathrm{H\widetilde{ome}o}_+(S^1)$ such that $\tau(\rho_{(p,q)}(\mu^p \lambda^q)) =0$ and $\tau(\rho_{(p,q)}(\lambda))  >0$ \cite[Proof of Proposition 6.10]{BGH21}.  We review the construction of $\rho_{(p,q)}$ here,  
as we require such representations for $(p,q) = (1,1)$ and $(p,q) = (-1,1)$ in order to construct the normal subsemigroups $X_{(1,1)}$ and $X_{(-1,1)}$.

Beginning with $\rho : \pi_1(M) \rightarrow \mathrm{H\widetilde{ome}o}_+(S^1)$ such that $\tau(\rho(\mu))$ and $\tau(\rho(\lambda))=d>0$ are integers, and suppose $p$ divides $d$ and that $p, q$ are relatively prime.  Choose a homomorphism $\beta: \pi_1(M) \rightarrow \mathbb{Z}$ satisfying $\beta(\mu) = -\tau(\rho(\mu)) - q(d/p)$, then let the map $sh_k : \mathbb{R} \rightarrow \mathbb{R}$ be given by $sh_k(x) = x+k$, and one sets
\[\rho_{(p,q)}(g) = \rho(g) \circ sh_{\beta(g)}.
\]
Now we set $ X_{(p,q)} = \{ g \in \pi_1(M) \mid \tau(\rho_{(p,q)}(g))>0 \}$. Then $X_{(p,q)}$ is a normal subsemigroup of $\pi_1(M)$, because (1) $X_{(p,q)} = \{ g \in \pi_1(M) \mid \rho_{(p,q)}(g)(x)>x \mbox{ for all } x \in \mathbb{R} \}$ and defining property of this set is clearly preserved by taking products and (2) translation number is invariant under conjugation.

Note that $\tau(\rho_{(p,q)}(\lambda)) =d$ and $\tau(\rho_{(p,q)}(\mu)) = \tau(\rho(\mu)\circ sh_{-\tau(\rho(\mu)) - q(d/p)}) = -q(d/p)$, thus we have 
\begin{align*}
    X_{(p,q)}\cap \pi_1(\partial M)=\{\mu^n\lambda^k\mid n(-q(d/p))+kd>0\},
\end{align*} from which the equalities \eqref{E:Xpm11} follow.
    \end{proof}

\begin{lemma}
\label{lem:Y}
    Let $\pi_1(M)$ denote the fundamental group of the figure eight knot complement, and $\pi_1(\partial M)$ the peripheral subgroup with generators $\mu, \lambda$ as above. Then there exist normal subsemigroups $Y_1, Y_2$ of $\pi_1(M)$ with $Y_2 \cap \pi_1(\partial M) = \{ (\mu \lambda)^k \mid k >0 \}$ and  $Y_1 \cap \pi_1(\partial M) = \{ (\mu^{-1} \lambda)^k \mid k >0 \}$.\looseness=-1
\end{lemma}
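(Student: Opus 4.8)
The plan is to run the argument of Lemma \ref{lem:X} with the same two representations $\rho_{(1,1)}$ and $\rho_{(-1,1)}:\pi_1(M)\to\mathrm{H\widetilde{ome}o}_+(S^1)$, but exploiting now that $\tau(\rho_{(1,1)}(\mu\lambda))=0$ and $\tau(\rho_{(-1,1)}(\mu^{-1}\lambda))=0$, so that $\rho_{(1,1)}(\mu\lambda)$ and $\rho_{(-1,1)}(\mu^{-1}\lambda)$ each have fixed points. The new point, relative to Lemma \ref{lem:X}, is that we do not want a half-plane cut out by a translation functional, but rather the positive ray $\{(\mu\lambda)^k:k>0\}$ sitting on the \emph{boundary} of such a half-plane; so instead of the set $\{g:\tau(\rho(g))>0\}$ we will work with \emph{non-strict} displacement sets and intersect two of them.

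First I would record that for any homomorphism $\sigma:\pi_1(M)\to\mathrm{H\widetilde{ome}o}_+(S^1)$ the set
$$\sigma^{+}:=\{\,g\in\pi_1(M)\mid \sigma(g)(x)\ge x\ \text{for all }x\in\RR\ \text{and}\ \sigma(g)\neq\mathrm{id}\,\}$$
is a normal subsemigroup, and likewise $\sigma^{-}$ defined with $\le$. Closure under products and conjugation uses only that elements of $\mathrm{H\widetilde{ome}o}_+(S^1)$ are increasing, together with the observation that if $\sigma(g)\ge\mathrm{id}$ and $\sigma(g)\neq\mathrm{id}$ then $\sigma(g)^{-1}\le\mathrm{id}$ and $\sigma(g)^{-1}\neq\mathrm{id}$, so no product of two elements of $\sigma^{+}$ can be the identity. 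For $\sigma=\rho_{(1,1)}$, whose peripheral translation functional is $\mu^a\lambda^b\mapsto d(b-a)$, I would then compute $\sigma^{+}\cap\pi_1(\partial M)$: if $b>a$ then $\tau(\sigma(\mu^a\lambda^b))>0$, so $\sigma(\mu^a\lambda^b)(x)>x$ for all $x$ and $\mu^a\lambda^b\in\sigma^{+}$; if $b<a$ then $\sigma(\mu^a\lambda^b)(x)<x$ and $\mu^a\lambda^b\notin\sigma^{+}$; and on the line $b=a$, i.e.\ on $\langle\mu\lambda\rangle$, membership is governed by the one-sided behaviour of the parabolic $\rho_{(1,1)}(\mu\lambda)$. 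To control that, I would use that $\rho_{(1,1)}(\mu\lambda)$ commutes both with the fixed-point-free homeomorphism $\rho_{(1,1)}(\lambda)$ (which has positive translation number) and with the deck translation $x\mapsto x+1$, so that its fixed-point set is a closed, $\langle\rho_{(1,1)}(\lambda)\rangle$-invariant, $\mathbb{Z}$-invariant subset of $\RR$; using this together with the explicit form of the twisted representation, one checks that $\rho_{(1,1)}(\mu\lambda)$ displaces every point it does not fix in a single direction, and after arranging signs we may take $\rho_{(1,1)}(\mu\lambda)\ge\mathrm{id}$. Then $\rho_{(1,1)}^{+}\cap\pi_1(\partial M)=\{\mu^a\lambda^b:b>a\}\cup\{(\mu\lambda)^k:k>0\}$.

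To delete the unwanted half-plane $\{b>a\}$, I would intersect $\rho_{(1,1)}^{+}$ with a second normal subsemigroup of the same type, coming from a companion representation $\sigma$ whose peripheral translation functional is $\mu^a\lambda^b\mapsto d(a-b)$ (so its half-plane is $\{b<a\}$) and for which $\sigma(\mu\lambda)$ again displaces in the positive direction, giving $\sigma^{+}\cap\pi_1(\partial M)=\{\mu^a\lambda^b:b<a\}\cup\{(\mu\lambda)^k:k>0\}$. Every cross-term in the intersection of the two $\pi_1(\partial M)$-sets is empty, so $Y_2:=\rho_{(1,1)}^{+}\cap\sigma^{+}$ is a normal subsemigroup with $Y_2\cap\pi_1(\partial M)=\{(\mu\lambda)^k:k>0\}$, as desired. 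The construction of $Y_1$ from $\rho_{(-1,1)}$ and its companion is word-for-word the same, with $\mu^{-1}\lambda$ replacing $\mu\lambda$.

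The step I expect to be the main obstacle is producing the companion representation $\sigma$. Reversing the sign of the translation functional by the obvious devices — post-composing $\rho_{(1,1)}$ with an orientation-reversing conjugation of $S^1$, or pre-composing with a symmetry of the figure-eight knot complement (these act on $H_1(\partial M)$ only through the four sign changes of $\mu$ and $\lambda$) — simultaneously reverses the displacement direction of $\mu\lambda$, so none of these cheap manipulations alone produces a semigroup meeting $\pi_1(\partial M)$ in ``the opposite half-plane together with the same ray''; one must instead extract a genuinely independent representation, by rerunning the construction of $\rho_{(p,q)}$ from \cite{BGH21} with a different underlying circle representation and/or a different twisting homomorphism. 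Establishing at the same time the one-sidedness of the parabolics $\rho_{(1,1)}(\mu\lambda)$ and $\rho_{(-1,1)}(\mu^{-1}\lambda)$ is the other delicate point, and both amount to a careful reading of the constructions in \cite{BGH21}.
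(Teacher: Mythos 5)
Your proposal is incomplete, and you correctly locate the two missing pieces: (i) the companion representation $\sigma$, and (ii) the monotone displacement of the parabolics $\rho_{(1,1)}(\mu\lambda)$ and $\rho_{(-1,1)}(\mu^{-1}\lambda)$. For (ii), it is worth emphasizing that the commutativity and invariance observations you list are genuinely insufficient: an increasing homeomorphism of $\mathbb{R}$ commuting with a fixed-point-free element and with integer translation can have a $\mathbb{Z}$-invariant closed fixed-point set whose complementary intervals are displaced in different directions, provided those intervals lie in distinct orbits of the centralizer; so one would need detailed structural facts about the representation from \cite{BGH21}, not just soft dynamics. For (i), you rightly observe that orientation reversal and knot symmetries invert the parabolic's displacement along with the translation functional, yielding $\sigma^+ \cap \pi_1(\partial M) = \{\mu^a\lambda^b : b<a\}\cup\{(\mu\lambda)^k : k<0\}$, whose intersection with $\rho_{(1,1)}^+ \cap \pi_1(\partial M)$ is empty rather than the desired ray; you do not construct an independent representation, so the argument stops short.

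The paper's proof takes a completely different and much more elementary route that uses no circle representations at all. One takes $P$ to be the positive cone of a bi-ordering of $\pi_1(M)$ with $\mu\lambda \in P$ (possible for any nontrivial element of a bi-orderable group, after replacing $P$ by $P^{-1}$ if necessary) and sets $Y_2 = P \cap \langle\langle \mu\lambda\rangle\rangle$; this is a normal subsemigroup as the intersection of a normal subsemigroup with a normal subgroup. The equality $\langle\langle \mu\lambda\rangle\rangle \cap \pi_1(\partial M) = \langle \mu\lambda\rangle$ then follows from the fact that $+1$-Dehn filling on the figure eight knot yields a manifold with torsion-free, nontrivial fundamental group, and intersecting with $P$ singles out the positive powers of $\mu\lambda$. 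This completely sidesteps the delicate one-sided parabolic analysis you were headed toward.
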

\begin{proof}
Let $P$ be the positive cone of a bi-ordering of $\pi_1(M)$ chosen so that $\mu \lambda \in P$, and define $Y_2 = P \cap \langle \langle \mu \lambda \rangle \rangle$, where $\langle \langle \mu \lambda \rangle \rangle$ is the normal closure of $\mu \lambda$ in $\pi_1(M)$.  Then $Y_2$ is a normal subsemigroup of $\pi_1(M)$.  Similarly, let $P'$ be the positive cone of a bi-ordering of $\pi_1(M)$ chosen so that $\mu^{-1} \lambda \in P'$ and set $Y_1 = P' \cap \langle \langle \mu^{-1} \lambda \rangle \rangle$.

 Since $+1$ Dehn filling on the figure eight knot yields a manifold with torsion-free, nontrivial fundamental group, we know that $\langle \langle \mu \lambda \rangle \rangle \cap \pi_1( \partial M)=\{(\mu\lambda)^k\mid k\in\mathbb Z\}$, from which the first equality follows. The second equality follows from a similar argument. 
\end{proof}

\begin{theorem}
    The group $\pi_1(W)$ is GTF.
\end{theorem}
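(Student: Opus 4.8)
The plan is to apply Theorem \ref{T:GTFAmalgamIffFamilies} to the amalgam $\pi_1(W) = \pi_1(M_1) \ast_{\phi_*} \pi_1(M_2)$. This requires two things: first, that $C = \pi_1(\partial M)$ is RTF in $\pi_1(M)$ (the figure eight knot complement group); second, that we can exhibit families $\mathcal{F}_1$ of normal subsemigroups of $\pi_1(M_1)$ and $\mathcal{F}_2$ of $\pi_1(M_2)$ covering the nontrivial elements and matching up along $C$ via the gluing isomorphism $\phi_*$. The RTF condition follows from known results: $\pi_1(\partial M)$ is left-relatively convex in $\pi_1(M)$ (e.g. because peripheral subgroups of knot groups are left-relatively convex, or via \cite{LRR09}-type arguments), hence RTF by \cite[Lemma 3.15]{CH21}. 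So the heart of the matter is producing the matching families.

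The key point is that $\phi_*$ identifies $\pi_1(\partial M_1)$ with $\pi_1(\partial M_2)$ via $\mu_1 \mapsto \mu_2$, $\lambda_1 \mapsto \mu_2\lambda_2$. So for each normal subsemigroup $P$ of $\pi_1(M_1)$ I would need a normal subsemigroup $Q$ of $\pi_1(M_2)$ with $\phi_*(P \cap C) = Q \cap C$, and vice versa. Since $\pi_1(M_1) \cong \pi_1(M_2) \cong \pi_1(M)$, the task reduces to: for each normal subsemigroup $P$ of $\pi_1(M)$ appearing in a covering family, compute $P \cap \pi_1(\partial M)$, apply the coordinate change $(\mu,\lambda) \mapsto (\mu, \mu^{-1}\lambda)$ coming from $\phi_*^{-1}$ (which sends $\mu_2 \mapsto \mu_1$, $\lambda_2 \mapsto \lambda_1^{-1}\mu_1\cdots$, i.e. the inverse substitution), and find another normal subsemigroup realizing that transformed trace. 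The families should be built out of the normal subsemigroups constructed in Lemmas \ref{L:NSSQandR}, \ref{lem:X}, \ref{lem:Y}: the bi-ordering positive cone $P = Q \cup R$ and its inverse give traces $\{\mu^n\lambda^m : n > 0, m \in \mathbb{Z}\} \cup \{\lambda^k : k>0\}$ and the opposite; the $X_{(1,1)}, X_{(-1,1)}$ give traces like $\{\mu^n\lambda^{n+m} : m>0\}$ and $\{\mu^{-n}\lambda^{n+m}: m > 0\}$; and $Y_1, Y_2$ give $\{(\mu^{\pm 1}\lambda)^k : k > 0\}$. The reason these particular subsemigroups were constructed is precisely that, under the linear change of coordinates on $\mathbb{Z}^2 = \pi_1(\partial M)$ induced by $\phi_*$, their traces get permuted among one another (together with inverses): a half-plane through the origin in $\mathbb{Z}^2$ defined by one linear functional maps to a half-plane defined by another, and the list $\{Q, R, X_{(1,1)}, X_{(-1,1)}, Y_1, Y_2\}$ plus inverses is designed to be closed (up to the covering condition (1) of Theorem \ref{T:GTFAmalgamIffFamilies}) under this operation.

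Concretely, I would proceed as follows. First, enumerate all the normal subsemigroups to be used in each factor — namely (copies in $\pi_1(M_1)$ of) $Q, R, Q^{-1}, R^{-1}, X_{(1,1)}, X_{(1,1)}^{-1}, X_{(-1,1)}, X_{(-1,1)}^{-1}, Y_1, Y_1^{-1}, Y_2, Y_2^{-1}$, and check that every nontrivial element of $\pi_1(M_1)$ lies in at least one of them; since $P = Q \cup R$ is already a positive cone, $P$ and $P^{-1}$ alone cover $\pi_1(M_1) \setminus \{1\}$, so condition (1) is immediate, and the remaining subsemigroups are included only to make condition (2) hold. Second, for each such $P$ in $\mathcal{F}_1$, compute $P \cap \pi_1(\partial M_1)$ using the three lemmas, push it through $\phi_*$, and verify it equals $Q \cap \pi_1(\partial M_2)$ for a suitable $Q$ in $\mathcal{F}_2$; then do the symmetric check with $\phi_*^{-1}$. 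This is a finite bookkeeping computation with half-planes in $\mathbb{Z}^2$: e.g. $\phi_*$ sends the trace $\{\lambda_1^k : k > 0\}$ of $Q$ to $\{(\mu_2\lambda_2)^k : k>0\}$, which is exactly the trace of $Y_2$ in the second factor, and so on through the list. Third, invoke Theorem \ref{T:GTFAmalgamIffFamilies}: the matching families show no element of $\pi_1(M_1) \cup \pi_1(M_2)$ is generalized torsion in $\pi_1(W)$, and the RTF condition upgrades this to $\pi_1(W)$ being GTF.

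I expect the main obstacle to be the second step — verifying that the chosen finite list of normal subsemigroups is genuinely closed under the coordinate change induced by $\phi_*$ and $\phi_*^{-1}$. The subtlety is that the change of coordinates must carry the trace of each $P \in \mathcal{F}_1$ exactly onto the trace of some member of $\mathcal{F}_2$ (not merely into it), and one must confirm that the six ``directions'' of half-planes realized by $Q, R, X_{(1,1)}, X_{(-1,1)}, Y_1, Y_2$ (and their negatives) form an orbit-closed set under the relevant $\mathrm{GL}_2(\mathbb{Z})$ element. If the list as given turns out not to be closed, one would need to construct additional normal subsemigroups with prescribed peripheral traces — but the fact that the authors went to the trouble of proving Lemmas \ref{lem:X} and \ref{lem:Y} strongly suggests the list is exactly what is needed, so the remaining work is careful verification rather than new construction.
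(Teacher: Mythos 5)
Your strategy matches the paper's: apply Theorem \ref{T:GTFAmalgamIffFamilies} to the amalgam $\pi_1(M_1) \ast_{\phi_*} \pi_1(M_2)$, establish that $\pi_1(\partial M)$ is RTF in $\pi_1(M)$ via relative convexity plus \cite[Lemma 3.15]{CH21}, and assemble the families from the normal subsemigroups of Lemmas \ref{L:NSSQandR}, \ref{lem:X}, \ref{lem:Y}. Your example computation ($\phi_*$ carries the trace of $Q$ onto the trace of $Y_2$) is correct, and you correctly identify the bookkeeping of peripheral traces as the crux.

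However, there is a genuine gap in the execution: you propose to put the \emph{same} twelve subsemigroups $Q^{\pm 1}, R^{\pm 1}, X_{(1,1)}^{\pm 1}, X_{(-1,1)}^{\pm 1}, Y_1^{\pm 1}, Y_2^{\pm 1}$ into each of $\mathcal F_1$ and $\mathcal F_2$, and you conjecture that the resulting set of traces is orbit-closed under $\phi_*$. It is not. In the $(\mu,\lambda)$-coordinates on $\pi_1(\partial M) \cong \mathbb{Z}^2$, $\phi_*$ acts by $\mu^a\lambda^b \mapsto \mu^{a+b}\lambda^b$, i.e.\ the unipotent matrix $\left(\begin{smallmatrix} 1 & 1 \\ 0 & 1 \end{smallmatrix}\right)$, which has infinite order. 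Already $\phi_*\bigl(X_{(1,1)} \cap \pi_1(\partial M_1)\bigr) = \{\mu^p\lambda^q \mid 2q > p\}$, a half-plane direction realized by none of your twelve subsemigroups, and iterating $\phi_*$ produces an infinite sequence of new slopes; the same problem arises for $\phi_*(Y_2 \cap \pi_1(\partial M_1))$. So no finite \emph{symmetric} list can satisfy condition (2), and ``constructing additional normal subsemigroups with prescribed peripheral traces,'' which you offer as a fallback, would require infinitely many of them.

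The resolution in the paper is to take \emph{asymmetric} families: $\mathcal F_1 = \{Q_1^{\pm 1}, R_1^{\pm 1}, X_{(-1,1)}^{\pm 1}, Y_1^{\pm 1}\}$ and $\mathcal F_2 = \{Q_2^{\pm 1}, R_2^{\pm 1}, X_{(1,1)}^{\pm 1}, Y_2^{\pm 1}\}$. Condition (2) only demands that each trace in $\mathcal F_1$ appear as a trace in $\mathcal F_2$ and conversely, and with this choice $\phi_*$ induces a bijection on traces: $R_1^{\pm 1} \leftrightarrow X_{(1,1)}^{\mp 1}$, $Q_1^{\pm 1} \leftrightarrow Y_2^{\pm 1}$, $X_{(-1,1)}^{\pm 1} \leftrightarrow R_2^{\pm 1}$, $Y_1^{\pm 1} \leftrightarrow Q_2^{\pm 1}$. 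The covering condition (1) is still immediate since $Q \cup R$ is already a positive cone in each factor. It is precisely the freedom to choose different families in the two factors that makes a finite list possible here, and this is the idea your proposal is missing.
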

\begin{proof}
We use the notation established at the beginning of this section and the normal subsemigroups established in the preceding lemmas.  Recall that $\pi_1(W) = \pi_1(M_1) \ast_{\phi_*} \pi_1(M_2),$ where $\phi_* : \pi_1(\partial M_1) \rightarrow \pi_1(\partial M_2) $ is given by $\phi_*(\mu_1) = \mu_2$ and $\phi_*(\lambda_1) = \mu_2\lambda_2$.

First, note that each $\pi_1(M_i)$ is GTF because it is bi-orderable, by Lemma \ref{L:NSSQandR}.   
Now, consider the families of normal subsemigroups:
\[ \mathcal F_1 = \{R_1^{\pm 1}, Q_1^{\pm 1}, X_{(-1,1)}^{\pm 1}, Y_1^{\pm 1} \}
\text{ and }  \mathcal F_2 = \{R_2^{\pm 1}, Q_2^{\pm 1}, X_{(1,1)}^{\pm 1}, Y_2^{\pm 1} \},
\]
where we interpret $R_i^{\pm 1}$, $Q_i^{\pm 1}$ and $Y_i^{\pm 1}$ as copies of $R^{\pm 1}$, $Q^{\pm 1}$ and $Y^{\pm 1}$ lying inside $\pi_1(M_i)$ for $i=1,2$.  Moreover, using that $\pi_1(M_i)$ is a copy of $\pi_1(M)$ for $i=1, 2$, we take $X_{(-1,1)}^{\pm 1} \subseteq \pi_1(M_1)$ and $ X_{(1,1)}^{\pm 1} \subseteq \pi_1(M_2)$. 
We will show that these families satisfy the conditions of Theorem \ref{T:GTFAmalgamIffFamilies}, and that $\pi_1(\partial M_i)$ is RTF in $\pi_1(M_i)$.

First, that $\pi_1(\partial M_i)$ is RTF in $\pi_1(M_i)$ follows from an application of the fact that every isolated abelian subgroup of a bi-orderable group is left-relatively convex (See, e.g. \cite[Corollary 5.4(2)]{BC24}).  In our case, since the peripheral subgroup $\pi_1(\partial M)$ of $\pi_1(M)$ is isolated by \cite{Simon76}, it is relatively convex, and by \cite[Lemma 3.15]{CH21}, relatively convex subgroups are RTF.

Next, observe that no element of $\mathcal F_i$ contains the identity, because of equalities \eqref{E:Xpm11} and that every element of $\mathcal F_i$ other than $X_{\pm1,1}$ is a subset of a positive cone in $\pi_1(M_i)$.  Moreover, since $Q_i \cup R_i$ is the positive cone of a bi-ordering of $\pi_1(M_i)$, we know that the union of all elements in $\mathcal F_i$ is equal to $\pi_1(M_i) \setminus \{ 1\}$.

It remains to check condition (2) of Theorem \ref{T:GTFAmalgamIffFamilies} for the given families. Employing Lemmas \ref{lem:RQ}, \ref{lem:X}, \ref{lem:Y}, and recalling that $\phi_*(\mu_1)=\mu_2$ and $\phi_*(\lambda_1)=\mu_2\lambda_2$,  
we compute: 
\[ \phi_*(R_1^{\pm 1} \cap \pi_1(\partial M_1)) = X_{(1,1)}^{\mp 1} \cap \pi_1(\partial M_2),
\]
\[ \phi_*(Q_1^{\pm 1} \cap \pi_1(\partial M_1)) = Y_2^{\pm1 } \cap \pi_1(\partial M_2),
\]
\[ \phi_*(X_{(-1,1)}^{\pm 1} \cap \pi_1(\partial M_1)) = R_2^{\pm1 } \cap \pi_1(\partial M_2),
\]
\[ \phi_*(Y_1^{\pm 1} \cap \pi_1(\partial M_1)) = Q_2^{\pm1 } \cap \pi_1(\partial M_2).
\]
That $\pi_1(W)$ is GTF now follows from Theorem \ref{T:GTFAmalgamIffFamilies}.
\end{proof}

\section{A one-relator group which is GTF and not bi-orderable}\label{Section:One-RelatorNon-BO-GTF-Group}

The goal of this section is to answer \cite[Question 3]{CGW15} in the negative by proving Theorem \ref{thm:onerelatorGTFnonBO}. As mentioned in the introduction, we take inspiration for the non-bi-orderability of our example from \cite{Ak23}.  Specifically,  \cite[Theorem 1.1]{Ak23} says that that the group $\Gamma:=\langle a,t\mid a_1(a_0a_2)a_1^{-1}=(a_0a_2)^2\rangle$, where we write $a_i$ for $a^{t^{-i}}$, is a one-relator, non-bi-orderable and GTF group. 

Unfortunately, this group turns out to contain generalized torsion.  To see this, note that $\alpha:=a_0a_2a_1(a_0a_2)^{-1}a_1^{-1}a_0a_2$ is trivial, and hence, so is $\beta:=\alpha^{t^{-1}}\alpha^{a_2^{-1}a_1a_3}$.  Then we compute 
\begin{align*}
    \beta&=a_1a_3a_2(a_1a_3)^{-1}\cdot a_0a_2a_1(a_0a_2)^{-1}a_1^{-1}a_0a_2\cdot a_2^{-1}a_1a_3\\
         &=a_1a_3a_2(a_1a_3)^{-1}\cdot a_0a_2a_1(a_0a_2)^{-1}\cdot a_1^{-1}a_0a_1\cdot a_3\\
         &=a_2^{(a_1a_3)^{-1}}a_1^{(a_0a_2)^{-1}}a_0^{a_1} a_3,
\end{align*} and since the $a_i$'s are conjugates of $a$, this shows $a$ is a generalized torsion element. However, the group $\Gamma$ is non-bi-orderable, via a proof similar to that of Lemma \ref{lem:nonbolemma} below. 

\begin{lemma}
\label{lem:nonbolemma}
    Let $G$ be a group and $a, b \in G$.  Assume that $<$ is a bi-ordering of $G$ and $a>1$. Then $a^{b^2}(a^b)^{-1}a >1$ and $(a^{b^2})^{-1}a^ba^{-1} <1$.
\end{lemma}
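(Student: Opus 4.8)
The plan is to deduce both inequalities from a single, easily-checked comparison of group elements, using only the bi-invariance of $<$ and the fact that conjugation by a fixed element is an order-preserving bijection.

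First I would record two elementary facts about a bi-ordering: conjugates of positive elements are positive (so $a^{b}>1$ and $a^{b^2}>1$), and a product $uv$ of two elements each larger than $1$ satisfies $uv>u$ and $uv>v$. Then, multiplying the inequality $a^{b^2}(a^b)^{-1}a>1$ on the left by $(a^{b^2})^{-1}$ and by $a^{b}$, and on the right by $a^{b^2}$ --- each such multiplication being an equivalence since $<$ is bi-invariant --- I would show it is equivalent to $a^{b}<a\,a^{b^2}$. In the same way, $(a^{b^2})^{-1}a^{b}a^{-1}<1$ is equivalent to $a^{b}<a^{b^2}a$. Thus it suffices to prove the two comparisons $a^{b}<a\,a^{b^2}$ and $a^{b}<a^{b^2}a$.

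By the first step, each of $a\,a^{b^2}$ and $a^{b^2}a$ is strictly larger than both $a$ and $a^{b^2}$, so it is enough to show that $a^{b}\le a$ or $a^{b}\le a^{b^2}$. Here is the one genuinely non-mechanical point: if $a^{b}>a$, then conjugating this inequality by $b$ (an order automorphism of $G$) gives $a^{b^2}=(a^{b})^{b}>a^{b}$, so $a^{b}<a^{b^2}$; and in the remaining case $a^{b}\le a$ holds outright. Either way $a^{b}$ is dominated by one of $a, a^{b^2}$, hence strictly smaller than $a\,a^{b^2}$ and $a^{b^2}a$, and the lemma follows.

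The step I expect to be the crux is the reduction: one should resist estimating the three-term products directly and instead rearrange them into a comparison of $a^{b}$ with $a\cdot a^{b^2}$, after which the monotonicity ``$a^{b}>a \Rightarrow a^{b^2}>a^{b}$'' under conjugation by $b$ closes the argument. Everything else is routine bookkeeping with left and right multiplication.
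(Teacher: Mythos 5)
Your proof is correct. The core idea is the same as the paper's — namely, the observation that conjugation by $b$ is order-preserving, so the chain $a, a^b, a^{b^2}$ is monotone: $a^b>a$ forces $a^{b^2}>a^b$, and dually. What differs is the packaging. The paper cases on $a=a^b$ versus $a^{b^2}>a^b>a$ versus $a>a^b>a^{b^2}$, in each case factoring the word $a^{b^2}(a^b)^{-1}a$ as a product of two visibly positive elements, and then obtains the second inequality \emph{from the first} by substituting $b\mapsto b^{-1}$, conjugating by $b^2$, and inverting. You instead rearrange each word into a single comparison — $a^{b}<a\,a^{b^2}$ and $a^{b}<a^{b^2}a$ — and treat the two inequalities symmetrically, closing both at once by noting $a^b$ is dominated by one of $a$, $a^{b^2}$. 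Your version is slightly more uniform (no substitution trick to transfer one inequality to the other), while the paper's is marginally shorter since it only analyzes one three-term word directly; both rest on the same monotonicity lemma and are equally elementary.
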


\begin{proof}
    If $a=a^b$, the conclusion is obvious.  Hence we assume $a\neq a^b$, and then we have either $a^{b^2}>a^b>a$ or $a>a^b>a^{b^2}$. We then have $a^{b^2}(a^b)^{-1}>1$ or $(a^b)^{-1}a>1$ correspondingly, implying that $a^{b^2}(a^b)^{-1}a>1$ as $a>1$ and $a^{b^2}>1$. Thus we have $a^{b^2}(a^b)^{-1}a>1$ for all $b\in G$, hence upon replacing $b$ with $b^{-1}$, we have $a^{b^{-2}}(a^{b^{-1}})^{-1}a>1$, and then $aa^ba^{b^2}>1$ by conjugating $b^2$, implying $(a^{b^2})^{-1}a^ba^{-1} <1$ by taking inverse. 
    \end{proof}

  Note that non-biorderability of the group $G=\langle a,b,d\mid a^{b^2}(a^{b})^{-1}a=(a^{d^2})^{-1}a^{d}a^{-1}\rangle$ in Theorem \ref{thm:onerelatorGTFnonBO} is a direct consequence of the previous lemma. Assuming that $<$ is a bi-ordering of $G$, we may also assume that $1<a$; then by Lemma \ref{lem:nonbolemma} the left-hand side of the relator is positive, while the right-hand side is negative, a contradiction.

We now turn to proving that $G$ is GTF. To do so, we will use the fact that 
$$G=\langle a,b,c,d\mid a=c^{-1}, a^{b^2}(a^{b})^{-1}a=c^{d^2}(c^{d})^{-1}c\rangle,$$ 
where we recover the one-relator presentation of Theorem \ref{thm:onerelatorGTFnonBO} by replacing $c$ with $a^{-1}$.  

From this presentation, we see that $G$ is the amalgam $A\ast_{\phi}B$, where $A=\langle a,b\mid\rangle$, $B=\langle c,d\mid\rangle$ are free groups of rank two, $C$ and $D$ are the subgroups of $A$ and $B$ freely generated by $\{a,a^{b^2}(a^b)^{-1}a\}$ and $\{c^{-1}, c^{d^2}(c^d)^{-1}c\}$, respectively, with the isomorphism
$\phi:C\rightarrow D$ defined by $$\phi(a)=c^{-1},\phi(a^{b^2}(a^{b})^{-1}a)= c^{d^2}(c^{d})^{-1}c.$$

We introduce some notation for ease of discussion.  Let $F$ be a free group generated by a nonempty set $S$ and $t\in S$. We define a homomorphism $W_{S,t}: F \rightarrow \mathbb Z$ by $W_{S,t}(u)=\delta_{tu}$ ($u\in S$), where $\delta_{tu}=1$ if $t=u$ and $\delta_{tu}=0$ if $t\neq u$. We call $W_{S,t}(g)$ the \textbf{weight} of $t$ in $g$ (with respect to the generating set $S$); it is the sum of the exponents of $t$ when $g$ is written as a (reduced) word in $S$. 

Set $N = \ker(W_{\{a,b\},b})$, then $N$ is freely generated by $S=\{a_{i}\mid i\in\mathbb Z\}$, where $a_i=a^{b^i}$. Another free generating set of $N$ is $S'=\{v_i\mid i\in\mathbb Z\}$, where $v_1:=a_2a_1^{-1}$ and $v_i:=a_i$ for all $i\neq1$. Note that $C$ is the free subgroup of $N$ generated by $\{v_0, v_1\}$. Letting $N_1$ be the subgroup of $N$ generated by $S'\setminus \{v_0, v_1\}$, observe that  $N=C\ast N_1$, 
which is central to the following proofs.

We also note that the isomorphism $\Phi:A\rightarrow B$ defined by $\Phi(a)=c$ and $\Phi(b)=d$ satisfies
$\Phi(C)=D$.

\begin{lemma}\label{L:One-Relator-RTF}
    The subgroup $C$ (resp. $D$) is RTF in $A$ (resp. $B$).
\end{lemma}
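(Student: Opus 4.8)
By symmetry (via the isomorphism $\Phi : A \to B$ carrying $C$ to $D$), it suffices to show that $C$ is RTF in $A = \langle a, b \mid \ \rangle$. Recall $N = \ker(W_{\{a,b\},b})$ is freely generated by $S' = \{v_i \mid i \in \mathbb{Z}\}$ with $v_1 = a_2 a_1^{-1}$ and $v_i = a_i$ otherwise, that $C$ is freely generated by $\{v_0, v_1\}$, and crucially that $N = C \ast N_1$ where $N_1 = \langle v_i : i \neq 0, 1\rangle$. The plan is to reduce the RTF statement for the pair $(C, A)$ to a statement about the free product decomposition $N = C \ast N_1$, and then invoke the multi-malnormality of free factors (Corollary \ref{C:FactorSubgroupsAreMulti-malnormal}).

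First I would reduce from $A$ to $N$. Suppose for contradiction that $C$ is not RTF in $A$, so there exist $g \in A \setminus C$ and $c_1, \ldots, c_k \in C$ with $gc_1gc_2\cdots gc_k = 1$. Applying $W_{\{a,b\},b}$ and using that each $c_i \in C \subseteq N$ has weight zero, we get $kW_{\{a,b\},b}(g) = 0$, hence $W_{\{a,b\},b}(g) = 0$, i.e. $g \in N$. So in fact $C$ fails to be RTF in $N$ as well. Now I would appeal to the free product decomposition: since $N = C \ast N_1$, Corollary \ref{C:FactorSubgroupsAreMulti-malnormal} tells us $C$ is multi-malnormal in $N$. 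The relation $gc_1 \cdots gc_k = 1$ with $g \in N \setminus C$ and all $c_i \in C$ needs to be converted into a product of conjugates of a fixed element of a normal subsemigroup of $C$; the standard trick (as in the proof of Corollary \ref{C:BOGTFOfExtendableAmalgam}) is: set $P_k := g c_1 g c_2 \cdots g c_k$; rewriting, each $c_i$ can be conjugated so that $P_k$ becomes a product of $k$ conjugates (in $N$) of elements $c_i \in C$. More precisely, $g c_1 \cdots g c_k = (c_k^{g^{-1}\cdots} )\cdots$ — one collects the $c_i$'s to one side, expressing $1 = P_k$ as $c_1^{h_1} c_2^{h_2} \cdots c_k^{h_k}$ for suitable $h_i \in N$. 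Since at least one $h_i$ lies in $N \setminus C$ (because $g \notin C$), multi-malnormality forces $1 = c_1^{h_1}\cdots c_k^{h_k} \in N \setminus C$, provided the $c_i$'s generate a normal subsemigroup of $C$ not containing $1$ — but $C$ is free hence GTF, so $\mathrm{NSS}_C(\{c_1, \ldots, c_k\})$ avoids $1$ (or one passes to one $c_i$ at a time). This contradiction completes the argument.

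Actually, let me organize the final step more carefully, as it is the main obstacle: one must take an element $c \in C \setminus \{1\}$ and show $gh_1 g h_2 \cdots g h_k \neq 1$ directly — here one may need all the $c_i$ equal to a fixed $c$, or handle distinct $c_i$. The cleanest route: suppose $gc_1\cdots gc_k = 1$ with $g \in N\setminus C$. Conjugating the whole relation and collecting powers of $g$, we get an identity of the form $1 = \prod_{i=1}^k c_i^{w_i}$ with $w_i \in N$, where at least one $w_i \notin C$ since $g \notin C$ (indeed $w_1$ involves $g$). Let $P = \mathrm{NSS}_C(\{c_1, \ldots, c_k\})$; since $C$ is free it is GTF so $1 \notin P$, and $P$ is a normal subsemigroup of $C$. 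By multi-malnormality of $C$ in $N$ (Corollary \ref{C:FactorSubgroupsAreMulti-malnormal}, with the free product $N = C \ast N_1$), Lemma \ref{L:multi-malnormalStrongMalnormalMalnormal}(1) gives $\prod c_i^{w_i} \in N \setminus C$, contradicting that it equals $1 \in C$. The hard part is bookkeeping: one must correctly convert $gc_1 g c_2 \cdots g c_k = 1$ into a product of conjugates of the $c_i$ and verify that the conjugating elements are genuinely in $N$ with at least one outside $C$ — this is where I would spend the most care, modeling the manipulation on the computation of $P_n$ in the proof of Corollary \ref{C:BOGTFOfExtendableAmalgam}.
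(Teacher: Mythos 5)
Your reduction from $A$ to $N$ via the weight homomorphism $W_{\{a,b\},b}$ is correct and parallels the paper's observation that $A/N\cong\mathbb{Z}$, and identifying $N=C\ast N_1$ as the relevant structure is also the right move; but the finishing argument via multi-malnormality has a genuine gap. You claim that $gc_1gc_2\cdots gc_k=1$ can be rewritten as $1=\prod_{i=1}^{k}c_i^{w_i}$ with $w_i\in N$, but this is false: collecting the $g$'s gives
\[
c_1\, c_2^{g^{-1}}\, c_3^{g^{-2}}\cdots c_k^{g^{-(k-1)}} = g^{-k},
\]
with a leftover nontrivial power of $g$ on the right. So even in the favourable case where $1\notin\mathrm{NSS}_C(\{c_1,\ldots,c_k\})$, Lemma \ref{L:multi-malnormalStrongMalnormalMalnormal}(1) together with Corollary \ref{C:FactorSubgroupsAreMulti-malnormal} only tells you that $g^{-k}\notin C$, which is no contradiction; and that favourable case itself can fail, since RTF allows $c_i=1$ or $c_i=c_j^{-1}$, while GTF-ness of $C$ only controls $\mathrm{NSS}_C(\{c\})$ for a single nontrivial $c$. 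Fundamentally, RTF concerns products drawn from a coset $gC$ and does not reduce to a product of conjugates of elements of $C$ being trivial, so multi-malnormality is the wrong tool here. (The doubling trick from the proof of Corollary \ref{C:BOGTFOfExtendableAmalgam} would rescue the idea: if $C$ were not RTF in $N$ this would produce generalized torsion in $N\ast_CN\cong C\ast N_1\ast N_1$, which is free and hence GTF, a contradiction; but that is not what you wrote.)

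The paper's proof instead goes through relative convexity: $N$ is relatively convex in $A$ since $A/N\cong\mathbb{Z}$ is left-orderable; $C$ is relatively convex in $N=C\ast N_1$ by \cite[Corollary 1.19]{ADW18}; relative convexity is transitive; and relatively convex subgroups are RTF by \cite[Lemma 3.15]{CH21}. The missing ingredient in your write-up is precisely this last chain: for free factors one needs relative convexity (or an equivalent strong fact), not multi-malnormality, to conclude RTF.
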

\begin{proof}
     Because of the isomorphism $\Phi$, we only need to prove that $C$ is RTF in $A$. By \cite[Lemma 3.15]{CH21}, it suffices to check that $C$ is relatively convex in $A$.  Since $A/N \cong \mathbb{Z}$, $N$ is relatively convex in $A$.  Since $N=C\ast N_1$, $C$ is relatively convex in $N$ by \cite[Corollary 1.19]{ADW18}.  It follows that $C$ is relatively convex in $A$, since relative convexity is transitive. 
\end{proof}

   We now want to find two families of normal subsemigroups of $A$ and $B$ respectively, satisfying the conditions (1) and (2) in Theorem \ref{T:GTFAmalgamIffFamilies}. The main tools we will use in the proof are the Magnus representation of free groups (see Proposition 10.1 on page 68 of \cite{LS77-Reprint}), the Freiheitssatz (see Proposition 5.1 on Page 104 of \cite{LS77-Reprint}) and Corollary \ref{C:FactorSubgroupsAreMulti-malnormal}.  

   Let $F$ be a free group generated by $S$, $t\in S$ and $g\in F$.  Write $g=x_1\dotsm x_n$ (with $x_i\in S\cup S^{-1}$ and each $x_ix_{i+1}\neq1$) as a reduced word in $S$. If $\{t,t^{-1}\}\cap\{x_1,\dotsc,x_n\}\neq\emptyset$, we say that $t$ \textbf{appear}s in $g$ (or $g$ contains $t$). Recall that Freiheitssatz says that if $t$ appears in $g$ and $g$ is cyclically reduced (with respect to $S$), then $t$ appears in each nontrivial element of the normal subgroup of $F$ generated by $g$. 

We begin with a preparatory lemma.
\begin{lemma}\label{L:StrongMalnormalityLikeByFreiheitssatz} Let $\alpha\in A\backslash\{1\}$. Consider the product $\tau=\alpha^{g_1}\dotsm\alpha^{g_r}$, with $g_1,\dotsc,g_r\in N$ and $r\geq1$. 
    Then $\tau\not\in C$ if one of the following two conditions holds:
    \begin{enumerate}
    \item $\alpha$ is not conjugate into $C$, i.e., $\alpha^g \in A\backslash C$ for all $g \in N$; 
        \item $\alpha\in C$ and there exists $i \in \{1, \ldots, r\}$ such that $g_i\in N\backslash C$.
    \end{enumerate}
    \end{lemma}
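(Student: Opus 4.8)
The plan is to treat the two conditions separately, with condition (2) being the quicker one.

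For condition (2), where $\alpha\in C\setminus\{1\}$ and some $g_j\in N\setminus C$, I would invoke the multi-malnormality of $C$ in the free product $N=C\ast N_1$, supplied by Corollary \ref{C:FactorSubgroupsAreMulti-malnormal}. Put $C'=\mathrm{NSS}_C(\{\alpha\})$; since $C$ is a free group it is GTF, so $C'$ is a normal subsemigroup of $C$ with $1\notin C'$ and $\alpha\in C'$. The product $\tau=\alpha^{g_1}\cdots\alpha^{g_r}$ then has every ``base'' factor in $C'$, every conjugating element $g_i$ in $N$, and at least one $g_j\in N\setminus C$, so Lemma \ref{L:multi-malnormalStrongMalnormalMalnormal}\ref{Item:multi-malnormalEquivalentDefinition}, applied with ambient group $N$ and subgroup $C$, yields $\tau\in N\setminus C$; in particular $\tau\notin C$.

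For condition (1), I would first dispose of the case $\alpha\notin N$: since $N=\ker W_{\{a,b\},b}$ is normal in $A$ and each $g_i\in N$, conjugation does not change the weight, so $W_{\{a,b\},b}(\tau)=r\,W_{\{a,b\},b}(\alpha)\neq 0$, whereas $C\subseteq N=\ker W_{\{a,b\},b}$; hence $\tau\notin C$. It remains to handle $\alpha\in N\setminus\{1\}$, where I would work in the free group $N$ with basis $S'=\{v_i\}$, recalling that $C=\langle v_0,v_1\rangle$. Conjugating inside $N$, write $\alpha=h^{-1}\alpha' h$ with $h\in N$ and $\alpha'$ cyclically reduced with respect to $S'$. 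If $\alpha'$ involved only the letters $v_0^{\pm1},v_1^{\pm1}$ it would lie in $C$, contradicting the hypothesis that $\alpha$ is not conjugate by $N$ into $C$; so some $v_j$ with $j\notin\{0,1\}$ appears in $\alpha'$. The product $\tau$ lies in the normal closure $\langle\langle\alpha\rangle\rangle_N=\langle\langle\alpha'\rangle\rangle_N$ of $\alpha$ in $N$, and it is nontrivial because $N$, being free, is GTF; hence by the Freiheitssatz applied to the cyclically reduced word $\alpha'$, the generator $v_j$ occurs in every nontrivial element of $\langle\langle\alpha'\rangle\rangle_N$, in particular in $\tau$. Since no element of $C=\langle v_0,v_1\rangle$ contains $v_j$, we get $\tau\notin C$.

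The main obstacle I anticipate is the step in condition (1) that passes from the group-theoretic hypothesis ``$\alpha$ is not conjugate by $N$ into $C$'' to the combinatorial statement that the cyclically reduced form of $\alpha$ with respect to the basis $S'$ genuinely involves a generator outside $\{v_0,v_1\}$, together with the bookkeeping needed to apply the Freiheitssatz to this cyclically reduced conjugate and to guarantee $\tau\neq 1$ so that the Freiheitssatz conclusion is not vacuous. Once this set-up is in place, the remaining verifications are routine.
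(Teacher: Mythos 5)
Your proof is correct and follows essentially the same route as the paper's: the case $\alpha\notin N$ is handled via the weight homomorphism, case (1) with $\alpha\in N$ via the Freiheitssatz applied to a cyclically reduced $N$-conjugate $\alpha'$ of $\alpha$ (using that $\alpha'\notin C$ forces a generator $v_j$ with $j\notin\{0,1\}$ to appear), and case (2) via multi-malnormality of $C$ in $N=C\ast N_1$ from Corollary \ref{C:FactorSubgroupsAreMulti-malnormal} combined with Lemma \ref{L:multi-malnormalStrongMalnormalMalnormal}(1). You make explicit two small points the paper leaves implicit — the choice of normal subsemigroup $C'=\mathrm{NSS}_C(\{\alpha\})$ with $1\notin C'$, and the nontriviality of $\tau$ (needed for the Freiheitssatz to apply) — which is a welcome clarification rather than a deviation.
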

    \begin{proof} 
If $\alpha\not\in N$, then $\tau\not\in N$ (and thus $\tau\not\in C$) by considering the weight function $W_{\{a,b\},b}$. Hence, we assume that $\alpha \in N$.

Assume that condition (1) holds. We write $\alpha = \alpha'^h$, where $h \in N$ and $\alpha' \notin C$ is cyclically reduced with respect to the generating set $S'$ of $N$.  Since $\alpha' \notin C$ there exists $v_i \in S'\setminus\{v_0, v_1\}$ such that $v_i$ appears in $\alpha'$.  Rewriting $\tau$ as $\alpha'^{g_1'}\dotsm \alpha'^{g_r'}$, with $g_i'\in N$, by Freiheitssatz $v_i$ must appear in $\tau$, in particular, $\tau \notin C$.

 Now, assume that condition (2) holds. Note that $C$ is multi-malnormal in $N$ by Corollary \ref{C:FactorSubgroupsAreMulti-malnormal} (recalling that $N=C\ast N_1$), so we conclude that $\tau\not\in C$ by Lemma \ref{L:multi-malnormalStrongMalnormalMalnormal}\ref{Item:multi-malnormalEquivalentDefinition}.
    \end{proof}
   
We recall the Magnus embedding for our next proofs.  Let $R=\mathbb Z[[X_i\mid i\in\mathbb Z]]$ be the ring of formal power series in the noncommuting variables $X_i$. The Magnus embedding $\mu : N \rightarrow R$ is defined by $\mu(a_i)=1+X_i$, 
whose image lies in the group of units of $R$. 

For every $\alpha\in N$, we can write $\mu(\alpha)=1+\sum_{i=1}^{\infty} P_i$, where $P_i$ is a homogeneous polynomial of degree $i$. When $\alpha\neq1$, there is a minimal $i\geq1$ such that $P_i\neq0$; we call this $P_i$ the \textbf{leading term} of $\mu(\alpha)$, and denote it by $L(\alpha)$. The following properties can be confirmed by straightforward computation and are often used without explicit mention in the proofs of this section. 
\begin{lemma}\label{L:SimplePropertyOfLeadingTerm}
\begin{enumerate}
    \item $L(\alpha^g)=L(\alpha)$ for all $\alpha,g\in N$.

    \item Let $\alpha\in N$, $\mu(\alpha)=1+\sum_{i=1}^{\infty} P_i$, where $P_i$ is a homogeneous polynomial of degree $i$. Then $P_1=\sum_{i\in\mathbb Z}W_{S,a_i}(\alpha)X_i$. 
    \item \label{Item:Degree1LeadingTermInC}Let $\alpha\in C$. Then $L(\alpha)$ is of degree one if and only if $(W_{S',v_0}(\alpha), W_{S',v_1}(\alpha))\neq(0,0)$, and in this case $L(\alpha)=W_{S',v_0}(\alpha)X_0+W_{S',v_1}(\alpha)(X_2-X_1)$ and in particular, the sum of the coefficients of $X_1$ and $X_2$ is zero.
\end{enumerate}
\end{lemma}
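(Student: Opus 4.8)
The statement to prove is Lemma~\ref{L:SimplePropertyOfLeadingTerm}, which collects three elementary properties of the Magnus embedding and the leading term map. Let me sketch a proof.

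\textbf{Proof plan.}

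The plan is to verify each of the three items by direct computation with the Magnus embedding $\mu: N \to R$, $\mu(a_i) = 1 + X_i$, recalling that $\mu$ is a group homomorphism into the units of $R$ with $\mu(a_i)^{-1} = 1 - X_i + X_i^2 - \cdots$.

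For item (1), I would first observe that $\mu$ being a homomorphism gives $\mu(\alpha^g) = \mu(g)^{-1}\mu(\alpha)\mu(g)$. Writing $\mu(g) = 1 + Q$ where $Q$ has no constant term (so $\mu(g)^{-1} = 1 - Q + Q^2 - \cdots$ also has constant term $1$) and $\mu(\alpha) = 1 + \sum_{i \geq 1} P_i$ with leading term $L(\alpha) = P_d$ of degree $d$, I would expand $\mu(\alpha^g) = (1 - Q + \cdots)(1 + \sum P_i)(1 + Q)$ and collect terms by degree. Every term of degree $< d$ coming from $\mu(\alpha) - 1$ vanishes by definition of $d$, and any term involving $Q$ or $Q^2$ etc.\ multiplied against the degree-$d$ part $P_d$ has degree strictly greater than $d$ since $Q$ has degree $\geq 1$; hence the degree-$d$ part of $\mu(\alpha^g)$ is exactly $P_d = L(\alpha)$, while all lower-degree parts vanish. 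This shows $L(\alpha^g) = L(\alpha)$.

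For item (2), I would argue by induction on word length. If $\alpha = a_i^{\pm 1}$ then $\mu(\alpha) = 1 \pm X_i + (\text{higher order})$, so the degree-one part is $\pm X_i = W_{S, a_i}(a_i^{\pm 1}) X_i$, matching the claim since $W_{S,a_j}(a_i^{\pm 1}) = 0$ for $j \neq i$. For the inductive step, if $\alpha = \beta a_i^{\pm 1}$ is reduced with $\mu(\beta) = 1 + \sum_j W_{S,a_j}(\beta) X_j + \cdots$, then multiplying by $\mu(a_i^{\pm 1}) = 1 \pm X_i + \cdots$ gives degree-one part $\sum_j W_{S,a_j}(\beta) X_j \pm X_i = \sum_j W_{S,a_j}(\alpha) X_j$, using additivity of the weight functions $W_{S,a_j}$ under the word operation. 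This establishes $P_1 = \sum_{i \in \mathbb{Z}} W_{S,a_i}(\alpha) X_i$.

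For item (3), I would specialize item (2) to $\alpha \in C$. Here $L(\alpha)$ has degree one exactly when $P_1 \neq 0$, i.e.\ when some weight $W_{S,a_i}(\alpha)$ is nonzero; but since $C = \langle v_0, v_1 \rangle$ with $v_0 = a_0$ and $v_1 = a_2 a_1^{-1}$, and $\alpha$ is a word in $v_0, v_1$, I would compute $W_{S,a_i}(\alpha)$ in terms of $W_{S',v_0}(\alpha)$ and $W_{S',v_1}(\alpha)$: applying the homomorphism $N \to \mathbb{Z}^{(\mathbb{Z})}$ (abelianization, or just the collection of weight maps) and using $v_0 \mapsto e_0$, $v_1 \mapsto e_2 - e_1$ in the $a_i$-coordinates, we get $W_{S,a_0}(\alpha) = W_{S',v_0}(\alpha)$, $W_{S,a_1}(\alpha) = -W_{S',v_1}(\alpha)$, $W_{S,a_2}(\alpha) = W_{S',v_1}(\alpha)$, and $W_{S,a_i}(\alpha) = 0$ for $i \notin \{0,1,2\}$. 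Hence $P_1 = W_{S',v_0}(\alpha) X_0 + W_{S',v_1}(\alpha)(X_2 - X_1)$, which is nonzero iff $(W_{S',v_0}(\alpha), W_{S',v_1}(\alpha)) \neq (0,0)$, and the coefficients of $X_1$ and $X_2$ sum to zero. This completes the proof.

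\textbf{Main obstacle.} None of these steps presents a genuine difficulty; the whole lemma is bookkeeping with power series and weight functions. The only point requiring a little care is keeping track of which degree-one terms can or cannot be cancelled when passing from $S'$-coordinates to $S$-coordinates in item (3) — specifically, verifying that $v_0$ and $v_1$ map to linearly independent vectors in the degree-one part, so that no cancellation between the two generators' contributions can occur. This is immediate once one writes $v_1 = a_2 a_1^{-1}$ explicitly, but it is the one place where the specific choice of the alternative generating set $S'$ matters.
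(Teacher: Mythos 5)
Your proof is correct and takes exactly the (only) natural approach — direct computation with the Magnus embedding — which is what the paper has in mind when it says the lemma "can be confirmed by straightforward computation" and then omits the details. The one place worth tightening is item (1): rather than expanding the triple product term-by-term it is slightly cleaner to write $\mu(\alpha^g)-1=\mu(g)^{-1}(\mu(\alpha)-1)\mu(g)$ and observe that conjugation by a unit with constant term $1$ preserves the lowest-degree homogeneous part, but this is the same computation you did.
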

 Let $P_k\in R$ be a homogeneous polynomial of degree $k$ (with $k\geq1$). Then $$P_k=\sum_{(i_1,\dotsc,i_k)}c_{i_1,\dotsc,i_k} X_{i_1}\dotsm X_{i_k},$$ where $(i_1,\dotsc,i_k)$ runs through all $k$-tuples in $\mathbb Z^k$, $c_{i_1,\dotsc,i_k}\in \mathbb Z$ and all but finitely many of the $c_{i_1,\dotsc,i_k}$'s are zero. For $j\in\mathbb Z$, we say that the variable $X_j$ \textbf{appears} in $P_k$ if there is a $k$-tuple $(i_1,\dotsc,i_k)$, such that $c_{i_1,\dotsc,i_k}\neq0$ and $j\in\{i_1,\dotsc,i_k\}$.
Let $I$ be the ideal of $R$ generated by a subset $R_0$ of $R$. There is a canonical homomorphism $\pi$ from $R$ to the quotient $R/I$. 
For a homogeneous polynomial $P\in R$, we say that the relations $f=0$ ($f\in R_0$) \textbf{annihilate} $P$ if $\pi(P)=0$. For $\alpha\in N$, we say that the relations $f=0$ ($f\in R_0$) annihilate $\alpha$ if $\pi(\mu(\alpha))=\pi(1)$. 
 
\begin{lemma}\label{L:IdealGeneratedByHomogeneousPolynomials}
Assume that all polynomials in $R_0$ are homogeneous and let $\alpha\in N$ be nontrivial. Assume that the relations $f=0$ ($f\in R_0$) annihilate $\alpha$, then they annihilate $L(\alpha)$.
\end{lemma}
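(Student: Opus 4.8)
The plan is to exploit that the two-sided ideal $I := \ker \pi$, being generated by the homogeneous polynomials in $R_0$, is itself a \emph{homogeneous} ideal of $R$: for every $x \in I$, each homogeneous component of $x$ again lies in $I$. Once this is known, the lemma follows by picking out the lowest-degree component of $\mu(\alpha)-1$.

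First I would fix notation. Write $\mu(\alpha) = 1 + \sum_{i \geq 1} P_i$, where $P_i$ is homogeneous of degree $i$, and let $k \geq 1$ be minimal with $P_k \neq 0$, so that $L(\alpha) = P_k$ and $\mu(\alpha) - 1 = \sum_{i \geq k} P_i$. The hypothesis that the relations $f = 0$ ($f \in R_0$) annihilate $\alpha$ says exactly that $\pi(\mu(\alpha)) = \pi(1)$, i.e. $\mu(\alpha) - 1 \in I$.

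Next I would establish the key claim: if $x \in I$ and $x = \sum_{d \geq 0} x_d$ is its decomposition into homogeneous components of degree $d$, then each $x_d \in I$. To prove it, write $x = \sum_{j=1}^{m} a_j f_j b_j$ as a finite sum with $f_j \in R_0$ homogeneous of some degree $e_j$ and $a_j, b_j \in R$. Expanding $a_j = \sum_{p \geq 0} a_{j,p}$ and $b_j = \sum_{q \geq 0} b_{j,q}$ into homogeneous components, we get $a_j f_j b_j = \sum_{p,q \geq 0} a_{j,p} f_j b_{j,q}$, and for each fixed $d$ only the finitely many pairs with $p + q = d - e_j$ contribute in degree $d$; hence the degree-$d$ component of $a_j f_j b_j$ is the finite sum $\sum_{p+q = d-e_j} a_{j,p} f_j b_{j,q}$, which lies in $I$. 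Summing over $j$ gives $x_d \in I$, proving the claim. (Here one should note that comparing homogeneous components is legitimate in the power series ring $R$, since in any fixed degree each product contributes only finitely many terms.)

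Finally, applying the claim to $x = \mu(\alpha) - 1 \in I$: since $P_1 = \dots = P_{k-1} = 0$, the degree-$k$ component of $x$ is $P_k = L(\alpha)$, so $L(\alpha) \in I$, that is, $\pi(L(\alpha)) = 0$. By definition this means the relations $f = 0$ ($f \in R_0$) annihilate $L(\alpha)$, as desired. The only mildly delicate point is the bookkeeping in the key claim — making sure the graded-ideal argument survives passage to the completed (power-series) ring — but this is routine once one observes that each a priori infinite expansion contributes only finitely many terms in any fixed degree.
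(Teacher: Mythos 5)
Your proof is correct and follows essentially the same approach as the paper's: both arguments show that the ideal generated by homogeneous elements is a graded ideal, so that each homogeneous component of $\mu(\alpha)-1$ — in particular the lowest-degree one, $L(\alpha)$ — lies in the ideal. If anything, you are slightly more careful than the paper about the power-series bookkeeping: the paper writes $\mu(\alpha)-1$ directly as a finite sum $\sum_j Q_j r_j T_j$ with $Q_j,T_j$ homogeneous, whereas you correctly start from arbitrary $a_j,b_j \in R$ and then expand into homogeneous components, observing that only finitely many pairs contribute in each degree.
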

\begin{proof}To see this, we suppose $\mu(\alpha)=1+\sum_{i=1}^\infty P_i$ is in the ideal $I$ generated by $R_0$, where each $P_i$ is homogeneous of degree $i$. We will show that each $P_i$ is in $I$, and this finishes the proof.

As $\mu(\alpha)\in I$, it can be written as a finite sum $\sum_{j=1}^m Q_jr_jT_j$, where $Q_j,T_j$ are homogeneous polynomials in $R$ and $r_j\in R_0$. Now $P_i$ is the sum of those $Q_jr_jT_j$ whose degree is $i$, thus $P_i$ is in $I$, as desired. 
\end{proof}
\begin{example} \label{E:SomeExamplesAboutAnnihilation}
Let $i,j\in\mathbb Z$. Let $\alpha\in N$ and $P\in R$ be a nonzero homogeneous polynomial. 
\begin{enumerate}[(1)]
   \item The relation $X_i=0$ annihilates $a_i$; the relation $X_i-X_j=0$ annihilates $a_ia_j^{-1}$.

   \item\label{Item:X0AnnihilationX1-X2} Let $\alpha=v_0^{k_1}v_1^{l_1}v_0^{k_2}v_1^{l_2}\dotsm v_0^{k_r}v_1^{l_r}$, with $k_i,l_i\in\mathbb Z$, and recall that $v_0=a_0$ and $v_1=a_2a_1^{-1}$. If $\sum_{i=1}^rl_i=0$ then $X_0=0$ annihilates $\alpha$; if $\sum_{i=1}^rk_i=0$ then $X_1-X_2=0$ annihilates $\alpha$.   
   To see this, note that if $\sum_{i=1}^rl_i=0$, then $\pi(\mu(v_0))=\pi(1)$ leads to:
   \begin{align*}    \pi(\mu(\alpha))&=\pi(1)\pi(\mu(v_1^{l_1}))\pi(1)\pi(\mu(v_1^{l_2}))\dotsm\pi(1)\pi(\mu(v_1^{l_r}))\\
       &=\pi(\mu(v_1^{l_1})\mu(v_1^{l_2})\dotsm\mu(v_1^{l_r}))\\
       &=\pi(\mu(v_1^{l_1+\dotsm+l_r}))=\pi(1).
   \end{align*}
Similarly, if $\sum_{i=1}^rk_i=0$, then $\pi(\mu(v_1))=\pi(1)$ leads to $\pi(\mu(\alpha)) = \pi(1)$.

\item \label{Item:X0Annihilation} Assume that $X_0=0$ annihilates $P$, then $P=\sum_{i=1}^mQ_iX_0R_i$ for some $Q_i,R_i\in R$.  Being a nonzero sum of monomials where $X_0$ appears in every monomial, $X_0$ must also appear in $P$.

   \item  \label{Item:Xi-X0Annihilation}Assume that $\sum_{i\in\mathbb Z}W_{S,a_i}(\alpha)=0$.  Then the relations $X_i-X_0=0$ ($i\in\mathbb Z$) annihilate $\alpha$.

   To see this, we first note that these relations annihilate each $a_ia_0^{-1}$, thus $\pi(\mu(a_ia_0^{-1}))=\pi(1)$, from which we have 
   \begin{align*}
       \pi(\mu(a_i))=\pi(\mu((a_ia_0^{-1})a_0))=\pi(1)\pi(\mu(a_0))=\pi(\mu(a_0)).
   \end{align*} Now if $\alpha=a_{i_1}^{k_1}\dotsm a_{i_n}^{k_n}$ with $k_j,i_j\in\mathbb Z$, then $\sum_{j\in\mathbb Z}k_j=0$,  and so we have 
   \begin{align*}
       \pi(\mu(\alpha))&=\pi(\mu(a_{i_1}))^{k_1}\dotsm \pi(\mu(a_{i_n}))^{k_n}\\
                       &=\pi(\mu(a_0))^{k_1}\dotsm\pi(\mu(a_{i_n}))^{k_n}\\
                       &=\pi(\mu(a_0^{k_1+\dotsm+k_n}))=\pi(1).
   \end{align*}
   \item \label{Item:AppearingX1-X2}Assume that $X_1-X_2=0$ annihilates $P$. Then both $X_1$ and $X_2$ appear in $P$. 
To see this, for every function $\sigma:\mathbb Z\rightarrow\mathbb Z$, we define a ring homomorphism $f_\sigma:R \rightarrow R$ by extending the following map on monomials of $R$ linearly to all elements of $R$:
$$f_\sigma(X_{i_1}\dotsm X_{i_k})=X_{\sigma(i_1)}\dotsm X_{\sigma(i_k)}.$$ 

Now let $I$ be the ideal of $R$ generated by $X_1-X_2$, and define $\sigma$ by $\sigma(1)=2$ and $\sigma(k)=k$ for all $k\neq1$. Then $I \subseteq \ker(f_\sigma)$, and if $X_1$ does not appear in $P \in R$, then $f_\sigma(P)=P$ so that $P \notin I$.  Similarly, $P \notin I$ if $X_2$ does not appear in $P$.

\item \label{Item:Xi-X0AnnihilationPolynomial}Assume that $X_i-X_0=0$ ($i\in\mathbb Z$) annihilate $P$. Then there exist $i,j\in \mathbb Z$ with $i\neq j$, such that $X_i,X_j$ both appear in $P$.  This follows by an argument similar to \ref{Item:AppearingX1-X2}, now with $\sigma$ defined by $\sigma(i)=0$ for all $i\in\mathbb Z$.
\end{enumerate}
\end{example}

   \begin{lemma}\label{L:X0X1X2AllAppear} 
   Let $\alpha \in N$ be nontrivial. Assume that $\alpha\in C$ and $W_{S',v_0}(\alpha)=W_{S',v_1}(\alpha)=0$. Then the variables $X_0,X_1,X_2$ all appear in $L(\alpha)$.
    \end{lemma}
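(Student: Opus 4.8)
The plan is to combine the two ``annihilation'' facts about $C$, namely that $X_0=0$ annihilates $\alpha$ (because $W_{S',v_0}(\alpha)=0$) and $X_1-X_2=0$ annihilates $\alpha$ (because $W_{S',v_1}(\alpha)=0$), together with Lemma \ref{L:IdealGeneratedByHomogeneousPolynomials} and the appearance criteria collected in Example \ref{E:SomeExamplesAboutAnnihilation}. First I would invoke Example \ref{E:SomeExamplesAboutAnnihilation}\ref{Item:X0AnnihilationX1-X2}: writing $\alpha$ as an alternating word in $v_0^{\pm1}$ and $v_1^{\pm1}$, the hypothesis $W_{S',v_0}(\alpha)=\sum l_i = 0$ gives that $X_0=0$ annihilates $\alpha$, and $W_{S',v_1}(\alpha)=\sum k_i = 0$ gives that $X_1-X_2=0$ annihilates $\alpha$. (Here I should note $\alpha$ is a nontrivial reduced word in $v_0, v_1$ since $\alpha \in C$, and its $S'$-syllable form is exactly the shape used in that part of the example.)

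Next, since $\alpha$ is nontrivial, $L(\alpha)$ is a well-defined nonzero homogeneous polynomial. Because $X_0=0$ and $X_1-X_2=0$ are homogeneous relations, Lemma \ref{L:IdealGeneratedByHomogeneousPolynomials} tells me each of them annihilates $L(\alpha)$ as well. Now I apply the two appearance statements: Example \ref{E:SomeExamplesAboutAnnihilation}\ref{Item:X0Annihilation} shows that since $X_0=0$ annihilates the nonzero homogeneous polynomial $L(\alpha)$, the variable $X_0$ appears in $L(\alpha)$; and Example \ref{E:SomeExamplesAboutAnnihilation}\ref{Item:AppearingX1-X2} shows that since $X_1-X_2=0$ annihilates $L(\alpha)$, both $X_1$ and $X_2$ appear in $L(\alpha)$. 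Taken together, $X_0$, $X_1$, and $X_2$ all appear in $L(\alpha)$, which is exactly the conclusion.

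I expect the main subtlety, rather than a genuine obstacle, to be a bookkeeping point: I must make sure the hypotheses $W_{S',v_0}(\alpha)=W_{S',v_1}(\alpha)=0$ are correctly translated into the ``$\sum l_i=0$'' and ``$\sum k_i=0$'' conditions of Example \ref{E:SomeExamplesAboutAnnihilation}\ref{Item:X0AnnihilationX1-X2}, which is immediate from the definition of the weight functions $W_{S',v_0}$ and $W_{S',v_1}$ applied to the $S'$-syllable decomposition of $\alpha$. Everything else is a direct citation chain; no new computation is required beyond observing that a nonzero homogeneous polynomial lying in the ideal $(X_0)$ (resp. $(X_1-X_2)$) must visibly contain the variable $X_0$ (resp. $X_1$ and $X_2$), which the cited parts of the example already establish.
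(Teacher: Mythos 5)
Your argument is correct and is essentially the paper's own proof: apply Example \ref{E:SomeExamplesAboutAnnihilation}\ref{Item:X0AnnihilationX1-X2} to see that $X_0=0$ and $X_1-X_2=0$ annihilate $\alpha$, push this to $L(\alpha)$ via Lemma \ref{L:IdealGeneratedByHomogeneousPolynomials}, and conclude with Examples \ref{E:SomeExamplesAboutAnnihilation}\ref{Item:X0Annihilation} and \ref{E:SomeExamplesAboutAnnihilation}\ref{Item:AppearingX1-X2}. The bookkeeping point you flag (matching $W_{S',v_0}(\alpha)=0$ and $W_{S',v_1}(\alpha)=0$ to $\sum l_i=0$ and $\sum k_i=0$) is indeed immediate and is not a gap.
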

    \begin{proof}
    Note that the relation $X_0=0$ annihilates $\alpha$ by Example \ref{E:SomeExamplesAboutAnnihilation}\ref{Item:X0AnnihilationX1-X2}. So it annihilates $L(\alpha)$ by Lemma \ref{L:IdealGeneratedByHomogeneousPolynomials}. Therefore $X_0$ appears in $L(\alpha)$ by Example \ref{E:SomeExamplesAboutAnnihilation}\ref{Item:X0Annihilation}.
 Similarly, the relation $X_1-X_2=0$ annihilates  $\alpha$, and so it annihilates $L(\alpha)$. Hence both $X_1$ and $X_2$ appear in $L(\alpha)$ by Example \ref{E:SomeExamplesAboutAnnihilation}\ref{Item:AppearingX1-X2}.
    \end{proof}

\begin{proposition}\label{P:One-Relator-ConjugateIntoC}
    If $\alpha\in C\backslash\{1\}$, then $
        \mathrm{NSS}_A(\{\alpha\})\cap C=\mathrm{NSS}_C(\{\alpha\})$.
    Similarly, we have $\mathrm{NSS}_B(\{\beta\})\cap D=\mathrm{NSS}_D(\{\beta\})$ for all $\beta\in D\backslash\{1\}$.
\end{proposition}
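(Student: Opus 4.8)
The plan is to reduce the statement to a question about products of conjugates inside the free subgroup $N=\ker W_{\{a,b\},b}$, settle that question with the Magnus embedding, and then invoke the multi-malnormality of $C$ in $N$. By the isomorphism $\Phi\colon A\to B$ (which carries $C$ onto $D$), the assertion for $D$ follows from the assertion for $C$, so I would treat only $C$.

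First I would pass to $N$. Since $\alpha\in C\subseteq N$ and $W_{\{a,b\},b}$ is a homomorphism to an abelian group, every element of $\mathrm{NSS}_A(\{\alpha\})$ already lies in $N$, so $\mathrm{NSS}_A(\{\alpha\})\cap C=\mathrm{NSS}_A(\{\alpha\})\cap N$. Writing an element of $A$ as $nb^{k}$ with $n\in N$ and $k\in\mathbb{Z}$, one has $\alpha^{nb^{k}}=(\alpha^{b^{k}})^{\,n^{b^{k}}}$ with $n^{b^{k}}\in N$, and conversely $(\alpha^{b^{k}})^{m}=\alpha^{b^{k}m}$ for $m\in N$; hence $\mathrm{NSS}_A(\{\alpha\})=\mathrm{NSS}_N\big(\{\alpha^{b^{k}}\mid k\in\mathbb{Z}\}\big)$, where $\alpha^{b^{k}}$ is the word obtained from $\alpha$ by the index shift $a_i\mapsto a_{i+k}$. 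It therefore suffices to prove: if $T=\prod_{i=1}^{r}(\alpha^{b^{k_i}})^{n_i}\in C$ with $n_i\in N$ and $r\ge 1$, then every $k_i=0$; for then $T\in\mathrm{NSS}_N(\{\alpha\})$, which the last step handles. (Note $T\ne 1$, since $A$ is free, hence bi-orderable, hence GTF.)

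The step I expect to be the main obstacle is the vanishing of the shifts $k_i$, which I would prove using the Magnus embedding $\mu$ and leading terms. Put $d=\deg L(\alpha)$. Since leading terms are conjugation-invariant (Lemma~\ref{L:SimplePropertyOfLeadingTerm}) and $\mu(a_i)=1+X_i$, we have that $L\big((\alpha^{b^{k_i}})^{n_i}\big)=L(\alpha^{b^{k_i}})$ is the degree-$d$ polynomial obtained from $L(\alpha)$ by $X_j\mapsto X_{j+k_i}$. Expanding $\mu(T)=\prod_i\big(1+L(\alpha^{b^{k_i}})+(\text{higher degree terms})\big)$ shows that $\mu(T)-1$ has no homogeneous part of degree $<d$ and that its degree-$d$ part is $D:=\sum_{i=1}^{r}L(\alpha^{b^{k_i}})$. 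The crucial observation is a ``top/bottom index'' estimate: letting $j^{+}$ (resp.\ $j^{-}$) be the largest (resp.\ smallest) variable index occurring in $L(\alpha)$ and $M=\max_ik_i$, $m=\min_ik_i$, the monomials of $D$ containing a variable of index $j^{+}+M$ can only come from the summands with $k_i=M$, so they add up to a positive-integer multiple of a nonzero polynomial; hence $D\ne 0$, so $L(T)=D$, and the largest (resp.\ smallest) variable index occurring in $L(T)$ is exactly $j^{+}+M$ (resp.\ $j^{-}+m$). Because $\alpha\in C=\langle a_0,\,a_2a_1^{-1}\rangle$, the Magnus images of $\alpha$ and of $T$ lie in $\mathbb{Z}[[X_0,X_1,X_2]]$, so $L(\alpha)$ and $L(T)$ involve only $X_0,X_1,X_2$; consequently $j^{+}+M\le 2$ and $j^{-}+m\ge 0$. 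If $d\ge 2$, then $W_{S',v_0}(\alpha)=W_{S',v_1}(\alpha)=0$ and all of $X_0,X_1,X_2$ occur in $L(\alpha)$ (Lemmas~\ref{L:SimplePropertyOfLeadingTerm} and \ref{L:X0X1X2AllAppear}), so $j^{-}=0$, $j^{+}=2$, which forces $M\le 0\le m$ and hence all $k_i=0$. If $d=1$, then $L(\alpha)=pX_0+q(X_2-X_1)$ with $(p,q)\ne(0,0)$, while $L(T)$, having degree $1$ with $T\in C\setminus\{1\}$, equals $cX_0+e(X_2-X_1)$ for some $c,e\in\mathbb{Z}$ (Lemma~\ref{L:SimplePropertyOfLeadingTerm}); encoding $X_j\leftrightarrow z^{j}$ identifies $D$ with $\big(\sum_i z^{k_i}\big)(p-qz+qz^{2})$ and $L(T)$ with $c-ez+ez^{2}$, and comparing these — using that $\sum_i z^{k_i}$ has nonnegative coefficients — forces $\sum_i z^{k_i}$ to be a nonzero constant, so again all $k_i=0$.

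Finally I would finish with multi-malnormality. Once all $k_i=0$, $T\in\mathrm{NSS}_N(\{\alpha\})$. Since $N=C\ast N_1$, $C$ is multi-malnormal in $N$ by Corollary~\ref{C:FactorSubgroupsAreMulti-malnormal}; the set $C':=\mathrm{NSS}_C(\{\alpha\})$ is a normal subsemigroup of $C$ with $1\notin C'$ (free groups are bi-orderable, hence GTF), and $\mathrm{NSS}_N(C')=\mathrm{NSS}_N(\{\alpha\})$ because $\alpha\in C'\subseteq\mathrm{NSS}_N(\{\alpha\})$. Hence Lemma~\ref{L:multi-malnormalStrongMalnormalMalnormal}\ref{Item:multi-malnormalToWeaklyMultimalnormal} gives $T\in\mathrm{NSS}_N(\{\alpha\})\cap C=C'=\mathrm{NSS}_C(\{\alpha\})$. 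The reverse inclusion $\mathrm{NSS}_C(\{\alpha\})\subseteq\mathrm{NSS}_A(\{\alpha\})\cap C$ being immediate, the desired equality follows.
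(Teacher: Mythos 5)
Your proof is correct and follows essentially the same route as the paper: reduce to products of conjugates inside $N=\ker W_{\{a,b\},b}$, use the Magnus embedding and a leading-term analysis to force all $b$-shifts $k_i$ to vanish, then invoke multi-malnormality of $C$ in $N=C\ast N_1$. The case split (by $\deg L(\alpha)$ rather than by which of $X_0,X_1,X_2$ appear in the leading term) and the $z$-encoding in the degree-one case are cosmetic variations, and your explicit top-index argument for $D\neq 0$ is a welcome cleanup of a point the paper leaves implicit.
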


\begin{proof}
We only prove the first equality, the second follows by applying the isomorphism $\Phi:A\rightarrow B$, which sends $C$ to $D$.

    Since $\mathrm{NSS}_C(\{\alpha\}) \subseteq \mathrm{NSS}_A(\{\alpha\})\cap C$ is clear, we need only prove the other containment. 
    Let $\tau\in \mathrm{NSS}_A(\{\alpha\})\cap C$, then $\tau$ can be written in the form $\tau=\alpha^{b^{i_1}g_1}\dotsm \alpha^{b^{i_r}g_r}$, where $i_j\in\mathbb Z$, $g_j\in N$, and $r\geq1$.  We only need to prove that all these $i_j=0$, and then Lemma \ref{L:StrongMalnormalityLikeByFreiheitssatz}(2) gives $g_i\in C$ for all $i$ and thus $\tau\in\mathrm{NSS}_C(\{\alpha\})$, as desired.  Let $m$ and $M$ be the minimum and maximum of $\{i_1,\dotsc,i_r\}$ respectively, then we only need to prove that $M=m=0$.

     Since $\alpha, \tau\in C$, we know that $L(\alpha)$ is a nonzero homogeneous polynomial $P(X_0,X_1,X_2)$ and then 
     $$L(\tau)=\sum_{p=1}^rP(X_{i_p},X_{i_p+1},X_{i_p+2})$$ is also a nonzero homogeneous polynomial in $X_0,X_1,X_2$.

We first consider the case that the variables $X_0,X_1,X_2$ all appear in $L(\alpha)$. In this case, since both  $X_{M+2}$ and $X_{m}$ appear in $L(\tau)$, we have $m\geq0$ and $M\leq 0$, hence $M=m=0$, as desired.

Now we assume that not all the variables $X_0,X_1,X_2$ appear in $L(\alpha)$. Then by Lemma \ref{L:X0X1X2AllAppear}, one of $w_0:=W_{S',v_0}(\alpha)$ and $w_1:=W_{S',v_1}(\alpha)$ is nonzero. 
Thus $L(\alpha)=w_0X_0+w_1(X_2-X_1)$ by Lemma \ref{L:SimplePropertyOfLeadingTerm}\eqref{Item:Degree1LeadingTermInC}. 
Thus exactly one of $w_0$ and $w_1$ is zero and $L(\alpha)$ is either $w_0X_0$ or $w_1(X_2-X_1)$. Correspondingly, $L(\tau)$ is one of the following 
\begin{align*}
    &\sum_{p=1}^rw_0X_{i_p}=w_0\sum_{j=m}^Mn_jX_j, \quad \sum_{p=1}^rw_1(X_{i_p+2}-X_{i_p+1})=w_1\sum_{j=m}^Mn_j(X_{j+2}-X_{j+1}),
\end{align*} where $n_j=|\{p\mid i_p=j\}|$.  But $\tau\in C$, thus $L(\tau)=c_0X_0+c_1X_1+c_2X_2$, with $c_1+c_2=0$, by Lemma \ref{L:SimplePropertyOfLeadingTerm}\eqref{Item:Degree1LeadingTermInC}. If $L(\tau)$ is the first one, then $m\geq0$, $M\leq 2$, $n_1w_0+n_2w_0=0$, thus $n_1=n_2=0$ and then $M=m=0$, as desired. If $L(\tau)$ is the second one, then $m\geq-1$, $M\leq0$ and $(n_{-1}-n_0)w_1+n_0w_1=0$, which implies that $n_{-1}=0$ and thus $M=m=0$. 
\end{proof}

\begin{lemma}
\label{L:One-Relator-NotConjugateIntoC}
    Assume that $\beta \in A$ is not conjugate into $C$, and that $\mathrm{NSS}_A(\beta)\cap C\neq\emptyset$. 
     Then $W_{S,a_0}(\beta)W_{S,a_1}(\beta)<0$ and for every $\tau \in \mathrm{NSS}_A(\beta) \cap C$, there are $r,s\geq1$ such that $$(W_{S', a_0}(\tau), W_{S', a_2a_1^{-1}}(\tau))=(rW_{S,a_0}(\beta),sW_{S,a_1}(\beta)).$$
\end{lemma}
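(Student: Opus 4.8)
The plan is to reduce the statement, via the Magnus embedding of $N$, to an identity of Laurent polynomials in one variable and then read off the weight constraints from it.

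\emph{Step 1 (Reductions).} Since $A$ is free, hence bi-orderable and GTF, no power product of conjugates of $\beta$ is trivial, so any $\tau\in\mathrm{NSS}_A(\beta)\cap C$ is nontrivial; fix such a $\tau$ and write $\tau=\beta^{g_1}\dotsm\beta^{g_n}$. Because $W_{\{a,b\},b}$ is conjugation invariant and vanishes on $\tau\in C\subseteq N$, we get $\beta\in N$. Using $A=N\rtimes\langle b\rangle$, write $g_i=b^{j_i}h_i$ with $h_i\in N$, so $\beta^{g_i}=\delta_i^{h_i}$ with $\delta_i:=\beta^{b^{j_i}}\in N$; thus $\tau=\prod_i\delta_i^{h_i}$ is a product of $N$-conjugates of the $b$-translates $\delta_i$ of $\beta$. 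Finally, a change-of-basis computation in $N^{\mathrm{ab}}$ (using $v_0=a_0$ and $\bar a_1=\bar v_2-\bar v_1$) shows that on $N$ one has $W_{S',v_0}=W_{S,a_0}$ and $W_{S',v_1}=-W_{S,a_1}$, so the conclusion is equivalent to $W_{S,a_0}(\tau)=rW_{S,a_0}(\beta)$ and $W_{S,a_1}(\tau)=-sW_{S,a_1}(\beta)$ for some $r,s\ge1$.

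\emph{Step 2 (Magnus leading term).} Write $\sigma_j$ for the ring automorphism $X_k\mapsto X_{k+j}$ of $R$, so that $\mu(\delta_i)=\sigma_{j_i}(\mu(\beta))$ and, by Lemma \ref{L:SimplePropertyOfLeadingTerm}(1), $L(\delta_i^{h_i})=\sigma_{j_i}(L(\beta))$. Then $\mu(\tau)=1+\sum_i\sigma_{j_i}(L(\beta))+(\text{terms of degree}>d)$, where $d=\deg L(\beta)$. Inspecting the monomials of $\sum_i\sigma_{j_i}(L(\beta))$ that contain the variable of smallest index (which undergo no cancellation) shows this sum is nonzero, hence it equals $L(\tau)$ and has degree $d$; and since $\tau\in C$, $L(\tau)$ is a polynomial in $X_0,X_1,X_2$ only. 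Granting for the moment that $d=1$ — equivalently $\beta\notin[N,N]$ — Lemma \ref{L:SimplePropertyOfLeadingTerm}(2),(3) gives $L(\beta)=\sum_k W_{S,a_k}(\beta)X_k$ and $L(\tau)=W_{S',v_0}(\tau)X_0+W_{S',v_1}(\tau)(X_2-X_1)$, so rewriting $L(\tau)=\sum_i\sigma_{j_i}(L(\beta))$ as an identity of Laurent polynomials ($X_k\leftrightarrow X^k$) yields
$$Q(X)\,P_\beta(X)=W_{S',v_0}(\tau)-W_{S',v_1}(\tau)\,X+W_{S',v_1}(\tau)\,X^2,$$
where $Q(X)=\sum_i X^{j_i}$ has nonnegative coefficients summing to $n$ and $P_\beta(X)=\sum_k W_{S,a_k}(\beta)X^k$.

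\emph{Step 3 (Reading off the constraints, and the main obstacle).} The right-hand side is supported on $\{0,1,2\}$ and its coefficients of $X$ and $X^2$ sum to zero. Factoring out the lowest powers of $X$ from $Q$ and $P_\beta$ and using that $Q$ has nonnegative coefficients, the vanishing-sum constraint forces both cofactors to be exact degree-one polynomials: $P_\beta$ is supported on two consecutive indices $\{p,p+1\}$ with $W_{S,a_p}(\beta),W_{S,a_{p+1}}(\beta)\ne0$, and $Q$ on two consecutive exponents $\{m,m+1\}$ — the possibility that $Q$ is a monomial (all $j_i$ equal) being excluded by Lemma \ref{L:StrongMalnormalityLikeByFreiheitssatz}(1), since up to a $b$-conjugation $\tau$ would then lie in $\mathrm{NSS}_N(\beta')\cap C$ with $\beta'$ not conjugate in $N$ into $C$. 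Comparing supports then gives $m=-p$, $W_{S,a_{p+1}}(\beta)=-\frac{s'}{n}W_{S,a_p}(\beta)$ with $r'=\#\{i:j_i=m\}$ and $s'=\#\{i:j_i=m+1\}$, and $(W_{S',v_0}(\tau),W_{S',v_1}(\tau))=(r'W_{S,a_p}(\beta),\,s'W_{S,a_{p+1}}(\beta))$. The crux — and the hardest step — is to show that $p=0$, i.e. that the $S$-support of $\beta$ is exactly $\{0,1\}$ (equivalently $\min_i j_i=0$); granted this, $W_{S,a_0}(\beta)W_{S,a_1}(\beta)=-\frac{s'}{n}W_{S,a_0}(\beta)^2<0$ and the displayed identity becomes the assertion of the lemma with $r=r'$, $s=s'$. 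This cannot come from the degree-one (abelianized) data alone, since conjugating $\beta$ by $b^{-p}$ preserves the hypotheses while translating its $S$-support; the plan is to use the full force of ``$\beta$ is not conjugate into $C$'', which says $\beta$ is not conjugate in $N$ into any of the free factors $C^{b^k}$ in the decompositions $N=C^{b^k}\ast N_1^{(k)}$, together with the Freiheitssatz applied in these decompositions (and, to rule out the still-open possibility $\beta\in[N,N]$, an auxiliary bi-ordering argument in the free group $N_1$ showing that a product of $N$-conjugates of the basic positive commutators cannot be trivial), so as to pin both the $\mu$-indices of $\beta$ and the exponents $j_i$ into $\{0,1\}$.
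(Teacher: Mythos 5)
Your proposal takes the same route as the paper's proof — pass to $N$, apply the Magnus embedding, analyze the leading term, and use the support constraints imposed by $\tau\in C$ — and your preliminary reductions, the translation $W_{S',v_0}=W_{S,a_0}$, $W_{S',v_1}=-W_{S,a_1}$, and the exclusion of the monomial case via Lemma~\ref{L:StrongMalnormalityLikeByFreiheitssatz}(1) all match the paper. However, your Step~3 leaves two genuine gaps which you flag but do not close, and neither of your proposed patches is worked out.

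First, the case $d:=\deg L(\beta)\geq2$ (i.e.\ $\beta\in[N,N]$) is deferred to an unspecified ``bi-ordering argument in $N_1$.'' The paper disposes of this case with the annihilation machinery: if $d\geq2$ then $W_{S,a_i}(\alpha)=0$ for all $i$, hence $W_{S,a_i}(\tau)=0$ for all $i$, so $X_0=0$ annihilates $\tau$ (Example~\ref{E:SomeExamplesAboutAnnihilation}\ref{Item:X0AnnihilationX1-X2}), hence annihilates $L(\tau)$ (Lemma~\ref{L:IdealGeneratedByHomogeneousPolynomials}), so $X_0$ appears in every monomial of $L(\tau)$ (Example~\ref{E:SomeExamplesAboutAnnihilation}\ref{Item:X0Annihilation}); but then $L(\tau)=rP(X_0,X_1)+sP(X_1,X_2)$ contains an $X_0$-free monomial involving $X_2$, a contradiction; a parallel argument using $X_i-X_0=0$ handles the one-variable case. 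Second, the claim that one may take $p=0$ (equivalently $\min_ij_i=0$) is precisely what the paper obtains by normalizing at the outset: it replaces $\beta$ by $\alpha:=\beta^{b^m}$ with $m=\min_ik_i$, so that $\min_ij_i=0$ holds by construction, and then the entire leading-term analysis applies to $\alpha$ and yields $L(\alpha)=w_0X_0+w_1X_1$ with $w_0w_1<0$. You should adopt this normalization rather than trying to locate the support of $\beta$ by a separate Freiheitssatz argument — your own (correct) observation that the hypotheses are invariant under $\beta\mapsto\beta^{b^k}$ shows that no such argument can pin down $p$ without renormalizing first. (You are right that the paper's return from $\alpha$ to $\beta$, ``$w_i=W_{S,a_i}(\beta)$ as $\alpha$ and $\beta$ are conjugate,'' is delicate since $W_{S,a_i}$ is not invariant under $b$-conjugation; but the downstream use in Proposition~\ref{P:NonConjugateIntoCMatching} needs only $w_0w_1<0$ and the identity $(W_{S',v_0}(\tau),W_{S',v_1}(\tau))=(rw_0,sw_1)$, both unchanged by replacing $\beta$ with $\alpha$.)
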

\begin{proof}
    First note that if $\beta \notin N$ then $\mathrm{NSS}_A(\beta) \cap N = \emptyset$, so that $\mathrm{NSS}_A(\beta) \cap C = \emptyset$ as well.  Hence $\beta \in N$.  Now let $\tau \in \mathrm{NSS}_A(\beta) \cap C$, 
    then we can write $\tau = \beta^{b^{k_1} h_1} \cdots \beta^{b^{k_n}h_n}$, where these $k_i \in \mathbb{Z}$, $h_i \in N$ and $n\geq1$.  Set $m = \min\{k_1, \ldots, k_n\}$, $\alpha = \beta^{b^m}$ and $j_i= k_i-m$ for all $i$, then we have
    \[ \tau = \alpha^{b^{j_1}h_1} \cdots \alpha^{b^{j_n}h_n}, \text{ with }\min\{j_1,\dotsc,j_n\}=0.
    \]
     Set $M = \max\{j_1, \ldots, j_n\}$, and note that if $M=0$ then Lemma \ref{L:StrongMalnormalityLikeByFreiheitssatz}(1) would imply $\tau \notin C$, so $M\geq 1$.  Now suppose that $ L(\alpha) = P(X_{i_1}, \ldots, X_{i_t})$ where $X_{i_1},\dotsc, X_{i_t}$ with $i_1<i_2<\dotsc<i_t$ and $t\geq1$ are the only variables appearing in $L(\alpha)$. Then 
    \[ L(\tau)=\sum_{q=1}^nP(X_{i_1+j_q},\dotsc,X_{i_t+j_q}),
    \]
and in particular both $X_{i_1}$ and $X_{i_t+M}$ appear in $L(\tau)$.  But since $\tau \in C$, only $X_0, X_1$ and $X_2$ can appear in $L(\tau)$, so we conclude that $0 \leq i_1 < i_t +M\leq 2$.  Since $M \geq 1$ this means that $t \in \{1,2\}$.
\begin{enumerate}
    \item Suppose $t=1$. Then 
    $L(\alpha) = kX_{i_1}^l$ 
    for some nonzero $k \in \mathbb{Z}$ and $l \geq 1$. 
    If $l \geq 2$, then by By Lemma \ref{L:SimplePropertyOfLeadingTerm}(2), we have $W_{S, a_i}(\alpha) = 0$ for all $i$, implying that the relations $X_i -X_0=0 \; (i \in \mathbb{Z})$ annihilate $\alpha$ by Example \ref{E:SomeExamplesAboutAnnihilation}(4).  Hence these same relations annihilate $L(\alpha)$ by Lemma \ref{L:IdealGeneratedByHomogeneousPolynomials}. On the other hand, $L(\alpha) = kX_{i_1}^l$ is not annihilated by $X_i -X_0=0 \; (i \in \mathbb{Z})$ by Example \ref{E:SomeExamplesAboutAnnihilation}(6).  So $l\geq 2$ is not possible.

    Therefore $l=1$, $L(\alpha) = kX_{i_1}$ and then $L(\tau)=k\sum_{q=1}^nX_{i_1+j_q}$.  Now, since $\tau \in C$, we know that $L(\tau)=rX_0+s(X_2-X_1)$ for some $r,s\in\mathbb Z$. As $k\neq0$ and there is one $j_p=0$, we thus have $s=0$, $i_1=0$ and then all $j_q=0$, meaning $M=0$, a contradiction.
    \item Suppose $t=2$. Then $i_1=0$, $i_2=1$, $M=1$ and $L(\alpha) = P(X_0, X_1)$ with both variables appearing in $L(\alpha)$.  Then $\{j_1,\dotsc,j_p \}=\{0,1\}$ and $L(\tau) = rL(\alpha) + sL(\alpha^b)$ for some positive integers $r, s$ with $r+s =n$. 

If $\deg(L(\alpha)) \geq 2$, then $W_{S, a_i}(\alpha) = 0$ for all $i$ and so $W_{S, a_i}(\tau) = 0$ for all $i$ as well.  Then since $\tau \in C$,  $X_0=0$ annihilates $L(\tau)$ by Example \ref{E:SomeExamplesAboutAnnihilation}(2), and so $X_0$ appears in every monomial of $L(\tau)$ by Example \ref{E:SomeExamplesAboutAnnihilation}(3).
On the other hand, $L(\tau) = rL(\alpha) + sL(\alpha^b) = rP(X_0, X_1) + sP(X_1, X_2)$, and so there is at least one monomial in $L(\tau)$ where $X_2$ appears, but $X_0$ doesn't. This is a contradiction.

Therefore $\deg(L(\alpha)) =1$. Then $L(\alpha) = w_0X_0 + w_1X_1$, where $w_i=W_{S,a_i}(\beta)$ as $\alpha$ and $\beta$ are conjugate, and $L(\tau) = r(w_0X_0+w_1X_1) + s(w_0X_1+w_1X_2)$.  But since $\tau \in C$, we know by Lemma \ref{L:SimplePropertyOfLeadingTerm}\eqref{Item:Degree1LeadingTermInC} that $(W_{S', a_0}(\tau), W_{S', a_2a_1^{-1}}(\tau))=(rw_0,sw_1)$ and $rw_1+sw_0+sw_1 = 0$, which implies that $w_0w_1<0$, as desired.
\end{enumerate}
\end{proof}

We recall notation for the next lemma. We use $B$ to denote the free group on generators $c$ and $d$, and $M$ the normal subgroup of $B$ generated by $c$. There is a free generating set $T:=\{c_i\mid i\in\mathbb Z\}$, where $c_i=c^{d^i}$ for each $i\in\mathbb Z$ for $M$, and a free generating set $T'$, which is the same as $T$ except with $c_1$ replaced by $c_2c_1^{-1}$.  The weight $W_{T,c_i} :M \rightarrow \mathbb{Z}$ is the homomorphism which sends $c_i$ to $1$ and all other $c_j$  to $0$, we similarly define $W_{T', c_i}$ and $W_{T', c_2c_1^{-1}}$. Last, $D$ is the subgroup of $M$ generated by $\{c_0,c_2c_1^{-1}\}$. 
The real numbers in $\{r_i \mid i \in I\}$ are said to be \textbf{of the same sign} if $r_i >0$ for all $i \in I$ or $r_i <0$ for all $i \in I$.

\begin{lemma}
\label{L:SamesignNSSIntersectD}
Let $D_0 \subseteq D$.  If the integers in $\{ W_{T', c_0}(\beta) \mid \beta \in D_0\}$ are of the same sign, and if the integers in $\{ W_{T', c_2c_1^{-1}}(\beta) \mid \beta \in D_0\}$ are of the same sign, then $\mathrm{NSS}_B(D_0)\cap D=\mathrm{NSS}_D(D_0)$.
\end{lemma}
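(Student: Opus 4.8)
The plan is to follow the pattern of Proposition~\ref{P:One-Relator-ConjugateIntoC} and Lemma~\ref{L:One-Relator-NotConjugateIntoC}, freely using the obvious $(B,M,D,T,T')$-analogues of Lemma~\ref{L:SimplePropertyOfLeadingTerm} (valid via the isomorphism $\Phi\colon A\to B$, $a\mapsto c$, $b\mapsto d$, which carries $N$ to $M$ and $C$ to $D$), together with the decomposition $M=D\ast M_1$, where $M_1$ is the $\Phi$-image of $N_1$. We may assume $D_0\neq\emptyset$, since otherwise both sides are empty, and the inclusion $\mathrm{NSS}_D(D_0)\subseteq\mathrm{NSS}_B(D_0)\cap D$ is clear; it remains to prove the reverse. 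Observe first that the hypothesis forces $W_{T',c_0}(\beta)\neq0$ and $W_{T',c_2c_1^{-1}}(\beta)\neq0$ for every $\beta\in D_0$, since a set that is ``of the same sign'' cannot contain $0$; in particular $1\notin D_0$.

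Fix $\tau\in\mathrm{NSS}_B(D_0)\cap D$ and write $\tau=\beta_1^{g_1}\dotsm\beta_k^{g_k}$ with $\beta_i\in D_0$, $g_i\in B$, $k\geq1$. Since $B=M\rtimes\langle d\rangle$, we may write $g_i=d^{m_i}h_i$ with $m_i\in\mathbb Z$ and $h_i\in M$, and then $\tau=\alpha_1^{h_1}\dotsm\alpha_k^{h_k}$ with $\alpha_i:=\beta_i^{d^{m_i}}\in M$. The crux of the proof is to show that $m_i=0$ for every $i$. Granting this, $\alpha_i=\beta_i$, so $\tau\in\mathrm{NSS}_M(D_0)$; and since $D$ is multi-malnormal in $M$ by Corollary~\ref{C:FactorSubgroupsAreMulti-malnormal} and the normal subsemigroup $C':=\mathrm{NSS}_D(D_0)$ of $D$ does not contain $1$ (as $D$ is free and $1\notin D_0$), Lemma~\ref{L:multi-malnormalStrongMalnormalMalnormal}\ref{Item:multi-malnormalToWeaklyMultimalnormal} gives $\mathrm{NSS}_M(C')\cap D=C'$, whence $\tau\in\mathrm{NSS}_M(D_0)\cap D\subseteq\mathrm{NSS}_M(C')\cap D=C'=\mathrm{NSS}_D(D_0)$, as required.

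To see that all $m_i=0$, we apply the Magnus embedding $\mu\colon M\to R$. Because $W_{T',c_0}(\beta_i)\neq0$, the analogue of Lemma~\ref{L:SimplePropertyOfLeadingTerm}\eqref{Item:Degree1LeadingTermInC} for $D\le M$ gives $L(\beta_i)=u_iX_0+w_i(X_2-X_1)$, where $u_i:=W_{T',c_0}(\beta_i)$ and $w_i:=W_{T',c_2c_1^{-1}}(\beta_i)$; by hypothesis the $u_i$ all have one common sign and the $w_i$ all have one common sign. Conjugation by $d$ corresponds under $\mu$ to the index shift $X_j\mapsto X_{j+1}$ (since $\mu(c_j^{d})=\mu(c_{j+1})=1+X_{j+1}$), so $L(\alpha_i)=u_iX_{m_i}+w_i(X_{m_i+2}-X_{m_i+1})$, again homogeneous of degree one. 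As $L$ is conjugation-invariant (the analogue of the first part of Lemma~\ref{L:SimplePropertyOfLeadingTerm}), the degree-one part of $\mu(\tau)$ equals $\sum_{i=1}^kL(\alpha_i)$. Setting $m_{\min}=\min_im_i$ and $m_{\max}=\max_im_i$, the coefficient of $X_{m_{\min}}$ in $\sum_iL(\alpha_i)$ is $\sum_{i:\,m_i=m_{\min}}u_i$, a nonempty sum of reals of one sign, hence nonzero; so $\sum_iL(\alpha_i)\neq0$ and therefore $L(\tau)=\sum_iL(\alpha_i)$. Since $\tau\in D$, the leading term $L(\tau)$ involves only $X_0,X_1,X_2$; the nonzero coefficient of $X_{m_{\min}}$ then forces $m_{\min}\geq0$, and the coefficient of $X_{m_{\max}+2}$, which equals $\sum_{i:\,m_i=m_{\max}}w_i\neq0$, forces $m_{\max}\leq0$. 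With $m_{\min}\leq m_{\max}$ this yields $m_i=0$ for all $i$, completing the argument.

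The substantive part is this last step, and the only real subtlety in it is ensuring that the leading terms $L(\alpha_i)$ do not cancel in a way that obscures the extremal indices $m_{\min}$ and $m_{\max}+2$. This is precisely where the ``same sign'' hypothesis is used: it guarantees that the partial sums $\sum_{m_i=m_{\min}}u_i$ and $\sum_{m_i=m_{\max}}w_i$ are nonzero, which is exactly the input needed to pin the $d$-conjugation exponents to $0$ and thereby reduce the problem to the multi-malnormality of $D$ in $M$.
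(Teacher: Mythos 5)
Your overall strategy matches the paper's: decompose $g_i=d^{m_i}h_i$ with $h_i\in M$, use the same-sign hypothesis to force $m_i=0$ for all $i$, then invoke multi-malnormality of $D$ in $M$ (Corollary~\ref{C:FactorSubgroupsAreMulti-malnormal}) to descend from $\mathrm{NSS}_M(D_0)\cap D$ to $\mathrm{NSS}_D(D_0)$. The only cosmetic difference is that you route through the Magnus leading-term machinery (tracking the degree-one part of $\mu(\tau)$ and examining the coefficients of $X_{m_{\min}}$ and $X_{m_{\max}+2}$), whereas the paper just computes the weights $W_{T,c_{2+M}}(\tau)$ and $W_{T,c_{m_{\min}}}(\tau)$ directly. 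Since the degree-one part of the Magnus image is exactly the abelianization, this is the same computation, just dressed more heavily; your version works.

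There is, however, one real gap. You justify $1\notin C'=\mathrm{NSS}_D(D_0)$ by writing ``(as $D$ is free and $1\notin D_0$).'' That inference is false: freeness of $D$ gives $1\notin\mathrm{NSS}_D(\{a\})$ for a single nontrivial $a$, but it does not imply $1\notin\mathrm{NSS}_D(D_0)$ for an arbitrary subset $D_0$ of nontrivial elements. (Take $D_0=\{c_0,c_0^{-1}\}$: then $c_0c_0^{-1}=1\in\mathrm{NSS}_D(D_0)$, even though $1\notin D_0$ and $D$ is free.) You do need $1\notin C'$ in order to apply Lemma~\ref{L:multi-malnormalStrongMalnormalMalnormal}, so this is a required step, and the reason must come from the same-sign hypothesis: $W_{T',c_0}\colon D\to\mathbb{Z}$ is a homomorphism, hence constant on conjugacy classes of $D$, so for any $\tau=\gamma_1^{h_1}\dotsm\gamma_n^{h_n}\in\mathrm{NSS}_D(D_0)$ one has $W_{T',c_0}(\tau)=\sum_i W_{T',c_0}(\gamma_i)$, a nonempty sum of nonzero integers all of one sign, hence nonzero, so $\tau\neq 1$. (Your chosen $D_0$ does in fact avoid $\{c_0,c_0^{-1}\}$-type counterexamples precisely because of this hypothesis, so the conclusion is correct; only your stated reason is wrong.) Replace the parenthetical with this computation and the proof is complete.
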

\begin{proof}
    We have $\mathrm{NSS}_D(D_0) \subseteq \mathrm{NSS}_B(D_0)\cap D$ by definition, and so we only need to prove the other containment. Let $\tau = \gamma_1^{g_1}\cdots \gamma_n^{g_n} \in \mathrm{NSS}_B(D_0)\cap D$ with $\gamma_i \in D_0$ and $g_i \in B$, and we need to show that $\tau\in \mathrm{NSS}_D(D_0)$.  

    First,  by setting $g_i = d^{l_i}h_i$ for some $l_i \in \mathbb{Z}$ and $h_i \in M$, we can write
    $\tau = \gamma_1^{d^{l_1}h_1}\cdots \gamma_n^{d^{l_n}h_n}$. Letting $M = \max\{l_1, \ldots, l_n\}$ and $Z= \{ i \mid l_i = M\}$, we have
    \[ W_{T, c_{2+M}}(\tau) = \sum_{i=1}^n W_{T, c_{2+M}}(\gamma_i^{d^{l_i}h_i}) = \sum_{i \in Z}W_{T', c_{2}c_1^{-1}}(\gamma_i),
    \]
which is nonzero since it is a sum of integers having the same sign.  Since $\tau \in D$, we know $W_{T, c_i}(\tau) =0$ for $i \notin \{0,1,2\}$ and so this forces $M=0$.  Arguing similarly yields $\min \{l_1, \ldots, l_n\}=0$ as well, so that $l_i=0$ for all $i$ and $\tau = \gamma_1^{h_1} \ldots \gamma_n^{h_n}$.

    Second, $W_{T', c_0}(\tau) \neq 0$ for all $\tau \in \mathrm{NSS}_D(D_0)$,  because $W_{T',c_0}(\tau)$ is positive (resp. negative) when $W_{T', c_0}(\beta)$ is positive (resp. negative) for all $\beta \in D_0$.  In particular, $1 \notin \mathrm{NSS}_D(D_0)$.
    
    Now we have $1\not\in D':=\mathrm{NSS}_D(D_0)$, all $\gamma_i\in D_0\subseteq D'$, all $h_i\in M$ and $\tau=\gamma_1^{h_1} \ldots \gamma_n^{h_n}\in D$. Note that $D$ is multi-malnormal in $M$ by Corollary  \ref{C:FactorSubgroupsAreMulti-malnormal}, thus we know $h_i \in D$ for all $i$ by Lemma \ref{L:multi-malnormalStrongMalnormalMalnormal}(1).  But then $\tau=\gamma_1^{h_1}\dotsm \gamma_n^{h_n} \in \mathrm{NSS}_D(D_0)$, as we needed to show.
\end{proof}

\begin{proposition}\label{P:NonConjugateIntoCMatching}
    Assume that $\alpha\in A$ is not conjugate into $C$. Then $P_\alpha:=\mathrm{NSS}_A(\{\alpha\})$ and $Q_\alpha:=\mathrm{NSS}_B(\phi(P_\alpha\cap C))$ satisfy $1\not\in P_\alpha$, $1\not\in Q_\alpha$ and $\phi(P_\alpha\cap C)=Q_\alpha\cap D.$

    Similarly, if $\beta\in B$ is not conjugate into $D$, then $Q'_\beta:=\mathrm{NSS}_B(\{\beta\})$ and $P'_\beta:=\mathrm{NSS}_A(\phi^{-1}(Q'_\beta\cap D))$ satisfy that $1\not\in Q'_\beta$, $1\not\in P'_\beta$ and $\phi^{-1}(Q'_\beta\cap D)=P'_\beta\cap C$.
\end{proposition}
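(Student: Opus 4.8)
The plan is to prove the first statement in full and then deduce the second one symmetrically via the isomorphism $\Phi : A \to B$ ($a \mapsto c$, $b \mapsto d$), which carries $S$ to $T$, $S'$ to $T'$ and $C$ to $D$; through $\Phi$ the $A$-versions and $B$-versions of Lemmas~\ref{L:One-Relator-NotConjugateIntoC} and~\ref{L:SamesignNSSIntersectD} become interchangeable, and the roles of $\phi$ and $\phi^{-1}$ get swapped.

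For the first statement: since $\alpha$ is not conjugate into $C$ we have $\alpha \neq 1$, and $A$ is a free group, hence bi-orderable, hence GTF, so $1 \notin \mathrm{NSS}_A(\{\alpha\}) = P_\alpha$. Next, $P_\alpha \cap C$ is a normal subsemigroup of $C$ (it is product-closed, and normality of $P_\alpha$ in $A$ restricts to normality in $C$), so $D_0 := \phi(P_\alpha \cap C)$ is a normal subsemigroup of $D$; consequently $\mathrm{NSS}_D(D_0) = D_0$ and $D_0 \subseteq Q_\alpha \cap D$, which is already one of the two inclusions. If $P_\alpha \cap C = \emptyset$, then $D_0 = \emptyset$, $Q_\alpha = \mathrm{NSS}_B(\emptyset) = \emptyset$, and all three assertions are immediate, so assume $P_\alpha \cap C \neq \emptyset$. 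The aim is then to apply Lemma~\ref{L:SamesignNSSIntersectD} to $D_0$, for which I must verify that $\{W_{T',c_0}(\beta) : \beta \in D_0\}$ and $\{W_{T', c_2c_1^{-1}}(\beta) : \beta \in D_0\}$ are each of one sign.

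I would begin the sign analysis by recording $\phi$ on the free basis $\{v_0, v_1\}$ of $C$: since $v_1 v_0 = a^{b^2}(a^b)^{-1}a$ and, writing $e_0 := c_0$, $e_1 := c_2 c_1^{-1}$ for the free basis of $D$, we have $e_1 e_0 = c^{d^2}(c^d)^{-1}c$, the definition of $\phi$ forces $\phi(v_0) = e_0^{-1}$ and $\phi(v_1) = e_1 e_0^2$. Abelianising, each $\tau \in C$ satisfies $W_{T', c_2c_1^{-1}}(\phi(\tau)) = W_{S', v_1}(\tau)$ and $W_{T', c_0}(\phi(\tau)) = -W_{S', v_0}(\tau) + 2\,W_{S', v_1}(\tau)$ (here $W_{S',v_0} = W_{S',a_0}$ and $W_{S',v_1} = W_{S',a_2a_1^{-1}}$ on $C$). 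Applying Lemma~\ref{L:One-Relator-NotConjugateIntoC} to $\alpha$ and setting $u := W_{S,a_0}(\alpha)$, $w := W_{S,a_1}(\alpha)$, we get $uw < 0$ and, for each $\tau \in P_\alpha \cap C$, integers $r, s \geq 1$ with $W_{S', v_0}(\tau) = ru$, $W_{S', v_1}(\tau) = sw$. Hence $W_{T', c_2c_1^{-1}}(\phi(\tau)) = sw$ has the constant nonzero sign of $w$, while $W_{T', c_0}(\phi(\tau)) = -ru + 2sw$ is a sum of two nonzero integers each of sign $w$ --- using that $u, w$ have opposite signs --- and so also has the sign of $w$. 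This checks the hypotheses, so Lemma~\ref{L:SamesignNSSIntersectD} gives $Q_\alpha \cap D = \mathrm{NSS}_B(D_0) \cap D = \mathrm{NSS}_D(D_0) = D_0 = \phi(P_\alpha \cap C)$; and since $\phi$ is injective and $1 \notin P_\alpha$, we get $1 \notin D_0 = Q_\alpha \cap D$, whence $1 \notin Q_\alpha$. The second statement follows by running this argument verbatim with $A, B$, $C, D$, $S, T$ and $\phi, \phi^{-1}$ interchanged, using $\phi^{-1}(e_0) = v_0^{-1}$ and $\phi^{-1}(e_1) = v_1 v_0^2$. The only step that is not bookkeeping is the sign computation: it is exactly where the particular amalgamating isomorphism (equivalently, the relator) enters, and it would break down without the conclusion $uw < 0$ of Lemma~\ref{L:One-Relator-NotConjugateIntoC}, which is precisely what stops the two terms of $-ru + 2sw$ from cancelling.
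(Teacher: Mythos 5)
Your proof is correct and follows essentially the same route as the paper: reduce to the case $P_\alpha \cap C \neq \emptyset$, invoke Lemma~\ref{L:One-Relator-NotConjugateIntoC} to get the weight formulas and $w_0w_1<0$, push through $\phi$ using $\phi(v_0)=e_0^{-1}$ and $\phi(v_1)=e_1e_0^2$, verify the constant-sign hypothesis of Lemma~\ref{L:SamesignNSSIntersectD}, and conclude. The only difference is cosmetic: you spell out the sign check for $-ru+2sw$ (both summands carry the sign of $w$ because $uw<0$) and the $\Phi$-transport of the basis for the symmetric case, both of which the paper leaves implicit.
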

\begin{proof} We only prove the first statement as the second follows by symmetry.
First, if $P_\alpha\cap C=\emptyset$, then $Q_\alpha=\emptyset$ and the equality is trivially true. Hence we assume that $P_\alpha\cap C\neq\emptyset$. Now, by Lemma \ref{L:One-Relator-NotConjugateIntoC}, every $g\in P_\alpha\cap C$ satisfies $(W_{S', a_0}(g), W_{S', a_2a_1^{-1}}(g))=(rw_0,sw_1)$ for some $r,s\geq1$, where $w_i:=W_{S,a_i}(\alpha)$ satisfies $w_0w_1<0$.
    Applying the isomorphism $\phi:C\rightarrow D$ given by $\phi(a_0)=c_0^{-1}$ and $\phi(a_2a_1^{-1}a_0)=c_2c_1^{-1}c_0$, we have $\phi(a_2a_1^{-1})=c_2c_1^{-1}c_0^2$ and so we find $(W_{T', c_0}(\phi(g)), W_{T', c_2c_1^{-1}}(\phi(g))) = (-rw_0+2sw_1, sw_1)$.

  Now the numbers in $\{ W_{T', c_2c_1^{-1}}(\phi(g))\mid g\in P_\alpha\cap C\}$ are of the same sign, and so are those in $\{ W_{T', c_0}(\phi(g)) \mid g\in P_\alpha\cap C\}$.  By Lemma \ref{L:SamesignNSSIntersectD}, we have, as desired, 
    \begin{align}\label{E:QalphacapDPhiPbetacapD}
        Q_\alpha\cap D=\mathrm{NSS}_B(\phi(P_\alpha\cap C))\cap D=\mathrm{NSS}_D(\phi(P_\alpha\cap C))=\phi(P_\alpha\cap C),
    \end{align}
    where the last equality is because $P_\alpha\cap C$ is a normal subsemigroup of $C$ and $\phi:C\rightarrow D$ is an isomorphism.  Finally, we note that $1 \notin P_{\alpha}$ since $A$ is GTF,  and therefore $1 \notin Q_{\alpha}$ by \eqref{E:QalphacapDPhiPbetacapD}.
\end{proof}

\begin{theorem}\label{T:GTF-One-Relator}
    The group $G=A\ast_\phi B$ from Theorem \ref{thm:onerelatorGTFnonBO} is GTF.
\end{theorem}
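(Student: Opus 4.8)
The plan is to deduce the theorem from Theorem \ref{T:GTFAmalgamIffFamilies} applied to the amalgam decomposition $G = A \ast_\phi B$, in which the common subgroup $C \leq A$ is identified with $D \leq B$ via $\phi$. The RTF hypothesis of that theorem is supplied verbatim by Lemma \ref{L:One-Relator-RTF} ($C$ is RTF in $A$ and $D$ is RTF in $B$), so the entire task reduces to producing families $\mathcal F_A$ and $\mathcal F_B$ of normal subsemigroups of $A$ and $B$ meeting conditions (1) and (2) of Theorem \ref{T:GTFAmalgamIffFamilies}. I would build these from the propositions already proved: for each $g \in A \setminus \{1\}$ set $P_g := \mathrm{NSS}_A(\{g\})$ and $Q_g := \mathrm{NSS}_B(\phi(P_g \cap C))$, and for each $h \in B \setminus \{1\}$ set $Q_h := \mathrm{NSS}_B(\{h\})$ and $P_h := \mathrm{NSS}_A(\phi^{-1}(Q_h \cap D))$. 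Note that $P_g \cap C$ is a normal subsemigroup of $C$ because $\mathrm{NSS}_A(\{g\})$ is normal in $A$, so $\phi(P_g \cap C)$ is one of $D$, and all of these sets are (possibly empty) normal subsemigroups of the relevant factor. Then take $\mathcal F_A := \{P_g : g \in A \setminus \{1\}\} \cup \{P_h : h \in B \setminus \{1\}\}$ and $\mathcal F_B := \{Q_h : h \in B \setminus \{1\}\} \cup \{Q_g : g \in A \setminus \{1\}\}$.

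Condition (1) will be immediate once we know that $1$ belongs to no member of either family, since $g \in P_g$ for every $g \in A \setminus \{1\}$, and symmetrically for $B$. For condition (2) I would pair $P_g$ with $Q_g$ and $Q_h$ with $P_h$, so that it suffices to establish $\phi(P_g \cap C) = Q_g \cap D$ for every $g \in A \setminus \{1\}$, together with its mirror image $\phi(P_h \cap C) = Q_h \cap D$ for every $h \in B \setminus \{1\}$. The first identity splits into two cases. If $g$ is not conjugate into $C$, it is exactly the conclusion of Proposition \ref{P:NonConjugateIntoCMatching} (which also gives $1 \notin P_g$ and $1 \notin Q_g$). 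If $g$ is conjugate into $C$, say conjugate to $\gamma \in C \setminus \{1\}$, then $P_g = \mathrm{NSS}_A(\{\gamma\})$ because normal subsemigroups are closed under conjugation, so Proposition \ref{P:One-Relator-ConjugateIntoC} gives $P_g \cap C = \mathrm{NSS}_C(\{\gamma\})$; hence $\phi(P_g \cap C) = \mathrm{NSS}_D(\{\phi(\gamma)\})$ and $Q_g = \mathrm{NSS}_B(\mathrm{NSS}_D(\{\phi(\gamma)\})) = \mathrm{NSS}_B(\{\phi(\gamma)\})$, and the $D \leq B$ version of Proposition \ref{P:One-Relator-ConjugateIntoC} yields $Q_g \cap D = \mathrm{NSS}_D(\{\phi(\gamma)\}) = \phi(P_g \cap C)$. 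The mirror identity $\phi(P_h \cap C) = Q_h \cap D$ is proved in the same way, using the $D \leq B$ versions of Propositions \ref{P:NonConjugateIntoCMatching} and \ref{P:One-Relator-ConjugateIntoC}.

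It remains to check that $1$ lies in no member of $\mathcal F_A$ or $\mathcal F_B$. For $P_g$ and $Q_h$ this is because $A$ and $B$ are free, hence GTF; for $Q_g$ and $P_h$ it was recorded during the verification of condition (2) (Proposition \ref{P:NonConjugateIntoCMatching} in the non-conjugate case, and the explicit description $Q_g = \mathrm{NSS}_B(\{\phi(\gamma)\})$ with $\phi(\gamma) \neq 1$ in the conjugate case, plus the mirror statements). With conditions (1), (2) and the RTF hypothesis all verified, Theorem \ref{T:GTFAmalgamIffFamilies} gives that $G$ is GTF. I do not expect a genuine obstacle at this stage: the real work is entirely contained in Propositions \ref{P:One-Relator-ConjugateIntoC} and \ref{P:NonConjugateIntoCMatching}, which rest on the Magnus embedding, the Freiheitssatz, and the multi-malnormality of free factors; the only point demanding a little care is that neither proposition alone covers all of $A \setminus \{1\}$, so one must dovetail the conjugate-into-$C$ case with the not-conjugate-into-$C$ case (and likewise for $B$).
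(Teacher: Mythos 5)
Your proof is correct and follows essentially the same route as the paper: both apply Theorem \ref{T:GTFAmalgamIffFamilies} using Lemma \ref{L:One-Relator-RTF} for the RTF hypothesis and Propositions \ref{P:One-Relator-ConjugateIntoC} and \ref{P:NonConjugateIntoCMatching} to match the intersections with $C$ and $D$ across the amalgam. The only (harmless) difference is that you include $Q_g$ for \emph{every} $g \in A \setminus \{1\}$ and $P_h$ for every $h \in B \setminus \{1\}$ in your families and then show that the conjugate-into-$C$ (resp. $D$) cases collapse to members already present, whereas the paper trims those redundant members up front and handles the conjugate case by pairing $P_\alpha$ directly with a suitable $Q'_{\beta'}$.
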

\begin{proof}
    Since $C$ (resp. $D$) is RTF in $A$ (resp. $B$) by Lemma \ref{L:One-Relator-RTF}, we only need to find two families of normal subsemigroups satisfying conditions (1) and (2) of Theorem \ref{T:GTFAmalgamIffFamilies}. For $\alpha\in A\backslash\{1\}$, let $P_{\alpha}=\mathrm{NSS}_A(\{\alpha\})$ and $Q_\alpha=\mathrm{NSS}_B(\phi(P_\alpha\cap C))$. For $\beta\in B\backslash\{1\}$, let $Q'_{\beta}=\mathrm{NSS}_B(\{\beta\})$ and $P'_\beta=\mathrm{NSS}_A(\phi^{-1}(Q'_\beta\cap D))$. We check that the following families satisfy the required conditions:
    \begin{align*}
        \mathcal F_A:=\left\{P_\alpha\mid \alpha\in A\backslash\{1\}\right\}\cup \left\{P'_\beta\mid\beta\in B\backslash\{1\}\text{ is not conjugate into }D\right\},\\
        \mathcal F_B:=\left\{Q_\alpha\mid \alpha\in A\backslash\{1\}\text{ is not conjugate into }C\right\}\cup \left\{Q'_\beta\mid\beta\in B\backslash\{1\}\right\}.
    \end{align*}
 
First, no set in $\mathcal F_A$ contains $1$, by Proposition \ref{P:NonConjugateIntoCMatching}. 
Thus $A \setminus \{ 1\} = \bigcup_{P \in \mathcal F_A} P$, and similarly, $B \setminus \{1\} = \bigcup_{Q \in \mathcal F_B} Q$. Next, given $P\in\mathcal F_A$, we must find  $Q\in\mathcal F_B$ such that $\phi(P\cap C)=Q\cap D$. There are three possible cases. 
\begin{enumerate}
                \item Suppose $P=P_\alpha$ where $\alpha$ is conjugate to $\alpha' \in C$. Then $P=P_{\alpha'}$, and with $\beta':=\phi(\alpha')$ and $Q:=Q'_{\beta'}$ we have:
                \begin{align*}
                    P\cap C&=P_{\alpha'}\cap C=\mathrm{NSS}_A(\{\alpha'\})\cap C=\mathrm{NSS}_C(\{\alpha'\}),\\
                    Q\cap D&=Q'_{\beta'}\cap D=\mathrm{NSS}_B(\{\beta'\}\cap D=\mathrm{NSS}_D(\{\beta'\}).
                \end{align*} The last equality on each line above is from Proposition \ref{P:One-Relator-ConjugateIntoC}. Now as $\phi(\mathrm{NSS}_C(\{\alpha'\}))=\mathrm{NSS}_D(\{\beta'\})$, we have $\phi(P\cap C)=Q\cap D$ as required. 
                \item Suppose $P=P_\alpha$ with $\alpha\in A\backslash\{1\}$ not conjugate into $C$. Letting $Q=Q_\alpha$, we have $\phi(P\cap C)=Q\cap D$ by Proposition \ref{P:NonConjugateIntoCMatching}.
                \item Suppose $P=P'_\beta$ with $\beta\in B\backslash\{1\}$ not conjugate into $D$. Choosing $Q=Q'_\beta$, we have $\phi(P\cap C)=Q\cap D$  by Proposition \ref{P:NonConjugateIntoCMatching}.
\end{enumerate}           
Last, a symmetric argument shows that for all $Q\in\mathcal F_B$, we can find $P\in\mathcal F_A$ such that $\phi(P\cap C)=Q\cap D$ holds.  This completes the proof.
\end{proof}

\section{A group which is GTF and not left-orderable}\label{Section:GTFAndNonLOGroups}
As another application of Theorem \ref{T:GTFAmalgamIffFamilies}, in this section we construct an amalgam which is GTF and not left-orderable. Our construction is given by Definition \ref{D:DefineSubgroupC} and Theorem \ref{T:NonleftOrderableAndGTF-AmalgamatingMap}. 
 
In Subsection \ref{Subsection:ConstructionOfTheAmalgamAndNonLO}, we define an amalgam $G=A*_\phi B$ where $\phi:C\rightarrow D$ is an isomorphism between subgroups $C \subseteq A$ and $D \subseteq B$.  
We prove that $G$ is not left-orderable, and prove that its GTF assuming the results in the subsequent sections. 

In Subsection \ref{Subsection:RTFOfC} we prove that $C$ is RTF in $A$. 
We introduce the left-first product in Subsection \ref{Subsection:LFP&Originality} and study some of its properties, which are applied in Subsection \ref{Subsection:LFP-multi-malnormalityOfC} to prove that $C$ is multi-malnormal in $A$.  

\subsection{The construction of a GTF and non-left-orderable group}\label{Subsection:ConstructionOfTheAmalgamAndNonLO}
 In this subsection, we define the amalgam $G=A\ast_\phi B$ which we will prove is GTF and non-LO.
 
 In the definition below, $\mathrm{sign}(x)=\frac{x}{|x|}$ is the usual sign function of real numbers.
\begin{definition} \label{D:DefineSubgroupC} 
Let $C$ be the subgroup of the free group $A=\langle a,b\mid\rangle$ generated by $S=\{\alpha_i\mid i=1,2,\dotsc,m\}$, with 
$\alpha_i=a^{k_{i1}^{(1)}}b^{k_{i1}^{(2)}}a^{k_{i2}^{(1)}}b^{k_{i2}^{(2)}}\dotsm a^{k_{is}^{(1)}}b^{k_{is}^{(2)}}$, $m \geq 8$ and $s\geq10$, where $k_{ij}^{(t)}$ are nonzero integers satisfying the following conditions.
\begin{enumerate}[(A)]
\item \label{Item:SVeryDistinct}
    The cardinalities of the following two sets are both $sm+\binom m2$:
\begin{align*}
    &\{|k_{ij}^{(1)}|\mid 1\leq i\leq m, 1\leq j\leq s\}\cup \{|k_{i1}^{(1)}-k_{i'1}^{(1)}|\mid 1\leq i<i'\leq m\},\\
    &\{|k_{ij}^{(2)}|\mid 1\leq i\leq m, 1\leq j\leq s\}\cup \{|k_{is}^{(2)}-k_{i's}^{(2)}|\mid 1\leq i<i'\leq m\}.
\end{align*}
    
    \item \label{Item:SSameSign} For $i\in\{1,2,\dotsc,8\}$ and $t\in\{1,2\}$, $\epsilon_{it}:=\mathrm{sign}(k_{i1}^{(t)})=\mathrm{sign}(k_{i2}^{(t)})=\dotsm=\mathrm{sign}(k_{is}^{(t)}).$
    \item \label{Item:SignOfTheFirst8} For \label{Item:SPositiveNegative}$i\in\{1,2,3,4\}$, $\epsilon_{i1}=\epsilon_{i2}=1$ and for $i\in\{5,6,7,8\}$, $\epsilon_{i1}=-\epsilon_{i2}=1$.                                
\end{enumerate}
\end{definition}
Condition \ref{Item:SVeryDistinct} says that for each fixed $t \in \{1,2\}$, the $sm+\binom{m}{2}$ numbers $k_{ij}^{(t)}$ for all $i,j$ and $k_{is}^{(t)}-k_{i's}^{(t)}$ for all $i<i'$ have distinct absolute values. Conditions \ref{Item:SSameSign} and \ref{Item:SPositiveNegative} together say that all exponents of $a$ and $b$ in $\alpha_1,\alpha_2,\alpha_3,\alpha_4$ are positive, and for $\alpha_5,\alpha_6,\alpha_7,\alpha_8$, the exponents of $a$ are positive and those of $b$ are negative. 
Condition \ref{Item:SVeryDistinct} and the restriction that $s\geq10$ will be needed to show that $C$ is RTF and multi-malnormal in $A$ (see Theorem \ref{T:CIsRTFInA} and Theorem \ref{T:multi-malnormalityOfC}), while Conditions \ref{Item:SSameSign} and \ref{Item:SPositiveNegative} together with $m\geq8$ are used to show non-left-orderability of the amalgam in Theorem \ref{T:NonleftOrderableAndGTF-AmalgamatingMap}. 

That $S$ is a free generating set of $C$ is proved in Lemma \ref{L:SmallCancellationOfS}(4), but we implicitly use this fact in the statement of the next theorem, which is the main result of this section and proves Theorem   
\ref{thm:introGTFnonLO}. The details of the proof constitute the remainder of the manuscript.

\begin{theorem}\label{T:NonleftOrderableAndGTF-AmalgamatingMap}
    Assume the setup in Definition \ref{D:DefineSubgroupC}. Let $B=\langle c,d\mid\rangle$ and $\Phi:A\rightarrow B$ be the isomorphism given by $\Phi(a)=c$, $\Phi(b)=d$. Set $D=\Phi(C)$ and $\beta_i=\Phi(\alpha_i)$. 
        Let $\phi:C\rightarrow D$ be an isomorphism sending $\alpha_1,\dotsc,\alpha_8$ to 
   $\beta_1$, $\beta_2^{-1}$, $\beta_5$, $\beta_6^{-1}$, $\beta_3$, $\beta_4^{-1}$, $\beta_7$, $\beta_8^{-1}$, respectively.
Then the amalgam $G=A\ast_\phi B$ is GTF and not left-orderable. 
\end{theorem}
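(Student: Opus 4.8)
The plan is to establish the two assertions separately. Generalized torsion-freeness will follow formally from the structural facts about the pair $C\subseteq A$ proved in the remainder of this section, whereas non-left-orderability is a short sign-chasing argument that uses only Conditions \ref{Item:SSameSign} and \ref{Item:SPositiveNegative} together with the definition of $\phi$.

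For the GTF claim I would argue as follows. The factors $A$ and $B$ are free, hence bi-orderable and in particular GTF. By Theorem \ref{T:CIsRTFInA} (proved in Subsection \ref{Subsection:RTFOfC}) the subgroup $C$ is RTF in $A$, and by Theorem \ref{T:multi-malnormalityOfC} (proved in Subsection \ref{Subsection:LFP-multi-malnormalityOfC}) it is multi-malnormal in $A$; transporting both properties along the isomorphism $\Phi:A\to B$, which carries $C$ onto $D$, shows that $D$ is RTF and multi-malnormal in $B$. Regarding $G=A\ast_\phi B$ as an amalgam of $A$ and $B$ over the common subgroup $C\cong D$, Corollary \ref{C:RTF+multi-malnormalityImpliesGTF} then applies and yields that $G$ is GTF. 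Note that this half of the argument is insensitive to the particular isomorphism $\phi$; any isomorphism $C\to D$ would do.

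For non-left-orderability, suppose toward a contradiction that $<$ is a left-ordering of $G$ with positive cone $P$, and write $\epsilon_x\in\{+1,-1\}$ for the sign of $x\in\{a,b,c,d\}$. Since $P$ is a subsemigroup, a nonempty positive word in elements of $P$ lies in $P$, and applying this to $\alpha_i^{\pm1}$ and $\beta_i^{\pm1}$ yields: for $i\in\{1,2,3,4\}$, $\alpha_i$ is a nonempty positive word in $a,b$ and $\beta_i$ in $c,d$ (Conditions \ref{Item:SSameSign} and \ref{Item:SPositiveNegative}), so $\epsilon_a=\epsilon_b=+1$ forces $\alpha_i>1$ and $\epsilon_a=\epsilon_b=-1$ forces $\alpha_i<1$, with the sign of $\beta_i$ similarly determined by $\epsilon_c,\epsilon_d$; for $i\in\{5,6,7,8\}$, $\alpha_i$ is a nonempty positive word in $a$ and $b^{-1}$ (and $\beta_i$ in $c$ and $d^{-1}$), so $(\epsilon_a,\epsilon_b)=(+1,-1)$ forces $\alpha_i>1$ and $(\epsilon_a,\epsilon_b)=(-1,+1)$ forces $\alpha_i<1$, and similarly for $\beta_i$. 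Feeding in the identifications $\alpha_1=\beta_1$, $\alpha_2=\beta_2^{-1}$, $\alpha_3=\beta_5$, $\alpha_4=\beta_6^{-1}$, $\alpha_5=\beta_3$, $\alpha_6=\beta_4^{-1}$, $\alpha_7=\beta_7$, $\alpha_8=\beta_8^{-1}$ which hold in $G$: in the case $\epsilon_a=\epsilon_b=+1$ these force $\beta_1>1$, $\beta_2<1$, $\beta_5>1$, $\beta_6<1$, so each of the blocks $\{\beta_1,\beta_2,\beta_3,\beta_4\}$ and $\{\beta_5,\beta_6,\beta_7,\beta_8\}$ contains an element $>1$ and an element $<1$; but whatever $(\epsilon_c,\epsilon_d)$ is, one of those blocks must be entirely $>1$ or entirely $<1$, a contradiction. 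In the case $(\epsilon_a,\epsilon_b)=(+1,-1)$ one gets instead $\beta_3>1$, $\beta_4<1$, $\beta_7>1$, $\beta_8<1$, and the same contradiction arises. Replacing $<$ by the reverse ordering flips all four signs, so the remaining cases $\epsilon_a=\epsilon_b=-1$ and $(\epsilon_a,\epsilon_b)=(-1,+1)$ reduce to the two already handled. Hence $G$ admits no left-ordering.

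The main obstacle is not the theorem itself but its two inputs: the RTF property (Theorem \ref{T:CIsRTFInA}) and the multi-malnormality (Theorem \ref{T:multi-malnormalityOfC}) of $C$ in $A$. These are precisely where the strong cancellation Condition \ref{Item:SVeryDistinct} and the bound $s\geq10$ enter, via the length estimates for end-preserving and tamed products from Sections \ref{S:notation}--\ref{S:TamingProductOfConjugates} combined with the left-first-product machinery developed in Subsections \ref{Subsection:LFP&Originality}--\ref{Subsection:LFP-multi-malnormalityOfC}.
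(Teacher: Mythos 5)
Your proposal is correct and follows essentially the same route as the paper: the GTF half is identical (RTF plus multi-malnormality of $C$ in $A$, transported to $D$ in $B$ via $\Phi$, then Corollary \ref{C:RTF+multi-malnormalityImpliesGTF}), and the non-left-orderability half does the same sign-chase on the exponents, the only cosmetic difference being that the paper packages the case analysis by invoking \cite[Theorem 1.48]{CR16} to reduce to exhibiting $1$ as a positive word in $\{a^{\epsilon_1},b^{\epsilon_2},c^{\epsilon_3},d^{\epsilon_4}\}$, namely $1=\alpha_i\phi(\alpha_i)^{-1}$ for a suitable $i$, whereas you derive the contradiction directly from an assumed positive cone.
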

\begin{proof}
By Theorem \ref{T:CIsRTFInA}, $C$ is RTF in $A$, and thus $D$ is RTF in $B$. By Theorem \ref{T:multi-malnormalityOfC}, $C$ is multi-malnormal in $A$ and thus $D$ is multi-malnormal in $B$. Together with the fact that free groups are GTF, these results imply that $G$ is GTF by Corollary \ref{C:RTF+multi-malnormalityImpliesGTF}. 

To prove that $G$ is not left-orderable,  first note that none of $a, b, c, d$ is equal to the identity.  Thus by Theorem 1.48 in \cite{CR16} we only need to prove that $1$ is contained in the subsemigroup of $G$ generated by $\{a^{\epsilon_1},b^{\epsilon_2},c^{\epsilon_3},d^{\epsilon_4}\}$ for all the choices $\epsilon_i=\pm1$.  We can assume $\epsilon_1=1$ for all choices under consideration, by taking inverses.  However, by the conditions on the exponents given in Definition \ref{D:DefineSubgroupC} parts \ref{Item:SSameSign} and \ref{Item:SPositiveNegative}, the element $1=\alpha_i\phi(\alpha_i)^{-1}$ for $i=1,2,\dotsc,8$ belongs to the subsemigroup of $G$ generated by the following sets respectively:
\begin{align*}
    &\{a,b,c^{-1},d^{-1}\}, \{a,b,c,d\}, \{a,b,c^{-1},d\}, \{a,b,c,d^{-1}\},\\
    &\{a,b^{-1},c^{-1},d^{-1}\}, \{a,b^{-1},c,d\},\{a,b^{-1},c^{-1}, d\},\{a,b^{-1},c, d^{-1}\}.
\end{align*} 
\end{proof}
Now it remains to prove that $C$ is RTF and multi-malnormal in $A$. This is the task of the next three subsections. 

\subsection{The RTF-ness of \texorpdfstring{$C$}{Lg} in \texorpdfstring{$A$}{Lg}} \label{Subsection:RTFOfC}
Our goal in this subsection is to prove that $C$ in Definition \ref{D:DefineSubgroupC} is RTF in $A$. 
We view $A$ as the free product $\langle a\mid\rangle \ast\langle b\mid\rangle$ of infinite cyclic groups $\langle a\mid\rangle$ and $\langle b\mid\rangle$. 
Hence, the length $l(g)$ of $g\in A$ is defined. We remark this $l(g)$ is \textit{not} the usual length in a free group with respect to the generating set $\{a,b\}$, as we do not count generators with multiplicity. 
For example, we have $l(\alpha_i)=2s$ for all $\alpha_i$ in $S$, the generating set of $C$.

Assume that $l(g)=n$, then write $g$ as an alternating product $g=x_1\dotsm x_n$, where $x_i:=c_i^{k_i}$ with $k_i\in\mathbb Z\backslash\{0\}$, $c_i\in\{a,b\}$ and $c_i\neq c_{i+1}$. In this case, the alternating product is uniquely determined by $g$, and we also call it the \textbf{reduced form} of $g$. We call $x_i=c_i^{k_i}$ the \textbf{$i$th component} of $g$ and denoted as $B_i(g)$. For convenience, we also call $x_i$ the \textbf{$(n-i+1)$th component from RHS} and denoted as $x_i=RB_{n-i+1}(g)$. For $i\leq n$, we define the \textbf{left $i$th factor} and the \textbf{right $i$th factor} of $g$ to be $$L_i(g)=x_1x_2\dotsm x_i, \text{ and } R_i(g)=x_{n-(i-1)}\dotsm x_{n-1}x_n,$$ and define $L_0(g)=1$ and $R_0(g)=1$. For $1\leq i\leq j\leq n$, we call $x_i\dotsm x_j$ a \textbf{factor} of $g$ and denote it as $B_{[i,j]}(g)$. So for example, the left $i$th factor $L_i(g)$ is $B_{[1,i]}(g)$.

We also define the set $L_i(C)=\{L_i(c)\mid c\in C, l(c)\geq i\}$ and $R_i(C)=\{R_i(c)\mid c\in C, l(c)\geq i\}$. 
Recall that if $l(g_1\dotsm g_n)=\sum_{i=1}^nl(g_i)$ for $g_i\in A$, we say the product $g_1\dotsm g_n$ is reduced. Assume that $g_1,\dotsc,g_n\in A$ with all $l(g_i)\geq2$ for $2\leq i\leq n-1$, we say that the product $g_1\dotsm g_n$ is \textbf{almost reduced} if $l(g_ig_{i+1})\geq l(g_i)+l(g_{i+1})-1$ for all $i$ with $1\leq i\leq n-1$. 

We need some results about the structure of the elements in $C$. Recall that $S=\{\alpha_i\mid i=1,2,\dotsc, m\}$ is the generating set of $C$. 
\begin{lemma}\label{L:SmallCancellationOfS}
    Let $u_1, \dotsc, u_k\in S\cup S^{-1}$ with $k\geq1$, such that $u_iu_{i+1}\neq1$ for all $i$ with $1\leq i\leq k-1$. Then we have
    \begin{enumerate}[(1)]
    \item The product $u_1\dotsm u_k$ is almost reduced.
        \item  \label{Item:SmallCancellationOfS-TheInitialAndEndOf}$L_{2s-1}(u_1\dotsm u_k)=L_{2s-1}(u_1)$ and $R_{2s-1}(u_1\dotsm u_k)=R_{2s-1}(u_k)$.
        \item \label{Item:SmallCancellationOfS-LengthGeneralLowerBound}$l(u_1\dotsm u_k)\geq 2ks-(k-1)$.
        \item\label{Item:SmallCancelllationOfS-LengthLowerBound} $S$ is a free generating set of $C$ and $l(g)\geq 2s$ for all $g\in C\backslash\{1\}$. 
        \item \label{Item:SmallCancelllationOfS-NonSymmetric}Suppose the alternating product $A_1\dotsm A_{i+2}$ is in $L_{i+2}(C)$ 
        with $i\geq1$. Then $A_{i+2}\neq A_{i}^{-1}$. 
  \end{enumerate}   
\end{lemma}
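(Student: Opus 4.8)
The plan is to exploit condition \eqref{Item:SVeryDistinct} of Definition \ref{D:DefineSubgroupC}, which forces the absolute values of the exponents $k_{ij}^{(t)}$ (together with the differences $k_{i1}^{(1)} - k_{i'1}^{(1)}$ and $k_{is}^{(2)} - k_{i's}^{(2)}$) to be pairwise distinct. The only place any cancellation can occur in a product $u_1 \cdots u_k$ with $u_p \in S \cup S^{-1}$ and $u_p u_{p+1} \neq 1$ is at a junction $u_p u_{p+1}$, and there it involves the last component of $u_p$ meeting the first component of $u_{p+1}$. Each $u_p$ has the form $\alpha_i^{\pm 1}$; writing it in reduced form, its first and last components are powers of $a$ or $b$ with exponent equal to some $k_{i1}^{(1)}$, $k_{is}^{(2)}$ (for $\alpha_i$) or $-k_{is}^{(2)}$, $-k_{i1}^{(1)}$ (for $\alpha_i^{-1}$). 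First I would establish the key structural claim: at any junction $u_p u_{p+1}$, at most one component is killed on each side, i.e. $l(u_p u_{p+1}) \geq l(u_p) + l(u_{p+1}) - 1 = 4s - 1$, and moreover the ``surviving'' component on the left of the junction remains nontrivial. This is because if the last component of $u_p$ (a power of $a$ or $b$ with some exponent $e$) cancels completely with the first component of $u_{p+1}$ (exponent $e'$), then $e + e' = 0$; if furthermore the next pair of components also fully cancelled, one would be forced into an equality of the shape $|k_{\cdot\cdot}^{(t)}| = |k_{\cdot\cdot}^{(t)}|$ or $|k_{\cdot\cdot}^{(t)} \pm k_{\cdot\cdot}^{(t)}|$-type coincidence between distinct absolute values, contradicting \eqref{Item:SVeryDistinct} — unless $u_p u_{p+1} = 1$, which is excluded. (One must treat carefully the case where $u_p$ and $u_{p+1}$ involve the \emph{same} $\alpha_i$; here the distinctness of the $|k_{ij}^{(t)}|$ for varying $j$, together with $s \geq 10$, rules out deep cancellation.) This proves (1).

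Granting (1), parts (2) and (3) follow quickly. For (3): since each $l(u_p) = 2s$ and almost-reducedness loses at most one unit of length per junction, $l(u_1 \cdots u_k) \geq 2ks - (k-1)$ by an immediate induction, using that the product of the two merged components at a junction is itself nontrivial so no further collapse propagates. For (2): because only the extreme component of $u_1$ can be touched by cancellation from the right (and it cannot be fully destroyed, again by the distinctness in \eqref{Item:SVeryDistinct} applied to $u_1 u_2$), the first $2s-1$ components of $u_1 \cdots u_k$ agree with $L_{2s-1}(u_1)$; symmetrically for $R_{2s-1}$. Part (4) is then a consequence of (3) with the observation that a nontrivial reduced word $u_1 \cdots u_k$ in $S \cup S^{-1}$ yields $l(u_1\cdots u_k) \geq 2ks - (k-1) \geq 2s + (2s-1)(k-1) > 0$, so $u_1 \cdots u_k \neq 1$ in $A$; hence $S$ freely generates $C$, and every nontrivial $g \in C$ has $l(g) \geq 2s$.

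For (5): suppose $A_1 \cdots A_{i+2} \in L_{i+2}(C)$, so it is a left factor of some reduced $c = u_1 \cdots u_k \in C$. By (2), the components $A_1, \ldots, A_{2s-1}$ coincide with those of $u_1 \in S \cup S^{-1}$, and more generally each $A_j$ for $j \leq i+2$ is a component of the almost-reduced product $u_1 \cdots u_k$. The equality $A_{i+2} = A_i^{-1}$ would assert that two components of this product — separated by exactly one intervening component $A_{i+1}$ — are a power of the same letter ($a$ or $b$) with opposite exponents; but two components that are powers of the same letter can occur only across a junction where a middle component of some $u_p$ was killed, and tracing which $k_{\cdot\cdot}^{(t)}$ values appear forces a coincidence of absolute values among the $k$'s contradicting \eqref{Item:SVeryDistinct} (or else forces $u_p u_{p+1} = 1$). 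I expect the main obstacle to be the careful bookkeeping in (1) and (5): one must enumerate the possible patterns of which components meet at a junction — left/right ends of $\alpha_i^{\pm1}$ for the same or different index $i$ — and check in each case that a second layer of cancellation would violate \eqref{Item:SVeryDistinct}; the role of $s \geq 10$ is precisely to guarantee enough ``interior'' components of each generator survive untouched so that these local arguments suffice. Everything else is routine induction on $k$.
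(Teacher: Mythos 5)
Your proposal follows essentially the same route as the paper for parts (1)--(4): establish that at a junction $u_p u_{p+1}$ the cancellation is at most a merge of the two adjacent end components, with a nontrivial result (equivalently $K(u_p,u_{p+1})=0$ and $l(u_p u_{p+1})\geq 4s-1$), and then get (2)--(4) by induction. The paper's phrasing is slightly sharper than yours: it shows directly $K(u,v)=0$ by checking the four cases ($u,v\in S$; $u,v\in S^{-1}$; $u\in S, v\in S^{-1}$; $u\in S^{-1}, v\in S$), whereas your ``at most one component is killed on each side'' literally allows a full cancellation of one pair of components, which would already destroy almost-reducedness. You immediately correct for this by observing that a full cancellation would force $|k_{is}^{(2)}-k_{js}^{(2)}|=0$ or the like, contradicting condition (A) unless $u_p u_{p+1}=1$. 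So the substance of (1)--(4) matches the paper.

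Part (5) contains a genuine error. You write that ``two components that are powers of the same letter can occur only across a junction where a middle component of some $u_p$ was killed.'' This is false: in the free product $\langle a\rangle\ast\langle b\rangle$ the components of any alternating product alternate between $a$-powers and $b$-powers, so $A_i$ and $A_{i+2}$ are \emph{always} powers of the same letter regardless of junctions, and furthermore no ``middle component'' of any $u_p$ is ever killed (only the two end components at a junction can merge). The correct argument, which the paper gives, is different: the length-three factor $h=A_i A_{i+1} A_{i+2}$ of the almost-reduced product $u_1\cdots u_k$ sits inside some $u_p u_{p+1}$ (since $l(u_p)=2s\geq 4$, a width-three window cannot straddle two junctions), and then condition (A) --- covering both the $|k_{ij}^{(t)}|$ and the differences $|k_{is}^{(2)}-k_{i's}^{(2)}|$, $|k_{i1}^{(1)}-k_{i'1}^{(1)}|$ that arise from the merged component --- shows that no two components of $u_p u_{p+1}$ are inverses of each other, ruling out $A_{i+2}=A_i^{-1}$. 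Your closing remark about $s\geq 10$ providing enough interior components gestures toward this localization, but the argument as actually stated rests on the false claim above rather than on it.
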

\begin{proof}
First, we show that if $u,v\in S\cup S^{-1}$ with $uv\neq1$, then $K(u,v)=0$ and $l(uv)\geq4s-1$. 
Recall that $\alpha_i=a^{k_{i1}^{(1)}}b^{k_{i1}^{(2)}}\dotsm a^{k_{is}^{(1)}}b^{k_{is}^{(2)}}$, so if $u$ and $v$ are both from $S$, then the product $uv$ is reduced, because $R_1(u)$ is a power of $b$ and $L_1(v)$ is a power of $a$. Similarly, if $u$ and $v$ are both from $S^{-1}$, then $uv$ is reduced. If $u\in S$ and $v\in S^{-1}$, we can assume that $u=\alpha_i$ and $v=\alpha_j^{-1}$ with $i\neq j$. Then we have 
\begin{align*}
    uv&=a^{k_{i1}^{(1)}}b^{k_{i1}^{(2)}}\dotsm a^{k_{is}^{(1)}}b^{k_{is}^{(2)}}\cdot b^{-k_{js}^{(2)}}a^{-k_{js}^{(1)}}\dotsm b^{-k_{j1}^{(2)}} a^{-k_{j1}^{(1)}}\\
    &=a^{k_{i1}^{(1)}}b^{k_{i1}^{(2)}}\dotsm a^{k_{is}^{(1)}}b^{k_{is}^{(2)}-k_{js}^{(2)}}a^{-k_{js}^{(1)}}\dotsm b^{-k_{j1}^{(2)}} a^{-k_{j1}^{(1)}},
\end{align*} where the last expression is an alternating product of $4s-1$ components, as $k_{is}^{(2)}-k_{js}^{(2)}\neq0$ by our choice of exponents. The case that $u\in S^{-1}$ and $v\in S$ can be treated similarly, so that in all cases $K(u,v)=0$ and $l(uv)\geq4s-1$. 

With this result in hand, (1) follows immediately, (2) and (3) follow by induction on $k$ and (4) follows from (3). 

To prove (5), we assume that $A_{i+2}=A_{i}^{-1}$. Then $h:=B_{[i,i+2]}(c)=A_{i}A_{i+1}A_{i}^{-1}$ for a $c\in C$. Now $c$ can be written as an almost reduced product $v_1\dotsm v_k$, with each $v_j\in S\cup S^{-1}$ and each $v_iv_{i+1}\neq1$. Since $l(v_i)=2s\geq 4$, we know that $h$ is a factor of some $v_iv_{i+1}$. However, by Condition \ref{Item:SVeryDistinct}, there is not a pair of components of $v_{i}v_{i+1}$ such that they are the inverse of each other. This is a contradiction.
\end{proof}
Let $Sc(C)$ be the set of components of elements in $C$; i.e., $$Sc(C)=\{E\mid E \text{ is a component of some } c\in C\} \subset A.$$ 

\begin{definition}\label{D:Precursor} 
Define a map $p: Sc(C)\rightarrow A$ as follows.  Given $E \in Sc(C)$, there are two possibilities: 
\begin{enumerate}
\item There exists a unique $A_1\dotsm A_{2s} \in S \cup S^{-1}$ with $E = A_i$ for some $i$ with $1\leq i\leq 2s.$ Set
$$p(E)=p(A_i)=A_1\dotsm A_{i-1} \text{ for } 1\leq i\leq 2s,  (\text{with }p(A_1)=1).$$

\item There exists a unique pair $B_1\dotsm B_{2s}, A_1\dotsm A_{2s}\in S\cup S^{-1}$ with $E = B_{2s}A_1$.  Set
$$p(E) = p(B_{2s}A_1)=B_1\dotsm B_{2s-1}.$$
\end{enumerate}
We call $p(E)$ for $E\in Sc(C)$ the \textbf{prefix} of $E$ in $C$.
\end{definition}Note that the uniqueness claims in Definition \ref{D:Precursor}  follow from our choice of exponents in the elements of the basis $S$ of $C$. 
It can be verified that 
\begin{align}\label{P(E)&P(E-1)}
    p(E)E(p(E^{-1}))^{-1}\in C, \quad \forall E\in Sc(C).
\end{align}

\begin{lemma}\label{L:Prefix}
Assume that an alternating product $D_1\dotsm D_i$ (resp. $E_i\dotsm E_1$) with $1\leq i$ is in $L_i(C)$ (resp. $R_i(C)$). Then $D_1\dotsm D_{i-1}(p(D_i))^{-1}$ (resp. $p(E_i)E_iE_{i-1}\dotsm E_1$) is in $C$. 
\end{lemma}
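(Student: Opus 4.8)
The plan is to reduce both assertions to a single fact about pairs of consecutive components of elements of $C$, and then telescope. Since $S$ is a free generating set of $C$ by Lemma \ref{L:SmallCancellationOfS}(4), any $c\in C\setminus\{1\}$ can be written as $c=v_1\cdots v_k$ with $v_j\in S\cup S^{-1}$ and $v_jv_{j+1}\neq1$, and by Lemma \ref{L:SmallCancellationOfS}(1),(2) --- together with the fact, shown in its proof, that $K(u,v)=0$ for $u,v\in S\cup S^{-1}$ with $uv\neq1$ --- one checks by induction on $k$ that the reduced form of $c$ is obtained from the concatenation of the blocks $B_1(v_j)\cdots B_{2s}(v_j)$ by merging, at each junction $v_j\mid v_{j+1}$, the components $B_{2s}(v_j)$ and $B_1(v_{j+1})$ into one precisely when they are powers of the same generator; no full cancellations occur. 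As $s\ge10$, distinct junction-merges affect disjoint sets of components, so every component of $c$ is either an \emph{honest} component $B_l(v)$ with $v\in S\cup S^{-1}$ and $1\le l\le2s$, or a \emph{merged} component $B_{2s}(v)B_1(v')$; by Condition \ref{Item:SVeryDistinct} these two families are disjoint and a merged component determines the ordered pair $(v,v')$, so the map $p$ of Definition \ref{D:Precursor} is well defined, as remarked there.

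The key sub-lemma to isolate is: if $X$ and $Y$ are components of some $c\in C$ with $X$ immediately to the left of $Y$, then $p(X)X(p(Y))^{-1}\in C$. Given the normal form above, and because $2\le 2s-1$, the pair $(X,Y)$ falls into one of four shapes: $(B_l(v_j),B_{l+1}(v_j))$ with $1\le l\le2s-1$; $(B_{2s}(v_j),B_1(v_{j+1}))$ with $B_{2s}(v_j),B_1(v_{j+1})$ powers of different generators; $(B_{2s-1}(v_j),\,B_{2s}(v_j)B_1(v_{j+1}))$; or $(B_{2s}(v_j)B_1(v_{j+1}),\,B_2(v_{j+1}))$. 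In each case a direct computation with Definition \ref{D:Precursor} (using part (2) for the merged components) gives $p(X)X(p(Y))^{-1}\in\{1\}\cup S\cup S^{-1}\subseteq C$: in shapes one and three one has $p(X)X=p(Y)$, and in shapes two and four one has $p(X)X(p(Y))^{-1}=v_j$.

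With the sub-lemma established, the two claims follow by telescoping. For the left factor: if $D_1\cdots D_i=L_i(c)$, then $D_1=B_1(v_1)$ is honest with $p(D_1)=1$, and
\[
D_1\cdots D_{i-1}(p(D_i))^{-1}=p(D_1)^{-1}\prod_{l=1}^{i-1}\bigl(p(D_l)D_l(p(D_{l+1}))^{-1}\bigr),
\]
whose right-hand side telescopes to the left-hand side and all of whose factors lie in $C$ --- the first because $p(D_1)=1$, the rest by the sub-lemma applied to the consecutive components $D_l,D_{l+1}$ of $c$. For the right factor: if $E_i\cdots E_1=R_i(c)$, then $E_1=B_{2s}(v_k)$ is honest with $p(E_1)E_1=L_{2s}(v_k)=v_k\in C$, and
\[
p(E_i)E_iE_{i-1}\cdots E_1=\Bigl(\prod_{l=i}^{2}\bigl(p(E_l)E_l(p(E_{l-1}))^{-1}\bigr)\Bigr)\,p(E_1)E_1,
\]
again a telescoping product all of whose factors lie in $C$ (by the sub-lemma and the previous sentence).

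The main obstacle is the first paragraph: establishing the block-and-merge normal form for elements of $C$, and verifying via Condition \ref{Item:SVeryDistinct} both that no merged component coincides with an honest one (or with another merged component coming from a different pair) and that consequently $p$ is unambiguous --- this is exactly what makes the telescoping well defined. Once this bookkeeping is in place, the four-case computation in the sub-lemma and the telescoping itself are routine.
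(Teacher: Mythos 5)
Your proof is correct, but it takes a genuinely different route from the paper's. The paper locates the block $u\in S\cup S^{-1}$ containing the component $D_i$, writes $c$ as an almost reduced product $g_1 u g_2$ (or $g_1 v u g_2$ when $D_i$ is a junction-merge), and then observes directly that $D_1\cdots D_{i-1}(p(D_i))^{-1}=g_1\in C$, handling the boundary cases $k=1$ and $k=2s$ by requiring the adjacent product to be reduced. You instead isolate a local sub-lemma --- for any two consecutive components $X,Y$ of $c\in C$ one has $p(X)X(p(Y))^{-1}\in\{1\}\cup S\cup S^{-1}\subseteq C$ --- and then telescope, using $p(D_1)=1$ (resp. $p(E_1)E_1=v_k$) to anchor the end. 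Both proofs rest on the same block-and-merge picture of $C$-words that you spell out in your first paragraph (which the paper uses more implicitly via the almost-reduced-product decomposition and Lemma~\ref{L:SmallCancellationOfS}), but yours is a local-to-global argument while the paper's is a single global computation. The paper's version is shorter; yours has the small advantage of treating the left- and right-factor statements by the same mechanism rather than by a declared symmetry, and of isolating the consecutive-component fact as a potentially reusable ingredient.
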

\begin{proof} We only prove the case involving $D_1\dotsm D_i$, as the other case can be proved similarly.

Let $D_1\dotsm D_i=L_i(c)$ for some $c\in C$. There are two possible cases to consider, 
 where we write the component $D_i$ in bold and blue (so $D_i$ is $A_k$ or $B_{2s}A_1$, respectively):
$$c=g_1ug_2=g_1A_1\dotsm\boldsymbol{\textcolor{blue}{A_{k}}}\dotsm  A_{2s}g_2,\text{ or } c=g_1vug_2=g_1B_1\dotsm \dotsm  B_{2s-1}(\boldsymbol{\textcolor{blue}{B_{2s}A_1}})A_2\dotsm A_{2s} g_2,$$
 where $g_1,g_2\in C$, both $g_1ug_2$ and $g_1vug_2$ are almost reduced products, $u=A_1\dotsm A_{2s}$ and $v=B_1\dotsm B_{2s}$ are alternating products with $u, v \in S\cup S^{-1}$, $l(B_{2s}A_1)=1$ and the product $g_1A_1$ (resp. $A_{2s}g_2$) is reduced if $k=1$ (resp. $k=2s$).
 Now we have for the first case $$D_1\dotsm D_{i-1}(p(D_i))^{-1}=g_1A_1\dotsm A_{k-1}(p(A_{k}))^{-1}=g_1A_1\dotsm A_{k-1}\cdot (A_1\dotsm A_{k-1})^{-1}=g_1\in C,$$ as desired.
In the second case, we similarly find $D_1\dotsm D_{i-1}(p(D_i))^{-1}=g_1\in C$.
\end{proof}

\begin{definition}\label{D:unaltered-ProductOfTwo}
    Let $g=A_p\dotsm A_1$ and $h=B_1\dotsm B_q$ be elements of $A$ in reduced form, and $k=K(g,h)$. 
    \begin{enumerate}
        \item We say that the components $A_i$ for $1\leq i\leq k$ of $g$ and the components $B_j$ for $1\leq j\leq k$ of $h$ are \textbf{canceled} in the product $gh$. For each $i$ with $1\leq i\leq k$, we say the components $A_i$ and $B_i$ cancel (in the product $gh$).
        \item We say a component $A_i$ (of $g$) is \textbf{unaltered} (in the product $gh$) if (i) $p\geq i\geq k+2$ or (ii) $p\geq i=k+1=q+1$ or (iii) $p,q\geq i=k+1$ and $l(A_{k+1}B_{k+1})=2$ \footnote{Since we are considering the free group of rank two, this can only happen when $k=0$.}.  
        Similarly, a component $B_j$ (of $h$) is \textbf{unaltered} (in the product $gh$) if (i) $q\geq j\geq k+2$ or (ii) $q\geq j=k+1=p+1$ or (iii) $p,q\geq j=k+1$ and $l(A_{k+1}B_{k+1})=2$. 
        \item We say that the two components $A_{k+1}$ and $B_{k+1}$ \textbf{merge} into a component in the product $gh$ if $p,q\geq k+1$ and $l(A_{k+1}B_{k+1})=1$.\footnote{Note that if $p,q\geq k+1\geq2$, then $l(A_{k+1}B_{k+1})=1$, because we are working in the free group of rank 2; this doesn't follow if the rank is greater than 2.} In this case, we also say that the component $A_{k+1}$ (of $g$) is \textbf{merged with} the component $B_{k+1}$ (of $h$) to form the component $A_{k+1}B_{k+1}$ (of $gh$) and the the component $A_{k+1}B_{k+1}$ (of $gh$) is the \textbf{merge} of $A_{k+1}$ and $B_{k+1}$. 
    \item We say that a factor $B_{[i,j]}(g)$ is unaltered (resp. canceled) if all $B_t(g)$ for $i\leq t\leq j$ are unaltered (resp. canceled). Similarly, a factor $B_{[i,j]}(h)$ is unaltered (resp. canceled) if all $B_t(h)$ for $i\leq t\leq j$ are unaltered (resp. canceled). A factor $g'$ of $g$ and a factor $h'$ of $h$ is said to cancel if $l(g')=l(h')$ and for each $i$ with $1\leq i\leq l(g')$, $RB_i(g')$ and $B_i(h')$ cancel in $gh$. 
     \item A factor of $g$ or $h$ is said to be \textbf{altered} in the product $gh$ if it's not unaltered. 
     \end{enumerate}
\end{definition} Note that sometimes we use the notation $P(g,h)$ for the product $gh$. For example, $P(g_1g_2,g_3g_4)$ refers to the product $gh$, where $g=g_1g_2$ and $h=g_3g_4$. We may also just write it as the product $(g_1g_2)(g_3g_4)$ when there is no risk of confusion.  
\begin{remark}\label{R:unaltered-RightFactor-ReducedProduct}
    We remark that a factor $f$ of $h$ is unaltered in $P(g,h)$ if and only if we can write $h=h'fh''$ as a reduced product such that the product $P(gh', fh'')$ is reduced. Moreover, $h$ is a right factor of $gh$ if and only if $P(g,h)$ is reduced. (The ``if" is clear. Also, if $h$ is a right factor or $gh$, then we can write $gh$ as a reduced product $g'h$, but then $g=g'$, so $P(g,h)$ is reduced.)
\end{remark}

Note an altered component is either one that is canceled or one that is merged with another component to form a component of the product, while an unaltered component ``remains" a component of the product.
  
Let $\gamma\in A$ and $1\leq i\leq l(\gamma)$. If $L_i(\gamma)\in L_i(C)$ (resp. $L_i(\gamma)\not\in L_i(C)$), we say that $\gamma$ is left \textbf{$i$-compatible} (resp. \textbf{left $i$-incompatible}) with $C$. We say that $\gamma$ is \textbf{left compatible} (resp. \textbf{left incompatible}) with $C$ if it is left $l(\gamma)$-compatible (resp. left $l(\gamma)$-incompatible) with $C$. We similarly define right $i$-compatibility, right $i$-incompatibility and right compatibility.  For $\alpha\in A$, let $\Lambda(\alpha)$ (resp. $P(\alpha)$) be the largest integer $i$ (with $i\leq l(\alpha)$) such that $L_i(\alpha)\in L_i(C)$ (resp. $R_i(\alpha)\in R_i(C)$).  For example, we have $\Lambda(c)=l(c)=P(c)$ for $c\in C$. We also have 
\begin{align}\label{CompabilityAndCancellationNumber}
    K(g,\beta)\leq \Lambda(\beta) \text{ and } K(\beta, g)\leq P(\beta)\qquad  \forall \beta\in A, g\in C. 
\end{align} Note that $\Lambda(\alpha^{-1})=P(\alpha)$, because $(L_i(C))^{-1}=R_i(C)$. Note that if $\gamma$ is left or right $i$-incompatible with $C$, then $\gamma\not\in C$, an observation used in  Subsection \ref{Subsection:multi-malnormality}.

An element $\alpha\in A$ is called \textbf{left $C$-simplified} if either (1) $\Lambda(\alpha)< s$ or (2) $\alpha\in L_s(C)$. (Note that $\alpha\in L_s(C)$ implies that $\Lambda(\alpha)=s$.) We define right $C$-simplified similarly. We say that $\alpha$ is \textbf{$C$-simplified} if it's both left and right $C$-simplified.  
\begin{lemma}\label{L:C-simplified}
    Let $\alpha\in A\backslash C$.  
    \begin{enumerate}[(1)]
        \item There exists $c\in C$ and a left (resp. right) $C$-simplified $\alpha'$ such that $\alpha=c\alpha'$ (resp. $\alpha=\alpha'c$), $l(\alpha')\leq l(\alpha)$, and $l(\alpha')<l(\alpha)$ if $\alpha$ is not left (resp. right) $C$-simplified.
        \item There exist $c_1,c_2\in C$ and $\alpha'$ that is $C$-simplified such that $\alpha=c_1\alpha'c_2$ with $l(\alpha')\leq l(\alpha)$. 
    \end{enumerate}
\end{lemma}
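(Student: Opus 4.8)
The plan is to reduce the whole statement to a single ``one generator'' cancellation step and then iterate it. Throughout I write $l(\cdot)$ for the free‑product length on $A=\langle a\rangle\ast\langle b\rangle$, write reduced forms as $g=x_1\cdots x_{l(g)}$, and use the free‑product length formula $l(gh)=l(g)+l(h)-2K(g,h)-\delta$, where $\delta\in\{0,1\}$ records whether the first surviving components of $g$ and of $h$ (after the $K(g,h)$ cancellations) lie in the same cyclic factor and therefore merge.

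First I would pin down the shape of $L_s(C)$ and $R_s(C)$. By Lemma \ref{L:SmallCancellationOfS}(2), any $c\in C\setminus\{1\}$ written as a reduced word $u_1\cdots u_k$ in $S\cup S^{-1}$ satisfies $L_{2s-1}(c)=L_{2s-1}(u_1)$, and since $s\le 2s-1$ this gives $L_s(c)=L_s(u_1)$. Hence $L_s(C)=\{\,L_s(\alpha_i^{\pm1}):1\le i\le m\,\}$, and dually $R_s(C)=\{\,R_s(\alpha_i^{\pm1}):1\le i\le m\,\}$; in particular every element of $L_s(C)$ has length exactly $s$. Consequently, if $\Lambda(\alpha)\ge s$ then (taking the left $s$-factor of a left $\Lambda(\alpha)$-factor of an element of $C$) $L_s(\alpha)\in L_s(C)$, so $L_s(\alpha)=L_s(\alpha_i^{\eta})$ for some $i$ and some $\eta\in\{1,-1\}$, and dually when $P(\alpha)\ge s$.

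Next comes the key reduction: if $\alpha\in A\setminus C$ has $l(\alpha)>s$ and $\Lambda(\alpha)\ge s$, then $l(c^{-1}\alpha)<l(\alpha)$ for the length-$2s$ element $c:=\alpha_i^{\eta}\in C$ with $L_s(c)=L_s(\alpha)$ found above (and dually, if $P(\alpha)\ge s$ and $l(\alpha)>s$ there is such a $c$ with $l(\alpha c^{-1})<l(\alpha)$). To see this, write $\alpha=D_1\cdots D_n$ and $c=E_1\cdots E_{2s}$ in reduced form, so $D_t=E_t$ for $t\le s$, and let $j\in\{s,\dots,2s\}$ be the length of the longest common initial segment of $\alpha$ and $c$ viewed as sequences of components. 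If $j=\min(n,2s)$ then one of $\alpha,c$ is an initial factor of the other, so $c^{-1}\alpha$ is (the inverse of) the leftover factor, of length $|n-2s|$; since $\alpha\notin C$ forces $n\ne 2s$ and $n>s$, this is $<n$. If $j<\min(n,2s)$, then $D_{j+1}$ and $E_{j+1}$ both exist and, because $D_j=E_j$ puts them both in the cyclic factor opposite to that of $D_j$, they are distinct nonzero powers of the same generator; hence in $c^{-1}\alpha=E_{2s}^{-1}\cdots E_{j+1}^{-1}D_{j+1}\cdots D_n$ the components $E_{j+1}^{-1}$ and $D_{j+1}$ merge, so $l(c^{-1}\alpha)=2s+n-2j-1\le n-1<n$ since $j\ge s$.

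Finally I would assemble the two assertions. For (1), strong induction on $l(\alpha)$: if $\alpha$ is left $C$-simplified take $c=1$, $\alpha'=\alpha$; otherwise $\Lambda(\alpha)\ge s$ and $\alpha\notin L_s(C)$, which forces $l(\alpha)\ne s$ (elements of $L_s(C)$ have length $s$) and $l(\alpha)\not<s$ (else $\Lambda(\alpha)\le l(\alpha)<s$), so $l(\alpha)>s$ and the reduction step gives $c_0\in C$ with $\alpha_1:=c_0^{-1}\alpha\in A\setminus C$ shorter; apply the induction hypothesis to get $\alpha_1=c'\alpha'$ and output $c=c_0c'$, $\alpha'$ left $C$-simplified with $l(\alpha')\le l(\alpha_1)<l(\alpha)$. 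The right-hand version is symmetric. For (2), iterate: while the current $\beta\in A\setminus C$ is not $C$-simplified it fails to be left- or right-$C$-simplified, so the (left or right) reduction step produces $c\in C$ with $l(c^{-1}\beta)<l(\beta)$ or $l(\beta c^{-1})<l(\beta)$; replace $\beta$ and record the factor $c$ on the relevant side. Each step strictly decreases the nonnegative integer $l(\beta)$, so the process halts at a $C$-simplified $\alpha'\in A\setminus C$, and the recorded factors multiply to give $c_1,c_2\in C$ with $\alpha=c_1\alpha'c_2$ and $l(\alpha')\le l(\alpha)$. The only genuinely delicate point is the length bookkeeping in the reduction step — in particular verifying that $E_{j+1}^{-1}$ and $D_{j+1}$ really do merge, which supplies the crucial $-1$ that makes the length drop even when $j=s$; everything else is routine given Lemma \ref{L:SmallCancellationOfS} and the structure of reduced words in a free product of two cyclic groups.
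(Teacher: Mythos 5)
Your proof is correct and follows essentially the same route as the paper: identify a single letter $u=\alpha_i^{\eta}\in S\cup S^{-1}$ with $L_s(u)=L_s(\alpha)$ via Lemma \ref{L:SmallCancellationOfS}(2), prepend $u^{-1}$, and show the length strictly drops — with the crucial $-1$ coming either from cancellation past depth $s$ or from the merge of the two surviving components in the same cyclic factor when cancellation stops exactly at depth $s$. Your case split ($j=\min(n,2s)$ versus $j<\min(n,2s)$) is a slightly different grouping from the paper's ($k>s$ versus $k=s$), and your (2) is phrased as a terminating while-loop rather than the paper's explicit second induction, but these are only presentational; the underlying reduction step and its verification are the same.
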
 
\begin{proof}
    We make induction on $l(\alpha)$. Both statements (1) and (2) are true if $l(\alpha)<s$, as we can take $c=c_1=c_2=1$. 
    
    First, we show (1), only proving the statement that is not in parentheses since the other can be proved similarly. It is clearly true if $\alpha$ is left $C$-simplified. Now, assume that $\alpha$ is not left $C$-simplified. Then $\Lambda(\alpha)\geq s$ and $\alpha\not\in L_s(C)$, and in particular $l(\alpha)>s$. There is a $c_1\in C$ such that $L_s(\alpha)=L_s(c_1)$. By \ref{Item:SmallCancellationOfS-TheInitialAndEndOf} of Lemma \ref{L:SmallCancellationOfS}, there is a $u\in S\cup S^{-1}$ such that $L_s(c_1)=L_s(u)$, and thus $L_s(\alpha)=L_s(u)$.  Letting $\alpha_1=u^{-1}\alpha$, then we have $\alpha=u\alpha_1$ and we show in the next paragraph that $l(\alpha_1)<l(\alpha)$. Thus, by induction we can write $\alpha_1=c'\alpha'$ with $c'\in C$ and $\alpha'$ left $C$-simplified and $l(\alpha')\leq l(\alpha_1)$, leading to the desired product $\alpha=(uc')\alpha'$ with $uc'\in C$ and $\alpha'$ left $C$-simplified and $l(\alpha')<l(\alpha)$. 
    
    Let $k=K(u^{-1},\alpha)$. Then $k\geq s$ as $L_s(\alpha)=L_s(u)$. If $k>s$, then we clearly have $l(\alpha_1)<l(\alpha)$ as $l(u)=2s$. 
    If $k=s$, then there is a merge of components after the cancellations, as $l(u^{-1}), l(\alpha)>s$. Thus, we have $l(\alpha_1)=l(u^{-1}\alpha)<l(\alpha)$ again. 

    Now we show (2).  By (1) there is a $c_1'\in C$, and an $\alpha_1$ which is left $C$-simplified, such that $\alpha=c_1'\alpha_1$ and $l(\alpha_1)\leq l(\alpha)$. If $\alpha_1$ is right $C$-simplified, then we are done. If not, then there is a $c_2'\in C$ and an $\alpha_2$ that is right $C$-simplified such that $\alpha_1=\alpha_2c_2'$ and $l(\alpha_2)<l(\alpha_1)$. Now we have $\alpha=c_1'\alpha_2c_2'$ and $l(\alpha_2)<l(\alpha)$. Using induction, there are $c_1'',c_2''\in C$, and $\alpha'$ that is $C$-simplified, such that $\alpha_2=c_1''\alpha'c_2''$ with $l(\alpha')\leq l(\alpha_2)$. Finally, we have $\alpha=(c_1'c_1'')\alpha'(c_2''c_2')$ where $c_1'c_1''$ and $c_2''c_2'$ are in $C$, $\alpha'$ $C$-simplified and $l(\alpha')\leq l(\alpha)$.
\end{proof}

The next result follows from Lemma \ref{L:Prefix}.
\begin{corollary}\label{C:Incompatibility&CancellationNumber}
    Let $\beta\in A\backslash C$, $h$ be a left factor of an element in $C$, $k=K(\beta,h)$ and $n=l(\beta)$. Then $\beta h$ is not left $(n-k\!+1)$-compatible with $C$. If moreover $k\leq l(h)-2$, then $\beta h$ is $(n-k+1)$-incompatible with $C$. 
\end{corollary}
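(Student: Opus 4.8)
The plan is to unwind the definition of the cancellation number $k=K(\beta,h)$ so as to see exactly which component of the product $\beta h$ sits in position $n-k+1$, and then to use Lemma \ref{L:Prefix} to derive a contradiction from the assumption that $\beta h$ is left $(n-k+1)$-compatible with $C$. Write $\beta$ and $h$ in reduced form, $\beta=A_n\dotsm A_1$ and $h=B_1\dotsm B_q$ where $q=l(h)$, so that by the definition of $K$ the components $A_1,\dotsc,A_k$ of $\beta$ cancel against $B_1,\dotsc,B_k$ of $h$, and (since $k\leq l(h)$ automatically, and in the interesting case $k<n$) the product $\beta h$ has left factor $A_n\dotsm A_{k+2}$ followed by either the merge $A_{k+1}B_{k+1}$ or the two components $A_{k+1},B_{k+1}$ in turn. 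In either case the $(n-k)$-th component of $\beta h$ from the left is $A_{k+1}$ (or its merge with $B_{k+1}$), and the $(n-k+1)$-th component is governed by $B_{k+1}$; more precisely $L_{n-k+1}(\beta h)=A_n\dotsm A_{k+2}(A_{k+1}B_{k+1})$ or $A_n\dotsm A_{k+1}B_{k+1}$, so the first $n-k$ components of this left factor are determined by $\beta$ while the last one involves $B_{k+1}$, a component of $h$.

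Now suppose for contradiction that $\beta h$ is left $(n-k+1)$-compatible with $C$, i.e. $D_1\dotsm D_{n-k+1}:=L_{n-k+1}(\beta h)\in L_{n-k+1}(C)$. Apply Lemma \ref{L:Prefix}: then $D_1\dotsm D_{n-k}(p(D_{n-k+1}))^{-1}\in C$. The point is that $D_1\dotsm D_{n-k}$ is a left factor of $\beta$ (namely $A_n\dotsm A_{k+1}$), being unaffected by the cancellation against $h$, so this element of $C$ differs from a left factor of $\beta$ only by the prefix $p(D_{n-k+1})$. Since $B_{k+1}$ is a component of $h$, hence lies in $Sc(C)$, and since its prefix $p(B_{k+1})$ is by Definition \ref{D:Precursor} a left factor of an element of $S\cup S^{-1}$ of length at most $2s-1$, one deduces that a suitable left factor of $\beta$ of length $\geq n-k-(2s-1)$ lies in $L_{\bullet}(C)$ — but more usefully, combining $D_1\dotsm D_{n-k}(p(D_{n-k+1}))^{-1}\in C$ with the relation \eqref{P(E)&P(E-1)} for $E=B_{k+1}$ shows that $A_n\dotsm A_{k+1}B_{k+1}$, up to an element of $C$ on the right, equals a genuine left factor of $\beta$ times a component; chasing this through forces $L_{n-k+1}(\beta)$ itself, or a closely related left factor of $\beta$, to be left-compatible with $C$, contradicting the maximality of $\Lambda(\beta)$ unless $\Lambda(\beta)\geq n-k+1$. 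But $\Lambda(\beta)<n=l(\beta)$ since $\beta\notin C$, and one checks $n-k+1>\Lambda(\beta)$ using $K(\beta,h)\leq\Lambda(h^{\,})$... — here the cleaner route is: $L_{n-k}(\beta h)=A_n\dotsm A_{k+1}=L_{n-k}(\beta)$, so left $(n-k+1)$-compatibility of $\beta h$ would, via Lemma \ref{L:Prefix} and \eqref{P(E)&P(E-1)}, yield $L_{n-k+1}(\beta)\cdot(\text{component of an element of }S^{\pm1})^{-1}\cdot(\dots)\in C$, and then Lemma \ref{L:SmallCancellationOfS}\ref{Item:SmallCancelllationOfS-LengthLowerBound} together with the rigidity of the generators pins down that $L_{n-k+1}(\beta)\in L_{n-k+1}(C)$, i.e. $\Lambda(\beta)\geq n-k+1>\Lambda(\beta)$, a contradiction. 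This proves the first assertion.

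For the second assertion, assume in addition $k\leq l(h)-2$, so that $B_{k+1}$ and $B_{k+2}$ are both unaltered components of $h$ in $\beta h$ and appear as the components in positions $n-k+1$ and $n-k+2$ of $\beta h$. To show $\beta h$ is $(n-k+1)$-incompatible we must rule out $L_{n-k+1}(\beta h)\in L_{n-k+1}(C)$, which is exactly the first assertion; the hypothesis $k\leq l(h)-2$ is what guarantees the relevant components of $\beta h$ in positions up to $n-k+2$ are honestly $B_{k+1},B_{k+2},\dots$ from $h$ and an honest left factor $A_n\dotsm A_{k+1}$ of $\beta$, with no merge muddying position $n-k+1$ (a merge only occurs at position $n-k$, between $A_{k+1}$ and $B_{k+1}$, when $l(A_{k+1}B_{k+1})=1$; when $l(A_{k+1}B_{k+1})=2$ there is no merge and $k=0$). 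In all sub-cases the structural description of $L_{n-k+1}(\beta h)$ is uniform enough that the argument of the first paragraph applies verbatim. I expect the main obstacle to be the bookkeeping of the three sub-cases in Definition \ref{D:unaltered-ProductOfTwo} (merge versus no-merge versus the rank-two degenerate case $l(A_{k+1}B_{k+1})=2$ forcing $k=0$), together with the precise invocation of Lemma \ref{L:Prefix} and \eqref{P(E)&P(E-1)} to convert "a left factor of $\beta h$ lies in $L_\bullet(C)$'' into "a left factor of $\beta$ lies in $L_\bullet(C)$'', contradicting the definition of $\Lambda(\beta)$.
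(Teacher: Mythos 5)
Your plan identifies the right tool (Lemma \ref{L:Prefix}) and the right case split (merge vs.\ no-merge, with the degenerate $k=0$ sub-case), but the chain you propose to close the argument does not reach the contradiction, and the ``cleaner route'' you offer is actually wrong.  The paper's proof applies Lemma \ref{L:Prefix} \emph{twice}: once to the hypothesis $L_{n-k+1}(\beta h)\in L_{n-k+1}(C)$, yielding $D_1\dotsm D_{n-k}(p(D_{n-k+1}))^{-1}\in C$, and a second time to $h$ itself.  Since $h=A_1\dotsm A_l$ is a left factor of an element of $C$, Lemma \ref{L:Prefix} gives $c_i:=A_1\dotsm A_i(p(A_{i+1}))^{-1}\in C$ for every $i$, so $(p(A_{k+1}))^{-1}=(A_1\dotsm A_k)^{-1}c_k$ (and analogously with $k+1$ in the merge case).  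Substituting this and using that the first $k$ components of $h$ are exactly the inverses of the last $k$ components of $\beta$, the expression collapses to $\beta c_k\in C$, hence $\beta\in C$, directly contradicting $\beta\in A\setminus C$.  This second application of Lemma \ref{L:Prefix} is the load-bearing step, and your sketch replaces it with an appeal to \eqref{P(E)&P(E-1)}, which only records how a single component $E\in Sc(C)$ fits into its own generator, not how consecutive components of $h$ fit together; it does not supply the cancellation that turns the expression back into $\beta$.

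The ``cleaner route'' does not work for two independent reasons.  First, from $L_{n-k}(\beta h)=L_{n-k}(\beta)$ and $L_{n-k+1}(\beta h)\in L_{n-k+1}(C)$ you cannot conclude $L_{n-k+1}(\beta)\in L_{n-k+1}(C)$: the $(n-k+1)$-th component of $\beta h$ comes from $h$ (it is $B_{k+1}$ in your notation), while the $(n-k+1)$-th component of $\beta$ is $A_k$; these differ by the maximality of $k$, so the two left $(n-k+1)$-factors are simply different elements.  Second, even if one could somehow establish $\Lambda(\beta)\geq n-k+1$, this is not a contradiction: nothing forces $\Lambda(\beta)<n-k+1$, and $\Lambda(\beta)=n=l(\beta)$ is perfectly compatible with $\beta\notin C$ ($\beta$ could be a proper left factor of an element of $C$).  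Your parenthetical ``one checks $n-k+1>\Lambda(\beta)$ using $K(\beta,h)\leq\Lambda(h)$'' trails off and no such inequality is available.  The correct contradiction is $\beta\in C$, not a bound on $\Lambda(\beta)$.  Your treatment of the second assertion (the role of $k\leq l(h)-2$) is essentially right: it ensures $l(\beta h)\geq n-k+1$ so that $L_{n-k+1}(\beta h)$ exists and incompatibility, rather than mere non-compatibility, makes sense.
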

\begin{proof}
    Let $\beta=B_n\dotsm B_k\dotsm B_1$ and $h=A_1\dotsm A_k\dotsm A_l$ be alternating products. Then we have that $B_k\dotsm B_1A_1\dotsm A_k=1$ and $\beta h$ is one of the following alternating products:
    \begin{align*}
        B_nB_{n-1}\dotsm B_{k+1}A_{k+1}\dotsm A_{l-1}A_l, \text{ or } B_nB_{n-1}\dotsm B_{k+2}(B_{k+1}A_{k+1})A_{k+2}\dotsm A_l, 
    \end{align*} where the first one can occur when $k=0$ or $k=n$. 
    
Suppose that $\beta h$ is left $(n\!-\!k\!+\!1)$-compatible with $C$ and we will find a contradiction. Using Lemma \ref{L:Prefix}, we have $c_i:=A_1A_2\dotsm A_i(p(A_{i+1}))^{-1}\in C$ for $0\leq i\leq l$. 
(1) If $\beta h$ is the first product, then we use Lemma \ref{L:Prefix} to arrive at the containment:
\begin{align*}
    C\ni B_nB_{n-1}\dotsm B_{k+1}(p(A_{k+1}))^{-1}=B_nB_{n-1}\dotsm B_{k+1}(A_1\dotsm A_k)^{-1}c_k=\beta c_k,
\end{align*}thus $\beta\in C$, a contradiction. (2) If $\beta c$ is the second product, using Lemma \ref{L:Prefix} we have 
\begin{align*}
    C\ni& B_n\dotsm B_{k+2}B_{k+1}A_{k+1}(p(A_{k+2}))^{-1}=B_n\dotsm B_{k+2}B_{k+1}A_{k+1}(A_1\dotsm A_{k+1})^{-1}c_{k+1}=\beta c_{k+1},
\end{align*} again concluding $\beta\in C$, a contradiction. 

If moreover $k\leq l(h)-2$, then $l(\beta h)\geq n-k+1$, so now $\beta h$ is $(n\!-\!k\!+\!1)$-incompatible with $C$.  
 \end{proof}

\begin{lemma}\label{L:betauCancellationnumber}
Let $\beta\in A\backslash C$, $g\in C$ and $h$ be a left factor of an element in $C$. Then we have 
$$K(g,\beta h)\leq K(g,\beta)+1.$$
\end{lemma}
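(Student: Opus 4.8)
The plan is to argue by contradiction: suppose $K(g,\beta h) \geq K(g,\beta) + 2$, and derive a statement that forces $\beta \in C$, contradicting $\beta \in A \setminus C$. Write $k_0 = K(g,\beta)$ and suppose $K(g,\beta h) = k_0 + j$ with $j \geq 2$. The key observation is that the cancellation between $g$ and $\beta h$ must ``use up'' all of $\beta$ except for at most one component before it can start eating into $h$; more precisely, I would like to conclude that $K(g,\beta) = l(\beta)$ or $K(g, \beta)= l(\beta) - 1$, because only then can the product $g\beta$ have a short enough right end to continue cancelling against $h$, and simultaneously $h$ must begin cancelling against $g$ in a way that makes $\beta h$ (left-)compatible with the reduced form of an element of $C$.

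The precise mechanism I would use is the interplay between Lemma \ref{L:End-preservingAndCancellationNumber}, Lemma \ref{L:CancellationNumberAndRightFactor}, and Corollary \ref{C:Incompatibility&CancellationNumber}. Write $\gamma = g\beta$; then $K(g,\beta) = k_0$ means that after cancellation $\gamma$ retains $l(\beta) - k_0$ components coming from $\beta$ on the right (plus possibly one merged component). If $l(\beta) - k_0 \geq 2$, then $\gamma$ has at least two ``honest'' $\beta$-components on its right end, so when we next form $\gamma h$ the cancellation number $K(\gamma h)$ is governed by those $\beta$-components, which are \emph{not} left factors of elements of $C$ in a compatible way — and here is where Corollary \ref{C:Incompatibility&CancellationNumber} (or a direct application of Lemma \ref{L:Prefix} in the spirit of its proof) produces the contradiction: it shows $g \beta h$ would have to be left-incompatible with $C$ past the point where $K(g,\beta h)$ claims cancellation. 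Conversely if $l(\beta) - k_0 \leq 1$, I would track the at-most-one surviving component of $\beta$ (possibly merged with a component of $g$) and show that one further cancellation with $h$ at most uses up that single component, giving $K(g,\beta h) \leq k_0 + 1$, again contradicting $j \geq 2$. Throughout, the identity $K(g, \beta h)$ depending only on $g$ and $\beta h$ (established in Section \ref{S:notation}) lets me choose convenient alternating-product representatives.

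The main obstacle I anticipate is the bookkeeping around merges: when $K(g,\beta) = k_0$, the $(k_0+1)$-st components of $g^{-1}$'s reverse and of $\beta$ may merge into a single component of $g\beta$, and this merged component is a power of a generator that need not be a ``legal'' component of any element of $C$ (this is exactly the kind of phenomenon Lemma \ref{L:SmallCancellationOfS}(5) and the precursor map of Definition \ref{D:Precursor} are designed to control). So the careful step is verifying that the merged component cannot masquerade as part of a left factor of $C$ for more than one additional slot of cancellation. I expect this to follow by applying Lemma \ref{L:Prefix} to the putative compatible left factor $L_{n-k_0-j+1}(\beta h)$ of an element of $C$ and then, exactly as in the proof of Corollary \ref{C:Incompatibility&CancellationNumber}, multiplying by the prefix $p(\cdot)$ of the appropriate component to recover an element of the form $\beta c$ with $c \in C$, forcing $\beta \in C$. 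Once that is in place, the bound $K(g,\beta h) \leq K(g,\beta) + 1$ is immediate.
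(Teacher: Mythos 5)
Your proposal identifies the right toolkit — the precursor map of Definition \ref{D:Precursor}, Lemma \ref{L:Prefix}, and especially Corollary \ref{C:Incompatibility&CancellationNumber}, which is exactly the ingredient the paper uses — and the high-level plan of contradiction via left-compatibility with $C$ is sound. However, the argument as laid out has a structural gap.

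The intermediate claim you want to establish, that $K(g,\beta h) \geq K(g,\beta)+2$ forces $K(g,\beta)\in\{l(\beta),l(\beta)-1\}$, is not what the hypothesis gives and is false in general. If $i=K(g,\beta)$, $k=K(\beta,h)$, $n=l(\beta)$, then what matters is the relation between $i$ and $n-k$ (how much of $\beta$'s left prefix survives in the alternating form of $\beta h$), not the relation between $i$ and $n$. One can have $i$ far from $n$ and still have $K(g,\beta h)>i+1$ provided $k$ is correspondingly large. So the dichotomy you propose to case-split on is the wrong one. A related symptom is the pivot to $\gamma=g\beta$ and the product $\gamma h$: the lemma concerns $K(g,\beta h)$, which measures cancellation between $g$ and the alternating form of $\beta h$, whereas your ``at least two honest $\beta$-components on the right end of $\gamma$'' reasoning is about $K(g\beta,h)$. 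These two cancellation numbers are different, and no bridge between them is supplied.

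The paper's proof needs no case split and stays with $\beta h$ throughout. Assuming $K(g,\beta h)\geq i+2$, one has $L_{i+2}(\beta h)=L_{i+2}(g^{-1})\in L_{i+2}(C)$, so $\beta h$ is left $(i+2)$-compatible with $C$. Corollary \ref{C:Incompatibility&CancellationNumber} says $\beta h$ is not left $(n-k+1)$-compatible with $C$; since compatibility at a level implies compatibility at all lower levels, this forces $n-k+1\geq i+3$, i.e. $k\leq n-i-2$. But then $L_{i+1}(\beta)$ is unaltered in $P(\beta,h)$, giving $L_{i+1}(\beta)=L_{i+1}(\beta h)=L_{i+1}(g^{-1})$, contradicting the maximality defining $i=K(g,\beta)$. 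If you rework your proposal so that the decisive comparison is between $i+2$ and $n-k+1$ (equivalently, between $k$ and $n-i-2$), and you stay with $\beta h$ rather than detouring through $\gamma h$, you will recover the paper's argument.
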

\begin{proof}Let $i=K(g, \beta)$, assume that $i':=K(g, \beta h)\geq i+2$ and we will find a contradiction. Note that $i$ (resp. $i'$) is maximal such that $L_i(\beta)=L_i(g^{-1})$ (resp. $L_{i'}(\beta h)=L_{i'}(g^{-1})$); in particular, $\beta h$ is left $(\!i\!+\!2)$-compatible with $g$. However, letting $n=l(\beta)$ and $k=K(\beta,h)$, $\beta h$ is not left $(n\!-\!k\!+\!1)$-compatible with $C$ by Corollary \ref{C:Incompatibility&CancellationNumber}. Hence $n-k+1\geq i+3$, leading to $k\leq n-i-2$ and thus $L_{i+1}(\beta)$ is unaltered in $P(\beta,h)$. Therefore, $L_{i+1}(\beta)=L_{i+1}(\beta h)=L_{i+1}(g^{-1})$, contradicting the maximality of $i$.
\end{proof}

\begin{lemma}\label{L:CIsIsolatedAndLambdaPower}
Let $\alpha\in A\backslash C$, $g\in C$ and $n\geq2$. Then  $\alpha^n\in A\backslash C$ and 
$K(g,\alpha^n)\leq K(g,\alpha)+1$.
\end{lemma}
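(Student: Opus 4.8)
The plan is to analyze $\alpha^n$ through its cyclically reduced form. Write $\alpha=w^{-1}\bar\alpha w$ as a reduced product with $\bar\alpha$ cyclically reduced and $l(w)$ maximal, and set $q=l(w)$, $r=l(\bar\alpha)$; note $r\geq 1$ since $\alpha\neq 1$. Standard free‑group bookkeeping in $A=\langle a\mid\rangle\ast\langle b\mid\rangle$ shows that the reduced form of $\alpha^n$ is $w^{-1}\bar\alpha^n w$, where the reduced form of $\bar\alpha^n$ is the juxtaposition of $n$ copies of that of $\bar\alpha$: no merging occurs at the junctions when $r\geq2$, because cyclic reducedness forces the first and last components of $\bar\alpha$ to be powers of different letters (so $r$ is even), while $\bar\alpha^n=c^{kn}$ is a single component when $\bar\alpha=c^k$ with $c\in\{a,b\}$. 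Hence $K(\alpha,\alpha)=K(\alpha,\alpha^m)=q$ for all $m\geq1$, $l(\alpha^n)=2q+l(\bar\alpha^n)$, and the reduced forms of $\alpha$ and of $\alpha^n$ agree on their first $q+r$ components (the $w^{-1}\bar\alpha$ part) and then diverge. This already settles the cancellation bound in the ``easy'' regime: if $K(g,\alpha^n)\leq q+r$, then $L_{K(g,\alpha^n)}(\alpha^n)=L_{K(g,\alpha^n)}(\alpha)$, so $K(g,\alpha)\geq K(g,\alpha^n)$ and $K(g,\alpha^n)\leq K(g,\alpha)+1$ trivially. It therefore remains to prove $\alpha^n\notin C$ and to treat the regime $K(g,\alpha^n)>q+r$.

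For $\alpha^n\notin C$: assuming $\alpha^n\in C$, every power $\alpha^{nm}=w^{-1}\bar\alpha^{nm}w$ also lies in $C$. When $r=1$ these have the fixed length $2q+1$, so by Lemma~\ref{L:SmallCancellationOfS}\ref{Item:SmallCancelllationOfS-LengthLowerBound} (whose proof gives $l(u_1\cdots u_t)\geq 2ts-(t-1)$ for a product of $t$ elements of $S\cup S^{-1}$) each $\alpha^{nm}$ is a product of at most $t_0:=\lceil(2q+1)/(2s-1)\rceil$ elements of $S\cup S^{-1}$; this leaves only finitely many possibilities, contradicting that the $\alpha^{nm}$ are pairwise distinct. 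When $r\geq2$, the reduced form of $\alpha^{nm}$ contains the component‑periodic block $\bar\alpha^{nm}$ of length $nmr$; writing $\alpha^{nm}$ as an almost reduced product $u_1\cdots u_t$ of elements of $S\cup S^{-1}$, for $m$ large this block contains some $u_j$ entirely in its interior, so a generator of $C$ of length $2s$ would be periodic with period $r$. If $r<2s$ this contradicts Condition~\ref{Item:SVeryDistinct} (the exponents inside a generator have pairwise distinct absolute values, so no generator is periodic with period $<2s$); if $r\geq 2s$ one argues that the factorization $u_1\cdots u_t$ is itself eventually periodic, forcing $\alpha^{nm}$ to be a proper power inside $C$, and hence — after extracting roots in the free group $A$ — a shorter element outside $C$ with a power in $C$, excluded by induction on length.

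For the cancellation bound in the remaining regime $K(g,\alpha^n)>q+r$: here $g^{-1}\in C$ agrees with $\alpha^n$ on a left factor of length $>q+r$, and, since $w$ and $\bar\alpha$ share no initial component (maximality of $w$), one deduces first that $K(g,\alpha)=q+r$ exactly, so it suffices to exclude $K(g,\alpha^n)\geq q+r+2$. Such a $K$ would place a left factor of $\alpha^n$ reaching strictly into the second copy of $\bar\alpha$ inside an element of $C$; peeling generators off that element via Lemma~\ref{L:SmallCancellationOfS}\ref{Item:SmallCancellationOfS-TheInitialAndEndOf} and using \eqref{CompabilityAndCancellationNumber} and Corollary~\ref{C:Incompatibility&CancellationNumber} (as in the proof of Lemma~\ref{L:betauCancellationnumber}) confines the ``extra'' period data to a bounded position inside a single generator or a merge, which in the $r=1$ case bounds $|kn|$ in terms of $\max_{i,j,t}|k_{ij}^{(t)}|$ and in the $r\geq2$ case produces a period‑$r$ stretch of a left factor of $C$ long enough to again violate Condition~\ref{Item:SVeryDistinct} (this is where $s\geq10$ is used). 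The main obstacle is exactly this last regime: understanding left factors of $\alpha^n$ that reach past one full period into an element of $C$, where the coarse free‑group length bookkeeping runs out and one must invoke the arithmetic of the exponents fixed in Definition~\ref{D:DefineSubgroupC}.
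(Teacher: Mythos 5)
Your opening decomposition already has a bug: in a free product with only two indices, if $\bar\alpha$ is cyclically reduced in the sense that its first and last components have different indices and $l(\bar\alpha)\geq2$, then the product $w^{-1}\bar\alpha w$ with $w\neq1$ cannot be reduced — one of the two junctions is forced to merge. And if the first and last components of $\bar\alpha$ have the same index without cancelling (the generic situation with $w\neq1$; e.g.\ $\bar\alpha=bab$, where $l(\bar\alpha)$ is \emph{odd}), then $\bar\alpha^n$ does merge at every junction, so the assertions that ``no merging occurs,'' that $r$ is even, and that $\alpha$ and $\alpha^n$ agree on their first $q+r$ components are all false — they agree only on the first $q+r-1$. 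The paper's decomposition $\alpha=A_1^{-1}\dotsm A_r^{-1}B_1\dotsm B_tA_r\dotsm A_1$ with $B_1B_t\neq1$ (forcing $l(B_1B_t)=1$ when $r>0$) is chosen precisely to make that merge explicit, and your ``easy/hard'' split of the $K(g,\alpha^n)$ regimes needs the corrected off-by-one accounting.

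The more serious problem is structural. You try to prove the two conclusions by separate, unrelated arguments — $\alpha^n\notin C$ via a counting argument when $\bar\alpha$ is a single component and a periodicity argument otherwise, and the cancellation bound via a sketched analysis of ``extra period data'' — and both routes have acknowledged holes: the periodicity case $r\geq2s$ invokes an induction on length that is never set up and a jump from ``the $u_j$'s are eventually periodic'' to ``$\alpha^{nm}$ is a proper power inside $C$'' that is not justified, and the final paragraph openly concedes that the regime $K(g,\alpha^n)>q+r$ is unfinished. The paper proves one clean, stronger statement: $L_{r+t+1}(\alpha^n)\notin L_{r+t+1}(C)$. For $t=1$ this is Lemma~\ref{L:SmallCancellationOfS}\ref{Item:SmallCancelllationOfS-NonSymmetric} applied to the components $A_r^{-1}$ and $A_r$ sitting two positions apart in $\alpha^n$; for $t\geq2$ it follows from Lemma~\ref{L:Prefix}, since left $(r\!+\!t\!+\!1)$-compatibility would force $\alpha\in C$. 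That single incompatibility gives $\alpha^n\notin C$ at once and, combined with $L_{r+t-1}(\alpha)=L_{r+t-1}(\alpha^n)$ and \eqref{CompabilityAndCancellationNumber}, the bound $K(g,\alpha^n)\leq K(g,\alpha)+1$. In particular neither Condition~\ref{Item:SVeryDistinct} nor the hypothesis $s\geq10$ is used in the paper's proof of this lemma, so their appearance in your argument is itself a sign that the approach is not on the intended track.
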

\begin{proof}Let $k=K(g,\alpha)$, then $k$ is maximal such that $L_k(\alpha)=L_k(g^{-1})$. 
Write $\alpha$ as an alternating product $$\alpha=A_1^{-1}\dotsm A_r^{-1}B_1\dotsm B_tA_r\dotsm A_1,$$ where $r\geq0$, $t\geq1$ and $B_1B_t\neq1$. (Also, $l(B_1B_t)=1$ if $r>0$.) Then we can write $\alpha^n$ as one of the following alternating products:
    \begin{align*}
       &A_1^{-1}\dotsm A_r^{-1}(B_1^n) A_r\dotsm A_1 \quad (\text{when } t=1),\\\nonumber
        &B_1\dotsm B_tB_1\dotsm B_t\dotsm B_1\dotsm B_t \quad (\text{when } t\geq2,\; l(B_1B_t)=2),\\\nonumber
        &A_1^{-1}\dotsm A_r^{-1}B_1B_2\dotsm B_{t-1}(B_tB_1)B_2\dotsm (B_tB_1)B_2\dotsm B_t A_r\dotsm A_1 \quad (\text{when } t\geq2,  \;l(B_1B_t)=1).
    \end{align*}
    Assume that $\alpha^n$ is the first alternating product. If $r=0$, then $l(\alpha^n)=1$ and thus $K(g,\alpha^n)\leq 1\leq k+1$, and $\alpha^n\not\in C$ by \ref{Item:SmallCancelllationOfS-LengthLowerBound} of Lemma \ref{L:SmallCancellationOfS}. If $r\geq1$, then by \ref{Item:SmallCancelllationOfS-NonSymmetric} of Lemma \ref{L:SmallCancellationOfS}, we know that $\alpha^n\not\in C$ and $ A_1^{-1}\dotsm A_r^{-1}(B_1^m) A_r$ is not $L_{r+2}(g^{-1})$. Hence $K(g,\alpha^n)\leq r+1$. Hence $K(g,\alpha^n)\leq k+1$ if $k\geq r$. If $k<r$, then $K(g,\alpha^n)=k$ as $L_{r}(\alpha^n)=L_r(\alpha)$. So we always have $K(g,\alpha^n)\leq k+1$, as desired. 
    
  From now on we only consider the other two cases. We first show by contradiction that $L_{r+t+1}(\alpha^n)\not\in L_{r+t+1}(C)$. Suppose that $L_{r+t+1}(\alpha^n)\in L_{r+t+1}(C)$. 
    \begin{enumerate}       
        \item Suppose $\alpha^n$ is the second product. Then $r=0$ and $B_1\dotsm B_tB_1\in L_{t+1}(C)$.  Hence $B_1\in L_1(C)$, and by Lemma \ref{L:Prefix},  $\alpha=B_1\dotsm B_t=B_1\dotsm B_t(p(B_1))^{-1}\in C$, a contradiction. 
        \item Suppose $\alpha^n$ is the third product. Recalling that $l(B_tB_1)=1$, by Lemma \ref{L:Prefix} we have 
        $$h_1:=A_1^{-1}\dotsm A_r^{-1}B_1\dotsm B_{t-1}B_tB_1(p(B_2))^{-1}\in C, \quad h_2:=A_1^{-1}\dotsm A_r^{-1}B_1(p(B_2))^{-1}\in C,$$
        from which we find that $h_1h_2^{-1}=\alpha$ is in $C$, again a contradiction.
         \end{enumerate}
         
         It now follows that $\alpha^n\not\in C$, as $l(\alpha^n)\geq r+t+1$.  Suppose $K(g,\alpha^n)\geq k+2$.  Then as $L_{r+t-1}(\alpha)=L_{r+t-1}(\alpha^n)$, this forces $k \geq r+t-1$, so $K(g,\alpha^n)\geq r+t+1$.  But then $L_{r+t+1} (\alpha^n)=L_{r+t+1}(g^{-1})\in L_{r+t+1}(C)$, a contradiction.  Thus $K(g,\alpha^n)\leq k+1$, as desired.
         \end{proof}

\begin{corollary}\label{C:Khalpham&Kalpha}
    Let $\alpha\in A\backslash C$, $g\in C$, $n\geq1$. If $h$ is a left factor of an element in $C$, then 
    \begin{align*}
        K(g, \alpha^nh)\leq K(g,\alpha)+2\leq \Lambda(\alpha)+2.
    \end{align*}
\end{corollary}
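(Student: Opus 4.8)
The plan is to combine Lemma \ref{L:CIsIsolatedAndLambdaPower} with Lemma \ref{L:betauCancellationnumber}, treating the two cases $n=1$ and $n\geq 2$ separately. In both cases the final inequality $K(g,\alpha)+2 \leq \Lambda(\alpha)+2$ is immediate from \eqref{CompabilityAndCancellationNumber}, which gives $K(g,\alpha)\leq \Lambda(\alpha)$ since $g\in C$; so the real content is the first inequality $K(g,\alpha^n h)\leq K(g,\alpha)+2$.

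First I would dispose of $n=1$: here $\alpha^n h = \alpha h$, and since $\alpha\in A\backslash C$ and $h$ is a left factor of an element of $C$, Lemma \ref{L:betauCancellationnumber} gives directly $K(g,\alpha h)\leq K(g,\alpha)+1\leq K(g,\alpha)+2$, as desired.

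For $n\geq 2$, the idea is to insert $\alpha^n$ as an intermediate step. By Lemma \ref{L:CIsIsolatedAndLambdaPower}, $\alpha^n\in A\backslash C$, so $\alpha^n$ is again an element of $A$ that is not in $C$, and Lemma \ref{L:betauCancellationnumber} applies with $\beta=\alpha^n$: this yields $K(g,\alpha^n h)\leq K(g,\alpha^n)+1$. But Lemma \ref{L:CIsIsolatedAndLambdaPower} also gives $K(g,\alpha^n)\leq K(g,\alpha)+1$. Chaining these two inequalities gives $K(g,\alpha^n h)\leq K(g,\alpha)+2$, completing the proof.

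I do not expect any genuine obstacle here; the corollary is a purely formal consequence of the two preceding lemmas, and the only thing to be careful about is checking the hypotheses of Lemma \ref{L:betauCancellationnumber} apply to $\beta = \alpha^n$ — namely that $\alpha^n\notin C$, which is exactly the first assertion of Lemma \ref{L:CIsIsolatedAndLambdaPower}. If one wanted a uniform statement avoiding the case split, one could note that for $n=1$, $K(g,\alpha^1)\leq K(g,\alpha)+1$ trivially (it is an equality), so the chained bound $K(g,\alpha^n h)\leq K(g,\alpha^n)+1 \leq K(g,\alpha)+2$ covers $n\geq 1$ at once, provided one first observes $\alpha^n\in A\backslash C$ for all $n\geq 1$ (trivial for $n=1$, Lemma \ref{L:CIsIsolatedAndLambdaPower} for $n\geq 2$).
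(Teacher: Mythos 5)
Your proposal is correct and takes essentially the same route as the paper: apply Lemma \ref{L:CIsIsolatedAndLambdaPower} to see $\alpha^n\in A\backslash C$ and $K(g,\alpha^n)\leq K(g,\alpha)+1$, then apply Lemma \ref{L:betauCancellationnumber} with $\beta=\alpha^n$, then finish with \eqref{CompabilityAndCancellationNumber}. Your explicit $n=1$ case split is in fact slightly more careful than the paper's wording, since Lemma \ref{L:CIsIsolatedAndLambdaPower} is stated only for $n\geq 2$ and the $n=1$ case is trivial; but the substance is identical.
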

\begin{proof}
    Applying Lemma \ref{L:CIsIsolatedAndLambdaPower} we know $\alpha^n \in A \setminus C$  and so Lemma \ref{L:betauCancellationnumber} with $\alpha^n$ in place of $\beta$ gives $K(g, \alpha^n h) \leq K(g, \alpha^n) +1$.  Then Lemma \ref{L:CIsIsolatedAndLambdaPower} gives $K(g, \alpha^n) +1 \leq K(g, \alpha)+2$. That $K(g, \alpha)+2 \leq \Lambda(\alpha)+2$ is by \eqref{CompabilityAndCancellationNumber}.
\end{proof}

\begin{lemma}\label{L:TwoSidedCancellationWithGenerators}
    Let $\alpha\in A\backslash C$ be $C$-simplified, 
    $h_2$ (resp. $h_1$) be a right (resp. left) factor of an element in $C$, 
    $g\in S\cup S^{-1}$ and $m\geq1$ be an integer. Write $g=A_1\dotsm A_{2s}$ as an alternating product and assume that $i:=K(\alpha,g)$ and $j:=K(g,\alpha)$ are both nonzero.  Then we have:
\begin{enumerate}[(1)]
     \item  \label{Item:TwoSidedancallationWithGenerators-ijlalpha}
        $l(\alpha)$ is even, the product $\alpha^m$ is reduced, $R_{l(\alpha)}(\alpha^m)=\alpha$, $K(h_2,\alpha)\leq j$ and $i,j\leq s-1$. 
     \item \label{Item:TwoSidedCancellationWithGenerators-ExplicitFormOfAlpha} If $K(h_2\alpha^m,g)>i$ or $K(g,\alpha^mh_1)>j$, then $m=1$ and $\alpha$ is the alternating product 
         \begin{align}\label{E:TwoSidedancellationWithGenerators-ExplicitFormOfAlpha}
             A_{2s}^{-1}\dotsm A_{2s-j+1}^{-1}(A_{2s-j}^{-1}A_{i+1}^{-1})A_i^{-1}\dotsm A_1^{-1};
         \end{align}
     \item \label{Item:Khalphamg+1TwosidedCancellationWithGenerators}$K(h_2\alpha^m,g)\leq i+1$ and $K(g,\alpha^mh_1)\leq j+1$.
\end{enumerate}
\end{lemma}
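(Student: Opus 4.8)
The plan is to exploit that $g\in S\cup S^{-1}\subseteq C$, so that every left or right factor of $g$ lies in some $L_i(C)$ or $R_i(C)$, and combine this with the rigidity of the generating set $S$ (Condition \ref{Item:SVeryDistinct}) and the $C$-simplified normal form for $\alpha$, in order to read off the cancellation in all the relevant products almost explicitly. Throughout, write $g=A_1\cdots A_{2s}$ as an alternating product.

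For (1), first record the consequences of $i,j\ge 1$: the equality $K(\alpha,g)=i$ says that the right $i$-factor of $\alpha$ is $(L_i(g))^{-1}=A_i^{-1}\cdots A_1^{-1}$, and $K(g,\alpha)=j$ says that the left $j$-factor of $\alpha$ is $(R_j(g))^{-1}=A_{2s}^{-1}\cdots A_{2s-j+1}^{-1}$; in particular $B_1(\alpha)=A_{2s}^{-1}$ and $B_{l(\alpha)}(\alpha)=A_1^{-1}$. Since $A_1$ and $A_{2s}$ are powers of different generators (immediate from the form of $\alpha_i$ and $2s$ being even), the first and last components of $\alpha$ are powers of different generators; this forces $l(\alpha)$ even, and also gives $B_{l(\alpha)}(\alpha)\ne B_1(\alpha)^{-1}$, hence $K(\alpha,\alpha)=0$, so $\alpha^m$ is reduced and $R_{l(\alpha)}(\alpha^m)=\alpha$ for every $m\ge 1$. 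Next, $i\le s-1$: if $i\ge s$ then $R_s(\alpha)=(L_s(g))^{-1}\in R_s(C)$, so $P(\alpha)\ge s$, and since $\alpha$ is right $C$-simplified this forces $\alpha\in R_s(C)$, hence $l(\alpha)=s$ and $\alpha=(L_s(g))^{-1}$; then $B_1(\alpha)=A_s^{-1}$, which with $B_1(\alpha)=A_{2s}^{-1}$ gives $A_s=A_{2s}$, contradicting Condition \ref{Item:SVeryDistinct}; the bound $j\le s-1$ is the mirror argument using that $\alpha$ is left $C$-simplified. Using again the distinctness of the components of $g$, the right $i$- and left $j$-portions of $\alpha$ cannot overlap, so $l(\alpha)\ge i+j\ge j+1$. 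Finally, for $K(h_2,\alpha)\le j$: assuming $k:=K(h_2,\alpha)\ge j+1$, the identity $L_k(\alpha)=(R_k(h_2))^{-1}$ together with $R_k(h_2)\in R_k(C)$ gives $L_k(\alpha)\in L_k(C)$; choosing $c_1\in C$ with $L_k(c_1)=L_k(\alpha)$, decomposing $c_1$ into syllables from $S\cup S^{-1}$ and applying Lemma \ref{L:SmallCancellationOfS}(2), its first $j+1\le s$ components agree with those of its leading syllable $v_1\in S\cup S^{-1}$; since $L_j(c_1)=L_j(\alpha)=L_j(g^{-1})$, the syllable $v_1$ shares its first component with $g^{-1}\in S\cup S^{-1}$, and Condition \ref{Item:SVeryDistinct} then forces $v_1=g^{-1}$, whence $B_{j+1}(\alpha)=B_{j+1}(c_1)=B_{j+1}(g^{-1})=A_{2s-j}^{-1}$; but $K(g,\alpha)=j$ with $l(\alpha)\ge j+1$ forces $B_{j+1}(\alpha)\ne A_{2s-j}^{-1}$, a contradiction.

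For (2), assume $K(g,\alpha^m h_1)>j$ (the case $K(h_2\alpha^m,g)>i$ is symmetric) and set $k':=K(g,\alpha^m h_1)\ge j+1$. From $R_{l(\alpha)}(\alpha^m)=\alpha$ and $j+1\le l(\alpha)$ we get $B_{j+1}(\alpha^m)=B_{j+1}(\alpha)\ne A_{2s-j}^{-1}$, whereas $L_{k'}(\alpha^m h_1)=(R_{k'}(g))^{-1}$ forces $B_{j+1}(\alpha^m h_1)=A_{2s-j}^{-1}$; so the reduced form of $\alpha^m h_1$ already differs from $\alpha^m$ at position $j+1$, which means the cancellation between $\alpha^m$ and $h_1$ reaches back past that position, i.e. $K(\alpha^m,h_1)\ge m\,l(\alpha)-j-1$. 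Passing to inverses, this cancels almost all of $\alpha^{-m}$ against a right factor of an element of $C$; combining this with $j\le s-1$, the $C$-simplified hypothesis applied to $\alpha^{-1}$ (for which $\Lambda(\alpha^{-1})=P(\alpha)$), and the power-cancellation bounds of Lemma \ref{L:CIsIsolatedAndLambdaPower}, one checks this is possible only when $m=1$ and $l(\alpha)=i+j+1$, and then tracking components pins $\alpha$ down to the alternating form \eqref{E:TwoSidedancellationWithGenerators-ExplicitFormOfAlpha}. For (3): if neither hypothesis of (2) holds then $K(h_2\alpha^m,g)\le i$ and $K(g,\alpha^m h_1)\le j$ and there is nothing to prove; otherwise $m=1$ and $\alpha$ is given by \eqref{E:TwoSidedancellationWithGenerators-ExplicitFormOfAlpha}, and a direct computation of $K(h_2\alpha,g)$ and $K(g,\alpha h_1)$ from that form yields the bounds $i+1$ and $j+1$.

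I expect the crux to be the $m=1$ step inside (2): one must control exactly how much of $\alpha^m h_1$ (respectively $h_2\alpha^m$) can be absorbed into $g$, and the naive estimates from Lemmas \ref{L:CIsIsolatedAndLambdaPower}, \ref{L:betauCancellationnumber} and Corollary \ref{C:Khalpham&Kalpha} only give $\le j+2$; squeezing out the last unit, ruling out $m\ge 2$, and identifying the precise shape \eqref{E:TwoSidedancellationWithGenerators-ExplicitFormOfAlpha} will require simultaneously using the rigidity of $S$, the $C$-simplified normal form, the no-overlap inequality $l(\alpha)\ge i+j$, and careful treatment of the edge cases where $h_1$ or $h_2$ is long, or where $l(\alpha)$ is close to $i+j$.
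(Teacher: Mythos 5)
Your treatment of (1) is sound and essentially matches the paper: the parity of $l(\alpha)$, the reducedness of $\alpha^m$, and the bounds $i,j\le s-1$ all come out via the same observations. Your argument for $K(h_2,\alpha)\le j$ is more long-winded than the paper's (which simply notes $\Lambda(\alpha)=j$ and applies the inequality $K(h_2,\alpha)\le\Lambda(\alpha)$), but your version is a legitimate unpacking of why $\Lambda(\alpha)=j$; no issue there.

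The genuine gap is in (2). After correctly deducing $K(\alpha^m,h_1)\ge m\,l(\alpha)-j-1$, you assert that "one checks this is possible only when $m=1$ and $l(\alpha)=i+j+1$, and then tracking components pins $\alpha$ down," citing the $C$-simplified condition for $\alpha^{-1}$ and Lemma \ref{L:CIsIsolatedAndLambdaPower}. But the $C$-simplified hypothesis controls $P(\alpha)$, not $P(\alpha^m)$, and Lemma \ref{L:CIsIsolatedAndLambdaPower} bounds $K(g,\alpha^n)$ in terms of $K(g,\alpha)$ for $g\in C$, which is the wrong direction to exploit a large cancellation between $\alpha^m$ and the factor $h_1$. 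So the step ruling out $m\ge 2$ is not actually established — you explicitly flag this at the end as the expected crux, and indeed it is, and it is left open. The paper resolves it by working from the opposite side: if $K(h_2\alpha^m,g)\ge i+1$, then the component $RB_{i+1}(\alpha^m)$ must be altered in $P(h_2,\alpha^m)$; but for $m\ge 2$, $K(h_2,\alpha^m)=K(h_2,\alpha)\le j$, so alteration from the left reaches at most $j+1$ components into $\alpha^m$, while $RB_{i+1}(\alpha^m)$ sits at position $m\,l(\alpha)-i\ge 2(i+j)-i>j+1$ from the left, so it cannot be altered. This immediately forces $m=1$, after which the explicit form \eqref{E:TwoSidedancellationWithGenerators-ExplicitFormOfAlpha} drops out by tracking which components of $h_2\alpha$ must survive to produce $R_{i+1}(h_2\alpha)=A_{i+1}^{-1}\cdots A_1^{-1}$ given that the tail of $h_2$ agrees with $g$. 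Your route might be salvageable but would need a real argument, not a gesture at the right lemmas.

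For (3), your plan of a "direct computation from \eqref{E:TwoSidedancellationWithGenerators-ExplicitFormOfAlpha}" would likely work, but the paper's proof is both shorter and does not require the explicit form: once $m=1$ is known (from the argument of (2)), the bound $K(h_2\alpha,g)\le i+1$ is an immediate application of (the other version of) Lemma \ref{L:betauCancellationnumber}, with no case analysis on the shape of $\alpha$.
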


\begin{proof} 
\begin{enumerate}[(1)]
\item Since $i,j$ are nonzero and $l(A_1A_{2s})=2$, we conclude $l(B_1(\alpha)R_1(\alpha))=2$ and so $l(\alpha)$ is even. Hence, the product $\alpha^m=\alpha\dotsm\alpha$ is reduced and thus $R_{l(\alpha)}(\alpha^m)=\alpha$.  

Note that $i,j\leq s$ as $\alpha$ is $C$-simplified. If $i=s$, $\alpha=(L_s(g))^{-1}$ since $\alpha$ is $C$-simplified, and thus $L_1(\alpha)=(R_1(\alpha^{-1}))^{-1}=(B_s(g))^{-1}\neq (R_1(g))^{-1}$ by Condition \ref{Item:SVeryDistinct}, contradicting to $j>0$. Hence $i\leq s-1$, and similarly $j\leq s-1$. We also have $\Lambda(\alpha)=j$ and hence $K(h_2,\alpha)\leq j$. 

Since $A_1,\dotsc,A_{2s}$ are distinct,  $L_j(\alpha)=(R_j(g))^{-1}$ and $R_i(\alpha)=(L_i(g))^{-1}$ don't have common components, hence $\alpha$ can be written as alternating product
    $$\alpha=A_{2s}^{-1}\dotsm A_{2s-j+1}^{-1}B_1\dotsm B_t A_i^{-1}\dotsm A_1^{-1}, \text{ with }t\geq0.$$ 
 \item  \label{Item:TwoSidedCancellationWithGenerators-ProvingTheExplicitExpression} Assume that $K(h_2\alpha^m,g)\geq i+1$. Then $RB_{i+1}(\alpha^m)=RB_{i+1}(\alpha)$ is altered in $P(h_2,\alpha^m)$. But this can't happen if $m\geq2$ as $K(h_2,\alpha^m)=K(h_2,\alpha)\leq j$. Hence $m=1$ and $R_{i+1}(h_2\alpha)=A_{i+1}^{-1}\dotsm A_1^{-1}$. Note that $h_2$ is a right factor of an element in $C$ and the component $A_{2s}^{-1}$ of $\alpha$ is canceled in $P(h_2, \alpha)$. Therefore, letting $j'=\min\{l(h),2s-1\}$, we have $R_{j'}(h_2)=A_{2s-j'+1}\dotsm A_{2s}$. 
 Now, if $t=0$ or $t\geq2$, we can't have $R_{i+1}(h_2\alpha)=A_{i+1}^{-1}\dotsm A_1^{-1}$. So $t=1$ and we also need $2s-j'+1\leq 2s-j$ and $A_{2s-j}B_1=A_{i+1}^{-1}$, from which $B_1=A_{2s-j}^{-1}A_{i+1}^{-1}$ follows.
  This proves \ref{Item:TwoSidedCancellationWithGenerators-ExplicitFormOfAlpha} when $K(h_2\alpha^m,g)\geq i+1$. We treat the case $K(g,\alpha^mh_1)\geq j+1$ similarly and reach the same conclusion.
 
\item    Supposing $K(h_2\alpha^m,g)\geq i+2$, we will find a contradiction. (This proves the first inequality; the second is done similarly.) As we still have $K(h_2\alpha^m,g)\geq i+1$, we have $m=1$ by the the argument of \ref{Item:TwoSidedCancellationWithGenerators-ProvingTheExplicitExpression}. But $K(h_2\alpha,g)\leq K(\alpha,g)+1=i+1$ by (the other version of) Lemma \ref{L:betauCancellationnumber}, a contradiction as desired.
    \end{enumerate}
\end{proof}

The next result helps handle a special case in the proof of the RTF-ness of $C$ in $A$.
\begin{proposition}\label{P:WhenTheTwoSidedCancelationIsNotControled}
    Let $\alpha\in A\backslash C$ be $C$-simplified, $g\in C\backslash\{1\}$, $m,n\geq1$ and $h_2$ (resp. $h_1$) be a right (resp. left) factor of an element in $C$. Assume that 
    \begin{align}\label{Ine:WhenTheTwoSidedCancelationIsNotControled-TwoMuchCancellation}
        K(h_2\alpha^m,g)+K(g,\alpha^nh_1)>l(g)-2.
    \end{align} Then $g\in S\cup S^{-1}$ and $R_s(g)\alpha L_s(g)$ is the component $B_s(g)$ or the component $B_{s+1}(g)$.
\end{proposition}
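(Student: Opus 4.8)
The hypothesis \eqref{Ine:WhenTheTwoSidedCancelationIsNotControled-TwoMuchCancellation} is a statement about ``too much'' two-sided cancellation, so the strategy is to combine the cancellation-number bounds of Subsection \ref{Subsection:RTFOfC} with the structural information about $C$-simplified elements from Lemma \ref{L:TwoSidedCancellationWithGenerators}. First I would dispose of the case where both $i:=K(\alpha,g)$ and $j:=K(g,\alpha)$ are \emph{not} both nonzero, hoping to use \eqref{CompabilityAndCancellationNumber} together with Corollary \ref{C:Khalpham&Kalpha} and its ``right version'' to bound $K(h_2\alpha^m,g)$ and $K(g,\alpha^nh_1)$ each by $\Lambda(\alpha)+2$ or $P(\alpha)+2$; since $\alpha$ is $C$-simplified these are both bounded, and I would need to argue that when one of $i,j$ vanishes the corresponding cancellation number is small enough that \eqref{Ine:WhenTheTwoSidedCancelationIsNotControled-TwoMuchCancellation} forces a contradiction or forces $g$ to have small length, hence (by Lemma \ref{L:SmallCancellationOfS}(4), $l(g)\geq 2s$) pins down $g \in S \cup S^{-1}$.

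So the essential case is $i,j$ both nonzero, which is exactly the running hypothesis of Lemma \ref{L:TwoSidedCancellationWithGenerators}. Here I would \emph{not} yet know $g\in S\cup S^{-1}$, so the first task is to reduce to that situation: write $g$ as an almost reduced product of elements of $S\cup S^{-1}$ via Lemma \ref{L:SmallCancellationOfS}, and observe that the left factor $L_s(g)$ and right factor $R_s(g)$ that get involved in the cancellations are, by Lemma \ref{L:SmallCancellationOfS}(2), equal to $L_s(u)$ and $R_s(v)$ for the outermost letters $u,v \in S\cup S^{-1}$ of that product. If $g\notin S\cup S^{-1}$, then $g$ has length $\geq 2(2s)-1 = 4s-1$ (by Lemma \ref{L:SmallCancellationOfS}(3) with $k\geq 2$), so $l(g)-2 \geq 4s-3$; on the other hand Lemma \ref{L:TwoSidedCancellationWithGenerators}(3) (applied to each end with the appropriate generator) gives $K(h_2\alpha^m, g)\le i+1 \le s$ and $K(g,\alpha^n h_1)\le j+1\le s$ (using $i,j\le s-1$ from part (1)), so the left side of \eqref{Ine:WhenTheTwoSidedCancelationIsNotControled-TwoMuchCancellation} is at most $2s < 4s-3$ for $s\ge 10$ — a contradiction. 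Hence $g\in S\cup S^{-1}$, and then $l(g)=2s$.

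With $g = A_1\cdots A_{2s}\in S\cup S^{-1}$ in hand, the inequality reads $K(h_2\alpha^m,g)+K(g,\alpha^n h_1) > 2s-2$. Again using Lemma \ref{L:TwoSidedCancellationWithGenerators}(3), each summand is $\le i+1$ and $\le j+1$ respectively with $i,j\le s-1$, so both must be essentially maximal: I expect to force $K(h_2\alpha^m,g) = i+1$ and $K(g,\alpha^n h_1)=j+1$ with $i+j = 2s-2$, i.e. $i=j=s-1$. By Lemma \ref{L:TwoSidedCancellationWithGenerators}(2), since these strict inequalities $K(h_2\alpha^m,g)>i$ and $K(g,\alpha^n h_1)>j$ hold, we get $m=n=1$ and the explicit form \eqref{E:TwoSidedancellationWithGenerators-ExplicitFormOfAlpha} of $\alpha$, namely $\alpha = A_{2s}^{-1}\cdots A_{2s-j+1}^{-1}(A_{2s-j}^{-1}A_{i+1}^{-1})A_i^{-1}\cdots A_1^{-1}$ with $i=j=s-1$, so $2s-j=s+1$ and $i+1=s$, giving $\alpha = A_{2s}^{-1}\cdots A_{s+2}^{-1}(A_{s+1}^{-1}A_s^{-1})A_{s-1}^{-1}\cdots A_1^{-1} = g^{-1}$ — wait, that would mean $\alpha=g^{-1}\in C$, contradicting $\alpha\notin C$. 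So the exact extremal values need more care; the point is rather that the conclusion $R_s(g)\alpha L_s(g) = B_s(g)$ or $B_{s+1}(g)$ should drop out of the explicit form: $R_s(g)=A_{s+1}\cdots A_{2s}$ and $L_s(g)=A_1\cdots A_s$, and multiplying \eqref{E:TwoSidedancellationWithGenerators-ExplicitFormOfAlpha} on the left by $R_s(g)$ and on the right by $L_s(g)$ telescopes, after cancellation, to the single component $A_{2s-j}^{-1}A_{i+1}^{-1}$ sandwiched — which I expect, once $i$ and $j$ are correctly identified (presumably $i=j=s$ is excluded and the genuine extremal configuration is $i=j=s-1$, or one of them is $s$ handled separately), to be exactly $B_s(g)=A_s^{k}$ or $B_{s+1}(g)=A_{s+1}^{k}$.

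\textbf{Main obstacle.} The delicate point is the bookkeeping in the last step: matching the indices in the explicit form \eqref{E:TwoSidedancellationWithGenerators-ExplicitFormOfAlpha} against $R_s(g)$ and $L_s(g)$ and verifying that the telescoping product is precisely one component of $g$, while simultaneously keeping track of whether the extremal case forces $i=j=s-1$ or allows $i=s$ or $j=s$ (which Lemma \ref{L:TwoSidedCancellationWithGenerators}(1) permits only with the other being $\le s-1$). One must also handle the degenerate case $h_1$ or $h_2$ trivial, and confirm that ``$R_s(g)$'' and ``$L_s(g)$'' make sense, i.e. that $l(g)=2s\ge s$. I expect the index arithmetic — not any conceptual difficulty — to be the bulk of the work, and it should go through cleanly using only Lemma \ref{L:SmallCancellationOfS}, Corollary \ref{C:Khalpham&Kalpha}, and Lemma \ref{L:TwoSidedCancellationWithGenerators}.
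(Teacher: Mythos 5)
Your overall strategy is sound and you reach for the right lemmas, but there are two concrete gaps.

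First, in ruling out $g\notin S\cup S^{-1}$ you invoke Lemma~\ref{L:TwoSidedCancellationWithGenerators}(3) to bound $K(h_2\alpha^m,g)\le i+1$ and $K(g,\alpha^nh_1)\le j+1$ -- but that lemma is stated only for $g\in S\cup S^{-1}$, which is exactly what you do not yet know at this point. The remark ``applied to each end with the appropriate generator'' does not rescue this: $K(h_2\alpha^m,g)$ and $i=K(\alpha,g)$ involve the whole of $g$, and there is no immediate reduction to the outermost letter of $g$. The paper instead applies Corollary~\ref{C:Khalpham&Kalpha} (and its right-sided version), which holds for arbitrary $g\in C$, to obtain the weaker but legitimate bound $K(h_2\alpha^m,g)+K(g,\alpha^nh_1)\le (P(\alpha)+2)+(\Lambda(\alpha)+2)\le 2s+4$, which already contradicts $l(g)-2\ge 4s-3$ since $s\ge 10$. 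Your treatment of the case ``one of $i,j$ is zero'' is also only gestured at; the same Corollary~\ref{C:Khalpham&Kalpha} argument disposes of it cleanly once $g\in S\cup S^{-1}$ is known (giving LHS $\le s+4 < 2s-2$).

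Second, and more importantly, you conclude $i=j=s-1$, correctly notice this yields $\alpha=g^{-1}\in C$ (a contradiction), but then leave the resolution open. The correct bookkeeping is: the bounds $K(h_2\alpha^m,g)\le i+1$, $K(g,\alpha^nh_1)\le j+1$, $i,j\le s-1$, together with LHS $>2s-2$, only force $i,j\ge s-2$, so $i,j\in\{s-2,s-1\}$; you cannot conclude $i=j=s-1$. The missing idea is the parity observation from Lemma~\ref{L:TwoSidedCancellationWithGenerators}(1): $l(\alpha)$ is even, while the explicit form \eqref{E:TwoSidedancellationWithGenerators-ExplicitFormOfAlpha} gives $l(\alpha)=i+j+1$, so $i+j$ is odd and hence $i\ne j$. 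Combined with $i,j\in\{s-2,s-1\}$ this forces $\{i,j\}=\{s-1,s-2\}$, and then the index-matching you describe -- telescoping $R_s(g)\alpha L_s(g)$ against the form \eqref{E:TwoSidedancellationWithGenerators-ExplicitFormOfAlpha} -- yields $B_s(g)$ when $i=s-1$ and $B_{s+1}(g)$ when $i=s-2$, exactly as claimed. You also need to note explicitly that at least one of the strict inequalities $K(h_2\alpha^m,g)>i$, $K(g,\alpha^nh_1)>j$ holds (otherwise the LHS is at most $(s-1)+(s-1)=2s-2$), in order to legitimately invoke Lemma~\ref{L:TwoSidedCancellationWithGenerators}(2).
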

\begin{proof} First $\Lambda(\alpha), P(\alpha)\leq s$ as $\alpha$ is $C$-simplified. Then $i,j\leq s$ by \eqref{CompabilityAndCancellationNumber}, where  $i:=K(\alpha,g)$, $j:=K(g,\alpha)$. Then we proceed to prove the following.
\begin{enumerate}[(1)]
    \item \label{Item:giinscups-1}$g\in S\cup S^{-1}$ and so the RHS of \eqref{Ine:WhenTheTwoSidedCancelationIsNotControled-TwoMuchCancellation} is $2s-2$.
    
To show this, suppose $g\not\in S\cup S^{-1}$.  Then $l(g)\geq 4s-1$ by \ref {Item:SmallCancelllationOfS-LengthLowerBound} and \ref{Item:SmallCancellationOfS-LengthGeneralLowerBound} of Lemma \ref{L:SmallCancellationOfS}. By Corollary \ref{C:Khalpham&Kalpha} (and its ``right-sided" version), 
     we get
    \[ K(h_2\alpha^m,g)+K(g,\alpha^nh_1) \leq P(\alpha) + 2  + \Lambda(\alpha)+2 =2s+4.
    \]
 Now \eqref{Ine:WhenTheTwoSidedCancelationIsNotControled-TwoMuchCancellation} gives $2s+4>4s-3$ or $2s<7$, a contradiction since $s \geq 10$.  

   \item \label{Item:WhenTheTwoSidedCancelationIsNotControled-Nonzero} Both $i$ and $j$ are nonzero. (Recall that $i:=K(\alpha,g)$ $j:=K(g,\alpha)$.)

   Because if one of them is zero, then the LHS of \eqref{Ine:WhenTheTwoSidedCancelationIsNotControled-TwoMuchCancellation} is at most $s+4$ by Corollary \ref{C:Khalpham&Kalpha}, the RHS is $2s-2$ by \ref{Item:giinscups-1}, leading to $s<6$, again a contradiction to $s \geq 10$. 
   \item\label{Item:leqs-1} $l(\alpha)$ is even and $i,j \leq s-1$. 
   This now follows from Lemma \ref{L:TwoSidedCancellationWithGenerators}\ref{Item:TwoSidedancallationWithGenerators-ijlalpha}.

    \item $i,j\geq s-2$. To prove this, suppose $i \leq s-3$.  Then by Lemma \ref{L:TwoSidedCancellationWithGenerators}\ref{Item:Khalphamg+1TwosidedCancellationWithGenerators} and  \ref{Item:leqs-1}, we have
\[K(h_2\alpha^m,g)+K(g,\alpha^nh_1) \leq i + 1 + j+1 \leq (s-3)+1+(s-1)+1=2s-2,
\]
 contradicting \eqref{Ine:WhenTheTwoSidedCancelationIsNotControled-TwoMuchCancellation} by \ref{Item:giinscups-1}.  Similarly, we show that $j \leq s-3$ is not possible.

    \item $K(h_2\alpha^{m}, g)> i$ or $K(g, \alpha^n h_1)> j$. Because otherwise we have, using \ref{Item:leqs-1} and \ref{Item:giinscups-1},
     \begin{align*}
    K(h_2\alpha^m, g)+K(g, \alpha^nh_1)\leq s-1+s-1=2s-2=l(g)-2,
     \end{align*} contradicting \eqref{Ine:WhenTheTwoSidedCancelationIsNotControled-TwoMuchCancellation}.
\end{enumerate}
     We now can apply Lemma \ref{L:TwoSidedCancellationWithGenerators}\ref{Item:TwoSidedCancellationWithGenerators-ExplicitFormOfAlpha} to conclude that
     $\alpha$ is the alternating product $$A_{2s}^{-1}\dotsm A_{2s-j+1}^{-1}(A_{2s-j}^{-1}A_{i+1}^{-1})A_i^{-1}\dotsm A_1^{-1},$$ where $A_1\dotsm A_{2s}$ is the reduced form of $g$. In particular, we have $l(\alpha)= i+j+1$. Since $l(\alpha)$ is even and $i,j\in\{s-2,s-1\}$,  we have $\{i,j\}=\{s-1,s-2\}$. First, consider $i=s-1$. Then $\alpha$ can be written as the alternating product $\alpha=A_{2s}^{-1}\dotsm A_{s+2}^{-1}(A_{s+1}^{-1}A_{s-1}^{-1})A_{s-2}^{-1}\dotsm A_1^{-1}$ from which we have $R_s(g)\alpha L_s(g)=A_{s}$, as desired. Similarly, we find $R_s(g)\alpha L_s(g)=A_{s+1}$ when $i=s-2$. 
\end{proof}

We are now ready to prove the main result of this subsection.
\begin{theorem} \label{T:CIsRTFInA}
The subgroup $C$ of $A$ given by Definition \ref{D:DefineSubgroupC} is RTF in $A$. \footnote{In fact, here we only need condition \ref{Item:SVeryDistinct} on the exponents $k_{ij}^{(t)}$; conditions \ref{Item:SSameSign}, \ref{Item:SignOfTheFirst8} and that $m\geq8$ are not required for this proof.}
\end{theorem}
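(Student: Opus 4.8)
The statement to prove is that for every $\alpha \in A\setminus C$ and all $h_1,\dots,h_k\in C$ with $k\geq 1$ the product $T:=\alpha h_1\alpha h_2\cdots\alpha h_k$ is nontrivial; by the footnote we may use only Condition \ref{Item:SVeryDistinct} on the exponents together with $s\geq 10$. The plan is to argue by contradiction, choosing a counterexample with $k$ minimal and then $l(\alpha)$ minimal. First I would normalize $\alpha$: by Lemma \ref{L:C-simplified}(2) write $\alpha=c_1\alpha'c_2$ with $c_1,c_2\in C$ and $\alpha'$ $C$-simplified, and observe that $T=1$ is equivalent to $\alpha'h_1'\alpha'h_2'\cdots\alpha'h_k'=1$ with $h_i':=c_2h_ic_1\in C$ (a plain rearrangement, absorbing the flanking $c_1^{\pm1}$); minimality of $l(\alpha)$ then forces $\alpha$ itself to be $C$-simplified. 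Grouping consecutive factors $\alpha$ arising from trivial $h_i$'s, rewrite $T=g_0\alpha^{m_1}g_1\alpha^{m_2}g_2\cdots\alpha^{m_n}g_n$ with $m_t\geq1$, $g_0=1$, $g_n\in C$, and every interior $g_i\in C\setminus\{1\}$, so $l(g_i)\geq 2s$ by Lemma \ref{L:SmallCancellationOfS}(4). The cases $n\leq 1$ (in particular $T=\alpha^k$) are immediate from torsion-freeness of the free group $A$ together with Lemma \ref{L:CIsIsolatedAndLambdaPower} (which rules out $\alpha^m\in C$), so from now on $n\geq 2$.

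The core of the argument feeds this decomposition into Corollary \ref{C:TwosidedCancellationAndSumOfCancellationLengths} (which is used nowhere else in the paper) with $\alpha_i=\alpha^{m_i}$. Its hypothesis is exactly the cancellation bound
\[ K(g_{i-1,2}'\alpha^{m_i}, g_i) + K(g_i, \alpha^{m_{i+1}}g_{i+1,1}') \leq l(g_i)-2 \]
for all $i\in\{1,\dots,n-1\}$, all right factors $g_{i-1,2}'$ of $g_{i-1}$, and all left factors $g_{i+1,1}'$ of $g_{i+1}$. If this holds for every $i$, Corollary \ref{C:TwosidedCancellationAndSumOfCancellationLengths} gives $T\neq 1$, a contradiction, so it remains to treat the case in which the bound fails for some $i$. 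There, setting $h_2:=g_{i-1,2}'$ (a right factor of an element of $C$), $h_1:=g_{i+1,1}'$ (a left factor of an element of $C$) and $g:=g_i\in C\setminus\{1\}$, Proposition \ref{P:WhenTheTwoSidedCancelationIsNotControled} applies with $m=m_i$, $n=m_{i+1}$ and tells us that $g_i=u\in S\cup S^{-1}$ is a single generator and that $R_s(u)\,\alpha\,L_s(u)$ equals the middle syllable $B_s(u)$ or $B_{s+1}(u)$ of $u$; call it $w$. Writing $u=L_s(u)R_s(u)$ gives the rigid identities $\alpha=R_s(u)^{-1}\,w\,L_s(u)^{-1}$, $u\alpha=L_s(u)\,w\,L_s(u)^{-1}$ and $\alpha u=R_s(u)^{-1}\,w\,R_s(u)$, so both $u\alpha$ and $\alpha u$ are conjugates of the single syllable $w$.

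The main obstacle is to derive a contradiction from this exceptional configuration. The key point is that $R_s(u)\alpha L_s(u)=w$ determines $\alpha$ almost completely in terms of $u$, while Condition \ref{Item:SVeryDistinct} (distinctness of the absolute values of all exponents, which makes $u$ recoverable from a sufficiently long initial or terminal segment) makes $u$ essentially unique given the $C$-simplified $\alpha$; thus the exceptional interaction is isolated to that one generator. I would then use the identities above to rewrite the offending block, e.g. $\alpha u\alpha=R_s(u)^{-1}w^2L_s(u)^{-1}$, and absorb its ends into the neighbouring $g_{i-1},g_{i+1}\in C$ via $g_{i-1}R_s(u)^{-1}=(g_{i-1}u^{-1})L_s(u)$ and $L_s(u)^{-1}g_{i+1}=R_s(u)(u^{-1}g_{i+1})$, producing from $T$ a product of the same shape $\alpha h_1\cdots\alpha h_{k'}$ (with $w$, which is again $C$-simplified since $l(w)=1<s$, possibly replacing $\alpha$ in that block) but with strictly smaller $k$ or strictly smaller $l(\alpha)$, contradicting minimality. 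Carrying out this rewriting carefully — tracking the single generator's worth of interaction, checking that the new product is trivial exactly when $T$ is, and confirming that $w$ (or the modified repeated element) still lies outside $C$ — is where essentially all the work in the exceptional case lies; the rest of the proof is an assembly of the lemmas already established in this subsection.
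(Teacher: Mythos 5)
Your proposal tracks the paper's proof faithfully through the first four steps: reducing to a $C$-simplified $\alpha$ via Lemma \ref{L:C-simplified}, regrouping $T$ into $g_0\alpha^{m_1}g_1\cdots\alpha^{m_n}g_n$, applying Corollary \ref{C:TwosidedCancellationAndSumOfCancellationLengths}, and invoking Proposition \ref{P:WhenTheTwoSidedCancelationIsNotControled} when the cancellation bound fails, concluding that $g_i=u\in S\cup S^{-1}$ and $R_s(u)\,\alpha\,L_s(u)=w$ is a single syllable. All of that matches the paper. The gap is in your exceptional-case resolution, and it is real, not cosmetic.

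Your plan is a local rewrite of the single offending block $g_{i-1}\alpha^{m_i}u\,\alpha^{m_{i+1}}g_{i+1}$ followed by a minimality argument on $(k,l(\alpha))$. This fails for two linked reasons. First, after the rewrite the repeated element changes from $\alpha$ to $w$ only \emph{within} the offending block, but $\alpha$ still appears in every other block; the result is a product of conjugates of two \emph{different} elements, $\alpha$ and $w$, which is no longer of the form $\alpha'h_1'\alpha'h_2'\cdots\alpha'h_{k'}'$, so the minimal-counterexample hypothesis has nothing to bite on. Second, the absorbed flanking pieces $g_{i-1}R_s(u)^{-1}$ and $L_s(u)^{-1}g_{i+1}$ do not lie in $C$ — the generator $u$ has been split into $L_s(u)$ and $R_s(u)$ and the halves are not separately elements of $C$ — so even the individual ``$h$''-terms of your putative new product are not in $C$, and RTF-ness is a statement about coefficients from $C$. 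You also shouldn't expect the offending index to be unique, so the claim that ``the exceptional interaction is isolated to that one generator'' would need justification that Condition \ref{Item:SVeryDistinct} does not obviously supply.

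What the paper does instead is a \emph{global} substitution with no minimality at all. It replaces $C$ by the larger subgroup $\tilde C$ generated by $\tilde S=(S\setminus\{u,u^{-1}\})\cup\{L_s(u),R_s(u)\}$, writes $\alpha=R_s(u)^{-1}\gamma L_s(u)^{-1}$ with $l(\gamma)=1$ and substitutes this expression for \emph{every} occurrence of $\alpha$ in $T$, collapsing $T$ into $h_0\gamma^{n_1}h_1\cdots\gamma^{n_r}h_r$ with all $h_i\in\tilde C$. Then it reapplies the \emph{same} Corollary \ref{C:TwosidedCancellationAndSumOfCancellationLengths}: the point is that $\tilde S$ still satisfies small cancellation strong enough to give $l(h)\geq s$ for $h\in\tilde C\setminus\{1\}$, while cancellation of the length-one element $\gamma$ against members of $\tilde C$ is bounded by $1$ on each side, so $K(h_{i-1,2}\gamma^{n_i},h_i)+K(h_i,\gamma^{n_{i+1}}h_{i+1,1})\leq 2\leq s-2\leq l(h_i)-2$ holds automatically and the Corollary forces $T\neq 1$. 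This is exactly the missing idea in your proposal: enlarge the coefficient subgroup so that the split generator becomes legitimate, rather than trying to push the rewrite back into $C$.
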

\begin{proof}
Let $\beta\in A\backslash C$. We need to prove that $T:=h_0\beta h_1\beta h_2\dotsm \beta h_k\neq1$ for all $h_i\in C$ and $n\geq1$. By Lemma \ref{L:C-simplified}, we can write $\beta=c_1\alpha c_2$, where $c_i\in C$ and $\alpha$ is $C$-simplified. Plugging this into the expression of $T$, we can rewrite $T$ in the following form 
\begin{align}\label{E:RTF-Target}
    T=g_0\alpha^{m_1}g_1\alpha^{m_2}g_2\dotsm \alpha^{m_{n-1}}g_{n-1}\alpha^{m_n}g_n
\end{align} where $g_i\in C$, $m_i\geq1$, $n\geq1$ and $g_1,g_2,\dotsc,g_{n-1}$ are nontrivial.  
Applying Corollary \ref{C:TwosidedCancellationAndSumOfCancellationLengths} to \eqref{E:RTF-Target} and considering $\alpha_i$ in the corollary be $\alpha^{m_i}$, we know that $T\neq1$ if the conditions of the corollary are satisfied. Therefore, we only need to consider the case when there is an $i$ with $1\leq i\leq n-1$, a right factor $g_{i-1,2}'$ of $g_{i-1}$ and a left factor $g_{i+1,1}'$ of $g_{i+1}$,
such that 
\begin{align}\label{I:RTFLeqs+2CA}
    K(g_{i-1,2}'\alpha^{m_i}, g_i)+K(g_i, \alpha^{m_{i+1}}g_{i+1,1}')>l(g_i)-2.
\end{align}
Then by Proposition \ref{P:WhenTheTwoSidedCancelationIsNotControled}, we have $g_i\in S\cup S^{-1}$ and $\gamma:=R_s(g_i)\alpha L_s(g_i)$ is $B_s(g_i)$ or $B_{s+1}(g_i)$. 

Let $\tilde C$ be the subgroup of $A$ generated by
     $\tilde S:=(S\backslash\{g_i,g_i^{-1}\})\cup\{L_s(g_i),R_s(g_i)\}$. Then $C$ is a subgroup of $\tilde C$. Plugging $\alpha=(R_s(g_i))^{-1}\gamma(L_s(g_i))^{-1}$ into the expression \eqref{E:RTF-Target}, we can write $T$ in the following form 
     \begin{align}\label{E:RTF-Target'}
     T=h_0\gamma^{n_1}h_1\gamma^{n_2}h_2\dotsm \gamma^{n_r}h_r
 \end{align} where $h_i\in \tilde C$, $n_i\geq1$, $r\geq1$ and $h_1,\dots,h_{r-1}$ are nontrivial. We want to prove that $T\neq1$. This can be derived from another application of Corollary \ref{C:TwosidedCancellationAndSumOfCancellationLengths}, as follows, using that $l(\gamma)=1$.
 \begin{enumerate}[(1)]
     \item \label{Item:TheLowerBoundForLengthInTheExtendedGroup} First we have $l(g)\geq s$ for all $g\in \tilde C\backslash\{1\}$.  This is because all elements in the generating set $\tilde S$ have length at least $s$ and that $K(u,v)=0$ for all $u,v\in \tilde S\cup \tilde S^{-1}$ with $uv\neq1$. 
     \item  \label{Item:TheLocalPropertyOfTheExtendedGroup} If the alternating product $B_i\dotsm B_1$ (resp. $D_1\dotsm D_j$) is a right (resp. left) factor of an element in $\tilde C$, with $i,j\leq s-1$ and $B_iD_j=1$, then $i=j$ and $B_kD_k=1$ for all $k\leq i$. To see this, note that  $i,j\leq s-1$ implies there are $u,v\in \tilde S \cup \tilde S^{-1}$ such that $B_{i}\dotsm B_1=R_i(u)$ and $D_1\dotsm D_j=L_j(v)$. Hence $B_1^{-1}\dotsm B_i^{-1}=L_i(u^{-1})$. Since $B_i^{-1}=D_j$, by Definition \ref{D:DefineSubgroupC}\ref{Item:SVeryDistinct}, we know that $u^{-1}=v$ and also $i=j$. Then $B_k^{-1}=D_k$ for all $k\leq i$ follows, as desired. 

     \item  \label{Item:TheBoundForCancellationNumberWhenLenghtIs1}For any right (resp. left) factor $h$ of an element in $\tilde C$, for every $g\in\tilde C\backslash\{1\}$, and every positive integer $m$, we have $K(h\gamma^m,g)\leq 1$ (resp. $K(g,\gamma^mh)\leq1$). 

     We only prove the case when $h$ is a right factor. Assume that $K(h \gamma^m,g)\geq 2$ and we will find a contradiction. 
     Let $h=B_r\dotsm B_1$ and $g=D_1\dotsm D_t$ be alternating products. 
     Then $l(h\gamma^m), t\geq2$ and $h\gamma^m$ is one of the following alternating products:
     \begin{align*}
         B_r\dotsm B_1(\gamma^m), \; B_r\dotsm B_2(B_1\gamma^m), \text{ or } B_r\dotsm B_2,
     \end{align*} which leads to $B_1D_2=1$,  $B_1\gamma^mD_1=1$ and $B_2D_2=1$, and $B_2D_1=1$, respectively. But none of these is possible by \ref{Item:TheLocalPropertyOfTheExtendedGroup}. 

     \item Now, for each $i$ with $1\leq i\leq r-1$, every right factor $h_{i-1,2}$ of $h_{i-1}$ and every left factor $h_{i+1,1}$ of $h_{i+1}$,  using \ref{Item:TheBoundForCancellationNumberWhenLenghtIs1}, $s\geq4$ and \ref{Item:TheLowerBoundForLengthInTheExtendedGroup}, we have
     \begin{align*}
         K(h_{i-1,2}\gamma^{n_i}, h_i)+K(h_i,\gamma^{n_{i+1}}h_{i+1,1})\leq 2\leq s-2\leq l(h_i)-2.
     \end{align*}
     With this, it follows from Corollary \ref{C:TwosidedCancellationAndSumOfCancellationLengths} that $T$ in \eqref{E:RTF-Target'} is nontrivial. 
 \end{enumerate}
\end{proof}

\subsection{Left-first products and the origin of components}\label{Subsection:LFP&Originality}
Having completed the proof of the RTF-ness of $C$ in $A$, we now want to deal with the multi-malnormality of $C$ in $A$. In this subsection, we provide some tools and results, which will be applied in the next subsection in the proof that $C$ is multi-malnormal in $A$. 

Consider a product $c_1^{g_1} \cdots c_n^{g_n}$ with $c_i \in C$ and $g_i \in A \setminus C$.  Our primary technique will be to express $c_1^{g_1} \cdots c_n^{g_n}$ as an alternating product, and then use the exponent conditions of Definition \ref{D:DefineSubgroupC} to limit cancellation.  However, in order to use these exponent conditions, we must keep track of which components ``come from" the $c_i$'s, which will be accomplished in Subsection \ref{Subsection:LFP-multi-malnormalityOfC} using the tools of this section. 

Let us first look at an example to illustrate the subtlety of this task.  Consider a product $g_1g_2g_3$ with $g_1=a,g_2=a^{-1},g_3=a$, then their product is $g_1g_2g_3=a$. Where the component $a$ ``comes from" is ambiguous: it ``comes from" $g_1$ if we first compute $g_2g_3$ and then $g_1(g_2g_3)$; it ``comes from" $g_3$ if we compute $g_1g_2$ first and then calculate $(g_1g_2)g_3$. Therefore, the order of the multiplication matters when considering the origin of the components appearing in the final alternating product (even though the product is independent of the order in which we carried out the multiplications).

Recall that in Definition \ref{D:unaltered-ProductOfTwo}, we defined the unalteredness and cancellation of components and factors in a product $gh$ of two elements $g$ and $h$. We now generalize these to a product $g_1\dotsm g_n$. Note that if a component $A_i$ of $g$ is unaltered in $gh$, then we can view $A_i$ as a component of $gh$, and doing so allows us to talk about whether $A_i$ is unaltered in $(gh)f$ for $f\in A$.  This iterative approach to unaltered components is key to the definition below. 
\begin{definition}\label{D:lFP}
    Let $g_1,\dotsc,g_n\in A$ and $P_t=g_1g_2\dotsm g_t$ for $1\leq t\leq n$. We call the tuple $(P_1,P_2,\dotsc,P_n)$ the \textbf{left-first product} of $(g_1,\dotsc,g_n)$ and denote it as $LFP(g_1,\dotsc,g_n)$. 
    \begin{enumerate}
    \item Let $g_i'$ be a factor of $g_i$ with $1\leq i\leq n$. We say that $g_i'$ is \textbf{unaltered} in $LFP(g_1,\dotsc,g_n)$ if it's unaltered in $P_{j-1}g_j$, for $j=i,i+1,\dotsm,n$. (Here, we use that when $g_i'$ is unaltered in $P_{j-1}g_j$ we may view it as a factor of $P_j$.)  
    We say that $g_i'$ is \textbf{altered} in $LFP(g_1,\dotsc,g_n)$ if it's not unaltered in $LFP(g_1,\dotsc,g_n)$. 
    \item Let $g_i'$ (resp. $g_j'$) be a factor of $g_i$ (resp. $g_j$) with $1\leq i< j\leq n$. We say that $g_i'$ and $g_j'$  \textbf{cancel} in $LFP(g_1,\dotsc,g_n)$ if $g_i'$ is unaltered in $LFP(g_1,\dotsc,g_{j-1})$ and $g_i'$ and $g_j'$ cancel in $P_{j-1}g_j$. 
    \end{enumerate}
\end{definition}

Similarly, we define the right-first product $RFP(g_n,\dotsc,g_1)$ of the tuple $(g_n,\dotsc,g_1)$ to be $(P_1,P_2,\dotsc,P_n)$, where $P_i=g_i\dotsm g_1$; a factor $g_{i}'$ of $g_i$ is unaltered in $RFP(g_n,\dotsc,g_1)$ if it's unaltered in $g_jP_{j-1}$ for $j=i,i+1,\dotsc,n$; and a factor $g_j'$ of $g_j$ and a factor $g_{i}'$ of $g_i$ with $n\geq j>i\geq1$ cancel in $RFP(g_n,\dotsc,g_1)$ if $g_i'$ is unaltered in $RFP(g_{j-1},\dotsc,g_1)$ and $ g_{j}', g_{i}'$ cancel in $g_jP_{j-1}$.  Moreover, for convenience,  we say that $g_i'$ is \textbf{canceled by} $g_j''$ in $LFP(g_1,\dotsc,g_{n})$ (resp. $RFP(g_n,\dotsm, g_1)$) if $g_i'$ and a factor $g_j'$ of $g_j''$ cancel in $LFP(g_1,\dotsc,g_{n})$ (resp. $RFP(g_n,\dotsm, g_1)$).

The rest of this section presents several results concerning left-first products and right-first products. Note that for each result involving the left-first product, there is an analogous result involving the right-first product and vice versa. For brevity, we usually only state and prove one of the two versions for each pair of results. 

For two factors $f_1:=B_{[i_1,j_1]}(g)$, $f_2:=B_{[i_2,j_2]}(g)$ of $g$ with $1\leq i_1\leq j_1<i_2\leq j_2\leq l(g)$, we say that $f_2$ is \textbf{to the RHS} of $f_1$ in $g$. 
The next result is clear and follows by induction. 
\begin{lemma}\label{L:RelativePositionOfUnalteredSections}
    Consider the product $g=g_1g_2\dotsm g_n$. Let $f_i,f_j$ be factors of $g_i,g_j$ respectively with $1\leq i<j\leq n$. 
    Assume that $f_i,f_j$ are unaltered in $LFP(g_1,\dotsc,g_n)$. Then $f_j$ is to the RHS of $f_i$ in $g$.  
\end{lemma}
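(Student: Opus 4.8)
The statement is an assertion about the left-first product $LFP(g_1,\dots,g_n)$ of a tuple $(g_1,\dots,g_n)$: if $f_i$ is an unaltered factor of $g_i$ and $f_j$ is an unaltered factor of $g_j$ with $i<j$, then in the reduced form of the full product $g=g_1\cdots g_n$ the factor $f_j$ appears to the right of $f_i$. The plan is to induct on $n$, the number of terms in the product, and, for the inductive step, to track what happens when we multiply the partial product $P_{n-1}=g_1\cdots g_{n-1}$ by the last term $g_n$, using Definition \ref{D:unaltered-ProductOfTwo} and Definition \ref{D:lFP}.

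First I would dispose of the base case: when $n=2$ the claim is immediate from the definition of the cancellation number $K(g_1,g_2)$ and of unaltered components in Definition \ref{D:unaltered-ProductOfTwo}, since an unaltered component of $g_1$ has index $> K(g_1,g_2)$ (counted from the right) while an unaltered component of $g_2$ has index $> K(g_1,g_2)$ (counted from the left), and the reduced form of $g_1g_2$ is obtained by deleting the $K(g_1,g_2)$ canceled components from each side (and possibly merging the two innermost survivors into one component); in all cases the surviving part of $g_1$ sits strictly to the left of the surviving part of $g_2$. For the inductive step, suppose the result holds for all products of fewer than $n$ terms. Given $f_i \subseteq g_i$ and $f_j \subseteq g_j$ unaltered in $LFP(g_1,\dots,g_n)$ with $i<j$, I split into two cases according to whether $j<n$ or $j=n$. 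If $j<n$, then by definition both $f_i$ and $f_j$ are already unaltered in $LFP(g_1,\dots,g_{n-1})$ (being unaltered in $LFP(g_1,\dots,g_n)$ forces unalteredness in every earlier partial product), so by the induction hypothesis $f_j$ lies to the right of $f_i$ in the reduced form of $P_{n-1}$; then both factors are unaltered in $P_{n-1}g_n$, and the $n=2$ analysis (applied to the product $P_{n-1}\cdot g_n$, viewing $f_i,f_j$ as factors of the left term $P_{n-1}$) shows their relative left-right position is preserved when we pass to $g=P_{n-1}g_n$. If instead $j=n$, then $f_i$ is unaltered in $LFP(g_1,\dots,g_{n-1})$, hence is a factor of the reduced form of $P_{n-1}$, and $f_n=f_j$ is a factor of $g_n$; now apply the $n=2$ case to the single multiplication $P_{n-1}\cdot g_n$: the unaltered survivors of $P_{n-1}$ (which include $f_i$) occupy the left part of the reduced form of $g$, and the unaltered survivors of $g_n$ (which include $f_j$) occupy the right part, so $f_j$ is to the right of $f_i$.

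The only genuinely delicate point, and the one I expect to be the main obstacle to write carefully, is bookkeeping the merge case of Definition \ref{D:unaltered-ProductOfTwo}(3): when the two innermost surviving components of $P_{n-1}$ and of $g_n$ merge into a single component of $g$, one must check that this does not move $f_i$ or $f_j$ to the wrong side — but by the definition of ``unaltered'' a merged component is \emph{not} unaltered, so neither $f_i$ nor $f_j$ can be (or contain) the merging component, and hence both are strictly interior to their respective halves; the merge therefore only affects a component strictly between $f_i$ and $f_j$ (or to the outside of both), leaving their relative order untouched. Once this observation is in place, the induction goes through mechanically. In the write-up I would fold all of this into a short paragraph since, as the paper itself remarks, the statement ``is clear and follows by induction'' — the role of the proof is just to make explicit that ``unaltered'' is inherited by earlier partial products and that the one-step product case handles the left-right ordering.

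\begin{proof}
We induct on $n$; the case $n=1$ is vacuous and the case $n=2$ is immediate from Definition \ref{D:unaltered-ProductOfTwo}: writing $k=K(g_1,g_2)$, any unaltered component of $g_1$ has index at most $l(g_1)-k$ from the left (equivalently, is not among the last $k$ components), any unaltered component of $g_2$ is not among its first $k$ components, and in the reduced form of $g_1g_2$ these surviving parts of $g_1$ and $g_2$ sit, respectively, as the left and the right portion of the word, so $f_2$ is to the RHS of $f_1$. (In the merge subcase, the single merged component is by definition not unaltered, so it is not $f_1$ or $f_2$, and the conclusion is unaffected.)

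Now assume $n\geq 3$ and that the statement holds for products of fewer than $n$ terms. Let $f_i$ and $f_j$ ($i<j$) be unaltered in $LFP(g_1,\dots,g_n)$. By Definition \ref{D:lFP}, a factor which is unaltered in $LFP(g_1,\dots,g_n)$ is unaltered in $LFP(g_1,\dots,g_m)$ for every $m$ with $i\le m\le n$ (resp. $j\le m\le n$). If $j<n$, then $f_i$ and $f_j$ are unaltered in $LFP(g_1,\dots,g_{n-1})$, so by the induction hypothesis $f_j$ is to the RHS of $f_i$ in the reduced form of $P_{n-1}=g_1\cdots g_{n-1}$; since both are unaltered in $P_{n-1}g_n$, applying the $n=2$ case to the product $P_{n-1}\cdot g_n$ (with $f_i,f_j$ viewed as factors of the left term) shows $f_j$ remains to the RHS of $f_i$ in $g=g_1\cdots g_n$. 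If $j=n$, then $f_i$ is unaltered in $LFP(g_1,\dots,g_{n-1})$, hence is a factor of the reduced form of $P_{n-1}$, and $f_j$ is a factor of $g_n$; the $n=2$ case applied to $P_{n-1}\cdot g_n$ places the unaltered survivors of $P_{n-1}$ (including $f_i$) to the left of those of $g_n$ (including $f_j$) in $g$. In either case $f_j$ is to the RHS of $f_i$ in $g$, completing the induction.
\end{proof}
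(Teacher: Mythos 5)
Your proof is correct and follows exactly the induction the paper invokes (the paper provides no written proof, only the remark that the statement ``is clear and follows by induction''), so yours is essentially the intended argument spelled out in full. The only small imprecision is in the subcase $j<n$, where you invoke ``the $n=2$ case'' with both $f_i$ and $f_j$ viewed as factors of the left term $P_{n-1}$: as stated, your base case treats one factor from each term of a two-term product, whereas here you need the (easier) observation that when $P_{n-1}=af_ibf_jc$ is reduced and $f_j$ is unaltered in $P(P_{n-1},g_n)$, then $P_{n-1}g_n=(af_ibf_j)(cg_n)$ is reduced with the same relative positions intact (cf.\ Remark \ref{R:unaltered-RightFactor-ReducedProduct} and its left-sided analogue); your plan paragraph makes clear you understand this, so it is a matter of wording rather than a gap.
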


\begin{lemma}\label{L:PairCancellationToATrivialProduct}
    Let $g_1,\dotsc,g_n\in A$ with $n\geq 2$. Assume that the component $RB_r(g_1)$ of $g_1$ and the component $B_t(g_n)$ of $g_n$ cancel in $LFP(g_1,\dotsc,g_n)$, then we have $RB_r(g_1)B_t(g_n)=1$ and $R_r(g_1)g_2\dotsm g_{n-1}L_t(g_n)=1$.
\end{lemma}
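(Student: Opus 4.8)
The plan is to unwind the definition of "cancel in $LFP(g_1,\dots,g_n)$" (Definition \ref{D:lFP}) and trace the fate of the two components through the iterated products $P_t = g_1\cdots g_t$. By definition of cancellation in a left-first product, there is some index $j$ with $2 \le j \le n$ such that $RB_r(g_1)$ is unaltered in $LFP(g_1,\dots,g_{j-1})$ and $RB_r(g_1)$ together with $B_t(g_n)$ cancel in the two-term product $P_{j-1}g_j$ — wait, more precisely, the cancellation partner lives in $g_n$, so in fact $j = n$: the component $B_t(g_n)$ is a component of $g_n$, so the two-term product in which the cancellation happens must be $P_{n-1}g_n$. Thus $RB_r(g_1)$ is unaltered in $LFP(g_1,\dots,g_{n-1})$, and $RB_r(g_1)$ (viewed as a component of $P_{n-1}$) cancels with $B_t(g_n)$ in the product $P_{n-1}g_n$.

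First I would record the immediate consequence: since $RB_r(g_1)$ and $B_t(g_n)$ cancel in the two-term product $P_{n-1}g_n$, Definition \ref{D:unaltered-ProductOfTwo}(1) gives $RB_r(g_1)B_t(g_n)=1$ directly. Next I would locate $RB_r(g_1)$ inside $P_{n-1}$. Because $RB_r(g_1)$ is unaltered in $LFP(g_1,\dots,g_{n-1})$, iterating Remark \ref{R:unaltered-RightFactor-ReducedProduct} (applied from the right at each stage $P_{j-1}g_j$) lets me write, for each such stage, a reduced factorization exhibiting the right factor $R_r(g_1)$ of $g_1$ as an honest right factor that survives. Concretely, I would show by induction on $j$ (for $1 \le j \le n-1$) that $P_j$ admits a reduced factorization $P_j = Q_j \cdot R_r(g_1)$ for some $Q_j \in A$: the base case $P_1 = g_1 = L_1\cdot R_r(g_1)$ is the reduced form of $g_1$, and at each step the unalteredness of $RB_r(g_1)$ in $Q_{j-1}R_r(g_1)g_j$ forces (by Remark \ref{R:unaltered-RightFactor-ReducedProduct}) a reduced factorization $R_r(g_1)g_j = R_r(g_1) \cdot g_j$ with nothing of $R_r(g_1)$ altered, hence $P_j = (Q_{j-1} \cdot g_j')\cdot R_r(g_1)$ is reduced for the appropriate $g_j'$. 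In particular $P_{n-1} = Q_{n-1}\cdot R_r(g_1)$ is a reduced product.

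Finally I would combine these. Since $R_r(g_1)$ is an unaltered right factor of $P_{n-1}$, and its last $B_t(g_n)$-many... — more simply: in $P_{n-1}g_n$, the component $RB_r(g_1)$ (the first component of $R_r(g_1)$, i.e. the one sitting $r$ places from the right end of $g_1$) cancels against $B_t(g_n)$ (the $t$-th component of $g_n$ from the left). Writing $g_n = L_t(g_n)\cdot g_n'$ as its reduced form and $R_r(g_1) = RB_r(g_1)\cdot R_{r-1}(g_1)$... actually the cleanest route: the hypothesis that these two components cancel in $P_{n-1}g_n$ means, by Definition \ref{D:unaltered-ProductOfTwo}(1), that in the reduced form of $P_{n-1}$ the component $RB_r(g_1)$ occupies position $r$ from the right (which we have, since $P_{n-1}=Q_{n-1}R_r(g_1)$ is reduced and $R_r(g_1)$ has length $r$), that $B_t(g_n)$ occupies position $t$ from the left of $g_n$, that these indices match up under the cancellation, i.e. $r = t$ is NOT required but rather the cancellation number $K(P_{n-1}, g_n)$ is at least $\max\{r,t\}$ and position $r$ of $P_{n-1}$ from the right cancels position $r$ of $g_n$ from the left, forcing $r = t$; more importantly it yields $R_r(g_1) \cdot L_r(g_n) = 1$ (the right factor of $P_{n-1}$ of length $r$ times the left factor of $g_n$ of length $r$ is trivial). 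Combining with $P_{n-1} = g_1 g_2\cdots g_{n-1} = Q_{n-1} R_r(g_1)$ and $R_r(g_1) = $ (right factor of $g_1$), we get
$$R_r(g_1)g_2\cdots g_{n-1}L_t(g_n) = R_r(g_1)g_2\cdots g_{n-1}L_r(g_n).$$
To see this equals $1$: from $P_{n-1} = Q_{n-1}R_r(g_1)$ reduced we have $Q_{n-1} = g_1g_2\cdots g_{n-1}R_r(g_1)^{-1}$, hence $g_2\cdots g_{n-1} = L_{l(g_1)-r}(g_1)^{-1}Q_{n-1}R_r(g_1)^{-1}$... this is getting circular. The honest step is: $R_r(g_1)g_2\cdots g_{n-1}$ is the right factor of $P_{n-1}$ of length $r$, namely $R_r(P_{n-1}) = R_r(g_1)$, because $P_{n-1} = Q_{n-1}R_r(g_1)$ is reduced — so actually $R_r(g_1)g_2\cdots g_{n-1}$ is NOT $R_r(P_{n-1})$ unless $g_2\cdots g_{n-1}$ contributes nothing, which is false in general. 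I will instead argue: $g_2\cdots g_{n-1} = (L(g_1))^{-1}P_{n-1}$ where $L(g_1) = L_{l(g_1)-r}(g_1)$, so $R_r(g_1)g_2\cdots g_{n-1} = R_r(g_1)(L(g_1))^{-1}P_{n-1} = g_1^{-1}... $ no. Let me restart the last step more carefully in the actual write-up: the key identity is simply that $P_{n-1} = g_1g_2\cdots g_{n-1}$, so $R_r(g_1)^{-1}\cdot(\text{first } l(g_1)-r \text{ components of } g_1)^{-1}\cdot P_{n-1} = g_2\cdots g_{n-1}$, giving $R_r(g_1)g_2\cdots g_{n-1} = R_r(g_1)\cdot R_r(g_1)^{-1}\cdot L(g_1)^{-1}\cdot P_{n-1}$; and since $g_1 = L(g_1)R_r(g_1)$, we get $L(g_1)^{-1}g_1 = R_r(g_1)$, so this chain shows $R_r(g_1)g_2\cdots g_{n-1}$ equals the right factor of $P_{n-1}$ of length $\ge r$ only after the overlap, and then multiplying by $L_r(g_n)$ and invoking $R_r(g_1)L_r(g_n) = 1$ (which comes from the cancellation) finishes it.

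**Expected main obstacle.** The real content — and the step I expect to be fiddly — is the bookkeeping that converts "$RB_r(g_1)$ is unaltered throughout $LFP(g_1,\dots,g_{n-1})$ and then cancels with $B_t(g_n)$" into the clean group-equation $R_r(g_1)g_2\cdots g_{n-1}L_t(g_n)=1$. This requires (i) an induction showing $R_r(g_1)$ persists as a reduced right factor of each $P_j$ for $j \le n-1$ (using Remark \ref{R:unaltered-RightFactor-ReducedProduct} repeatedly), and (ii) carefully reading off from Definition \ref{D:unaltered-ProductOfTwo}(1) and the definition of $K(\cdot,\cdot)$ that the cancellation of component $r$-from-the-right of $P_{n-1}$ against component $t$-from-the-left of $g_n$ forces $r=t$ and $R_r(P_{n-1})\cdot L_r(g_n) = 1$, then substituting $R_r(P_{n-1}) = R_r(g_1)\cdot(\text{trailing part})$ is wrong — rather one substitutes $P_{n-1} = Q_{n-1}R_r(g_1)$ and uses $P_{n-1}L_r(g_n)=1$ on the length-$r$ tails. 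The cleanest packaging, which I would adopt in the final write-up, is: from the cancellation, $K(P_{n-1},g_n)\ge r$ and the $r$-th tail of $P_{n-1}$ equals the inverse of the $r$-th head of $g_n$, i.e. $R_r(P_{n-1}) = (L_r(g_n))^{-1}$; since $P_{n-1} = Q_{n-1}R_r(g_1)$ is reduced with $l(R_r(g_1)) = r$, we get $R_r(P_{n-1}) = R_r(g_1)$, hence $R_r(g_1) = (L_t(g_n))^{-1}$ (using $t=r$), which gives both $RB_r(g_1)B_t(g_n) = 1$ and — via $g_1g_2\cdots g_{n-1} = P_{n-1} = Q_{n-1}R_r(g_1) = Q_{n-1}(L_t(g_n))^{-1}$, so $R_r(g_1)g_2\cdots g_{n-1} = R_r(g_1)(L(g_1))^{-1}P_{n-1}$ where $L(g_1)R_r(g_1) = g_1$ — the identity $R_r(g_1)g_2\cdots g_{n-1}L_t(g_n) = 1$ after one more cancellation. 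I would present this computation in three or four displayed lines rather than prose.
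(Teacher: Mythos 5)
Your route is genuinely different from the paper's, which merges $g_1$ and $g_2$ into a single element $g_1g_2$, observes that $RB_r(g_1)=RB_{r'}(g_1g_2)$ with $R_{r'}(g_1g_2)=R_r(g_1)g_2$, and then applies the induction hypothesis on $n$ to the shorter tuple $(g_1g_2,g_3,\dotsc,g_n)$; both equalities then drop out by substitution. Your plan instead tries to track, stage by stage, how a piece of $g_1$ sits inside each $P_j$.

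The problem is that the intermediate claim your plan rests on is false. You assert by induction that ``$P_j$ admits a reduced factorization $P_j = Q_j\cdot R_r(g_1)$,'' i.e.\ that $R_r(g_1)$ remains a right factor of every $P_j$ up to $j=n-1$. But unalteredness of the single component $RB_r(g_1)$ says nothing about the components $RB_{r-1}(g_1),\dotsc,RB_1(g_1)$ to its right: those typically do get canceled or merged when you multiply by $g_2,\dotsc,g_j$ on the right, and even when they survive, more of $g_2,\dotsc,g_j$ sits to the right of them in $P_j$, so $R_r(g_1)$ is not at the right end. You do notice this midway (``so actually $R_r(g_1)g_2\cdots g_{n-1}$ is NOT $R_r(P_{n-1})\,\dots$''), but the ``cleanest packaging'' you say you would adopt in the final write-up reinstates exactly the same false premise (``since $P_{n-1}=Q_{n-1}R_r(g_1)$ is reduced''), so the gap is never closed.

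The correct invariant is dual: what survives is the \emph{left} factor. Since $RB_r(g_1)$ is unaltered in $P_{j-1}g_j$, nothing in $P_{j-1}$ to its left is disturbed, so $L_{l(g_1)-r}(g_1)$ is a left factor of every $P_j$ for $j\leq n-1$; equivalently $P_j=L_{l(g_1)-r}(g_1)\cdot\bigl(R_r(g_1)g_2\cdots g_j\bigr)$ is reduced, i.e.\ $R_r(g_1)g_2\cdots g_j$ is a right factor of $P_j$. With $j=n-1$ this gives $R_t(P_{n-1})=R_r(g_1)g_2\cdots g_{n-1}$, where $t$ is the position of $RB_r(g_1)$ from the right in $P_{n-1}$ (and the cancellation pairs the $t$-th component from the right of $P_{n-1}$ with $B_t(g_n)$, not $r$-from-the-right with $t$-from-the-left as you tentatively wrote). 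Then $R_t(P_{n-1})L_t(g_n)=1$ gives the second equality, and the first follows from the same cancellation. Fixing your plan this way makes it essentially equivalent to the paper's single-step observation $R_{r'}(g_1g_2)=R_r(g_1)g_2$ applied iteratively; the paper's induction on $n$ packages that iteration more cleanly.
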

\begin{proof}
   We use induction on $n$. For $n=2$ it is true by definition, so consider $n\geq 3$. Note that $RB_r(g_1)$ is unaltered in $g_1g_2$ as it's unaltered in $LFP(g_1,\dotsc,g_{n-1})$, so $RB_r(g_1)=RB_{r'}(g_1g_2)$ for a positive integer $r'$ and $R_{r'}(g_1g_2)=R_{r}(g_1) g_2$. Now $RB_{r'}(g_1g_2)$ and $B_t(g_n)$ cancel in $LFP(g_1g_2, g_3,\dotsc, g_n)$. Thus by the induction assumption, we have $RB_{r'}(g_1g_1)B_t(g_n)=1$ and $R_{r'}(g_1g_2)g_3\dotsm g_{n-1}L_t(g_n)=1$, from which desired equalities follow by replacement.
\end{proof}

\begin{lemma}\label{L:AlteredRightSection}
    Let $g,h\in A$ and $1\leq i\leq l(h)$. Then $R_i(gh)=R_i(h)$ if and only if $R_i(h)$ is unaltered in the product $gh$.
\end{lemma}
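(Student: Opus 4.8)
\textbf{Proof proposal for Lemma \ref{L:AlteredRightSection}.}

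The plan is to unwind the definitions on both sides and observe that they say the same thing, using only the basic cancellation bookkeeping already set up in Definitions \ref{D:unaltered-ProductOfTwo} and \ref{D:lFP} together with Remark \ref{R:unaltered-RightFactor-ReducedProduct}. Write $h$ in reduced form $h = B_1 \cdots B_n$ with $n = l(h)$, so that $R_i(h) = B_{n-i+1}\cdots B_n$, and let $k = K(g,h)$ be the cancellation number, as in Definition \ref{D:unaltered-ProductOfTwo}. The key numerical fact is that the components $B_1, \ldots, B_k$ of $h$ are exactly the canceled ones, the component $B_{k+1}$ may or may not survive (it is altered precisely when it is canceled, i.e. $k = n$, or when it merges with a component of $g$), and the components $B_{k+2},\ldots,B_n$ are always unaltered.

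For the forward direction I would argue contrapositively: suppose $R_i(h)$ is altered in $gh$. Then, since being altered is a property of at least one of its components, one of $B_{n-i+1},\ldots,B_n$ is altered, and because altered components of $h$ occur only at positions $\leq k+1$ (they are among the canceled block $B_1,\ldots,B_k$ or the possibly-merging/possibly-canceled component $B_{k+1}$), this forces $n - i + 1 \leq k+1$, i.e. $k \geq n-i$. If $k \geq n-i+1$ then the entire factor $R_i(h)$ is canceled (or its last surviving piece is shortened), and in any case $l(gh) = l(g)+n - 2k < l(g) + n - i$ or the left end of $R_i(h)$ inside $gh$ is shorter than $i$ letters, so $R_i(gh) \neq R_i(h)$ — more cleanly: if $R_i(h)$ were equal to $R_i(gh)$ then $R_i(h)$ would be a right factor of $gh$, hence unaltered by Remark \ref{R:unaltered-RightFactor-ReducedProduct} applied to the decomposition $h = (B_1\cdots B_{n-i})(B_{n-i+1}\cdots B_n)$, a contradiction. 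This last observation is really the engine of the whole proof and I would phrase the argument around it. For the converse, suppose $R_i(h)$ is unaltered in $gh$. Then, writing $h = h' \cdot R_i(h)$ as a reduced product with $h' = B_1\cdots B_{n-i}$, unalteredness of the factor $R_i(h)$ means (by the ``if and only if'' in Remark \ref{R:unaltered-RightFactor-ReducedProduct}, or directly by Definition \ref{D:unaltered-ProductOfTwo}(2) applied componentwise) that the product $P(g h', R_i(h))$ is reduced; hence $R_i(h)$ is a right factor of $(gh')\cdot R_i(h) = gh$, and a right factor of length $i$ is unique, so it equals $R_i(gh)$.

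I expect the main obstacle to be purely bookkeeping rather than conceptual: one has to be careful about the three sub-cases in Definition \ref{D:unaltered-ProductOfTwo}(2) — the ``merge'' case (iii)/the $l(A_{k+1}B_{k+1})=2$ case — and about the boundary index $i$ possibly equalling $n$ or $n-k$, where $R_i(h)$ meets or overlaps the component $B_{k+1}$ that may merge with a letter of $g$. In all of these boundary situations, however, the characterization via ``$R_i(h)$ is a right factor of $gh$'' from Remark \ref{R:unaltered-RightFactor-ReducedProduct} handles the case uniformly, so I would lean on that remark to avoid an explicit case split. The proof should be just a few lines once this is set up.
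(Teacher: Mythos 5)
Your proof is correct and takes essentially the same approach as the paper: both directions reduce to Remark \ref{R:unaltered-RightFactor-ReducedProduct} via the observation that $R_i(gh)=R_i(h)$ exactly when the product $P\bigl(gL_{l(h)-i}(h), R_i(h)\bigr)$ is reduced. The paper states the ``if'' direction is clear and proves the ``only if'' direction directly; your contrapositive-plus-contradiction phrasing unwinds to the same argument, and the preliminary bookkeeping about which indices can be altered, while correct, is not needed once you invoke the Remark.
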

\begin{proof} The direction of ``if" is clear. Now assume that $R_i(gh)=R_i(h)$. Writing $h=h_1R_i(h)$ as reduced product, $R_i(h)$ is a right factor of $gh=(gh_1)R_i(h)$. Now by Remark \ref{R:unaltered-RightFactor-ReducedProduct}, $P(gh_1,R_i(h))$ is reduced , and thus $R_i(h)$ is unaltered in $P(g,h)$, as desired.
\end{proof}
\begin{corollary}\label{C:UnalteredInLFPToUnalteredInProduct}
    Let $g_1,\dotsc,g_n\in A$, $1\leq i\leq l(g_1)$ and $1\leq j\leq n$.
    Assume that $L_i(g_1)$ is unaltered in $LFP(g_1,\dotsc,g_n)$. Then $L_i(g_1)$ is unaltered in $P(g_1\dotsm g_j, g_{j+1}\dotsm g_n)$. 
\end{corollary}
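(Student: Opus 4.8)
Corollary \ref{C:UnalteredInLFPToUnalteredInProduct} asserts that if $L_i(g_1)$ is unaltered in $LFP(g_1,\dotsc,g_n)$, then it is unaltered in the two-term product $P(g_1\dotsm g_j, g_{j+1}\dotsm g_n)$. The plan is to reduce this to Lemma \ref{L:AlteredRightSection} after passing to inverses, using only the defining iterative structure of $LFP$.

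First I would set up notation: write $P_t = g_1g_2\cdots g_t$ for $1 \le t \le n$, so that by Definition \ref{D:lFP}, the hypothesis that $L_i(g_1)$ is unaltered in $LFP(g_1,\dotsc,g_n)$ means precisely that $L_i(g_1)$ is unaltered in $P_{t-1}g_t$ for every $t = 1, 2, \ldots, n$ (with $P_0 = 1$), and in particular we may regard $L_i(g_1)$ as a left factor of each $P_t$. A routine induction on $t$ then shows that there is an index $i_t \ge i$ (in fact a nondecreasing sequence) with $L_i(P_t) = L_i(g_1)$ — i.e. the left $i$th factor of $P_t$ literally equals $L_i(g_1)$ — since unalteredness of $L_i(g_1)$ in $P_{t-1}g_t$ means the leftmost $i$ components of $P_{t-1}$ (which are $L_i(g_1)$) survive into $P_t$. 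In particular $L_i(g_1) = L_i(P_n) = L_i(g_1\cdots g_j \cdot g_{j+1}\cdots g_n)$, and also $L_i(g_1) = L_i(P_j) = L_i(g_1 \cdots g_j)$.

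The key step is then to invoke the ``left version'' of Lemma \ref{L:AlteredRightSection}, which the paper explicitly permits (it remarks after Lemma \ref{L:End-preservingAndCancellationNumber}, and more generally, that each such statement has a left–right mirror). Concretely: taking inverses, $L_i(g_1)$ being a left factor whose value is preserved translates to $R_i\big((g_1\cdots g_j)^{-1}\big)^{-1}$ information, but it is cleanest simply to state that the mirror of Lemma \ref{L:AlteredRightSection} reads ``for $u,v \in A$ and $1 \le i \le l(u)$, $L_i(uv) = L_i(u)$ if and only if $L_i(u)$ is unaltered in the product $uv$''. Apply this with $u = g_1\cdots g_j$ and $v = g_{j+1}\cdots g_n$: we have just established $L_i(g_1\cdots g_j) = L_i(g_1)$ and $L_i\big((g_1\cdots g_j)(g_{j+1}\cdots g_n)\big) = L_i(g_1) = L_i(g_1\cdots g_j)$, so the mirrored lemma gives that $L_i(g_1\cdots g_j) = L_i(g_1)$ is unaltered in $P(g_1\cdots g_j, g_{j+1}\cdots g_n)$, which is exactly the conclusion.

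The only mild subtlety — and the step I expect to need the most care — is the bookkeeping in the inductive claim $L_i(P_t) = L_i(g_1)$: one must confirm that ``$L_i(g_1)$ unaltered in $P_{t-1}g_t$,'' together with the inductive hypothesis $L_i(P_{t-1}) = L_i(g_1)$, genuinely yields $L_i(P_t) = L_i(g_1)$ rather than just that some shift of it survives. This follows directly from Definition \ref{D:unaltered-ProductOfTwo}: an unaltered component of $P_{t-1}$ remains a component of $P_t$ occupying the same position counted from the left end (no component to its left is cancelled or merged, since those would also be altered), so the first $i$ components of $P_t$ are still exactly those of $L_i(g_1)$. Once this is in hand, the reduction to (the mirror of) Lemma \ref{L:AlteredRightSection} is immediate and the proof is short.
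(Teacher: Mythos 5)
Your proof is correct and follows essentially the same route as the paper: both derive $L_i(g_1)=L_i(g_1\cdots g_j)=L_i(g_1\cdots g_n)$ from the hypothesis on $LFP(g_1,\dots,g_n)$ and then conclude via the left version of Lemma \ref{L:AlteredRightSection}. The paper simply states the intermediate equalities without spelling out the small induction you elaborate on.
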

\begin{proof} As $L_i(g_1)$ is unaltered in $LFP(g_1,\dotsc,g_n)$, we have $L_i(g_1)=L_i(g_1\dotsm g_j)=L_i(g_1\dotsm g_n)$. Then $L_i(g_1)$ is unaltered in the product $P(g_1\dotsm g_j, g_{j+1}\dotsm g_n)$ by (the left version of) Lemma \ref{L:AlteredRightSection}.
\end{proof}

The next result is about the multiplicativity of the unalteredness of left-first product. 
\begin{lemma}\label{L:LeftfirstAlteration-ChangeInitial}
     Let $g_1,\dotsc,g_n\in A$, $1< k<n$ and $B_t$ be a component of of $g_k$. Then $B_t$ is unaltered in $LFP(g_1, \dotsc, g_n)$ if and only if $B_t$ is unaltered in both $LFP(g_1, \dotsc, g_k)$ and $LFP(g_k, \dotsc, g_n)$. 
\end{lemma}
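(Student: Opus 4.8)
\textbf{Proof proposal for Lemma \ref{L:LeftfirstAlteration-ChangeInitial}.}

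The plan is to prove the two implications separately, working directly from the definition of ``unaltered in a left-first product'' (Definition \ref{D:lFP}). Write $P_t = g_1 \cdots g_t$ for $1 \le t \le n$, and also $Q_t = g_k g_{k+1} \cdots g_t$ for $k \le t \le n$. The key observation, which I would isolate as a preliminary step, is that if $B_t$ is a component of $g_k$ that is unaltered in $LFP(g_1,\dots,g_k)$, then by Corollary \ref{C:UnalteredInLFPToUnalteredInProduct} (or rather its combination with Lemma \ref{L:AlteredRightSection}) the component $B_t$, viewed as a component of $P_k = P_{k-1} g_k$, sits inside a factor of $P_k$ of the form $P_{k-1}' \cdot B_t \cdot (\text{right part of } g_k)$, where the product $P(P_{k-1}', B_t \cdots)$ is reduced; i.e. the portion of $g_k$ from $B_t$ rightward is an ``unaltered suffix'' of $P_k$. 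This lets me compare, for each $j > k$, the step $P_{j-1} g_j$ against the step $Q_{j-1} g_j$ and see that the behaviour of $B_t$ is the same in both.

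\emph{($\Leftarrow$)} Suppose $B_t$ is unaltered in $LFP(g_1,\dots,g_k)$ and in $LFP(g_k,\dots,g_n)$. I want to show $B_t$ is unaltered in $P_{j-1}g_j$ for $j = k+1, \dots, n$ (it is automatically unaltered in $P_{k-1}g_k$ by the first hypothesis with $j=k$). I would argue by induction on $j$ that $B_t$, as a component of $P_{j-1}$, occupies the ``same relative position from the right'' as it does in $Q_{j-1}$ — more precisely, that $R_i(P_{j-1}) = R_i(Q_{j-1})$ where $i$ is the number of components of $g_k g_{k+1}\cdots g_{j-1}$ lying at or to the right of $B_t$ (these being unaltered by the $LFP(g_k,\dots,g_n)$ hypothesis and Lemma \ref{L:RelativePositionOfUnalteredSections}). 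Since whether a given suffix of a word is altered when we multiply on the right by $g_j$ depends only on that suffix (this is exactly the content of Definition \ref{D:unaltered-ProductOfTwo} and Remark \ref{R:unaltered-RightFactor-ReducedProduct}: the cancellation number $K(\,\cdot\,, g_j)$ and the merge behaviour are determined by the right end of the left factor), and that suffix agrees for $P_{j-1}$ and $Q_{j-1}$, the component $B_t$ is unaltered in $P_{j-1}g_j$ iff it is unaltered in $Q_{j-1}g_j$, which holds by hypothesis. This closes the induction.

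\emph{($\Rightarrow$)} Suppose $B_t$ is unaltered in $LFP(g_1,\dots,g_n)$. Unalteredness in $LFP(g_1,\dots,g_k)$ is immediate, since that just requires unalteredness in $P_{j-1}g_j$ for $j \le k$, a subset of the given conditions. For unalteredness in $LFP(g_k,\dots,g_n)$, I run the same position-matching argument in reverse: since $B_t$ is unaltered all the way through, $R_i(P_{j-1}) = R_i(Q_{j-1})$ for the appropriate $i$ at each stage $j \ge k$ (again using Lemma \ref{L:RelativePositionOfUnalteredSections} to locate $B_t$ and the components of $g_k \cdots g_{j-1}$ to its right, all of which are unaltered), so the step $Q_{j-1}g_j$ alters $B_t$ iff $P_{j-1}g_j$ does, and the latter does not.

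\emph{Main obstacle.} The genuinely delicate point is the bookkeeping in the inductive claim $R_i(P_{j-1}) = R_i(Q_{j-1})$: one must be careful that the index $i$ (the number of surviving components to the right of $B_t$, inclusive) is consistent across the two products, and that ``unaltered'' really is a property of a suffix alone — in particular that a merge at the junction in $P_{j-1}g_j$ cannot happen in one product but not the other. This is where Lemma \ref{L:AlteredRightSection} does the heavy lifting: $R_i(xg_j) = R_i(g_j\text{'s image})$ is characterized purely by unalteredness of that suffix in $x g_j$, and $x = P_{j-1}$ versus $x = Q_{j-1}$ present the same suffix to $g_j$, hence the same outcome. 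I expect the write-up of this suffix-invariance to be the only part needing real care; everything else is routine unwinding of the definitions.
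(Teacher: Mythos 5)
Your proof is correct and follows essentially the same strategy as the paper's: both hinge on the observation that once $B_t$ is unaltered in $LFP(g_1,\dots,g_k)$, whether $B_t$ survives each subsequent multiplication by $g_j$ depends only on the data at and to the right of $B_t$ in the current partial product, and this data agrees between the two left-first products. The paper packages this slightly more compactly by noting that $g_1\cdots g_{k-1}g_k'B_t$ and $g_k'B_t$ share the same last component $B_t$, so reducedness of $P(g_1\cdots g_{k-1}g_k'B_t,\,h)$ and of $P(g_k'B_t,\,h)$ always coincide, whereas you unwind the same fact via an explicit induction on the matching suffixes $R_i(P_{j-1})=R_i(Q_{j-1})$.
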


\begin{proof}
     Note that $B_t$ is unaltered in $LFP(g_1,\dotsc,g_k)$ if $B_t$ is unaltered in $LFP(g_1,\dotsc,g_n)$. Now we assume that $B_t$ is unaltered in $LFP(g_1,\dotsc,g_k)$ and we will prove that $B_t$ is unaltered in $LFP(g_1,\dotsc,g_n)$ if and only if $B_t$ is unaltered in $LFP(g_k,\dotsc,g_n)$. This finishes the proof.
     
     First, we write $g_k$ as a reduced product $g_k=g_k'B_tg_k''$. Since $B_t$ is unaltered in the product $P(g_1\dotsm g_{k-1},g_k)$ the last component of $g_1\dotsm g_{k-1}g_k'B_t$ is $B_t$. Note that the last component of $g_k'B_t$ is $B_t$ as well. Hence for every $h\in A$, $P(g_1\dotsm g_{k-1}g_k'B_t, h)$ is reduced if and only if $P(g_k'B_t,h)$ is reduced. 

     Now, $B_t$ is unaltered in $P(g_1,\dotsc,g_n)$ if and only if for each $j$ with $k+1\leq j\leq n$, the product $P(g_1\dotsm g_{k-1}g_k'B_t,g_{k}''g_{k+1}\dotsm g_j)$ is reduced, which is equivalent to $P(g_k'B_t,g_{k}''g_{k+1}\dotsm g_j)$ being reduced. Hence $B_t$ is unaltered in  $P(g_1,\dotsc,g_n)$ if and only if it is unaltered in $P(g_k,\dotsc, g_n)$, as desisred.
\end{proof}

\begin{corollary}
    \label{C:CrossOverCancellationInLeftFirstProduct}
    Let $g_1,\dotsc,g_n\in A$ and $1< k<n$. Assume that components $B_1$ of $g_1$ and $B_k$ of $g_k$ are unaltered in $LFP(g_1,\dotsc, g_k)$ and $B_1$ is altered in $LFP(g_1,\dotsc, g_n)$. Then the component $B_k$ of $g_k$ is altered in $LFP(g_k,\dotsc, g_n)$. 
\end{corollary}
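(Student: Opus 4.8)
The plan is to deduce Corollary~\ref{C:CrossOverCancellationInLeftFirstProduct} directly from the multiplicativity statement of Lemma~\ref{L:LeftfirstAlteration-ChangeInitial}, together with the cross-over tracking provided by Lemma~\ref{L:PairCancellationToATrivialProduct}. The hypotheses give us that $B_1$ (a component of $g_1$) and $B_k$ (a component of $g_k$) are both unaltered in $LFP(g_1,\dots,g_k)$, while $B_1$ becomes altered in $LFP(g_1,\dots,g_n)$. Since $B_1$ is unaltered in $LFP(g_1,\dots,g_k)$ but altered in $LFP(g_1,\dots,g_n)$, the alteration of $B_1$ must first occur at some stage $P_{j-1}g_j$ with $k<j\le n$; at that stage $B_1$ (viewed as a component of $P_{j-1}=g_1\cdots g_{j-1}$) is either canceled or merged.

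First I would argue that $B_1$ is in fact canceled, not merged, and that the component it cancels with lies in $g_j$ (or, more precisely, that in $LFP(g_1,\dots,g_n)$ the component $B_1$ is canceled by some $g_j$ with $k<j\le n$): indeed, by Lemma~\ref{L:RelativePositionOfUnalteredSections} all components of $g_1,\dots,g_{k}$ that survive to $P_k$ are ordered left-to-right according to their source, and $B_1$ being the relevant component, anything canceling it from the right in a later multiplication must be a component of some $g_j$, $j>k$. Tracing the first alteration of $B_1$ using Lemma~\ref{L:PairCancellationToATrivialProduct} (applied with $g_1$ the factor starting at $B_1$, or its appropriate analog), we learn that $B_1$ and a component $B_j$ of $g_j$ satisfy $B_1B_j=1$ and the intervening product telescopes to $1$.

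Next, the key point: for this cancellation to reach $B_1$, everything to the right of $B_1$ in $P_k$ coming from $g_2,\dots,g_k$ must have already been eliminated; in particular the component $B_k$ of $g_k$, which is unaltered in $LFP(g_1,\dots,g_k)$ and hence a genuine surviving component of $P_k$ lying to the right of $B_1$ by Lemma~\ref{L:RelativePositionOfUnalteredSections}, must be canceled during the passage from $P_k$ to $P_n$. So $B_k$ is altered in $LFP(P_k, g_{k+1},\dots,g_n) = LFP(g_1\cdots g_k, g_{k+1},\dots,g_n)$. To convert this to the claimed statement about $LFP(g_k,\dots,g_n)$, I invoke Lemma~\ref{L:LeftfirstAlteration-ChangeInitial} with the roles reorganized: that lemma says $B_k$ is unaltered in $LFP(g_1,\dots,g_n)$ iff it is unaltered in both $LFP(g_1,\dots,g_k)$ and $LFP(g_k,\dots,g_n)$. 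Since $B_k$ is unaltered in $LFP(g_1,\dots,g_k)$ by hypothesis but altered in $LFP(g_1,\dots,g_n)$ (because it is already altered at stage $P_n$), the lemma forces $B_k$ to be altered in $LFP(g_k,\dots,g_n)$, which is exactly the conclusion.

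The main obstacle I anticipate is making precise the claim that ``$B_1$ being altered later forces $B_k$ to be altered," i.e. ruling out the possibility that $B_1$ is altered by something that ``jumps over" $B_k$ without touching it. This is handled by the combination of Lemma~\ref{L:RelativePositionOfUnalteredSections} (so that at the stage where $B_1$ is first altered, $B_k$ either has already been altered or still sits strictly to the right of $B_1$ as an unaltered component) and Lemma~\ref{L:PairCancellationToATrivialProduct} (so that the cancellation of $B_1$ forces the intervening segment — which contains $B_k$ if $B_k$ were still present — to telescope away, hence $B_k$ cannot remain unaltered). Once this ordering-and-telescoping bookkeeping is in place, the two applications of Lemma~\ref{L:LeftfirstAlteration-ChangeInitial} close the argument cleanly.
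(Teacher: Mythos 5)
Your argument is correct and follows essentially the same route as the paper: both rely on the ordering statement of Lemma~\ref{L:RelativePositionOfUnalteredSections} (so $B_k$ sits to the right of $B_1$ in $g_1\cdots g_k$), on the positional observation that alteration of the left-lying $B_1$ in $LFP(g_1\cdots g_k, g_{k+1},\dots,g_n)$ forces alteration of the right-lying $B_k$, and then on the multiplicativity of Lemma~\ref{L:LeftfirstAlteration-ChangeInitial}. The one imprecision worth flagging is your claim that $B_1$ must be \emph{canceled} rather than merged; this is not justified and is not true in general, so Lemma~\ref{L:PairCancellationToATrivialProduct} need not literally apply. Fortunately it does not matter: if $B_1$ is merged at the first stage $P_{j-1}g_j$ where it becomes altered, then every still-surviving component of $P_{j-1}$ lying to the right of $B_1$ (in particular $B_k$, if it had survived that far) is canceled at that same stage, so the conclusion of your step~4 holds in either case---the paper states this positional consequence directly, without tracing a specific cancellation pair or telescoping identity.
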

\begin{proof}
    First $B_k$ is to the RHS of $B_1$ in $g_1\dotsm g_k$ by Lemma \ref{L:RelativePositionOfUnalteredSections}. Since $B_1$ is altered in $LFP(g_1\dotsm g_k,g_{k+1},\dotsc,g_n)$, 
so is $B_k$. Thus $B_k$ is altered in $LFP(g_1,\dotsc,g_n)$ as well. But $B_k$ is unaltered in $LFP(g_1,\dotsc,g_k)$, so $B_k$ is altered in $LFP(g_k,\dotsc,g_n)$ by Lemma \ref{L:LeftfirstAlteration-ChangeInitial}.
\end{proof}

\begin{lemma}\label{L:CancellationInLFP-RestrictingLFP}
    Let $g_1,\dotsc,g_n\in A$, $S$ be a factor of $g_i$, $T$ a factor of $g_j$ with $1\leq i<j\leq n$. Assume that $S$ and $T$ cancel in $LFP(g_1,\dotsc,g_n)$. Then $S$ and $T$ cancel in $LFP(g_i,\dotsc,g_j)$. 
\end{lemma}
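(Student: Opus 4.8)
The plan is to reduce the $n$-fold statement to the two-sided cancellation in a single product, using the iterative structure of the left-first product and the results established just above (Lemma~\ref{L:LeftfirstAlteration-ChangeInitial}, Corollary~\ref{C:UnalteredInLFPToUnalteredInProduct}, and Lemma~\ref{L:PairCancellationToATrivialProduct}). Write $P_t=g_1\dotsm g_t$. By Definition~\ref{D:lFP}(2), ``$S$ and $T$ cancel in $LFP(g_1,\dotsc,g_n)$'' unpacks to: $S$ is unaltered in $LFP(g_1,\dotsc,g_{j-1})$, and $S$ and $T$ cancel in the two-term product $P(P_{j-1},g_j)$. So first I would observe that the conclusion only involves what happens up to index $j$, and the hypotheses beyond index $j$ are irrelevant; hence we may assume $j=n$ from the outset, i.e.\ it suffices to prove: if $S$ is a factor of $g_i$, $T$ a factor of $g_n$, $S$ is unaltered in $LFP(g_1,\dotsc,g_{n-1})$ and $S,T$ cancel in $P(P_{n-1},g_n)$, then $S,T$ cancel in $LFP(g_i,\dotsc,g_n)$.

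Next I would set up an induction on $i$ (or equivalently on the number $i-1$ of leading factors to be discarded); the base case $i=1$ is trivial. For the inductive step, the key point is that $S$, being unaltered in $LFP(g_1,\dotsc,g_{n-1})$, is in particular unaltered in $LFP(g_2,\dotsc,g_{n-1})$ by Lemma~\ref{L:LeftfirstAlteration-ChangeInitial} (applied componentwise to the components of $S$, or directly since $S$ is a factor of $g_i$ with $i\ge 2$ and unalteredness of a factor means unalteredness of each of its components). What remains is to verify that the cancellation of $S$ and $T$ in $P(P_{n-1},g_n)=P(g_1(g_2\dotsm g_{n-1}),g_n)$ descends to cancellation of $S$ and $T$ in $P(g_2\dotsm g_{n-1},g_n)$. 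This is where the real content lies: I would use that $S$ is unaltered in $P(g_1,g_2\dotsm g_{n-1})$, so that the component positions of $S$ inside $g_2\dotsm g_{n-1}$ and inside $g_1 g_2\dotsm g_{n-1}$ differ only by a fixed shift (coming from $l(g_1)$ and any merge at the junction), and the right factor of $P_{n-1}=g_1(g_2\dotsm g_{n-1})$ that participates in the cancellation with $g_n$ is literally a right factor of $g_2\dotsm g_{n-1}$ — here I would invoke (the left/right-symmetric analogue of) Lemma~\ref{L:AlteredRightSection} together with Lemma~\ref{L:PairCancellationToATrivialProduct} to identify $R_r(g_2\dotsm g_{n-1})g_n$-type relations. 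Then the induction hypothesis applied to $(g_2,\dotsc,g_n)$ finishes it.

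An alternative, possibly cleaner route that I would keep in reserve: argue directly that for any $1\le i<j\le n$, cancellation of $S$ and $T$ in $LFP(g_1,\dotsc,g_n)$ forces, via Lemma~\ref{L:PairCancellationToATrivialProduct}, the identity $R_r(g_i)g_{i+1}\dotsm g_{j-1}L_t(g_j)=1$ where $S$ has the form $R_r(g_i)$ up to its unaltered location and $T=L_t(g_j)$ similarly; and conversely this identity together with the unalteredness statements characterizes cancellation in $LFP(g_i,\dotsc,g_j)$. In other words, I would show both ``cancel in $LFP(g_1,\dotsc,g_n)$'' and ``cancel in $LFP(g_i,\dotsc,g_j)$'' are equivalent to the same concrete data (a pair of mutually inverse components $RB_r(g_i)$, $B_t(g_j)$, the vanishing of the enclosed product, and unalteredness of the relevant pieces within the respective sub-ranges), and deduce the implication. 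The main obstacle in either approach is bookkeeping: carefully tracking how the index of a component of $S$ shifts as we pass from $g_i$ to $P_{j-1}$, and making sure ``unaltered'' is genuinely preserved when we delete $g_1$ — this is exactly what Lemma~\ref{L:LeftfirstAlteration-ChangeInitial} is designed to give, so the proof should be short once the reduction to $j=n$ and the induction on $i$ are in place.
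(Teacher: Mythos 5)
Your proposal is correct in spirit, but it is more roundabout than the paper's proof, which is quite short and direct. The paper applies Lemma~\ref{L:LeftfirstAlteration-ChangeInitial} once (with $k=i$, $n=j-1$) to conclude that $S$ is unaltered in $LFP(g_i,\dotsc,g_{j-1})$, and then observes — by writing $g_i = g_i' S g_i''$ and $g_j = g_j' T g_j''$ as reduced products — that the literal identity $(g_i'' g_{i+1}\dotsm g_{j-1})\,g_j' = 1$, $ST=1$ is what cancellation of $S$ and $T$ in $P(P_{j-1},g_j)$ comes down to once $S$ is known to be unaltered. Since that identity only involves the segment from $g_i''$ to $g_j'$, exactly the same identity witnesses the cancellation of $S$ and $T$ in $P(g_i\dotsm g_{j-1}, g_j)$. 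No induction is needed.

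Your main route (reduce to $j=n$, then induct on $i$ peeling off $g_1$) would also work, but the induction is superfluous: the entire job of the induction is to achieve the transition from index $1$ to index $i$, which Lemma~\ref{L:LeftfirstAlteration-ChangeInitial} already does in a single step. Moreover, your claim that the descent of cancellation when deleting $g_1$ follows from ``Lemma~\ref{L:AlteredRightSection} together with Lemma~\ref{L:PairCancellationToATrivialProduct}'' is not wrong, but it is the vaguest part of the argument and would need to be spelled out; the cleanest way to do so is essentially what the paper does (note that the right factor of $P_{j-1}$ past $S$ equals the right factor of $g_2\dotsm g_{j-1}$ past $S$ because $S$ is unaltered, so the cancellation condition is literally the same formula). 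Your second, ``alternative'' route — characterizing cancellation in both products by the same concrete data — is in fact the paper's actual argument, so I would promote that to the main line and drop the induction.
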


\begin{proof}
    Since $S$ is unaltered in $LFP(g_1,\dotsc,g_{j-1})$, it is unaltered in $LFP(g_i,\dotsc,g_{j-1})$ by Lemma \ref{L:LeftfirstAlteration-ChangeInitial}. We now only need to prove that $S$ and $T$ cancel in $P(g_i\dotsm g_{j-1}, g_j)$. 

We write  $g_i=g_i'Sg_{i}''$ as a reduced product. Since $S$ is unaltered in $LFP(g_1,\dotsc, g_{j-1})$ (resp. $LFP(g_i,\dotsc,g_{j-1})$), the product $P(g_1\dotsm g_{i-1}g_i'S, g_{i}''g_{i+1}\dotsm g_{j-1})$ (resp.  $P(g_i'S, g_{i}''g_{i+1}\dotsm g_{j-1})$) is reduced.

Since $S$ and $T$ cancel in $P(g_1\dotsm g_{j-1},g_j)$, writing $g_j=g_j'Tg_j''$ as a reduced product, we have that $(g_{i}''g_{i+1}\dotsm g_{j-1})g_j'=1$ and $ST=1$. Now we see that $S$ and $T$ cancel in $P(g_i\dotsm g_{j-1},g_j)$. 
\end{proof}

The next result is about the associativity of cancellation. It will only be used at the very end of this section. 
\begin{lemma}\label{L:AssociationOfCancellationOfAPair}
    Let $\alpha,\beta,\gamma$ be in $A$. Assume that a factor $S$ of $\alpha$ is unaltered in $P(\alpha,\beta)$ and a factor $T$ of $\gamma$ is unaltered in $P(\beta,\gamma)$. Then $S$ and $T$ cancel in $P(\alpha, \beta\gamma)$ if and only if they cancel in $P(\alpha\beta, \gamma)$. 
\end{lemma}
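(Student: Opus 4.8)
The plan is to reduce both sides of the claimed equivalence to the same concrete combinatorial statement, namely that $S$ and $T$ are ``matched up'' against each other in the alternating form of $\alpha\beta\gamma$, and that all of $\beta$ between them is consumed. First I would set up notation: write $S = B_{[p,q]}(\alpha)$ and, since $S$ is unaltered in $P(\alpha,\beta)$, use Remark \ref{R:unaltered-RightFactor-ReducedProduct} to factor $\alpha = \alpha' S \alpha''$ as a reduced product with $P(\alpha' S, \alpha'' \beta)$ reduced; symmetrically write $\gamma = \gamma' T \gamma''$ with $P(\beta\gamma', T\gamma'')$ reduced. With these factorizations in hand, the definition of ``$S$ and $T$ cancel'' in a product of two elements (Definition \ref{D:unaltered-ProductOfTwo}(4), via $K$ and the pairing of components) becomes a statement purely about the reduced form of $\alpha'' \beta \gamma'$ together with $S$ and $T$: concretely, $S$ and $T$ cancel in $P(\alpha, \beta\gamma)$ iff, after forming $\beta\gamma = \beta\gamma' T \gamma''$ and then $\alpha(\beta\gamma)$, the factor $S$ of $\alpha$ and the factor $T$ of $\gamma$ collide and annihilate, which by Lemma \ref{L:PairCancellationToATrivialProduct}-type bookkeeping is equivalent to $l(S) = l(T)$, each corresponding pair of components being mutually inverse, and $\alpha'' \beta \gamma' = 1$.

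The key steps, in order: (1) Establish that $S$ being unaltered in $P(\alpha,\beta)$ forces $S$ to remain unaltered in $P(\alpha, \beta\gamma)$ and in $P(\alpha\beta,\gamma)$ up to the moment it could be touched by $T$; here I would invoke Lemma \ref{L:AlteredRightSection} (and its left version) to translate ``$S$ unaltered in $P(\alpha,\beta)$'' into the statement $R_{\cdot}(\alpha\beta) $ restricted appropriately equals the corresponding tail, and similarly for $T$ in $P(\beta,\gamma)$ being a genuine left-type factor. (2) Show that in both bracketings the only way $S$ can be altered is by cancellation against (part of) $\gamma$, and symmetrically for $T$ against $\alpha$ — this uses that $\beta$ sits strictly between them and that $S,T$ are already unaltered against $\beta$ from the respective sides. (3) Reduce ``$S$ and $T$ cancel in $P(\alpha,\beta\gamma)$'' to the symmetric condition $\alpha'' \beta \gamma' = 1$ together with $S$, $T$ being inverse factors of equal length, using Lemma \ref{L:PairCancellationToATrivialProduct} applied to the triple $(\alpha' S, \text{middle}, T\gamma'')$ — note this condition is manifestly bracketing-independent. (4) Do the same reduction for $P(\alpha\beta,\gamma)$, obtaining the identical condition, and conclude.

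The main obstacle I expect is step (2): controlling what can happen to $S$ (resp. $T$) in the opposite bracketing without assuming anything about $\beta$'s interior. One has to rule out, for instance, that in $P(\alpha\beta,\gamma)$ the factor $T$ of $\gamma$ gets altered by cancellation that ``reaches past'' what would be $\beta\gamma'$, or that $S$ gets altered by something other than a clean pairing with $T$. The hypothesis that $S$ is unaltered in $P(\alpha,\beta)$ (so $S$ survives as a genuine right-type factor of $\alpha\beta$) and that $T$ is unaltered in $P(\beta,\gamma)$ (so $T$ survives as a genuine left-type factor of $\beta\gamma$) should be exactly what is needed: in $P(\alpha\beta,\gamma)$ the cancellation is between a tail of $\alpha\beta$ and a head of $\gamma$, and since $T$'s head $\beta\gamma'$ of $\gamma$-side is a left factor surviving into $\beta\gamma$, any cancellation either stops before $T$ (leaving $S$ and $T$ both unaltered — the ``don't cancel'' case, which must then also hold in the other bracketing since $\alpha''\beta\gamma'\neq 1$) or consumes exactly $\alpha''\beta\gamma'$ and pairs $S$ with $T$. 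Making this dichotomy precise and symmetric is the crux; once it is established, the equivalence falls out because the surviving condition $\alpha''\beta\gamma' = 1$ does not mention the bracketing at all. I would lean on Lemma \ref{L:PairCancellationToATrivialProduct}, Lemma \ref{L:AlteredRightSection}, and Remark \ref{R:unaltered-RightFactor-ReducedProduct} throughout, and avoid any case analysis on the rank-two structure since the statement is purely about lengths and component cancellation.
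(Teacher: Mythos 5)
Your proposal matches the paper's proof in its essential structure: factor $\alpha=\alpha'S\alpha''$ and $\gamma=\gamma'T\gamma''$ as reduced products, use the unalteredness hypotheses (via Remark~\ref{R:unaltered-RightFactor-ReducedProduct}) to see that $(\alpha'S)(\alpha''\beta)$ and $(\beta\gamma')(T\gamma'')$ are reduced, and then observe that in either bracketing, ``$S$ and $T$ cancel'' unwinds to the bracketing-independent condition $\alpha''\beta\gamma'=1$ and $ST=1$. The paper's proof is just this, stated more tersely and without the detour through left-first products or Lemma~\ref{L:PairCancellationToATrivialProduct}, which are not needed for a two-term product; the ``obstacle'' you flag in step (2) dissolves once you have the symmetric characterization, so there is no real gap, only some extra machinery that the paper omits.
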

\begin{proof}
    Let $\alpha=\alpha'S\alpha''$ and $\gamma=\gamma'T\gamma$ be reduced products.
    Then the products $(\alpha'S)(\alpha''\beta)$ and $(\beta\gamma')(T\gamma'')$ are reduced. We now have that $S$ and $T$ cancel in $P(\alpha, \beta\gamma)$ (resp. $P(\alpha\beta, \gamma)$) if and only if $\alpha''(\beta\gamma')=1$ (resp. $(\alpha''\beta)\gamma'=1$) and $ST=1$, which implies the desired equivalence.
\end{proof}

\subsection{Multi-malnormality of \texorpdfstring{$C$}{Lg} in \texorpdfstring{$A$}{Lg}}\label{Subsection:LFP-multi-malnormalityOfC}
 Let $C'$ be a normal subsemigroup of $C$ with $1\not\in C'$.
We are interested in products of conjugates of the form $T:=C_1C_2\dotsm C_n$, where $C_i=c_i^{g_i}$ with $c_i\in C'$ and $g_i\in A\backslash C$. In this subsection, we first examine a single conjugate, then study the product of conjugates, and finally prove that $C$ is multi-malnormal in $A$ (see Theorem \ref{T:multi-malnormalityOfC}). 

\subsubsection{Almost reduced form and standard form of conjugates}
Recall the subgroup $C$ of $A$ given in Definition \ref{D:DefineSubgroupC}.

\begin{definition}\label{D:AlmostReducedForm&StandardForm}
Consider $c^g$ with $c\in C\backslash \{1\}$ and $g\in A\backslash C$.
We define the \textbf{almost reduced form} $\chi^{\gamma}$ of $c^g$ by 
$$\chi=c^{L_k(g)}, \quad \gamma=(L_k(g))^{-1}g,$$ 
where $k:=\max\{K(g^{-1},c), K(c,g)\}$.
The \textbf{standard form} $\lambda\mu\rho$ of $c^g$ is defined by 
$$\lambda=\gamma^{-1}\xi_1,\quad  \mu=\xi_1^{-1}\chi\xi_2^{-1}, \quad \rho=\xi_2\gamma,$$
where $\xi_1$ (resp. $\xi_2$) is 1 if the product $\gamma^{-1}\chi$ (resp. $\chi\gamma$) is reduced and it is $L_1(\chi)$ (resp. $R_1(\chi)$) otherwise. \looseness=-1
\end{definition}

We remark that we will only need to consider the case that $g$ is left $C$-simplified, in which case $K(\gamma^{-1},\chi)$ and $K(\chi, \gamma)$ are both zero, thus the products $\gamma^{-1}\chi$ and $\chi\gamma$ are reduced or have a merge. Thus the definition of the standard form says that if there is a merge in $\gamma^{-1}\chi$ (resp. $\chi\gamma$), then we let the first (resp. last) component of $\chi$ ``merge into" $\gamma^{-1}$ (resp. $\gamma$) to form $\lambda$ (resp. $\rho$).  We next record several essential properties of these definitions. 
\begin{lemma}\label{L:AlmostReducedAndReducedForm}
 Consider $c^g$ with $c\in C\backslash\{1\}$ and $g\in A\backslash C$ left $C$-simplified. Let $\chi^{\gamma}$ and $\lambda\mu\rho$ be the almost reduced form and standard form of $c^g$ respectively. Let $i=K(g^{-1},c)$, $j=K(c,g)$ and $k=\max\{i,j\}$. Then we have the following:
\begin{enumerate}
    \item\label{Item:AlmostReducedAndReducedForm-reduced} The product $\gamma^{-1}\chi\gamma$ is almost reduced; the product $\lambda\mu\rho$ is reduced. 
    \item \label{Item:AlmostReducedAndReducedForm-Similarity}$l(\lambda)=l(\gamma)=l(\rho)$. Moreover, if $l(\gamma)>0$, we have 
    $$L_{l(\gamma)-1}(\lambda)=L_{l(\gamma)-1}(\gamma^{-1}),\quad R_{l(\gamma)-1}(\rho)=R_{l(\gamma)-1}(\gamma).$$
    \item\label{Item:AlmostReducedAndReducedForm-length} $l(\chi)\geq 2s$, $l(\mu)\geq 2s-1$, $i+j\leq l(c)-s$, $i+j\leq l(\mu)-s+1$ and  $l(c^g)\geq 2s-1+2l(\gamma)$.
    \item 
      \label{Item:cgIncompatible}$c^g$ is left $(l(\gamma)+2)$-incompatible and right $(l(\gamma)+2)$-incompatible with $C$. 
\end{enumerate}
\end{lemma}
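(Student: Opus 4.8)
The plan is to prove Lemma \ref{L:AlmostReducedAndReducedForm} by unwinding the definitions of the almost reduced form $\chi^\gamma$ and the standard form $\lambda\mu\rho$, and repeatedly invoking the basic cancellation facts established earlier, especially Lemma \ref{L:SmallCancellationOfS}, inequality \eqref{CompabilityAndCancellationNumber}, and Corollary \ref{C:Incompatibility&CancellationNumber}. Throughout I would keep in mind that $g$ is left $C$-simplified, so $\Lambda(g)\le s$, and that $k=\max\{i,j\}\le s$ by \eqref{CompabilityAndCancellationNumber} since $c\in C$. The key observation, as remarked after Definition \ref{D:AlmostReducedForm&StandardForm}, is that $K(\gamma^{-1},\chi)=K(\chi,\gamma)=0$, because all of $k$ ``cancellable'' components of $g$ (or $g^{-1}$) against $c$ have already been moved into $\chi=c^{L_k(g)}$; I would verify this first, since everything else rests on it.

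For part \eqref{Item:AlmostReducedAndReducedForm-reduced}: from $K(\gamma^{-1},\chi)=0$ and $K(\chi,\gamma)=0$, the products $\gamma^{-1}\chi$ and $\chi\gamma$ are each either reduced or exhibit a single merge (we are in a rank-two free group), which gives that $\gamma^{-1}\chi\gamma$ is almost reduced. Then the definition of $\xi_1,\xi_2$ is precisely designed to absorb those merges: $\lambda=\gamma^{-1}\xi_1$, $\mu=\xi_1^{-1}\chi\xi_2^{-1}$, $\rho=\xi_2\gamma$, and a short case analysis on whether $\xi_1,\xi_2$ are trivial shows $\lambda\mu\rho$ is reduced. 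Part \eqref{Item:AlmostReducedAndReducedForm-Similarity} then follows: $\lambda$ differs from $\gamma^{-1}$ only possibly in its last component (when $\xi_1=L_1(\chi)$, the last component of $\gamma^{-1}$ gets merged with it), so $l(\lambda)=l(\gamma^{-1})=l(\gamma)$ and $L_{l(\gamma)-1}(\lambda)=L_{l(\gamma)-1}(\gamma^{-1})$; symmetrically for $\rho$. For part \eqref{Item:AlmostReducedAndReducedForm-length}: $\chi=c^{L_k(g)}$ is a conjugate of $c\in C\setminus\{1\}$, hence $\chi\in C\setminus\{1\}$ and $l(\chi)\ge 2s$ by Lemma \ref{L:SmallCancellationOfS}\ref{Item:SmallCancelllationOfS-LengthLowerBound}; since $\mu$ differs from $\chi$ by removing at most one component from each end, $l(\mu)\ge 2s-2$, and in fact $\ge 2s-1$ because at most one merge total can occur (a merge can happen on at most one side, or if on both sides then $l(\gamma)=0$ which forces a different bookkeeping — this edge case needs care). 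The bound $l(c^g)\ge 2s-1+2l(\gamma)$ follows by adding: $l(c^g)=l(\lambda)+l(\mu)+l(\rho)$ (reducedness from \eqref{Item:AlmostReducedAndReducedForm-reduced}) $\ge l(\gamma)+(2s-1)+l(\gamma)$. For $i+j\le l(c)-s$: here I would use that $L_i((g^{-1})^{-1}\!\cdots)$, i.e. the portions of $c$ cancelled against $g^{-1}$ on one end and $g$ on the other, are left and right $C$-compatible factors of $c$; since $c\in C$ has length $\ge 2s$ and the cancelled end-pieces have lengths $i,j$, and they cannot overlap too much, one gets $i+j\le l(c)-s$ — this uses the almost-reducedness of elements of $C$ from Lemma \ref{L:SmallCancellationOfS}(1)--(3), and $l(c)=l(\mu)$ up to $\pm$ small terms gives the $l(\mu)-s+1$ version.

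The main obstacle I expect is part \eqref{Item:cgIncompatible}, the incompatibility statement, and it is the crux: I want to show $c^g$ is left $(l(\gamma)+2)$-incompatible with $C$, i.e. $L_{l(\gamma)+2}(c^g)\notin L_{l(\gamma)+2}(C)$. The idea is that $L_{l(\gamma)+2}(c^g)$ consists of the $l(\gamma)$ components of $\lambda$ (essentially $\gamma^{-1}$, which is a right factor of $g^{-1}$, hence a \emph{left} factor structure governed by $g^{-1}\notin C^{-1}=C$... more precisely $g\notin C$) followed by two components that come from $\mu\subseteq$ the conjugate $\chi$ of a relator element. Since $g$ is left $C$-simplified and $g\notin C$, I would argue via Corollary \ref{C:Incompatibility&CancellationNumber} applied to $\beta=\gamma^{-1}$ (or to $g^{-1}$) together with the explicit first two components of $\mu$: the transition from the $\gamma^{-1}$-part to the $\chi$-part creates a pattern of exponents that cannot occur as a left factor of any element of $C$, because of the distinctness condition \ref{Item:SVeryDistinct} on the exponents $k_{ij}^{(t)}$. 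Concretely, I would write $L_{l(\gamma)+2}(c^g)$ explicitly in terms of $\gamma,\chi,\xi_1$, note that its first $l(\gamma)$ (or $l(\gamma)-1$) components agree with $\gamma^{-1}$ while $\gamma^{-1}$ itself, being obtained from $g$ by stripping a left factor, is left incompatible with $C$ beyond length $\Lambda$; then use Corollary \ref{C:Incompatibility&CancellationNumber} with $k=K(\gamma^{-1},h)$ for $h$ a left factor of an element of $C$, $k\le\Lambda(\text{that }h)$, to conclude the $(l(\gamma)+2)$-incompatibility. The right version is symmetric. I would present the left case in full and state that the right case ``follows similarly,'' consistent with the paper's stated convention. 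The delicate points to get right are the edge cases $l(\gamma)=0$ and $\xi_1=1$ versus $\xi_1=L_1(\chi)$, and making sure the ``two extra components'' argument genuinely uses \ref{Item:SVeryDistinct} rather than just $l(\chi)\ge 2s$.
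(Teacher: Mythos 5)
Your plan for parts (1) and (2) is correct and matches the paper: both rest on the observation that $K(\gamma^{-1},\chi)=K(\chi,\gamma)=0$, so the only phenomenon to absorb is a merge at each junction, which $\xi_1,\xi_2$ are designed to do.

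Part (3) contains a genuine error. You write that $\chi=c^{L_k(g)}$ is a conjugate of $c\in C\setminus\{1\}$ ``hence $\chi\in C\setminus\{1\}$'' and invoke Lemma \ref{L:SmallCancellationOfS}(\ref{Item:SmallCancelllationOfS-LengthLowerBound}). But $L_k(g)$ is an element of $A$ with $l(L_k(g))=k\leq s<2s$, so $L_k(g)\notin C$ whenever $k\geq1$ (the shortest nontrivial element of $C$ has length $2s$), and hence $\chi\notin C$ whenever $k\geq1$. The bound $l(\chi)\geq 2s$ is still true, but for a different reason, and the paper's proof makes a nontrivial case split to get it: if $i=0$ or $j=0$, $\chi$ is a cyclic conjugate of $c$ (a rotation, not a $C$-conjugate) and $l(\chi)=l(c)\geq2s$; if both $i,j\geq1$, one first deduces $u_1=u_p^{-1}$ and $p\geq 3$ for $c=u_1\cdots u_p$ from condition \ref{Item:SVeryDistinct}, giving $l(c)\geq 6s-2$ and $i=j$, and then all four inequalities follow. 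Your sketch also does not establish $i+j\leq l(c)-s$ with the care it needs; the $i\neq 0, j\neq 0$ case is not ``the cancelled end-pieces cannot overlap'' so much as the strong consequence $u_1=u_p^{-1}$, $p\geq3$.

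Part (4) is where the plan would not carry through. You propose to argue directly from the ``transition'' between the $\gamma^{-1}$-part and the $\chi$-part of $L_{l(\gamma)+2}(c^g)$ using condition \ref{Item:SVeryDistinct}. The paper instead first proves $L_{l'}(c^g)=L_{l'}(g^{-1}c)$ where $l'=l(\gamma)+2=n-k+2$, and then reduces to incompatibility of $g^{-1}c$, where Corollary \ref{C:Incompatibility&CancellationNumber} (applied with $\beta=g^{-1}$, $h=c$) gives left $(n-i+1)$-incompatibility. When $k=i$ this index is $l'-1\leq l'$ and the conclusion follows, since incompatibility at a smaller index implies it at a larger one. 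But when $k>i$ one has $i=0$ and $k=j$ (this uses the case (i) analysis in part (3)), the Corollary only gives $(n+1)$-incompatibility, and $l'=n-j+2\leq n$ for $j\geq2$, so the Corollary is not enough. The paper handles $j=k\geq2$ by a separate argument via Lemma \ref{L:Prefix}: if $L_{n-k+1}(g^{-1})$ were in $L_{n-k+1}(C)$, then $p(B_k(g))R_{n-k+1}(g)\in C$, and since $L_k(g)=L_k(u)$ for some $u\in S\cup S^{-1}$ (because $R_k(c)L_k(g)=1$), one has $p(B_k(g))=L_{k-1}(g)$, forcing $g\in C$, a contradiction. Your sketch does not anticipate this case split and the Prefix-lemma argument for $j\geq2$; that is the real content of part (4), and without it the ``exponent pattern at the transition'' reasoning does not close the gap.
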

\begin{proof}
\begin{enumerate}
    \item These are direct consequences of the definitions.
    \item \label{Item:ProvingThreeInequalities-AlmostReduced} By definition, $\lambda=\gamma^{-1}$ or $\lambda = \gamma^{-1}L_1(\chi)$ when the product $\gamma^{-1}\chi$ has a merge. Therefore, we have $l(\lambda)=l(\gamma)$ and that $\gamma^{-1}$ and $\lambda$ differ at most at the last component, implying that $L_{l(\gamma)-1}(\lambda)=L_{l(\gamma)-1}(\gamma^{-1})$ when $l(\gamma)>0$. Similarly, we prove the other equalities. 
First $i,j\leq s$ as $g$ is left $C$-simplified.  Also the last inequality follows from the second because $\lambda\mu\rho$ is reduced and $l(\lambda)=l(\gamma)=l(\rho)$.  Now we prove the first four inequalities considering two cases. (i) $i\neq0$ and $j\neq0$. Then write $c=u_1\dotsm u_p$ as a reduced word using the free generating set $S$, it must satisfy $u_1=u_p^{-1}$ and $p \geq 3$ because of our choice of exponents in Definition \ref{D:DefineSubgroupC}\ref{Item:SVeryDistinct}, therefore $i=j\leq s$ and $l(c)\geq 6s-2$,
    from which the first four inequalities follow. 
    (ii) $i=0$ or $j=0$. Then $\chi$ is the cyclic conjugate of $c$, so we have $l(\chi)=l(c)\geq2s$ and $i+j\leq l(c)-s$. Also, the fourth inequality follows from the second. The second inequality clearly holds if $l(\chi)>2s$.  On the other hand, if $l(\chi)=2s$, then $\gamma^{-1}\chi$ or $\chi \gamma$ is reduced, so $\xi_1$ or $\xi_2$ appearing in Definition \ref{D:AlmostReducedForm&StandardForm} is the identity, hence we still have the second inequality.
 
    \item Let $l'=l(\gamma)+2=n-k+2$, where $n:=l(g)$. We only prove that $c^g$ is left $l'$-incompatible with $C$, as the other statement can be done similarly. We have the following. 
    \begin{enumerate}
        \item\label{Item:Ll'cgAndLl'g-1c} $L_{l'}(c^g)=L_{l'}(g^{-1}c)$.  Write $c=c_1c_2$ as a reduced product, such that $l(c_2)=j+2$, then $l(c_1)=l(c)-j-2\geq i+s-2\geq i+8$, thus $K(g^{-1},c_1)=i$. 
        We now see that $R_1(c_1)$ (resp. $L_1(c_2)$) is unaltered in $g^{-1}c_1$ (resp. $c_2g$), and thus the products $(g^{-1}c_1)(c_2g)$ and $(g^{-1}c_1)c_2$ are reduced.
        Note that we have, recalling $l(c)\geq i+j+s$ and $i\leq k$,
    \begin{align*}
        l(g^{-1}c_1)&\geq l(g^{-1})+l(c_1)-2i-1= n+l(c)-j-2-2i-1\\
        &\geq n-i-3+s\geq n-k-3+s= l'-5+s\geq l'+5.
    \end{align*} Therefore $L_{l'}(c^g)=L_{l'}(g^{-1}c_1c_2g)=L_{l'}(g^{-1}c_1)=L_{l'}(g^{-1}c_1c_2)=L_{l'}(g^{-1}c)$. 
    \item If $k=i$, then $c^g$ is left $l'$-incompatible with $C$. This is because by Corollary \ref{C:Incompatibility&CancellationNumber}, we have that $g^{-1}c$ is $(n-k+1)$-incompatible with $C$, from which the desired result follows using $L_{l'}(c^g)=L_{l'}(g^{-1}c)$. 
    \item  If $k>i$, then $c^g$ is $l'$-incompatible with $C$. To prove this, we first note that $k=j$, then $i=0$ by (\ref{Item:ProvingThreeInequalities-AlmostReduced})(i), and $R_k(c)L_k(g)=1$.
Now consider two subcases.    
\begin{enumerate} \item $k=1$. In this case, we want to prove that $L_{n+1}(g^{-1}c)\not\in L_{n+1}(C)$. But this is true by Corollary \ref{C:Incompatibility&CancellationNumber} (as $i=0$). 
\item $k\geq2$. In this case, we have $L_{n-k+1}(g^{-1})=L_{n-k+1}(g^{-1}c)$ (as $i=0$). 
Thus we only need to show that $L_{n-k+1}(g^{-1})\not\in L_{n-k+1}(C)$. 
Otherwise, $R_{n-k+1}(g)\in R_{n-k+1}(C)$ and then $g':=p(B_k(g))R_{n-k+1}(g)\in C$ by Lemma \ref{L:Prefix}. But $p(B_k(g))=L_{k-1}(g)$ because $L_k(g)=(R_k(c))^{-1}=L_k(c^{-1})=L_k(u)$ for some $u\in S\cup S^{-1}$. Hence $C\ni g'=L_{k-1}(g)R_{n-k+1}(g)=g$, a contradiction. 
\end{enumerate}
    \end{enumerate}
\end{enumerate}
\end{proof}

The next result will assist us in proving Corollary \ref{C:CancellationOfBlocksFromCi}, which together with  Lemma \ref{L:PairCancellationToATrivialProduct} tells us that if a component from $\mu_i$ and a component from $\mu_j$ cancel, then the partial product  $g_iC_{i+1}\dotsm C_{j-1}g_j^{-1}$ is in $C$, and therefore we can reduce the number of conjugates in the product $T=C_1\dotsm C_n$ using Lemma \ref{L:aiCiCj-1aj-1}.

\begin{lemma}\label{L:PropertyOfC-Conjugate-LocalProperty}
Let $c\in C\backslash\{1\}$, $g\in A\backslash C$ be left $C$-simplified. 
Let $c^{g}=D_1\dotsm D_t\dotsm D_n$ be in reduced form and $c^{g}=\lambda\mu\rho$ be in standard form. If $D_t$ is a component of $\mu$, then we have
\begin{align}\label{Con:ConjugateAndp}
gD_1\dotsm D_{t-1}(p(D_t))^{-1}\in C\quad \text{and}\quad p(D_t) D_t\dotsm D_ng^{-1}\in C.
\end{align}
\end{lemma}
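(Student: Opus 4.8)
The key idea is that the standard form $\lambda\mu\rho$ of $c^g$ records exactly which components of the reduced form $c^g = D_1\dotsm D_n$ ``come from $c$'' rather than from $g$ or $g^{-1}$, and the prefix map $p$ together with Lemma \ref{L:Prefix} lets us peel off an element of $C$ at any component of $\mu$. So the first thing I would do is set up notation carefully: write $\chi^\gamma$ for the almost reduced form, recall from Lemma \ref{L:AlmostReducedAndReducedForm} parts \eqref{Item:AlmostReducedAndReducedForm-reduced} and \eqref{Item:AlmostReducedAndReducedForm-Similarity} that $\lambda\mu\rho$ is reduced with $l(\lambda)=l(\gamma)=l(\rho)$, and that $\lambda$ agrees with $\gamma^{-1}$ except possibly in its last component (and $\rho$ with $\gamma$ except possibly in its first). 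Thus if $D_t$ is a component of $\mu$, then the index $t$ satisfies $l(\lambda) < t \le l(\lambda)+l(\mu)$ (being careful about the boundary components of $\mu$, which may be merges of the last component of $\gamma^{-1}$ with the first of $\chi$, or of the last of $\chi$ with the first of $\gamma$). I would then observe that $D_t$ is also a component of the alternating expression of $\chi^\gamma = \gamma^{-1}\chi\gamma$ — more precisely, $D_t$ is a component of $\chi$ itself, or a boundary-merge component — and in particular $D_t \in Sc(C)$ because $\chi = c^{L_k(g)}$ is a cyclic conjugate of $c$ and hence an element of $C$, so all of its components lie in $Sc(C)$.

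Next, the heart of the argument. The product $c^g = \gamma^{-1}\chi\gamma$ is almost reduced, and $L_{l(\gamma)}(c^g) = \gamma^{-1}$ up to its last component being the merge, while $D_1\dotsm D_{t-1}$ is then $\gamma^{-1}$ followed by an initial segment of $\chi$ (as an alternating product, allowing boundary merges). Write $\chi$ in reduced form and let $D_t$ correspond to the component $A_j$ of $\chi$ in the sense of Definition \ref{D:Precursor} (either $D_t = A_j$ outright, or $D_t$ is a boundary merge and one uses the two-generator $B_{2s}A_1$ case of the definition applied to $\chi$ viewed via its expression as an almost reduced product of elements of $S \cup S^{-1}$). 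The point is that $D_1\dotsm D_{t-1} = \gamma^{-1}\cdot(\text{prefix of }\chi\text{ up to }D_t)$, and that prefix of $\chi$, by Lemma \ref{L:Prefix} applied to $\chi \in C$ (so that $L_{t-1-l(\gamma)}(\chi) \in L_{t-1-l(\gamma)}(C)$ — here I use that $\chi \in C$ forces every left factor of its reduced form to lie in $L_i(C)$), satisfies $\big(L_{t'-1}(\chi)\big)(p(D_t))^{-1} \in C$ where $t' = t - l(\gamma)$ is the position of $D_t$ within $\chi$. Hence
$$
g\, D_1\dotsm D_{t-1}(p(D_t))^{-1} = g\,\gamma^{-1}\,L_{t'-1}(\chi)\,(p(D_t))^{-1} = g\,\gamma^{-1}\cdot\big(L_{t'-1}(\chi)(p(D_t))^{-1}\big).
$$
Now $g\gamma^{-1} = g(L_k(g))^{-1}g^{-1}\cdot$ wait — more directly, $\gamma = (L_k(g))^{-1}g$ so $g\gamma^{-1} = g\,g^{-1}L_k(g) = L_k(g)$, which lies in $C$ only when $k = K(c,g)$ and $L_k(g) = R_k(c)^{-1} \in C$... and indeed in the relevant case $k = \max\{i,j\}$; if $k = j = K(c,g)$ then $L_j(g) = (R_j(c))^{-1} \in C$, while if $k = i = K(g^{-1},c)$ one argues $\chi = c^{L_i(g)}$ with $L_i(g)$ cancelling against $c$ from the left. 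Either way $g\gamma^{-1} \in C$ (this is essentially the content of $\chi$ being a cyclic conjugate of $c$ inside $C$ together with $k \le s$), and so the whole product lies in $C$. The second containment $p(D_t)D_t\dotsm D_n g^{-1} \in C$ is proved symmetrically, using the right-hand prefix $p(D_t^{-1})$ of $D_t^{-1}$, equation \eqref{P(E)&P(E-1)} which says $p(E)E(p(E^{-1}))^{-1} \in C$, and the right-factor version of Lemma \ref{L:Prefix}; combining the two gives a clean consistency check, since multiplying the first relation by $D_t\dotsm D_n g^{-1}$ on the right and using $p(D_t)^{-1}\cdot p(D_t) = 1$ recovers $g c^g g^{-1} = c \in C$.

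The main obstacle, and where I would spend the most care, is the bookkeeping at the two ``seams'' of the standard form: when $\gamma^{-1}\chi$ has a merge, the first component of $\mu$ is not a component of $\chi$ but the merge $R_1(\gamma^{-1})$-with-$L_1(\chi)$, and similarly at the $\chi\gamma$ seam. In those boundary cases one must verify that Definition \ref{D:Precursor} still assigns $D_t$ a well-defined prefix — this is exactly the two-element case $E = B_{2s}A_1$ of that definition, applied after writing $\chi$ (or rather $c$ itself, suitably conjugated) as an almost reduced product of basis elements and invoking Lemma \ref{L:SmallCancellationOfS}\eqref{Item:SmallCancellationOfS-TheInitialAndEndOf} to identify the relevant length-$2s$ blocks. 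Once the seam cases are handled, the interior case ($D_t$ an honest component of $\chi$, bounded away from both ends of $\chi$ by at least one component) is a direct application of Lemma \ref{L:Prefix} as above. I would organize the write-up as: (i) reduce to showing $D_t$ is a component of $\chi$ or a seam merge and lies in $Sc(C)$; (ii) establish $g\gamma^{-1} \in C$ and $\gamma^{-1}g^{-1}\cdot$ wait, $\gamma g^{-1} \in C$ by the same cancellation-number argument using $k \le s < $ the length of the relevant block; (iii) apply Lemma \ref{L:Prefix} (both versions) to the left/right prefixes of $\chi$ at position $t'$; (iv) dispatch the two seam cases via the $B_{2s}A_1$ clause of Definition \ref{D:Precursor} and property \eqref{P(E)&P(E-1)}.
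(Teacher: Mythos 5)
There is a genuine gap in the core of your argument. You assert that $g\gamma^{-1} = L_k(g)$ lies in $C$, and (in setting up the decomposition) that $\chi = c^{L_k(g)}$ lies in $C$. Neither is true in general. By Lemma \ref{L:SmallCancellationOfS}\eqref{Item:SmallCancelllationOfS-LengthLowerBound}, every nontrivial element of $C$ has length at least $2s$, whereas $l(L_k(g)) = k \leq s$ (since $g$ is left $C$-simplified). So $L_k(g) \in C$ only when $k=0$. Your attempted justification that $L_j(g) = (R_j(c))^{-1} \in C$ fails for the same reason: a length-$j$ right factor of $c$ is not an element of $C$. Consequently $\chi$, being a conjugate of $c$ by an element \emph{not} in $C$, also need not lie in $C$, and the factored expression $g\,\gamma^{-1}\cdot\bigl(L_{t'-1}(\chi)(p(D_t))^{-1}\bigr)$ does not split into two $C$-elements. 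The whole decomposition strategy you are relying on breaks down at this point.

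The paper avoids this entirely by working with membership in $L_*(C)$ rather than $C$ itself: it computes directly that $g\lambda\mu = c h_1 \xi_2^{-1}$ and then, splitting into the cases $k=j$ and $k=i$, shows that $c h_1$ is a left factor of $c$ or of $c^2$ respectively. Thus $g\lambda\mu$ is a \emph{left factor of} an element of $C$ — which is exactly the hypothesis of Lemma \ref{L:Prefix} — without ever being asked to lie in $C$. One then applies Lemma \ref{L:Prefix} once, to the whole word $g\lambda\mu$, not separately to $g\gamma^{-1}$ and a prefix of $\chi$. There is also a second ingredient you would still need even if the factoring worked: you must check that $D_t$, as a component of $\mu$, remains a component of the product $g\lambda\mu$, i.e.\ is unaltered in $P(g,\lambda\mu)$. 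The paper handles this by showing $R_{l(\mu)-j-1}(\mu)$ (resp.\ $L_{l(\mu)-i-1}(\mu)$) is unaltered in $P(g,\lambda\mu)$ (resp.\ $P(\lambda\mu,g^{-1})$), and that the two ranges cover all of $\mu$ thanks to the inequality $i+j \leq l(\mu)-s-1$. Finally, your worry about the ``seam'' components is misplaced: by construction of the standard form, any merge of $\gamma^{-1}$ with $\chi$ (or $\chi$ with $\gamma$) is absorbed into $\lambda$ (or $\rho$), so every component of $\mu$ is an honest interior component of $\chi$ and the $B_{2s}A_1$ clause of Definition \ref{D:Precursor} is not needed to identify $D_t$. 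Your observation that the two containments in \eqref{Con:ConjugateAndp} are equivalent (since they multiply to $c$) is correct and is also used in the paper.
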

\begin{proof}
   Note that each containment in \eqref{Con:ConjugateAndp} implies the other, as the product of the terms considered is equal to $c \in C$. Thus, we only need to show that for each component $D_t$ of $\mu$, one of the containments holds. Let $i=K(g^{-1},c), j=K(c,g)$, $k=\max\{i,j\}$ and $h_1=L_k(g)$, then $g=h_1\gamma$ is a reduced product,  and $i+j\leq l(c)-s$ by Lemma \ref{L:AlmostReducedAndReducedForm}(\ref{Item:AlmostReducedAndReducedForm-length}), which implies $R_1(h_1^{-1}ch_1)=R_1(ch_1)$.

   (1) We first show that $g\lambda\mu$ (resp. $\lambda\mu g^{-1})$ is left (resp. right) compatible with $C$. We only prove the statement about $g\lambda\mu$, as the other can be done similarly. Note that 
   $$g\lambda\mu=h_1\gamma \cdot  \gamma^{-1}\xi_1 \cdot \xi_1^{-1}h_1^{-1}ch_1\xi_2^{-1}=ch_1\xi_2^{-1}$$
   where $\xi_2$ is $1$ of $R_1(\chi)=R_1(h_1^{-1}ch_1)=R_1(ch_1)$. Hence $g\lambda\mu$ is a left factor of $ch_1$. Thus, we only need to prove that $ch_1$ is left compatible with $C$. If $k=j$, then $R_k(c)h_1=1$ and then $ch_1$ is a left factor of $c$,  so $ch_1$ is left compatible with $C$. If $k>j$, then $k=i$ and then $h_1^{-1}L_k(c)=1$, thus $ch_1=cL_k(c)$ is a left factor of $c^2$, hence $ch_1$ is left compatible with $C$. 

   (2) We now prove that $R_{l(\mu)-j-1}(\mu)$ (resp. $L_{l(\mu)-i-1}(\mu)$) is unaltered in $P(g,\lambda\mu)$ (resp. $P(\lambda\mu,g^{-1}))$. Again, we only prove the statement about $P(g,\lambda\mu)$. This is clearly true if  $L_1(\mu)$ is not canceled by $g$ in $P(g,\lambda\mu)$. Now we assume that $L_1(\mu)$ is canceled by $g$. Recalling that $g=h_1\gamma$ and $\lambda\mu$ are reduced products, and $l(\gamma)=l(\lambda)$, we know then $\gamma\lambda=1$, thus $\xi_1=1$ as $\lambda=\gamma^{-1}\xi_1$. Hence $P(g,\lambda\mu)=P(h_1\gamma, \gamma^{-1}\mu)$ with $\mu=h_1^{-1}ch_1\xi_2^{-1}$ and we only need to prove that $K(h_1,\mu)\leq j$. This is true if $k=j$, as $l(h_1)=k$. If $k>j$, then $k=i$ and $c$ can be written as a reduced product $h_1c'$, now $\mu=c'h_1\xi_2^{-1}$, and $L_1(\mu)=L_1(c')$ as $l(c)\geq i+j+s$ by Lemma \ref{L:AlmostReducedAndReducedForm}(\ref{Item:AlmostReducedAndReducedForm-length}),  thus $h_1\mu$ is reduced, implying $K(h_1,\mu)=0\leq j$, as desired.

   By Lemma \ref{L:Prefix}, (1) and (2) together imply that if $D_t$ is a component of $R_{l(\mu)-j-1}(\mu)$ (resp. $L_{l(\mu)-i-1}(\mu)$), we have the first (resp. second) containment of \eqref{Con:ConjugateAndp}. But $i+j\leq l(\mu)-s-1$ by Lemma \ref{L:AlmostReducedAndReducedForm}(\ref{Item:AlmostReducedAndReducedForm-length}), so all components of $\mu$ are components of $R_{l(\mu)-j-1}(\mu)$ or components of $L_{l(\mu)-i-1}(\mu)$. So  for each component $D_t$ of $\mu$, one containment of \eqref{Con:ConjugateAndp} holds, as desired.
\end{proof}

\subsubsection{Products of conjugates}
We now turn to the study of a product of conjugates. The next result follows from Lemma \ref{L:PropertyOfC-Conjugate-LocalProperty}.
\begin{corollary}\label{C:CancellationOfBlocksFromCi}
Consider a product $T=C_1\dotsm C_n$, where for each $i$ with $1 \leq i \leq n$, $C_i=c_i^{g_i}$, $c_i\in C\backslash\{1\}$, $g_i\in A\backslash C$ is left $C$-simplified and $C_i=\lambda_i\mu_i\rho_i$ is the standard form of $C_i$. Consider the multiplication $C_rC_{r+1}\dotsm C_t$ with $1\leq r<t\leq n$. Let $C_r=B_1\dotsm B_{p}\dotsm B_k$ and $C_t=D_1\dotsm D_{q}\dotsm D_l$ be in reduced form, where $B_{p}$ is a component of $\mu_r$ and $D_{q}$ one of $\mu_t$. Assume that $B_{p}D_{q}=1$ and $B_{p}\dotsm B_kC_{r+1}\dotsm C_{t-1}D_1\dotsm D_{q}=1$. Then $c_{rt}:=g_rC_{r+1}\dotsm C_{t-1}g_t^{-1}\in C$.
\end{corollary}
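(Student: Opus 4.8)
The plan is to peel the outer conjugating elements $g_r$ and $g_t$ off of the two extreme conjugates, reducing the hypothesis about the components $B_p$ of $\mu_r$ and $D_q$ of $\mu_t$ to a statement about prefixes, which can then be recombined into a single membership in $C$. First I would invoke Lemma \ref{L:PropertyOfC-Conjugate-LocalProperty} applied to $C_r = c_r^{g_r}$ with the component $B_p$ of $\mu_r$: writing $C_r = B_1\dotsm B_p \dotsm B_k$ in reduced form, the lemma gives
\begin{align*}
g_rB_1\dotsm B_{p-1}(p(B_p))^{-1}\in C \quad\text{and}\quad p(B_p)B_p\dotsm B_kg_r^{-1}\in C.
\end{align*}
Symmetrically, applying Lemma \ref{L:PropertyOfC-Conjugate-LocalProperty} to $C_t=c_t^{g_t}$ with the component $D_q$ of $\mu_t$, and writing $C_t = D_1\dotsm D_q\dotsm D_l$ in reduced form, yields
\begin{align*}
g_tD_1\dotsm D_{q-1}(p(D_q))^{-1}\in C \quad\text{and}\quad p(D_q)D_q\dotsm D_lg_t^{-1}\in C.
\end{align*}
In particular, taking inverses of appropriate members of these lists, $p(B_p)B_p\dotsm B_k g_r^{-1}\in C$ and $g_t D_1\dotsm D_{q-1}(p(D_q))^{-1}\in C$, hence also $(g_t D_1\dotsm D_{q-1}(p(D_q))^{-1})^{-1} = p(D_q)D_{q-1}^{-1}\dotsm D_1^{-1}g_t^{-1}\in C$.

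The second step is to exploit the two cancellation hypotheses $B_pD_q=1$ and $B_p\dotsm B_k C_{r+1}\dotsm C_{t-1} D_1\dotsm D_q = 1$. From the latter we get $B_p\dotsm B_k C_{r+1}\dotsm C_{t-1} D_1\dotsm D_{q-1} = D_q^{-1} = B_p$ (using $D_q = B_p^{-1}$), so that $(B_p\dotsm B_k)\,C_{r+1}\dotsm C_{t-1}\,(D_1\dotsm D_{q-1}) = B_p$, and therefore
\begin{align*}
C_{r+1}\dotsm C_{t-1} = (B_p\dotsm B_k)^{-1}\,B_p\,(D_1\dotsm D_{q-1})^{-1}.
\end{align*}
Now compute $c_{rt} = g_r C_{r+1}\dotsm C_{t-1} g_t^{-1}$ by substituting this expression and inserting $p(B_p)^{-1}p(B_p)$ and $p(D_q)^{-1}p(D_q)$ at the appropriate places:
\begin{align*}
c_{rt} &= g_r\,(B_p\dotsm B_k)^{-1}\,B_p\,(D_1\dotsm D_{q-1})^{-1}\,g_t^{-1}\\
       &= \bigl(g_r (B_p\dotsm B_k)^{-1} p(B_p)^{-1}\bigr)\bigl(p(B_p) B_p (p(D_q))^{-1}\bigr)\bigl(p(D_q)(D_1\dotsm D_{q-1})^{-1} g_t^{-1}\bigr).
\end{align*}
The first parenthesized factor is the inverse of $p(B_p)B_p\dotsm B_k g_r^{-1}\in C$, hence lies in $C$; the third parenthesized factor is $p(D_q)D_{q-1}^{-1}\dotsm D_1^{-1}g_t^{-1}\in C$ from the list above; and the middle factor $p(B_p)B_p(p(D_q))^{-1} = p(B_p)B_p(p(B_p^{-1}))^{-1}$ lies in $C$ by the identity \eqref{P(E)&P(E-1)}, since $D_q = B_p^{-1}$ as elements of $A$ (both are components, hence in $Sc(C)$). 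As a product of three elements of $C$, $c_{rt}\in C$, as desired.

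I expect the main obstacle to be purely bookkeeping: making sure the prefixes match up correctly, i.e. that the $p(B_p)$ appearing when we split $C_r$ is literally the same group element as the one appearing in the identity \eqref{P(E)&P(E-1)}, and that $p(D_q) = p(B_p^{-1})$ — this requires recognizing $B_p$ and $D_q = B_p^{-1}$ as elements of $Sc(C)$ and appealing to the uniqueness in Definition \ref{D:Precursor}. A secondary subtlety is the degenerate cases where $p = 1$ or $q = 1$ (so that $B_1\dotsm B_{p-1}$ or $D_1\dotsm D_{q-1}$ is empty), where one should check that Lemma \ref{L:PropertyOfC-Conjugate-LocalProperty} still applies with $p(B_p)=1$; these are handled automatically by the conventions in the cited lemma. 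Everything else is a mechanical substitution of the four membership statements coming from Lemma \ref{L:PropertyOfC-Conjugate-LocalProperty} into the rewritten partial product.
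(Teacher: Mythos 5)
Your proof is correct and is essentially the same as the paper's: both extract the same two memberships $p(B_p)B_p\dotsm B_kg_r^{-1}\in C$ and $g_tD_1\dotsm D_{q-1}(p(D_q))^{-1}\in C$ from Lemma \ref{L:PropertyOfC-Conjugate-LocalProperty}, apply the identity \eqref{P(E)&P(E-1)} once (to $B_p$ in your write-up, to $D_q=B_p^{-1}$ in the paper, which gives inverse elements of $C$), and combine. Your explicit three-factor decomposition of $c_{rt}$ is just a rearrangement of the paper's verification that $(p(B_p))^{-1}\eta\, c_{rt}\,\theta\, p(B_p)=1$.
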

\begin{proof}
By Lemma \ref{L:PropertyOfC-Conjugate-LocalProperty}, we have 
\begin{align*}
\eta:=p(B_{p})B_{p}\dotsm B_k g_r^{-1}\in C, \quad \eta':=g_tD_1\dotsm D_{q-1}(p(D_{q}))^{-1}\in C.
\end{align*} Since $\delta:=p(D_{q})D_{q}(p(D_{q}^{-1}))^{-1}\in C$ by \eqref{P(E)&P(E-1)} and $B_{p}D_{q}=1$, we find 
\begin{align*}
    C\ni \theta:=\eta'\delta=g_tD_1\dotsm D_{q}(p(D_{q}^{-1}))^{-1}=g_tD_1\dotsm D_{q}(p(B_{p}))^{-1}.
\end{align*}
Now we obtain $c_{rt}=g_rC_{r+1}\dotsm C_{t-1}g_t^{-1}=\eta^{-1}\theta^{-1}\in C$ by the computation 
\begin{align*}
1=B_{p}\dotsm B_kC_{r+1}\dotsm C_{t-1}D_1\dotsm D_{q}&=(p(B_{p}))^{-1}\eta g_rC_{r+1}\dotsm C_{t-1}g_t^{-1}\theta p(B_{p}). 
\end{align*} 
\end{proof}

So, this result tells us that if a certain type of cancellation happens in the product $T$, then the partial product $c_{rt}$ is in $C$. The next result says that if $c_{rt}\in C$, then we can rewrite $T$ as a product of fewer conjugates than the original expression for $T$. 

\begin{lemma}\label{L:aiCiCj-1aj-1} Let $C'$ be a normal subsemigroup of $C$, $T=C_1\dotsm C_n$ with $C_i=c_i^{g_i}$, $c_i\in C'$ and $g_i\in A$ for all $i$.
Assume that $c_{rt}:=g_{r}C_{r+1}\dotsm C_{t-1} g_{t}^{-1}\in C$ with $1\leq r<t\leq n$. Then
\[C_rC_{r+1}\dotsm C_{t-1}C_t=C_r'C_{r+1}\dotsm C_{t-1}, \text{ where }C_{r}'=(c_{r}')^{g_{r}}~ \text{ for some } c_{r}'\in C'.\]
\end{lemma}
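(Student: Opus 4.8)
The statement is essentially a normal-form manipulation: we want to absorb the conjugate $C_t = c_t^{g_t}$ into $C_r$ using the hypothesis that the "bridge" $c_{rt} = g_r C_{r+1}\cdots C_{t-1} g_t^{-1}$ lies in $C$. The plan is to compute directly. Write $P = C_{r+1}\cdots C_{t-1}$ (with $P = 1$ if $t = r+1$), so that $c_{rt} = g_r P g_t^{-1} \in C$ by hypothesis, and hence $P = g_r^{-1} c_{rt} g_t$. Then
\begin{align*}
C_r C_{r+1}\cdots C_{t-1} C_t
&= c_r^{g_r} \, P \, c_t^{g_t}
= g_r^{-1} c_r g_r \cdot g_r^{-1} c_{rt} g_t \cdot g_t^{-1} c_t g_t \\
&= g_r^{-1} \, c_r \, c_{rt} \, c_t \, c_{rt}^{-1} \, g_r \cdot g_r^{-1} c_{rt} g_t.
\end{align*}
Here I have inserted $c_{rt}^{-1} c_{rt}$ and regrouped so that the left-hand chunk is conjugated by $g_r$. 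Now set
\[
c_r' := c_r \cdot c_{rt} \, c_t \, c_{rt}^{-1} = c_r \cdot c_t^{\,c_{rt}^{-1}}.
\]
Since $C'$ is a normal subsemigroup of $C$, $c_r \in C'$, $c_t \in C'$, and $c_{rt} \in C$, we have $c_t^{\,c_{rt}^{-1}} \in C'$ (normality), and then $c_r' = c_r \cdot c_t^{\,c_{rt}^{-1}} \in C'$ (closure under products). With $C_r' := (c_r')^{g_r}$, the last display reads $C_r C_{r+1}\cdots C_t = C_r' \cdot (g_r^{-1} c_{rt} g_t)$, and since $g_r^{-1} c_{rt} g_t = P = C_{r+1}\cdots C_{t-1}$, this is exactly $C_r' C_{r+1}\cdots C_{t-1}$, as desired.

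There is essentially no obstacle here; the only points requiring a word of care are (i) the degenerate case $t = r+1$, where $P = 1$, $c_{rt} = g_r g_t^{-1} \in C$, and the identity $C_r C_t = C_r' = (c_r c_t^{c_{rt}^{-1}})^{g_r}$ with the right-hand chain empty, and (ii) making sure the normal-subsemigroup hypothesis is invoked correctly — a normal subsemigroup is closed under conjugation by all of $G$ (in particular by elements of $C$) and under multiplication, which is exactly what $c_r' = c_r \cdot c_t^{c_{rt}^{-1}} \in C'$ uses. I would present the computation in a single short display as above and then state the conclusion.
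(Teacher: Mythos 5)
Your proof is correct and follows essentially the same computation as the paper: insert $c_{rt}^{-1}c_{rt}$, regroup so the left block is conjugated by $g_r$, and recognize the right block as $C_{r+1}\cdots C_{t-1}$, with $c_r' = c_r\, c_t^{c_{rt}^{-1}} \in C'$ by normality. One small wording slip: you say a normal subsemigroup is ``closed under conjugation by all of $G$'' — but $C'$ is a normal subsemigroup of $C$, so it is only guaranteed closed under conjugation by elements of $C$; since $c_{rt}\in C$ by hypothesis, this is exactly what is needed, so the argument is unaffected.
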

\begin{proof}We have
\begin{align*}
     C_rC_{r+1}\dotsm C_{t-1}C_t
    =&g_r^{-1}c_rg_rC_{r+1}\dotsm C_{t-1}g_t^{-1}c_tg_t=g_{r}^{-1}c_{r}c_{rt}c_tg_{t}\\
    =&g_{r}^{-1}c_{r}c_{rt}c_tc_{rt}^{-1}g_{r}C_{r+1}\dotsm C_{t-1}=C_r'C_{r+1}\dotsm C_{t-1},
\end{align*}
where $C_r'={c_r'}^{g_{r}}$ with $c_r'=c_{r}c_{t}^{c_{rt}^{-1}}\in C'$, as $C'$ is a normal subsemigroup of $C$ . 
\end{proof}
\subsubsection{Proof of multi-malnormality of \texorpdfstring{$C$}{Lg}}\label{Subsection:multi-malnormality}
We now prove the main result of this subsection. 

\begin{theorem}\label{T:multi-malnormalityOfC}
    The subgroup $C$ given by Definition \ref{D:DefineSubgroupC} is multi-malnormal in $A$.
\end{theorem}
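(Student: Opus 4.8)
# Proof Proposal for Theorem \ref{T:multi-malnormalityOfC}

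The plan is to follow the same template used in the proof of Theorem \ref{T:TauInSAIfInC}, taking a minimal counterexample and deriving a contradiction. Let $C'$ be a normal subsemigroup of $C$ with $1 \notin C'$, and suppose for contradiction that there exist $c_1, \ldots, c_n \in C'$ and $g_1, \ldots, g_n \in A \setminus C$ with $T := c_1^{g_1} \cdots c_n^{g_n} \in C$. By Lemma \ref{L:C-simplified}, after absorbing $C$-parts into neighbouring conjugators and into $C'$ (using that $C'$ is a normal subsemigroup of $C$), I may assume each $g_i$ is left $C$-simplified and, among all such counterexamples, that $n$ is minimal and then that $\sum_i l(g_i)$ is minimal. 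Writing each conjugate $C_i = c_i^{g_i}$ in its standard form $\lambda_i \mu_i \rho_i$ as in Definition \ref{D:AlmostReducedForm&StandardForm}, I want to understand the alternating product $C_1 \cdots C_n$ and show it cannot lie in $C$.

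The core of the argument is a left-first-product analysis: form $LFP(C_1, \ldots, C_n)$ (or rather of the $\lambda_i, \mu_i, \rho_i$ pieces) and track which components survive. The key structural facts are: (i) each $\mu_i$ has length $\geq 2s-1$ (Lemma \ref{L:AlmostReducedAndReducedForm}(\ref{Item:AlmostReducedAndReducedForm-length})); (ii) by Lemma \ref{L:PropertyOfC-Conjugate-LocalProperty} and Corollary \ref{C:CancellationOfBlocksFromCi}, if any component of some $\mu_r$ cancels against a component of some $\mu_t$ (for $r<t$) in the product, then the partial product $c_{rt} = g_r C_{r+1} \cdots C_{t-1} g_t^{-1} \in C$, whence by Lemma \ref{L:aiCiCj-1aj-1} we can shorten the product to $n-1$ conjugates, contradicting minimality. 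So in a minimal counterexample, \emph{no $\mu_i$-component cancels against any other $\mu_j$-component}. The conjugators $g_i$ then must do all the cancelling, but each $g_i$ has bounded interaction with $C$-elements via the cancellation bounds from Subsection \ref{Subsection:RTFOfC} (Corollary \ref{C:Khalpham&Kalpha}, Lemma \ref{L:betauCancellationnumber}, and the incompatibility results Corollary \ref{C:Incompatibility&CancellationNumber}, Lemma \ref{L:AlmostReducedAndReducedForm}(\ref{Item:cgIncompatible})). Since $l(\mu_i) \geq 2s-1$ with $s \geq 10$, there is a large ``protected core'' of $\mu_i$ that survives all the way to the final product; in particular the final product has length growing with $n$, and moreover by Lemma \ref{L:AlmostReducedAndReducedForm}(\ref{Item:cgIncompatible}) each individual surviving core is $(l(\gamma_i)+2)$-incompatible with $C$, forcing the whole product to be left- and right-incompatible with $C$ and hence not in $C$ when $n \geq 1$ — reducing to the case $n=1$, where $c_1^{g_1} \in C$ with $g_1 \in A\setminus C$ is directly impossible since $c_1^{g_1}$ is left-incompatible with $C$.

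I expect the main obstacle to be the bookkeeping in the no-$\mu$-cancellation step: one must show that in $LFP$ the only cancellations consuming $\mu_i$-components are either with another $\mu_j$ (ruled out by minimality via Corollary \ref{C:CancellationOfBlocksFromCi}) or with $\lambda$/$\rho$-pieces and conjugator tails, and that the latter are quantitatively too short (length-$\leq s$-type bounds from the $C$-simplified hypothesis and Corollary \ref{C:Khalpham&Kalpha}) to eat through the $\geq 2s-1$ body of $\mu_i$. This is where the hypotheses $s \geq 10$ and Condition \ref{Item:SVeryDistinct} of Definition \ref{D:DefineSubgroupC} are essential: the distinctness of exponents prevents the kind of accidental cancellation that Lemma \ref{L:SmallCancellationOfS} and Proposition \ref{P:WhenTheTwoSidedCancelationIsNotControled} rule out. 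I would first establish, as a lemma, that adjacent conjugates $C_iC_{i+1}$ in a minimal counterexample are ``almost reduced'' in an appropriate sense (no $\mu$-$\mu$ cancellation, bounded $\mu$-loss), then iterate using the multiplicativity of unalteredness (Lemma \ref{L:LeftfirstAlteration-ChangeInitial}, Corollary \ref{C:CrossOverCancellationInLeftFirstProduct}) to get a global length lower bound and the incompatibility conclusion, closing the argument exactly as in Theorem \ref{T:TauInSAIfInC}.
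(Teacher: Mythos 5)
Your high-level template matches the paper's: take a $(NC, SLC)$-minimal counterexample $\Pi = c_1^{g_1}\cdots c_n^{g_n} \in C$ with each $g_i$ left $C$-simplified, write each $C_i = c_i^{g_i}$ in standard form $\lambda_i\mu_i\rho_i$, rule out $\mu$-$\mu$ cancellation via Corollary \ref{C:CancellationOfBlocksFromCi} and Lemma \ref{L:aiCiCj-1aj-1}, and conclude via the incompatibility in Lemma \ref{L:AlmostReducedAndReducedForm}\eqref{Item:cgIncompatible}. The paper also follows this plan, and \textbf{Claim~C} of the paper's proof is exactly your no-$\mu$-$\mu$-cancellation step.

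However, there is a genuine gap in the step you yourself flag as the ``main obstacle'', and the particular fix you propose would not work. You say the cancellations coming from $\lambda_j$, $\rho_j$ and the ``conjugator tails'' are ``quantitatively too short (length-$\leq s$-type bounds from the $C$-simplified hypothesis and Corollary~\ref{C:Khalpham&Kalpha}) to eat through the $\geq 2s-1$ body of $\mu_i$.'' This is false: $l(\lambda_j) = l(\rho_j) = l(\gamma_j)$, and $l(\gamma_j) = l(g_j) - k_j$ with only $k_j \leq s$ controlled by the left $C$-simplified hypothesis; $l(g_j)$, and hence $l(\lambda_j)$, can be arbitrarily large. Corollary~\ref{C:Khalpham&Kalpha} also does not apply here — it bounds $K(g, \alpha^m h)$ for a single $C$-simplified $\alpha$ and $g \in C$, whereas the object being multiplied by $\lambda_j$ is the full partial product $\Pi_{i,j-1}$ of conjugates, which is not of that form. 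So no naive length bound rules out $\lambda_j$ (or $\rho_j$) eating deep into the $\mu$-cores.

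The paper's replacement for this failed quantitative step is the real content of the proof. It proceeds by induction on the number of conjugates $j-i+1$ to show that both $\mathcal L_i := L_{l(\gamma_i)+x}(C_i)$ survives unaltered in the left-first product $L_{i,j}$ \emph{and} $\mathcal R_j := R_{l(\gamma_j)+x}(C_j)$ survives in the right-first product $R_{i,j}$. The bound on $\lambda_j$-cancellation comes not from a length estimate but from the $SLC$-minimality of the counterexample: \textbf{Claim~A} shows $l(\gamma_r\Pi_{r+1,t}) \geq l(\gamma_r)-1$, because excessive cancellation would let one replace $g_r$ by $g_r\Pi_{r+1,t}$ and strictly decrease $SLC$; \textbf{Claim~E} then converts a hypothetical deep $\lambda_j$-incursion into $\Pi_{i,j-1}$ into a violation of Claim~A via $K(\Pi_{i,j-1}, \gamma_j^{-1}) \geq K(\Pi_{i,j-1},\lambda_j)-1$. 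Finally, the case where $\rho_j$ cancels a $\mu_i$-core component is handled by a two-sided geometric argument — if $B_i \in \mu_i$ is canceled by $\rho_j$, then the mirror component $D_j \in \mu_j$ must also be canceled by something strictly \emph{between} $B_i$ and $\rho_j$, contradicting the symmetric claim that $D_j$ is canceled by $\lambda_i$. None of this is recoverable from your sketch; you would need to replace the ``too short'' heuristic with the $SLC$-minimality mechanism and run the two-sided induction.
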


 \begin{proof}
   Let $C'$ be a normal subsemigroup of $C$ with $1\not\in C'$. Let $\mathcal S$ be the set of tuples $((t_1, a_1),\dotsc, (t_k, a_k))$ such that $t_i\in C'$, $a_i\in A\backslash C$, $k\geq1$ and $t_1^{a_1}\dotsm t_k^{a_k}\in C$.
 We want to prove that $\mathcal S$ is empty. By assuming that $\mathcal S$ is nonempty, we will find a contradiction. 

For a tuple $V:=((t_1, a_1),\dotsc, (t_k, a_k))\in \mathcal S$, we associate two numbers with $V$: the number of conjugates $NC(V)=k$
and the sum  of conjugate lengths $SLC(V)=\sum_{i=1}^k l(t_i^{a_i})$. These define a map $\phi: \mathcal{S} \rightarrow \mathbb{N}^2$, where $\phi(V) = (NC(V), SLC(V)).$   Lexicographically order $\mathbb N^2$ according to the rule $(m_1,n_1)<(m_2,n_2)$ if $m_1<m_2$,  or if $m_1=m_2$ and $n_1<n_2$. 
  Choose $V_0 \in \mathcal{S}$ satisfying $\phi(V_0)\leq \phi(V)$ for all $V \in \mathcal{S}$.

Suppose $V_0=((c_1,g_1),\dotsc, (c_n,g_n))$, then $n\geq1$, $c_i\in C'$, $g_i\in A\backslash C$ and $\Pi:=c_1^{g_1}\dotsm c_n^{g_n}\in C.$ Moreover, by using Lemma \ref{L:C-simplified}(1) we may assume that these $g_i$ are left $C$-simplified.  
For each $t \in \{1, \ldots, n\}$, let $C_t=c_t^{g_t}=\chi_t^{\gamma_t}=\lambda_t\mu_t\rho_t,$ where the last two expressions are the almost reduced form and the standard form of $C_t$. To simplify notation, for $1\leq r\leq t\leq n$ we let $\Pi_{r,t}=C_rC_{r+1}\dotsm C_t$ (and for convenience, define $\Pi_{r,t}=1$ when $r>t$).  We also write $L_{i,j}$ for the left-first product $LFP(C_i,\dotsc, C_j)$, $R_{i,j}$ for the right-first product $RFP(C_i,\dotsc, C_j)$. Let $x$ be a fixed integer satisfying $3 \leq x \leq s-6$.  We now proceed by two steps to reach a contradiction.

Our first step is to establish three preparatory claims using minimality of $V_0$.  
\begin{enumerate}[wide, labelwidth=!, labelindent=0pt, label=\textbf{Claim \Alph*}]
\item\label{Claim:CancellationOfGammaiAndPiCan'tBeTwomuch}: Let $1\leq r<t\leq n$, then we have $$l(\gamma_r\Pi_{r+1,t})\geq l(\gamma_r)-1,\qquad l(\Pi_{r,t-1}\gamma_t^{-1})\geq l(\gamma_t^{-1})-1.$$
    We only prove the first inequality as the second one can be done similarly. For contradiction, assume $l(\gamma_r\Pi_{r+1,t})<l(\gamma_r)-1$. 
Letting $C_r'=C_r^{\Pi_{r+1,t}}$, we have
\begin{align*}
    \Pi_{r,t}=C_r\Pi_{r+1,t}=\Pi_{r+1,t}\Pi_{r+1,t}^{-1}C_r\Pi_{r+1,t}=\Pi_{r+1,t}C_r',
\end{align*} 
and hence we find
\begin{align*}
    \Pi&=\Pi_{1,n}=\Pi_{1,r-1}\Pi_{r,t}\Pi_{t+1,n}=\Pi_{1,r-1}\Pi_{r+1,t}C_r'\Pi_{t+1,n}\\
    &=\Pi_{1,r-1}\Pi_{r+1,t}C_r'\Pi_{t+1,n}C_r'^{-1} C_r'\\
    &=\Pi_{1,r-1}\Pi_{r+1,t}\Pi_{t+1,n}^{C_r'^{-1}} C_r'.
\end{align*} 
If $C_r'\in C$, then $\Pi\in C$, $\Pi':=\Pi_{1,r-1}\Pi_{r+1,t}\Pi_{t+1,n}^{C_r'^{-1}}$ is in $C$ as $\Pi\in C$, and $\Pi'$ is a product of $n-1$ conjugates of the form $c^g$ with $c\in C'$ and $g\in A\backslash C$, contradicting the minimality of $V_0$. Therefore, we have $C_r'\in A\backslash C$. 
Since $C_r'=C_r^{\Pi_{r+1,t}}=c_r^{g_r\Pi_{r+1,t}}=c_r^{g_r'}$, where $g_r':=g_r\Pi_{r+1,t}$, we have $g_r'\in A\backslash C$ as $c_r\in C$. Also, $C_r'=(\chi_r^{\gamma_r})^{\Pi_{r+1,t}}=\chi_r^{\gamma_r\Pi_{r+1,t}}$, and thus
\begin{align*}
l(C_r')\leq l(\chi_r)+2l(\gamma_r\Pi_{r+1,t})<l(\chi_r)+2(l(\gamma_r)-1)=l(\chi_r)+2l(\gamma_r)-2\leq l(C_r)
\end{align*} where the last inequality is because the product $C_r=\gamma_r^{-1}\chi_r \gamma_r$ is almost reduced. Considering the product $\Pi=\Pi_{1,r-1}\Pi_{r+1,t}C_r'\Pi_{t+1,n}$, we obtain a tuple $V_1\in\mathcal S$, such that $NC(V_1)=NC(V_0)$ but $SLC(V_1)<SLC(V_0)$, contradicting the minimality of $V_0$. 
\qed

\item \label{Item:ClaimE}: Let $1\leq i<j\leq n$ and assume that 
$l(\Pi_{i,j-1})\geq l(C_i)-6$. 
Then $l(\Pi_{i,j-1})\geq l(\gamma_i)+x+1$ and $B_{l(\gamma_i)+x+1}(\Pi_{i,j-1})$ is not canceled by the factor $\lambda_j$ of $C_j$ in $P(\Pi_{i,j-1}, C_j)$.

To prove the inequality, we only need $l(C_i)-6\geq l(\gamma_i)+x+1$, thus by the last equality of Lemma \ref{L:AlmostReducedAndReducedForm}(\ref{Item:AlmostReducedAndReducedForm-length}), we only need $2s-1-6\geq x+1$, which follows from $3\leq x\leq s-6$.

    Now we prove the second statement by contradiction, assuming it is canceled by $\lambda_j$.   
     Then the number of components of $\Pi_{i,j-1}$ canceled by $\lambda_j$ in $P(\Pi_{i,j-1},\lambda_j)$ is at least $\kappa:=l(\Pi_{i,j-1})-l(\gamma_i)-x$, so $K(\Pi_{i,j-1},\lambda_j)\geq \kappa$. Since $\gamma_j^{-1}$ and $\lambda_j$ are of the same length, and at most differ in their right-most components (by Lemma \ref{L:AlmostReducedAndReducedForm})\eqref{Item:AlmostReducedAndReducedForm-Similarity}, we thus have 
    $K(\Pi_{i,j-1},\gamma_j^{-1})\geq \kappa-1$.
         It now follows that 
          \begin{align*}
              l(\Pi_{i,j-1}\gamma_j^{-1})&\leq l(\Pi_{i,j-1})+l(\gamma_j^{-1})-2K(\Pi_{i,j-1},\gamma_j^{-1})\\&\leq l(\Pi_{i,j-1})+l(\gamma_j^{-1})-2(\kappa-1)\\
              &=l(\gamma_j^{-1})-l(\Pi_{i,j-1})+2l(\gamma_i)+2x+2\\&\leq l(\gamma_j^{-1})-l(C_i)+6+2l(\gamma_i)+2x+2\\
              &\leq l(\gamma_j^{-1})-2s+2x+9\leq l(\gamma_j^{-1})-3,
          \end{align*} where the penultimate inequality is obtained using the last inequality of Lemma \ref{L:AlmostReducedAndReducedForm}\eqref{Item:AlmostReducedAndReducedForm-length} and the last inequality is because $x\leq s-6$. This contradicts  \ref{Claim:CancellationOfGammaiAndPiCan'tBeTwomuch}.  \qed   
\item \label{Claim:NonCancellationAmongComponentsFromC}: Let $1\leq i\leq r<t\leq j\leq n$. Let $B_p$ and $D_q$ be components of $\mu_r$ and $\mu_t$, respectively. Then $B_p$ and $D_q$ do not cancel in $L_{i,j}$. (Recall that we write $L_{i,j}$ for $LFP(C_i,\dotsc,C_j)$.)

     We prove by contradiction, supposing that the components $B_p$ and $D_q$ cancel in $L_{i,j}$. By Lemma \ref{L:CancellationInLFP-RestrictingLFP}, $B_p$ and $D_q$ cancel in $L_{r,t}$ too. Writing $C_r=B_1\dotsm B_p\dotsm B_k$ and $C_t=D_1\dotsm D_q\dotsm D_l$ as reduced forms, then $B_pD_q=1$ and $B_p\dotsm B_kC_{r+1}\dotsm C_{t-1}D_1\dotsm D_q=1$
     by Lemma \ref{L:PairCancellationToATrivialProduct}. Then by Corollary \ref{C:CancellationOfBlocksFromCi}, $g_rC_{r+1}\dotsm C_{t-1}g_t^{-1}\in C$. Applying Lemma \ref{L:aiCiCj-1aj-1}, we have 
   $\Pi_{r,t}=C_r'C_{r+1}\dotsm C_{t-1}$, where $C_r'={c_{r}'}^{g_r}$ with $c_r'\in C'$. Now $\Pi=\Pi_{1,r-1}C_r'C_{r+1}\dotsm C_{t-1}\Pi_{t+1,n}\in C$ is a product of $n-1$ terms of the form $c^g$ with $c\in C'$ and $g\in A\backslash C$. This contradicts the minimality of $V_0$. \qed
\end{enumerate}

Our second step is to prove the following claim to finish the proof of the theorem. To ease notation, we write $\mathcal L_t$ in place of $L_{l(\gamma_t)+x}(C_t)$ and  $\mathcal R_t$ in place of $R_{l(\gamma_t)+x}(C_t)$.
 \begin{claim}
\label{main_claim} 
   If $i, j$ are integers satisfying $1 \leq i  \leq j \leq n$, 
   then $\mathcal L_i$ is unaltered in $L_{i,j}$ and $\mathcal R_j$ is unaltered in $R_{i,j}$.
\end{claim}
Let us see how the claim contradicts the nonemptiness of $\mathcal S$. Assuming Claim \ref{main_claim}, we know $\mathcal L_1$ is unaltered in $L_{1,n}$ and so $L_{l(\gamma_1)+x}(C_1) = L_{l(\gamma_1)+x}(\Pi)$.  But $C_1$ is $(l(\gamma_1)+2)$-incompatible with $C$ by Lemma \ref{L:AlmostReducedAndReducedForm}(\ref{Item:cgIncompatible}), so that $\Pi$ is also $(l(\gamma_1)+2)$-incompatible with $C$, in particular, $\Pi \notin C$. 
Recalling that the assumption that $\mathcal S$ is nonemtpy leads to the existence of $V_0$ and that $\Pi\in C$, this gives a desired contradiction. 

 The remainder of the paper is therefore devoted to proving Claim \ref{main_claim}. 
We induct on the number $j-i+1$ of conjugates in the product $\Pi_{i,j}$. Note that if $j-i+1=1$, then the conclusion of Claim \ref{main_claim} is trivially true. So, let $K\geq 2$ and assume that the conclusion of Claim \ref{main_claim} holds for all $i,j$ with $j-i+1<K$. Assuming that $j-i+1=K$, we want to prove that the conclusion of Claim \ref{main_claim} is true in this case. We proceed with two substeps to achieve this goal. Our first substep is to prove the following \ref{Item:ClaimRBlgammiUnaltered} and \ref {Item:ClaimD} based on the induction assumption.

\begin{enumerate}[wide, labelwidth=!, labelindent=0pt, label=\textbf{Claim \Alph*}]
\setcounter{enumi}{3}
\item \label{Item:ClaimRBlgammiUnaltered}: Let $i'$ be an integer satisfying $i\leq i'<j$. Assume that a component $B_{i'}$ of $C_{i'}$ is unaltered in $L_{i',j-1}$ but altered in $L_{i',j}$. Then the factor $RB_{l(\gamma_{i'})+2}(C_{i'})$ of $C_{i'}$ is unaltered in $L_{i',j-1}$.

We prove this with two sub-substeps. Note that the claim is trivially true if $i'=j-1$, and so we focus on the case that $i'\leq j-2$. 
\begin{enumerate}
    \item \label{Item:ppiik-1ck}
    Let $k$ be a fixed integer satisfying $i'+1\leq k\leq j-1$. 
   We prove that  at least one component of $C_k$ is altered in $P(\Pi_{i',k-1},C_k)$, and if $t$ is maximal such that $B_t(C_k)$ is altered, then $l(\gamma_k)+x\leq t\leq l(C_k)-(l(\gamma_k)+x)$. 

     To prove this, we rule out two other possible cases for the product $P(\Pi_{i',k-1}, C_k)$. 
     \begin{enumerate}
         \item No component of $C_k$ is altered or the rightmost component of $C_k$ altered is $B_t(C_k)$ with $t<l(\gamma_k)+x$.  In this case, $B_{l(\gamma_k)+x}(C_k)$ is unaltered in $L_{i',k}$, and by assumption, the component $B_{i'}$ of $C_{i'}$ is unaltered in $L_{i', j-1}$ (thus also unaltered in $L_{i',k}$) but altered in $L_{i',j}$. Therefore by Corollary \ref{C:CrossOverCancellationInLeftFirstProduct}, $B_{l(\gamma_k)+x}(C_k)$ is altered in $L_{k, j}$.  But this means $\mathcal L_k$ is altered in $L_{k,j}$, contradicting the induction assumption as $j-k+1<K$.  
         \item The rightmost component of $C_k$ altered is $B_t(C_k)$ with $t > l(C_k)-(l(\gamma_k)+x)$. Then $B_t(C_k)$ is a component of $\mathcal R_k$, thus $\mathcal R_k$ is altered in $P(\Pi_{i',k-1}, C_k)$. But $\mathcal R_k$ is unaltered in $R_{i',k}$ by the induction assumption (as $k-i'+1<K$), thus $\mathcal R_k$ is unaltered in  $P(\Pi_{i',k-1}, C_k)$ by (the right version of) Corollary \ref{C:UnalteredInLFPToUnalteredInProduct}, a contradiction.
     \end{enumerate}
    \item  Second, we prove the following Subclaim. Let's first see how this Subclaim finishes the proof of \ref{Item:ClaimRBlgammiUnaltered}. 
By this Subclaim, for each $k$ with $i'+1\leq k\leq j-1$, $\eta_{i'}:=\lambda_{i'}\mu_{i'}(R_1(\mu_{i'}))^{-1}$ is a left factor of $\Pi_{i',k}$, and thus $\eta_{i'}$ is unaltered in $P(\Pi_{i',k-1},C_k)$ by (the left version of) Lemma \ref{L:AlteredRightSection}. Therefore $\eta_{i'}$ is unaltered in $L_{i',j-1}$ and thus so is its right-most component $RB_{l(\gamma_{i'})+2}(C_{i'})$, as desired by \ref{Item:ClaimRBlgammiUnaltered}. 

     \begin{quote}    
    \textbf{Subclaim}:
    For every $k$ with $i'\leq k\leq j-1$, $\Pi_{i',k}$ can be written as an almost reduced product
    \begin{align}\label{E:KeyExpression-Fact1-ProofOfmulti-malnormality}
\Pi_{i',k}=\lambda_{i'}\zeta_{j_0}\zeta_{j_1}\dotsm \zeta_{j_{r_k}} h_k,
    \end{align}
    where $\zeta_{j_0}=\mu_{i'}$, $r_k\geq0$, $h_k\in A$, $i'=j_0<j_1<\dotsm<j_{r_k}\leq k$, the products $\lambda_{i'}\zeta_{j_0}$ and  $\zeta_{j_{r_k}}h_k$ are reduced, 
    each $\zeta_{t}$ is a right factor of $\mu_{t}$ with $l(\zeta_t)\geq x+1$, each $\zeta_t':=(L_1(\zeta_t))^{-1}\zeta_t(R_1(\zeta_t))^{-1}$ is unaltered in $L_{i',k}$, and $\zeta_{j_0}'':=\zeta_{j_0}(R_1(\zeta_{j_0}))^{-1}$ and $\zeta_{j_{r_k}}'':=(L_1(\zeta_{j_{r_k}}))^{-1}\zeta_{j_{r_k}}$ are unaltered in $L_{i',k}$. 
     \end{quote}
    To prove this subclaim, we induct on $k$. It is clearly true when $k=i'$, as we can take $r_k=0$ and $h_{i'}=\rho_{i'}$. Now assume that $\Pi_{i',k}$ with $i'\leq k\leq j-2$ can be written as the desired almost reduced product 
    \eqref{E:KeyExpression-Fact1-ProofOfmulti-malnormality}; we want to prove that $\Pi_{i',k+1}$  can too. We have
    $$
\Pi_{i',k+1}=\Pi_{i',k}C_{k+1}=\lambda_{i'}\zeta_{j_0}\zeta_{j_1}\dotsm \zeta_{j_{r_k}} h_k\lambda_{k+1}A_1\dotsm A_{m_{k+1}}\rho_{k+1},
   $$
     where we write $\mu_{k+1}$ in reduced form $A_1\dotsm A_{m_{k+1}}$. By \ref{Item:ClaimRBlgammiUnaltered}(a), the rightmost component of $C_{k+1}$ being altered in  $P(\Pi_{i',k}, C_{k+1})$ is $A_r$ with $x\leq r\leq m_{k+1}-x$. Thus $A_{r-1}$ is canceled in $P(\Pi_{i',k}, C_{k+1})$. Let $\delta_k=\zeta_{j_0}\zeta_{j_1}\dotsm \zeta_{j_{r_k}}$, then $\Pi_{i',k}=\lambda_{i'}\delta_kh_k$ is a reduced product. We want to show that $A_{r-1}$ is canceled by $h_k$ in $P(\Pi_{i',k},C_{k+1})$. First, it cannot be canceled by $\lambda_{i'}$, as that will lead to $\mathcal L_{i'}$ being altered in $L_{i',k+1}$, contradicting the induction assumption (as $k+1-i'+1<K$). Second, note that each $\zeta_t'$ for $t=j_0,j_1,\dotsc,j_{r_k}$ is a factor of $\mu_t$ unaltered in $L_{i',k}$, so $\zeta_t'$ cannot cancel $A_{r-1}$ by \ref{Claim:NonCancellationAmongComponentsFromC} as $A_{r-1}$ is a component $\mu_{k+1}$. Similarly, $\zeta_{j_0}''$ or $\zeta_{j_{r_k}}''$ cannot cancel $A_{r-1}$ either. 
     Therefore, if $A_{r-1}$ is canceled by $\delta_k$, then it can only be canceled by $R_1(\zeta_{j_{t-1}})L_1(\zeta_{j_t})$ for some $t\in\{1,2,\dotsc,r_k\}$, but then $A_{r-2}$ (note $r \geq x \geq 3$) is canceled by a component of $\zeta_{j_t}'$, contradicting \ref{Claim:NonCancellationAmongComponentsFromC}. 
     Therefore $A_{r-1}$ can only be canceled by a component of $h_k$, call this component $D$. 
    \begin{enumerate}
        \item If $D$ is not the first component of $h_k$, then letting $h_{k+1}=h_kC_{k+1}$, we know that $$\Pi_{i',k+1}=\lambda_{i'}\zeta_{j_0}\zeta_{j_1}\dotsm \zeta_{j_{r_k}} h_{k+1}$$ is the desired almost reduced product. 
        \item If $D$ is the first component of $h_k$, then  $h_k\lambda_{k+1}A_1\dotsm A_{r-1}=1$ and thus
    \begin{align*}
    \Pi_{i',k+1}
    &=\lambda_{i'}\zeta_{j_0}\zeta_{j_1}\dotsm \zeta_{j_{r_k}} A_r\dotsm A_{m_{k+1}}\rho_{k+1}=\lambda_{i'}\zeta_{j_0}\zeta_{j_1}\dotsm \zeta_{j_{r_k}} \zeta_{j_{r_{k+1}}}\rho_{k+1},
    \end{align*}
    where $\zeta_{j_{r_{k+1}}}:=A_r\dotsm A_{m_{k+1}}$ is a right factor of $\mu_{k+1}$ with $l(\zeta_{j_{r_{k+1}}})=m_{k+1}-r+1\geq x+1$, $K(\zeta_{j_{r_k}}, \zeta_{j_{r_{k+1}}})=0$ and the product $\zeta_{j_{r_{k+1}}}\rho_{k+1}$ is reduced. This gives the desired almost reduced product for $\Pi_{i',k+1}$, proving the Subclaim and thus \ref{Item:ClaimRBlgammiUnaltered}. \qed
    \end{enumerate} 
\end{enumerate}
 
\item \label{Item:ClaimD}: Assume that $\mathcal L_i$ is altered in $L_{i,j}$. Then $l(\Pi_{i,j-1})\geq l(C_i)-6$.

We prove this by contradiction, supposing that $l(\Pi_{i,j-1})< l(C_i)-6$. First we have $i\neq j-1$ and thus $i<j-1$ as $i<j$.
Let $r=K(C_i, \Pi_{i+1,j-1})$, then we have $$l(C_i)-6> l(\Pi_{i,j-1})\geq l(C_i)+l(\Pi_{i+1,j-1})-2r-1,$$ from which we find $ l(\Pi_{i+1,j-1})<2r-5.$ Since $\mathcal L_i$ is unaltered in $L_{i,j-1}$ by the induction assumption, and it is altered in $L_{i,j}$, we know that by \ref{Item:ClaimRBlgammiUnaltered}, $RB_2(\mu_i)=RB_{l(\gamma_i)+2}(C_i)$ is unaltered in $L_{i,j-1}$, hence unaltered in $P(C_i,\Pi_{i+1,j-1})$ by Corollary \ref{C:UnalteredInLFPToUnalteredInProduct}. Since exactly $r$ components of $C_i$ are canceled in $P(C_i,\Pi_{i+1,j-1})=P(\lambda_i\mu_i\rho_i, \Pi_{i+1,j-1})$,
we have $K(\rho_i,\Pi_{i+1,j-1})\geq r-1$. It then follows that $K(\gamma_i,\Pi_{i+1,j-1})\geq r-2$ by  Lemma \ref{L:AlmostReducedAndReducedForm}\eqref{Item:AlmostReducedAndReducedForm-Similarity}. 
Using this, the inequality $l(\Pi_{i+1,j-1})<2r-5$ and \ref{Claim:CancellationOfGammaiAndPiCan'tBeTwomuch}, we reach a contradiction:
\begin{align*}
  l(\gamma_i)-1\leq  l(\gamma_i\Pi_{i+1,j-1})\leq l(\gamma_i)+l(\Pi_{i+1,j-1})-2(r-2)< l(\gamma_i)-1.
\end{align*}  \qed
\end{enumerate} 
Our second substep is to complete the proof of Claim \ref{main_claim}, using  \ref{Item:ClaimE}, \ref{Claim:NonCancellationAmongComponentsFromC}, \ref{Item:ClaimRBlgammiUnaltered} and \ref{Item:ClaimD}. We only prove $\mathcal L_i$ is unaltered in $L_{i,j}$, as the ``right version" is proved similarly. \looseness=-1

 Suppose that  $\mathcal L_i$ is altered in $L_{i,j}$.  By the induction assumption $\mathcal L_i$ is unaltered in $L_{i,j-1}$, so the component $RB_{l(\gamma_{i})+2}(C_{i})$ is unaltered in $L_{i,j-1}$ by \ref{Item:ClaimRBlgammiUnaltered}. It follows that $B_i:=B_{l(\gamma_i)+x+1}(C_i)$ is unaltered in $L_{i,j-1}$ as well, because $l(C_i)\geq 2l(\gamma_i)+2s-1$ by Lemma \ref{L:AlmostReducedAndReducedForm}\eqref{Item:AlmostReducedAndReducedForm-length} and $2s-1\geq x+3$. 

 Now, since $\mathcal L_i$ is unaltered in $L_{i,j-1}$ but altered in $L_{i,j}$, it is therefore altered in $P(\Pi_{i,j-1}, C_j)$, implying that $B_i$ is canceled in $P(\Pi_{i,j-1}, C_j)$. But since $B_i$ is a component from $\mu_i$, it cannot be canceled by $\mu_j$ in $P(\Pi_{i,j-1}, C_j)$ by \ref{Claim:NonCancellationAmongComponentsFromC}.  Combining \ref{Item:ClaimD} and \ref{Item:ClaimE}, we know that $B_i$ is not canceled by $\lambda_j$ either. Therefore $B_i$ is canceled by $\rho_j$ in $P(\Pi_{i,j-1}, C_j)$.

Because $B_i$ is canceled by $\rho_j$ in $P(\Pi_{i,j-1}, C_j)$, we know $\mathcal R_j$ is altered in $P(\Pi_{i,j-1}, C_j)$. So $\mathcal R_j$ is altered in $R_{i,j}$ by (the right version of) Corollary \ref{C:UnalteredInLFPToUnalteredInProduct}.  
Starting from that $\mathcal R_j$ is altered in $R_{i,j}$, we make an argument similar to the previous two paragraphs but using the right-sided version of the claims whenever necessary, to conclude that $D_j:=RB_{l(\gamma_j)+x+1}(C_j)$ is unaltered in $R_{i+1,j}$ and $D_j$ 
is canceled by  $\lambda_i$ in the product $P(C_i, \Pi_{i+1,j})$.

Now, as $D_j$ is unaltered in $R_{i+1, j}$, it is unaltered in $P(\Pi_{i+1,j-1}, C_j)$ by (the right version of) Corollary \ref{C:UnalteredInLFPToUnalteredInProduct}. Moreover, $\lambda_i$ is unaltered in $P(C_i, \Pi_{i+1,j-1})$ by the induction assumption and Corollary \ref{C:UnalteredInLFPToUnalteredInProduct}. We can therefore apply Lemma \ref{L:AssociationOfCancellationOfAPair}, concluding that since $D_j$ is canceled by $\lambda_i$ in the product $P(C_i, \Pi_{i+1,j}) = P(C_i, \Pi_{i+1,j-1}C_j)$, 
$D_j$ is canceled by $\lambda_i$ in $P(C_i\Pi_{i+1,j-1},C_j)=P(\Pi_{i,j-1}, C_j)$.

Hence in the product $P(\Pi_{i,j-1}, C_j)$, $B_i$ (resp. $D_j$) is canceled by $\rho_j$ (resp. $\lambda_i$). We now show this leads to a contradiction. Since $B_i=B_{l(\gamma_i)+x+1}(C_i)$ is unaltered in $L_{i,j-1}$, $\lambda_i$ is a left factor of $C_i$ and $l(\lambda_i)=l(\gamma_i)$,  
we can write $\Pi_{i, j-1} = \lambda_i\sigma_1 B_i\tau_1$ as a reduced product, where $\sigma_1,\tau_1\in A$. Similarly, as $D_j=RB_{l(\gamma_j)+x+1}(C_j)$ and $\rho_j$ is a right factor of $C_j$ with $l(\rho_j)=l(\gamma_j)$, we can write $C_j$ as a reduced product $\sigma_2D_j\tau_2\rho_j$, where $\sigma_2,\tau_2\in A$.       Then  the cancellation of $B_i$ by $\rho_j$ in the product
    $\Pi_{i,j-1}C_j=(\lambda_i\sigma_1 B_i\tau_1)(\sigma_2D_j\tau_2\rho_j)$ implies that $D_j$ is canceled by $\tau_1$, contradicting that $D_j$ is canceled by $\lambda_i$.
\end{proof}

\bibliography{gen_torsionbib}

\bibliographystyle{plain}

\end{document}